\newcommand{\Z}{\mathbb{Z}}
\newcommand{\R}{\mathbb{R}}
\newcommand{\E}{\mathbb{E}}
\newcommand{\T}{\mathcal{T}}
\newcommand{\s}{\mathcal{S}}
\newcommand{\lue}{\textit{lue}}
\newcommand{\lte}{\textit{lte}}
\newcommand{\pd}{\Lambda}
\newcommand{\po}{\Pi}
\newtheorem{thm}{Theorem}
\newtheorem{lem}[thm]{Lemma}
\newtheorem{pro}[thm]{Proposition}
\newtheorem{cor}[thm]{Corollary}
\newtheorem{dfi}{Definition}
\newtheorem{rmk}{Remark}
\newtheorem{cla}[thm]{Claim}
\begin{document}

\begin{frontmatter}

  \title{A fast algorithm  for computing irreducible triangulations of
    closed  surfaces in  $\E^d$}

\author[sr]{Suneeta Ramaswami\fnref{fn1}}
\ead{rsuneeta@camden.rutgers.edu}

\author[ms]{Marcelo Siqueira\corref{cor}\fnref{fn2}}
\ead{mfsiqueira@mat.ufrn.br}

\cortext[cor]{Corresponding author}
\fntext[fn1]{Partially supported by NSF  grant CCF-0830589}
\fntext[fn2]{Partially supported by CNPq Grant 305845/2012-8 and CNPq Grant 486951/2012-0}

\address[sr]{Rutgers University, Department of Computer Science, 227 Penn Street, BSB Room 321, Camden, NJ 08102-1656, USA}
\address[ms]{Universidade Federal do Rio Grande do Norte, Departamento
  de Matem\'atica, CCET Sala 70, Natal, RN 59078-970, Brazil}

\begin{abstract}
  We give a fast  algorithm for computing an irreducible triangulation
  $\T^{\prime}$ of  an oriented, connected,  boundaryless, and compact
  surface $\s$  in $\E^d$ from  any given triangulation $\T$  of $\s$.
  If  the genus  $g$ of  $\s$ is  positive, then  our  algorithm takes
  $\mathcal{O}( g^2 +  g n )$ time to  obtain $\T^{\prime}$, where $n$
  is  the number of  triangles of  $\T$.  Otherwise,  $\T^{\prime}$ is
  obtained in  linear time  in $n$.  While  the latter upper  bound is
  optimal,  the former upper  bound improves  upon the  currently best
  known  upper bound by  a $\lg  n /  g$ factor.   In both  cases, the
  memory space required by our algorithm is in $\Theta(n)$.
\end{abstract}

\begin{keyword}
Irreducible triangulations\sep link condition\sep edge contractions
\MSC[2010] 05C10\sep  68U05
\end{keyword}

\end{frontmatter}

\linenumbers

\section{Introduction}\label{sec:intro}
Let $\s$ be a compact surface with empty boundary.  A triangulation of
$\s$ can be viewed as a  ``polyhedron'' on $\s$ such that each face is
a triangle  with three distinct  vertices and the intersection  of any
two distinct triangles  is either empty, a single  vertex, or a single
edge (including its two vertices).   A classical result from the 1920s
by Tibor Rad\'o asserts that every compact surface with empty boundary
(i.e.,   usually   called   a   \textit{closed  surface})   admits   a
triangulation~\cite{GX13}.   Let $e$  be any  edge of  a triangulation
$\T$ of  $\s$.  The  \textit{contraction} of $e$  in $\T$  consists of
contracting  $e$ to a  single vertex  and collapsing  each of  the two
triangles     meeting    $e$     into    a     single     edge    (see
Figure~\ref{fig:contraction}).  If  the result of  contracting $e$ in
$\T$ is  still a triangulation  of $\s$, then  $e$ is said to  be {\em
  contractible}; else  it is {\em  non-contractible}.  A triangulation
$\T$ of $\s$ is said to be {\em irreducible} if and only if every edge
of $\T$ is  non-contractible.  Barnette and Edelson~\cite{BE89} showed
that  all closed  surfaces  have finitely  many irreducible  surfaces.
More recently, Boulch, de Verdi\`ere, and Nakamoto~\cite{BVN13} showed
the same result for compact surfaces with a nonempty boundary.

Irreducible  triangulations have proved  to be  an important  tool for
tackling   problems  in  combinatorial   topology  and   discrete  and
computational  geometry.   The   reasons  are  two-fold.   First,  all
irreducible  triangulations  of  any  given  compact  surface  form  a
``basis'' for  all triangulations of the same  surface.  Indeed, every
triangulation of the surface can be  obtained from at least one of its
irreducible   triangulations   by    a   sequence   of   {\em   vertex
  splittings}~\cite{SCH91,SUL06a},   where    the   vertex   splitting
operation  is  the inverse  of  the  edge  contraction operation  (see
Figure~\ref{fig:contraction}).      Second,    some     problems    on
triangulations can be solved by considering irreducible triangulations
only.  In  particular, irreducible  triangulations have been  used for
proving the  existence of geometric  realizations (in some  $\E^d$) of
triangulations   of   certain    surfaces,   where   $\E^d$   is   the
$d$-dimensional  Euclidean   space~\cite{ABE-M07,BN08},  for  studying
properties       of        diagonal       flips       on       surface
triangulations~\cite{NEG94,NEG99,CGMN02,KNS09,HNOS11},              for
characterizing  the  structure   of  flexible  triangulations  of  the
projective plane~\cite{CL98},  and for finding lower  and upper bounds
for the maximum number of cliques in an $n$-vertex graph embeddable in
a given  surface~\cite{DFJSW11}.  Irreducible triangulations  are also
``small'', as their number of vertices  is at most linear in the genus
of the surface~\cite{NO95,BVN13}.  However,  the number of vertices of
all irreducible triangulations of the same surface may vary, while any
irreducible triangulation of smallest size (known as \textit{minimal})
has $\Theta( \sqrt{g}  )$ vertices if the genus $g$  of the surface is
positive~\cite{JR80}.

The  sphere  has a  unique  irreducible  triangulation,  which is  the
boundary  of  a tetrahedron~\cite{SR34}.   The  torus  has exactly  21
irreducible triangulations,  whose number of  vertices varies from  7 to
10~\cite{LAV90}.   The  projective  plane  has  only  two  irreducible
triangulations,   one  with   6  vertices   and  the   other   with  7
vertices~\cite{BAR82}.   The Klein bottle  has exactly  29 irreducible
triangulations   with   number  of   vertices   ranging   from  8   to
11~\cite{SUL06b}.   Sulanke devised and  implemented an  algorithm for
generating  all irreducible  triangulations of  compact  surfaces with
empty   boundary~\cite{SUL06a}.    Using   this   algorithm,   Sulanke
rediscovered   the  aforementioned   irreducible   triangulations  and
generated  the  complete sets  of  irreducible  triangulations of  the
double  torus,  the triple  cross  surface,  and  the quadruple  cross
surface.

The  idea  behind  Sulanke's  algorithm  is  to  generate  irreducible
triangulations   of   a   surface   by   modifying   the   irreducible
triangulations of  other surfaces of smaller Euler  genuses (the Euler
genus of a surface equals  the usual genus for nonorientable surfaces,
and  equals  twice the  usual  genus  for  orientable surfaces).   The
modifications include  vertex splittings and the  addition of handles,
crosscaps, and crosshandles. Unfortunately,  the lack of a known upper
bound on the number of  vertex splittings required in the intermediate
stages  of  the  algorithm   prevented  Sulanke  from  establishing  a
termination criterion for all surfaces.  Furthermore, his algorithm is
impractical   for  surfaces  with   Euler  genus   $\ge  5$,   as  his
implementation could  take centuries  to generate the  quintuple cross
surface  on  a cluster  of  computers with  an  average  CPU speed  of
2GHz~\cite{SUL06a}.   To  the  best   of  our  knowledge,  no  similar
algorithm for compact surfaces with a nonempty boundary exists.

\subsection{Our contribution}
Here, we  give an algorithm for  a problem closely related  to the one
described  above: \textit{given  any triangulation  $\T$ of  a compact
  surface  $\s$  with empty  boundary,  find \textit{one}  irreducible
  triangulation, $\T^{\prime}$, of $\s$  from $\T$}. In particular, if
the genus $g$ of $\s$ is positive, then we show that $\T^{\prime}$ can
be  computed in $\mathcal{O}(  g n  + g^2  )$ time,  where $n$  is the
number of triangles of  $\T$. Otherwise, $\T^{\prime}$ can be computed
in $\mathcal{O}(  n )$  time, which is  optimal.  In either  case, the
space requirement  is in $\Theta(n)$.   To the best of  our knowledge,
the previously best known (time) upper bound is $\mathcal{O}( n\lg n +
g \lg n + g^4 )$  for the algorithm given by Schipper in~\cite{SCH91}.
In his  complexity analysis, Schipper  assumed that $g$ is  a constant
depending  only  on   $\s$,  and  thus  stated  the   upper  bound  as
$\mathcal{O}( n\lg  n )$. While  it is true  that $g$ is  an intrinsic
feature of $\s$,  we may have $m \in  \Theta( \sqrt{g} )$~\cite{JR80},
where $m$ is the number of vertices of $\T$, which implies that $n \in
\Theta( g  )$ (see Section~\ref{sec:back}).   Thus, we state  the time
bounds in  terms of both  $g$ and $n$.   Since our algorithm  can more
efficiently generate  \textit{one} irreducible triangulation  from any
given  triangulation  of  $\s$,  it  can  potentially  be  used  as  a
``black-box'' by a fast and  alternative method (to that of Sulanke's)
for  generating \textit{all} irreducible  triangulations of  any given
surface.

\subsection{Application to the triquad conversion problem}
The algorithm for  computing irreducible triangulations described here
was recently  incorporated into  an innovative and  efficient solution
\cite{TRS}  to the  problem of  converting a  triangulation $\T$  of a
closed surface into a quadrangulation with the same set of vertices as
$\T$      (known      as      the     \textit{triquad      conversion}
problem~\cite{BLPPSTZ13}).  This new  solution takes $\mathcal{O}( g n
+  g^2 )$  time, where  $n$ is  the number  of triangles  of  $\T$, to
produce  the  quadrangulation if  the  genus  $g$  of the  surface  is
positive.   Otherwise, it  takes  linear time  in  $n$.  The  solution
improves upon the approach of computing a perfect matching on the dual
graph of $\T$, for which the best known upper bound is $\mathcal{O}( n
\lg^2  n  )$   amortized  time~\cite{DS10}.   In~\cite{TRS},  the  new
solution   is   experimentally  compared   with   two  simple   greedy
algorithms~\cite{PS96,VEL00} and  the approach based  on the algorithm
in~\cite{DS10}.        It       outperforms       the       approaches
in~\cite{PS96,VEL00,DS10}  whenever $n$ is  sufficiently large  and $g
\ll  n$,  which is  typically  the  case  for triangulations  used  in
computer  graphics and  engineering  applications.  We  hope that  the
solution in~\cite{TRS} to the triquad conversion problem increases the
practical   interest    for   algorithms   to    compute   irreducible
triangulations of surfaces.

\subsection{Organization}
The   remainder    of   this   paper   is    organized   as   follows:
Section~\ref{sec:back} introduces the  notation, terminology and basic
definitions used throughout the paper. Section~\ref{sec:rwork} reviews
prior work on algorithms  for computing irreducible triangulations and
related  algorithms   (e.g.,  algorithms  for   mesh  simplification).
Section~\ref{sec:algo} describes our proposed algorithm in detail, and
analyzes its time and space complexities. Section~\ref{sec:expresults}
presents  an experimental  comparison of  the implementation  of three
algorithms   for  computing   irreducible   triangulations:  ours;   a
randomized, brute-force  algorithm; and  the one proposed  by Schipper
in~\cite{SCH91}.  Finally,  section~\ref{sec:conc} summarizes our main
contributions, and discusses future research directions.

\section{Notation, terminology, and background}\label{sec:back}
Let $\E^d$  denote the  $d$-dimensional Euclidean (affine)  space over
$\R$, and let  $\R^d$ denote the associated vector  space of $\E^d$. A
subset  of $\E^d$  that is  homeomorphic  to the  open unit  interval,
$\mathbb{B}^1 = ( 0 , 1 ) \subset \E$, is called an {\em open arc}.  A
subset   of  $\E^d$  that   is  homeomorphic   to  the   open  circle,
$\mathbb{B}^2 = \{ ( x , y ) \in  \E^2 \mid x^2 + y^2 < 1 \}$, of unit
radius is called an {\em open  disk}. Recall that a subset $\s \subset
\E^d$  is called  a {\em  topological surface},  or {\em  surface} for
short, if each  point $p$ in $\s$ has an open  neighborhood that is an
open disk.   According to this  definition, a surface is  a ``closed''
object in the  sense that it has an empty  boundary. Here, we restrict
our attention to the class consisting of all {\em oriented, connected,
  and compact surfaces in $\E^d$},  and we use the term ``surface'' to
designate a member of this class (unless stated otherwise).

The notions of  triangle mesh and quadrilateral mesh  of a surface are
synonyms for the well-known terms triangulation and quadrangulation of
a  surface, respectively,  in topological  graph theory  and algebraic
topology~\cite{GT01,GX13}.    Informally,   a   triangulation   (resp.
quadrangulation) of a surface is a  way of cutting up the surface into
triangular (resp.  quadrilateral) regions  such that these regions are
images  of triangles  (resp.  quadrilaterals)  in the  plane,  and the
vertices and  edges of these planar  triangles (resp.  quadrilaterals)
form a  graph with  certain properties. To  formalize these  ideas, we
rely on the  notions of subdivision of a surface,  as nicely stated by
Guibas and Stolfi~\cite{GS85}, and of a graph embedded on a surface.

\begin{dfi}\label{def:subdivision}
  A {\em subdivision} of a surface $\s$ is a partition, $\mathcal{P}$,
  of $\s$ into three finite  collections of disjoint subsets: the {\em
    vertices},  {\em edges},  and {\em  faces}, which  are  denoted by
  $V_{\mathcal{P}}(\s)$,           $E_{\mathcal{P}}(\s)$,          and
  $F_{\mathcal{P}}(\s)$,  respectively,   and  satisfy  the  following
  conditions:
\begin{itemize}
\item[(S1)] every vertex is a point,

\item[(S2)] every edge is an open arc,

\item[(S3)] every face is an open disk, and

\item[(S4)] the boundary of every face is a closed path of edges and vertices.
\end{itemize}
\end{dfi}

Condition (S4) is based on the notion of ``closed path'' on a surface,
which can be formalized  as follows: let $\mathbb{S}^1 = \{ (  x , y )
\in \E^2 \mid  x^2 + y^2 = 1  \}$ be the circumference of  a circle of
unit radius centered at the origin.   We define a {\em simple path} in
$\mathbb{S}^1$ as a partition of $\mathbb{S}^1$ into a finite sequence
of isolated points  and open arcs. Then, condition  (S4) is equivalent
to  the following (refer  to Figure~\ref{fig:conditions4}):  for every
face $\tau$  in $\mathcal{P}$, there  exists a simple path,  $\pi$, in
$\mathbb{S}^1$    and    a    continuous    mapping,    $g_{\tau}    :
\mathbb{\overline{B}}^2     \rightarrow     \overline{\tau}$,    where
$\mathbb{\overline{B}}^2$  and $\overline{\tau}$  are the  closures of
$\mathbb{B}^2$   and   $\tau$,   such   that   $g_{\tau}$   (i)   maps
$\mathbb{B}^2$ homeomorphically  onto $\tau$, (ii) maps  each open arc
of  $\pi$ homeomorphically onto  an edge  of $\mathcal{P}$,  and (iii)
maps each isolated  point of $\pi$ to a  vertex of $\mathcal{P}$.  So,
condition  (S4) implies  that the  images of  the isolated  points and
edges  of $\pi$ under  $g_{\tau}$, taken  in the  order in  which they
occur around  $\mathbb{S}^1$, constitute  a closed, connected  path of
vertices and  edges of $\mathcal{P}$,  whose union is the  boundary of
$\tau$.  Note  that this  path need not  be simple, as  $g_{\tau}$ may
take two  or more distinct  points or open  arcs of $\pi$ to  the same
vertex or edge of $\mathcal{P}$, respectively.

\begin{figure}[htb!]
\begin{center}
\includegraphics[height=1.4in]{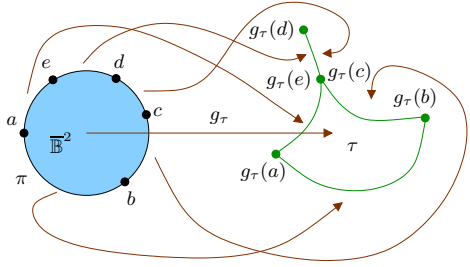}
\caption{\label{fig:conditions4} Illustration of condition (S4) of Definition~\ref{def:subdivision}.}
\end{center}
\end{figure}

Since an open disk cannot be entirely covered by a finite number of
vertices and edges, every vertex and edge in $\mathcal{P}$ must be
incident on some face of $\mathcal{P}$. In fact, using condition (S4),
it is possible to show that (i) every edge of $\mathcal{P}$ is
entirely contained in the boundary of some face of $\mathcal{P}$, (ii)
every vertex is incident on an edge, and (iii) every edge of
$\mathcal{P}$ is incident on two (not necessarily distinct) vertices
of $\mathcal{P}$. These vertices are called the {\em endpoints} of the
edge. If they are the same, then the edge is a {\em loop}, and its
closure is homeomorphic to the circumference of a circle of unit
radius, $\mathcal{S}^1 = \{ ( x , y ) \in \E^2 \mid x^2 + y^2 = 1
\}$. 

We can  define an  undirected graph, $G$,  and a  one-to-one function,
$\imath :  G \rightarrow \s$ from  the collection of  all vertices and
edges of $\mathcal{P}$. Let $G = ( V  , E )$ be a graph such that $V =
\{ v_1 , \ldots, v_h \}$ and $E  = \{ e_1 , \ldots, e_l \}$, where $h$
and $l$ are the number  of vertices and edges of $\mathcal{P}$. Define
a one-to-one mapping, $\imath : G \rightarrow \s$, such that each $v_j
\in V$ and each $e_k \in E$ is associated with a distinct vertex and a
distinct edge of $V_{\mathcal{P}}(  \s )$ and $E_{\mathcal{P}}( \s )$,
respectively,  for   $j  =  1,  \ldots,   h$  and  $k   =  1,  \ldots,
l$. Furthermore, if  $v$ and $u$ are the two  vertices in $V$ incident
on edge $e$ in $E$, then $i( v )$ and $i( u )$ are the two vertices of
$V_{\mathcal{P}}( \s )$ incident on $\imath( e )$ in $E_{\mathcal{P}}(
\s  )$. Note  that  $i(G)^{c} =  \s -  \imath(  G)$ are  the faces  of
$\mathcal{P}$.  We say  that $G$ is the {\em  graph of $\mathcal{P}$},
and that $i$ is the {\em embedding}  of $G$ on $\s$.  Graph $G$ can be
viewed as the combinatorial structure of $\mathcal{P}$, while $\imath$
can be viewed as a ``geometric realization'' of $G$ on the surface.

Two subdivisions of the same  surface are {\em isomorphic} if and only
if their graphs are isomorphic. Since $\mathcal{P}$ is fully described
by $G$ and $\imath$,  it is customary to call the pair,  $( G , \imath
)$,  the subdivision itself.   Triangulations of  a given  surface are
specialized subdivisions that  adequately capture the practical notion
of triangle meshes:

\begin{dfi}\label{def:triangulation}
  A {\em  triangulation} of  a surface  $\s$ is a  subdivision $(  G ,
  \imath )$  such that each  face of $\imath(  G )^{c}$ is  bounded by
  exactly three  distinct vertices (resp.  edges) from  $\imath( G )$.
  Furthermore,  any two  edges of  a  triangulation have  at most  one
  common  endpoint,  and  every  vertex  of a  triangulation  must  be
  incident on at least three edges.
\end{dfi}

The  following  lemmas  state  two  important  properties  of  surface
triangulations:

\begin{lem}\label{lem:tritwofaces}
  Every edge  of a  surface triangulation is  incident on  exactly two
  faces.
\end{lem}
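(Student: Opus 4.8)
The plan is to argue \emph{locally}, exploiting that $\s$ is a $2$-manifold without boundary together with condition (S4). Fix an edge $e$ of the triangulation and a point $p$ in the relative interior of $\imath(e)$, i.e.\ $p$ is not an endpoint. Since $\s$ is a surface, $p$ has an open neighborhood homeomorphic to the open disk $\mathbb{B}^{2}$; fix such a neighborhood $N$ and a homeomorphism $\phi : N \to \mathbb{B}^{2}$ with $\phi(p)=0$. Because the triangulation is finite, every vertex and every edge other than $e$ is disjoint from $\imath(e)$ save possibly at the endpoints of $e$, which lie in the compact set $\overline{\imath(e)}\setminus\imath(e)$ and hence at positive distance from $p$. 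Shrinking $N$ if necessary, I may therefore assume $N\cap\imath(G)=N\cap\imath(e)$ is a single open subarc $\alpha$ of $\imath(e)$ through $p$.

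First I would establish the \emph{two-sidedness} of $e$. Since the endpoints of $e$ have been excluded from $N$, moving along $\imath(e)$ away from $p$ must exit $N$ through $\partial N$; thus under $\phi$ the arc $\alpha$ becomes a simple arc through the origin whose two ends approach $\partial\mathbb{B}^{2}$, and by the arc version of the Jordan curve theorem it separates $\mathbb{B}^{2}$ into exactly two connected open components. Hence $N\setminus\imath(e)$ has exactly two connected components $N^{+}$ and $N^{-}$. Each $N^{\pm}$ is open, connected, and contained in $\s\setminus\imath(G)$, which is the disjoint union of the open faces (S3); a connected subset of a disjoint union of open sets lies in exactly one of them, so $N^{+}\subseteq\tau^{+}$ and $N^{-}\subseteq\tau^{-}$ for faces $\tau^{+},\tau^{-}$. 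This shows that \emph{at least} two face-germs abut $e$ at $p$ and that \emph{at most} two do, there being only two sides.

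Next I would promote ``local'' to ``global'' and count honestly. Letting $p$ range over the connected arc $\imath(e)$ and repeating the argument, the face on each side is locally constant; as $\imath(e)$ contains no vertex in its interior, there is nothing along which a side-face could change, so the same pair $\tau^{+},\tau^{-}$ serves every interior point of $e$, and $e$ is incident on these two faces and no others. Finally I would verify they are \emph{distinct}, which is where the triangulation hypothesis (beyond a bare subdivision) enters: if $\tau^{+}=\tau^{-}=\tau$, then the boundary walk of $\tau$ furnished by the map $g_{\tau}$ of (S4) would traverse $\imath(e)$ from both sides, so two distinct arcs of $\pi$ would map onto $\imath(e)$; but a triangular face is bounded by exactly three \emph{distinct} edges, its boundary walk consisting of three edge-arcs mapping to three distinct edges, so no edge is traversed twice, a contradiction. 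Hence $\tau^{+}\neq\tau^{-}$ and $e$ is incident on exactly two faces.

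The step I expect to be the main obstacle is the passage from the purely local, disk-separation picture to the global incidence count through condition (S4): making precise that each abutting face-germ corresponds to exactly one traversal of $e$ in some boundary walk $g_{\tau}$, and that two opposite sides cannot be glued by a single triangular face. The separation of the disk by a crossing arc is standard, but it must be married carefully to the boundary-path formalism so that ``incidence'' is tallied the same way the definition of a triangulation tallies edges around a face.
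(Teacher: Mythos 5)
Your proof is correct, and it shares the paper's basic strategy: both arguments work locally at an interior point $p$ of $\imath(e)$, use the $2$-manifold hypothesis to control what can happen near $p$, and invoke the ``three distinct boundary edges'' clause of Definition~\ref{def:triangulation} to finish. The key steps differ, though. For the upper bound the paper argues by contradiction: if three faces were incident on $e$, each would meet a small ball $B(p,r)$ in a half-disk, and the union of three half-disks glued along an arc is not an open disk, contradicting that $\s$ is a surface. You instead use separation: the disk neighborhood $N$ minus the properly embedded arc $N\cap\imath(e)$ has exactly two components, each connected and contained in $\s\setminus\imath(G)$, hence each inside a single face, so at most two faces accumulate at $p$. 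Your version buys something: it does not presuppose that a face meets a small ball along an edge in a half-disk (a collar-type assertion the paper uses without justification), only that faces are open, disjoint, and cover the complement of the $1$-skeleton. For the lower bound the paper goes global --- the closure of one triangle cannot cover a compact boundaryless surface, so a second face must exist on the other side --- whereas you stay local: if the same face filled both sides of $e$, its boundary walk under $g_\tau$ would traverse $e$ twice, forcing two arcs of $\pi$ onto the same edge and contradicting distinctness. Both proofs leave comparable point-set details implicit; in yours these are the shrinking of $N$ so that $N\cap\imath(e)$ is a single arc crossing $N$ (take the component of $p$ in $N\cap\imath(e)$ and shrink $N$ away from the closure of the remainder) and the correspondence between face-germs at $p$ and traversals of $e$ in a boundary walk --- exactly the obstacle you flag, and no worse than what the paper itself assumes.
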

\begin{proof}
See~\ref{sec:proofs}.
\end{proof}

\begin{lem}[\cite{GS85}]\label{lem:graphofsub}
  If  $G$  is  the graph  of  a  surface  triangulation, then  $G$  is
  connected.
\end{lem}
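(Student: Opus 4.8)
The plan is to derive connectedness of $G$ from the topological connectedness of $\s$ by means of an \emph{open--closed decomposition}. Suppose, for contradiction, that $\imath(G)$ has connected components $C_1, \ldots, C_r$ with $r \ge 2$; there are finitely many since $G$ has finitely many vertices and edges. As $\imath(G)$ is a finite union of closures of edges and vertices, each of which is compact, $\imath(G)$ is compact and therefore closed in the Hausdorff space $\s$; consequently each $C_i$, being a connected component of a closed set, is closed in $\s$ as well.

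The first key step is to show that the boundary of every face lies entirely within a single component $C_i$. For a face $\tau$, its closure $\overline{\tau} = g_{\tau}( \mathbb{\overline{B}}^2 )$ is the continuous image of a connected set, hence connected, and by condition (S4) the set $\overline{\tau} \setminus \tau$ is a closed, connected path of vertices and edges of the subdivision. Being connected, this path is contained in exactly one $C_i$. This lets me partition the faces: I assign $\tau$ to part $i$ precisely when $\partial\tau \subset C_i$, and set $\s_i = C_i \cup \bigcup \{ \tau : \partial\tau \subset C_i \}$. By construction $\s = \s_1 \sqcup \cdots \sqcup \s_r$.

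Next I would verify that each $\s_i$ is both closed and open. Closedness is immediate: $\s_i$ is the finite union of the closed set $C_i$ together with the closures $\overline{\tau} = \tau \cup \partial\tau \subseteq \s_i$ of the faces assigned to part $i$. For openness, I would argue locally around each point $p \in \s_i$ using the defining property of a surface, namely that $p$ has a neighborhood that is an open disk. If $p$ lies in the interior of an assigned face, that face is itself an open neighborhood contained in $\s_i$. If $p$ lies on an edge or vertex of $C_i$, I would invoke Lemma~\ref{lem:tritwofaces} --- every edge is incident on exactly two faces --- to observe that a sufficiently small disk around $p$ meets only edges and vertices of $C_i$ (all connected to $p$ within $\imath(G)$) together with the faces incident on them; each such face has its boundary meeting $C_i$ and hence lies in part $i$, so the disk is contained in $\s_i$.

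Having shown that each $\s_i$ is a nonempty clopen subset of $\s$, the connectedness of $\s$ forces $r = 1$, so $\imath(G)$ is connected and therefore $G$ is connected. I expect the delicate step to be the openness verification at a vertex $v$: one must check that the entire open-disk neighborhood supplied by the surface axiom is exhausted by $v$, its finitely many incident edges, and the faces incident on them --- with no stray piece of $\imath(G)$ belonging to another component intruding --- which is exactly the point at which the local structure of a triangulation (each edge bounding precisely two faces, and the faces arranged cyclically around $v$) must be used.
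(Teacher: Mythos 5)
The paper does not actually prove this lemma; it is imported from Guibas and Stolfi~\cite{GS85}, so there is no in-paper argument to compare against. Your proposal is a correct, self-contained proof by the standard open--closed decomposition, and its essential structure is sound: by condition (S4) each face boundary is a connected union of edges and vertices, hence lies in a unique component $C_i$ of the closed set $\imath(G)$, and the sets $\s_i = C_i \cup \bigcup\{\tau : \partial\tau \subset C_i\}$ partition $\s$ into finitely many nonempty closed sets (each $\s_i$ is a finite union of $C_i$ with the closures $\overline{\tau} = \tau \cup \partial\tau \subseteq \s_i$). One remark worth making: the step you flag as delicate --- verifying openness of $\s_i$ locally at a vertex via the cyclic fan of incident faces --- is unnecessary. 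Once $\s = \s_1 \sqcup \cdots \sqcup \s_r$ is a partition into finitely many closed sets, each $\s_i$ is automatically open, its complement being the finite union $\bigcup_{j \neq i} \s_j$ of closed sets. This matters because the local argument is the one place where you would lean on the star/link structure around a vertex, which the paper only asserts for triangulations (citing the literature) rather than deriving from Definition~\ref{def:subdivision}; bypassing it keeps the proof at the level of point-set topology plus condition (S4). The only other point to spell out is the final implication from topological connectedness of $\imath(G)$ to graph connectedness of $G$: distinct graph components have disjoint images whose closures remain disjoint (an edge's closure adds only its own endpoints, which lie in the same component), so a graph separation would induce a topological separation of $\imath(G)$.
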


Recall that the genus, $g$, of a surface $\s$ is the maximum number of
disjoint, closed, and simple  curves, $\alpha_1, \ldots, \alpha_g$, on
$\s$ such that the set $\s  - ( \alpha_1 \cup \cdots \cup \alpha_g)$ is
connected.  Up to  homeomorphisms, there is only one  surface of genus
$g$~\cite{GX13}. For  any subdivision, $( G  , \imath )$,  of $\s$, we
have $n_v -  n_e + n_f = 2 \cdot  ( 1 - g )$,  where $n_v$, $n_e$, and
$n_f$ are  the number of vertices, edges,  and faces of $(  G , \imath
)$~\cite{GT01}.  If $( G , \imath  )$ is a triangulation of $\s$, then
$3 \cdot n_f = 2 \cdot n_e$, which implies that $2 \cdot n_v - n_f = 4
( 1 - g  )$ and $3 \cdot n_v - n_e  = 6 ( 1 - g )$,  and thus $n_e \in
\Theta( n_v + g)$ and $n_f \in \Theta( n_v + g)$. Here, we assume that
$\s$ is such that $g \in  \mathcal{O}( n_v )$, which implies that $n_e
\in \Theta( n_v  )$ and $n_f \in  \Theta( n_v )$.  As we  see in
Section~\ref{sec:algo},  our algorithm  requires only  the graph  of a
triangulation  as its  input.  Hence,  we simply refer  to  a given
triangulation by $\T$.

Let $\T$ be any triangulation of a surface $\s$. Then, every vertex of
$\T$ is incident on at least  three edges. Since every edge of $\T$ is
incident     on     exactly     two     faces     of     $\T$     (see
Lemma~\ref{lem:tritwofaces}), every vertex of $\T$ must be incident on
at least three  faces of $\T$ as well.   Furthermore, for every vertex
$v$ of $\T$, the edges $e$ and faces $\tau$ of $\T$ containing $v$ can
be arranged as cyclic  sequence $e_1, \tau_1, e_2, \ldots, \tau_{k-1},
e_k,  \tau_k$,  in  the  sense  that  $e_j$  is  the  common  edge  of
$\tau_{j-1}$ and  $\tau_j$, for  all $j$,  with $2 \le  j \le  k$, and
$e_1$  is  the common  edge  of $\tau_1$  and  $\tau_k$,  with $k  \ge
3$~\cite{GX13}. The set
\[
v \cup e_1 \cup \tau_1  \cup  e_2 \cup  \cdots  \cup \tau_{k-1} \cup e_k \cup \tau_k \subset \s \, ,
\]
is  called  the  {\em  star  of   $v$  in  $\T$}  and  is  denoted  by
$\textit{st}( v , \T  )$ (see Figure~\ref{fig:starlink}). It turns out
that  $\textit{st}(  v  , \T  )$  is  homeomorphic  to an  open  disk.
Furthermore, the boundary of $\textit{st}(  v , \T )$ in $\s$ consists
of  the  boundary edges  of  $\tau_1,  \ldots,  \tau_k$ that  are  not
incident on $v$, as well as the endpoints of $\tau_1, \ldots, \tau_k$,
except for $v$  itself.  This point set, denoted  by $\textit{lk}( v ,
\T )$, is a  simple, closed curve on $\s$ called the  {\em link of $v$
  in  $\T$} (see  Figure~\ref{fig:starlink}).  If  $e$ is  an  edge of
$\T$, then the  {\em star, $\textit{st}( e , \T)$, of  $e$ in $\T$} is
the set $e  \cup \tau \cup \sigma$, where $\tau$  and $\sigma$ are the
two  faces  of  $\T$  incident  on  $e$.   In  turn,  the  {\em  link,
  $\textit{lk}( e , \T)$, of $e$ in $\T$} is the set consisting of the
two vertices, $x$ and $y$, such that $x$ is incident on $\tau$ and $y$
is incident  on $\sigma$, but none of  $x$ and $y$ is  incident on $e$
(see Figure~\ref{fig:starlink}).

Let $\tau$ and $e$ be a face and an edge of $\T$, respectively.  Since
every  vertex of  $\T$ is  incident  on at  least three  triangulation
edges,  i.e., since  each vertex  of $\T$  has {\em  degree}  at least
three, if $u$, $v$, and $w$  are the boundary vertices of $\tau$, then
we  can uniquely  identify $\tau$  by enumerating  these  vertices. In
particular, we denote $\tau$  by $[ u , v , w  ]$. Similarly, since no
two edges of a triangulation share  the same two endpoints, if $u$ and
$v$ are the  two endpoints of edge $e$, then  we can uniquely identify
$e$ by enumerating its two  endpoints, and therefore we can denote $e$
by $[ u , v ]$.

\begin{figure}[htbp]
\begin{center}
\includegraphics[height=1.5in]{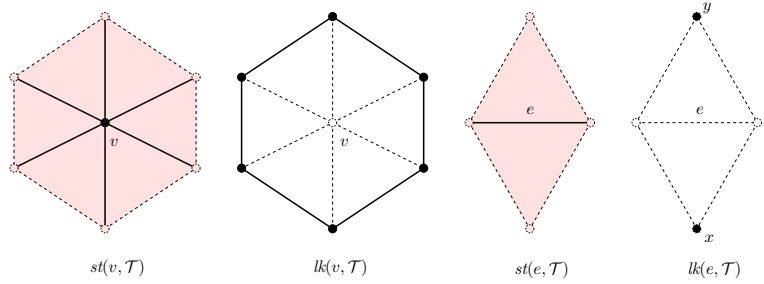}
\caption{\label{fig:starlink}  The star  and  the link  of vertex  $v$
  (left) and the star and the link of edge $e$ (right).}
\end{center}
\end{figure}

\begin{dfi}\label{def:contraction}
  Let $\T$ be a triangulation of a surface, $\s$, and let $e = [ u , v
  ]$ be  an edge of  $\T$.  The {\em  contraction} of $e$  consists of
  merging  $u$  and $v$  into  a  new vertex  $w$,  such  that $w  \in
  \textit{st}(  u ,  \T )  \cup  \textit{st}( v  , \T  )$, edges  $e$,
  $[v,x]$ and $[v,y]$, and  faces $[u,v,x]$ and $[u,v,y]$ are removed,
  edges of the form $[ u , p ]$  and $[ v , q ]$ are replaced by $[w ,
  p]$ and $[ w , q ]$, and faces of  the form $[ u , r , s ]$ and $[ v
  , t ,  z ]$ are replaced  by $[ w ,  r , s ]$ and  $[ w , t  , z ]$,
  where $x$ and $y$ are the vertices in the link, $\textit{lk}( e , \T
  )$, of $e$  in $\T$, $p, q \not\in  \{ x , y \}$, $r,s  \neq v$, and
  $t, z  \neq u$.  If the result  is a triangulation of  $\s$, then we
  denote   it   by   $\T-uv$,    and   call   the   contraction   {\em
    topology-preserving} and $e$ a {\em contractible} edge.
\end{dfi}

Figure~\ref{fig:contraction}   illustrates    the   edge   contraction
operation. 

\begin{figure}[htbp]
\begin{center}
\includegraphics[height=1.2in]{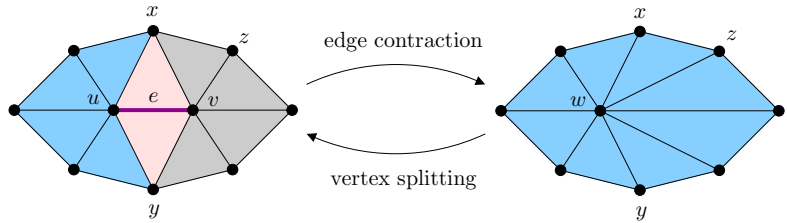}
\caption{\label{fig:contraction} Contraction  of edge $[u,v]$  and its
  inverse: splitting of vertex $w$.}
\end{center}
\end{figure}

Dey, Edelsbrunner,  Guha, and Nekhayev~\cite{DEGN99}  gave a necessary
and  sufficient  condition,  called   the  {\em  link  condition},  to
determine whether an edge  of a surface triangulation is contractible.
Let $e  = [ u  , v ]$  be any edge  of a surface  triangulation, $\T$.
Then, edge $e$ is contractible  if and only if the following condition
holds:
\begin{equation}\label{eq:linkcondition}
\textit{lk}( e , \T  ) = \textit{lk}( u , \T )  \cap \textit{lk}( v , \T) \, .
\end{equation}
In other words,  edge $e$ is contractible if and only  if the links of
$u$  and $v$  in $\T$  have  no common  vertices, except  for the  two
vertices of  the link of  $e$ in $\T$.   The link condition  is purely
combinatorial. In fact,  we can easily test edge  $e$ against the link
condition by considering  the graph, $G_{\T}$, of $\T$  only. In fact,
an equivalent  characterization of the link condition,  which uses the
notion  of critical  cycle on  the graph  of a  triangulation (defined
below), had been given before by Barnette in~\cite{BAR82}.

If edge $e$ passes the test, then the graph of $\T - uv$ can be easily
obtained from $G_{\T}$ by merging  its vertices $\imath^{-1}( u )$ and
$\imath^{-1}(  v  )$  into  a   new  vertex  $w$,  by  removing  edges
$\imath^{-1}( e )$, $\imath^{-1}( [ v  , x] )$ and $\imath^{-1}( [ v ,
y ] )$, and by replacing every  edge of the form $\imath^{-1}( [ u , r
] )$ and $\imath^{-1}( [ v , s  ] )$ with with $( w , \imath^{-1}( r )
)$  and $(  w ,  \imath^{-1}( s  ) )$,  respectively, where  $\imath :
G_{\T} \rightarrow \s$  is the embedding of $G_{\T}$  in $\s$, $x$ and
$y$ are the vertices  in the link, $\textit{lk}( e , \T  )$, of $e$ in
$\T$, and $r, s \not\in \{ x , y \}$.  We can prove that the resulting
graph is  embeddable in $\s$, and thus  the fact that we  can define a
triangulation  (i.e., $\T  - uv$)  from the  resulting graph  does not
depend on the surface geometry \cite{GX13}.

A {\em $\ell$-cycle}  in a triangulation $\T$ consists  of a sequence,
$e_1,\ldots,e_\ell$, of $\ell$ edges of $\T$ such that $e_j$ and $e_k$
share an endpoint in $\T$ if and only if $| j - k | = 1$ or $| j - k |
= \ell - 1$,  for all $j, k = 1,\ldots,\ell$, with  $j \neq k$.  Since
the  two endpoints of  a triangulation  edge cannot  be the  same, and
since  no two  edges  of a  triangulation  can have  two endpoints  in
common, a triangulation can only have $\ell$-cycles, for $\ell \ge 3$.
Furthermore,   each  cycle   can  be   unambiguously   represented  by
enumerating  the   vertices  of  its  edges  rather   than  the  edges
themselves.   In   particular,   if   $e_1,\ldots,e_\ell$   define   a
$\ell$-cycle   in   $\T$,   then   we   denote  this   cycle   by   $(
v_1,\ldots,v_\ell )$, where $v_j$ is  the common vertex of edges $e_j$
and $e_{j+1}$, for  each $j = 1, \ldots, \ell-1$,  and $v_\ell$ is the
common vertex of edges $e_1$  and $e_\ell$.  A $\ell$-cycle of $\T$ is
said to be  {\em critical} if and  only if $\ell = 3$  and its (three)
edges do  not belong to the  boundary of the  same triangulation face.
For instance,  cycle $( u ,  v , z  )$ is critical in  both (partially
shown) triangulations in  Figure~\ref{fig:critical}. Observe that edge
$[u,v]$ fails  the link condition  (see Eq.~\ref{eq:linkcondition}) in
both triangulations, and hence it is non-contractible in both.

Every  genus-$0$ surface (i.e.,  a surface  homeomorphic to  a sphere)
admits a triangulation with four  vertices, six edges, and four faces.
We denote  this triangulation by $\T_4$.   Figure~\ref{fig:t4} shows a
planar drawing of  the graph of $\T_4$.  Note  that every $3$-cycle of
$\T_4$ consists of (three) edges that  bound a face of $\T_4$.  So, no
$3$-cycle of $\T_4$ is critical.   Note also that every edge of $\T_4$
fails the link condition, and thus is a non-contractible edge. So,
$\T_4$ is a ``minimal'' triangulation in the sense that no edge of
$\T_4$ is contractible. In fact, $\T_4$ is the only triangulation of a
genus-$0$ surface satisfying this property. For a surface of arbitrary
genus, we have:

\begin{thm}[Lemma 3 in \cite{SCH91}]\label{the:linkcond}
  Let $\s$  be a surface, and  let $\T$ be any  triangulation of $\s$.
  Then, an edge $e$ of $\T$ is  a contractible edge if and only if $e$
  does  not belong  to any  critical  cycle of  $\T$ and  $\T$ is  not
  (isomorphic to) the triangulation $\T_4$.
\end{thm}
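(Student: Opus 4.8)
The plan is to reduce everything to the Dey--Edelsbrunner--Guha--Nekhayev link condition, Eq.~\ref{eq:linkcondition}, which is already available to us, and then to rephrase that condition combinatorially. Fix $e=[u,v]$ and let $x,y$ denote the two vertices of $\textit{lk}(e,\T)$, so that $[u,v,x]$ and $[u,v,y]$ are exactly the two faces incident on $e$ (Lemma~\ref{lem:tritwofaces}); since a face is determined by its three vertices, $x\neq y$. As $x$ and $y$ are neighbours of both $u$ and $v$, they lie on $\textit{lk}(u,\T)$ and on $\textit{lk}(v,\T)$, so $\textit{lk}(e,\T)\subseteq \textit{lk}(u,\T)\cap\textit{lk}(v,\T)$ always holds. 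The link condition therefore fails precisely when $\textit{lk}(u,\T)\cap\textit{lk}(v,\T)$ contains an additional simplex, which---since the links are $1$-complexes (simple closed curves)---is either an \emph{extra vertex} or an \emph{extra edge}. The bulk of the argument is to show that an extra vertex corresponds exactly to a critical cycle through $e$, while an extra edge can occur without an extra vertex only for $\T_4$.

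First I would treat the vertex case. A vertex $z\notin\{x,y\}$ lies in $\textit{lk}(u,\T)\cap\textit{lk}(v,\T)$ if and only if $z$ is a common neighbour of $u$ and $v$, i.e.\ if and only if $(u,v,z)$ is a $3$-cycle of $\T$. Because the only faces on $e$ are $[u,v,x]$ and $[u,v,y]$, the cycle $(u,v,z)$ bounds no face and is hence critical; conversely, any critical cycle through $e$ must be a $3$-cycle $(u,v,z)$ with $z\notin\{x,y\}$. Thus ``no extra vertex'' is equivalent to ``$e$ lies on no critical cycle''.

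The delicate step is the edge case, which is where $\T_4$ enters. Suppose an edge $[a,b]$ lies in both $\textit{lk}(u,\T)$ and $\textit{lk}(v,\T)$; this means $[u,a,b]$ and $[v,a,b]$ are both faces. If $\{a,b\}\neq\{x,y\}$, then one of $a,b$ is a common neighbour of $u$ and $v$ outside $\{x,y\}$, returning us to the extra-vertex case. The only remaining possibility is $\{a,b\}=\{x,y\}$: then $[u,x,y]$ and $[v,x,y]$ are faces, and together with $[u,v,x]$ and $[u,v,y]$ they supply, via Lemma~\ref{lem:tritwofaces}, both faces on each of the six edges spanned by $\{u,v,x,y\}$; hence these four vertices, with their edges and faces, form a closed subcomplex, which by connectivity of the graph of $\T$ (Lemma~\ref{lem:graphofsub}) must be all of $\T$, so $\T\cong\T_4$. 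I expect this overlap of a single edge $[x,y]$ in both links to be the main obstacle, since it is invisible to the critical-cycle test yet still breaks the link condition; this is precisely why $\T_4$ must be excluded. Combining the two cases, the link condition holds if and only if there is neither an extra vertex nor an extra edge, i.e.\ if and only if $e$ lies on no critical cycle and $\T\not\cong\T_4$; invoking Eq.~\ref{eq:linkcondition} (contractibility $\Leftrightarrow$ link condition) then yields the stated equivalence.
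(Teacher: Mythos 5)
Your argument is correct, but there is nothing in the paper to compare it against: Theorem~\ref{the:linkcond} is imported verbatim as Lemma~3 of Schipper~\cite{SCH91} and the paper supplies no proof of it. What you give is a self-contained derivation from the Dey--Edelsbrunner--Guha--Nekhayev link condition, and the structure is sound: the inclusion $\textit{lk}(e,\T)\subseteq\textit{lk}(u,\T)\cap\textit{lk}(v,\T)$ always holds, the intersection of the two link curves is a union of whole vertices and whole edges of $\T$ (because a subdivision partitions $\s$ into disjoint cells, an edge of one link meeting the other link must coincide with an edge of it), an extra vertex is exactly a critical $3$-cycle through $e$, and the only way an extra edge can appear without an extra vertex is $[x,y]\in\textit{lk}(u)\cap\textit{lk}(v)$, which forces $\T\cong\T_4$. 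This isolates correctly why $\T_4$ must be excluded; it is the same phenomenon the paper exploits in the proof of Proposition~\ref{prop:contractible1}, where contractibility of an edge at a degree-$3$ vertex reduces to whether $[u,x,y]$ is a face. The one step you should spell out more is the final ``closed subcomplex $\Rightarrow$ all of $\T$'': from the four faces $[u,v,x]$, $[u,v,y]$, $[u,x,y]$, $[v,x,y]$ you should first observe that the three faces containing $u$ already form a complete cyclic sequence around $u$ (consecutive ones share the edges $[u,v]$, $[u,x]$, $[u,y]$), so $\textit{st}(u,\T)$ contains no other face and $u$ has degree exactly $3$ with all neighbours in $\{v,x,y\}$; the same holds for $v$, $x$, $y$, and only then does connectivity of the graph (Lemma~\ref{lem:graphofsub}) rule out further vertices. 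As written, ``closed subcomplex'' plus graph connectivity is a slight elision, since graph connectivity alone does not prevent extra edges at $u$ unless you have first bounded its star.
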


\begin{figure}[htb!]
\begin{center}
\includegraphics[height=1.5in]{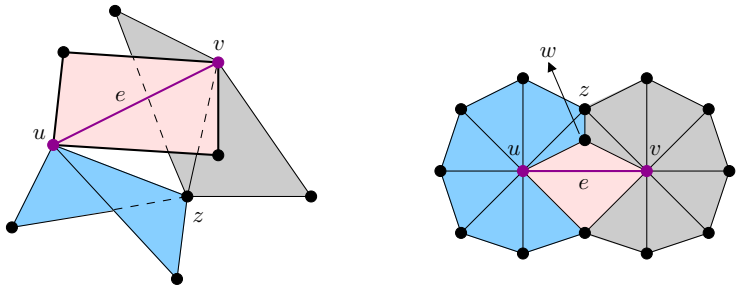}
\caption{\label{fig:critical} Edges $[u,z]$, $[z,v]$,  and $[ v , u ]$
  define critical cycles in both triangulations.}
\end{center}
\end{figure}
\begin{figure}[htb!]
\begin{center}
\includegraphics[height=1.4in]{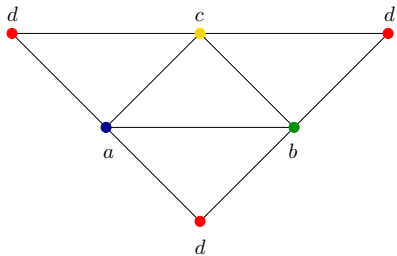}
\caption{\label{fig:t4} A planar drawing of the graph of
  $\T_4$. Vertices labeled $d$ are the same vertex.}
\end{center}
\end{figure}

\begin{dfi}\label{def:trapped}
  Let $\T$  be a  triangulation of a  surface $\s$,  and let $v$  be a
  vertex of $\T$.  If all  edges incident on $v$ are non-contractible,
  then $v$ is called {\em trapped}; else it is called {\em loose}.
\end{dfi}

\begin{dfi}\label{def:irreducible}
  Let $\s$ be  a surface, and let $\T$ be a  triangulation of $\s$. If
  every edge of $\T$ is  non-contractible, then $\T$ is called {\em
    irreducible};  else it is called {\em reducible}.
\end{dfi}

A triangulation is irreducible if and  only if all of its vertices are
trapped. To the  best of our knowledge, the best  known upper bound on
the  size  of  irreducible  triangulations  was  given  by  Jaret  and
Wood~\cite{JW10}:

\begin{thm}[\cite{JW10}]\label{the:itsize}
  Let $\s$ be a compact surface with empty boundary whose Euler genus,
  $h$, is positive,  and let $\T$ be any  irreducible triangulation of
  $\s$. Then, the number of vertices, $n_v$, of $\T$ is such that $n_v
  \le 13 \cdot h  - 4$. If $\s$ is also orientable,  then we have that
  $g =  2h$, where $g$  is the  genus of $\s$,  and hence $n_v  \le 26
  \cdot g - 4$.
\end{thm}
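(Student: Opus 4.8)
The orientable statement is an immediate corollary of the Euler-genus statement, since an orientable surface of genus $g$ has Euler genus $h = 2g$, whence $n_v \le 13h - 4$ becomes $n_v \le 26g - 4$. So the plan is to bound $n_v$ for an irreducible triangulation $\T$ of a surface $\s$ of Euler genus $h \ge 1$. I would first record the numerology of Euler's formula: with $n_v, n_e, n_f$ the numbers of vertices, edges and faces, $n_v - n_e + n_f = 2 - h$ and $3 n_f = 2 n_e$, so $n_e = 3 n_v + 3h - 6$ and the degree sum is $\sum_v \deg(v) = 2 n_e = 6 n_v + 6h - 12$. Equivalently, the total surplus over average degree six is fixed: $\sum_v ( \deg(v) - 6 ) = 6h - 12$. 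Bounding $n_v$ therefore amounts to showing that irreducibility forces this surplus to be spread thickly enough --- i.e.\ that the average degree exceeds $6$ by at least a positive constant --- so that only $O(h)$ vertices can be present.

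Next I would turn irreducibility into a local condition on links. As $h \ge 1$, $\s$ is not the sphere, so $\T \not\cong \T_4$, and Theorem~\ref{the:linkcond} says irreducibility is equivalent to every edge lying on a critical cycle. Fix $v$ and write its link as the cycle $\textit{lk}(v,\T) = (u_1, \dots, u_k)$, $k = \deg(v)$, so the faces at $v$ are $[v,u_i,u_{i+1}]$ and $\textit{lk}([v,u_i],\T) = \{ u_{i-1}, u_{i+1} \}$. Since every common neighbour of $v$ and $u_i$ is a link vertex, the link condition~\eqref{eq:linkcondition} fails for $[v,u_i]$ precisely when the link cycle has a \emph{chord} at $u_i$, that is an edge $[u_i,u_j]$ with $u_j \notin \{u_{i-1}, u_i, u_{i+1}\}$. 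Hence irreducibility is equivalent to the statement that \emph{in every vertex link, every vertex is an endpoint of a chord}. This has two useful consequences on small degrees, which I would use as base cases: there is no vertex of degree $3$ (a $3$-cycle has no chord), and every degree-$4$ vertex forces both diagonals of its link and so lies in an induced $K_5$.

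The core of the argument I would carry out by discharging, starting from the charge $\mu(v) = \deg(v) - 6$ with total $6h - 12$. Calling a vertex \emph{deficient} if $\deg(v) \in \{4,5\}$, the chord condition forces dense adjacencies among the neighbours of each deficient vertex (the $K_5$ around a degree-$4$ vertex being the extreme case), and in particular forbids large patches of degree-$6$ vertices: in such a patch the link condition would hold and edges would be contractible. I would design rules that move a bounded amount of charge from vertices of degree $\ge 7$ to nearby deficient vertices, using the forced chords to locate donors, so that every vertex ends with charge at least a fixed positive constant $\delta$. Then $\delta \, n_v \le 6h - 12$ gives a linear bound; tuning the rules so that $\delta$ and the additive correction match $13h - 4$ settles the generic case, and the finitely many surfaces of small Euler genus (where the surplus is nonpositive) are checked directly.

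The main obstacle is exactly this discharging case analysis. The chord reformulation cleanly controls one link at a time, but deficient vertices can be mutually adjacent and can share the very chords and critical cycles that are meant to pay for them, so the danger is double-spending a donor's charge across several deficient neighbours. The delicate work is therefore a finite but intricate enumeration of the local configurations around degree-$4$ and degree-$5$ vertices --- especially clustered ones forced by overlapping $K_5$'s --- together with a global accounting that caps how much charge each high-degree vertex is asked to give and ties the leftover deficiency to the fixed budget $6h - 12$, so that the constant comes out as $13$ and not something larger.
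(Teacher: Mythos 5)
First, a point of comparison: the paper does not prove Theorem~\ref{the:itsize} at all --- it is quoted from Joret and Wood~\cite{JW10} --- so there is no in-paper argument to measure yours against; your proposal has to stand as a proof of the cited result. Your preliminary reductions are all correct: the orientable case does follow from $h=2g$ (the paper's ``$g=2h$'' is a typo), the Euler-formula bookkeeping $\sum_v(\deg(v)-6)=6h-12$ is right, the link of a vertex is a simple cycle under Definition~\ref{def:triangulation}, so a third common neighbour of $v$ and $u_i$ is exactly a chord of $\textit{lk}(v,\T)$ at $u_i$ and irreducibility is indeed equivalent to ``every vertex of every link is a chord endpoint''; the exclusion of degree-$3$ vertices and the forced $K_5$ around a degree-$4$ vertex follow.

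The gap is the entire third step, which carries all of the content. You announce a discharging scheme but specify no rules, and you yourself identify the case analysis as the main obstacle; as written this is a plan, not a proof. More seriously, there is a concrete reason to doubt that any purely local scheme of the kind you describe can close. The chord condition is satisfiable at a degree-$6$ vertex (a $6$-cycle link whose three long diagonals are present covers every link vertex; the irreducible triangulation $K_7$ of the torus realizes this at all seven of its vertices, with total surplus $6h-12=0$). So chords alone create no per-vertex degree surplus, and a bounded-radius rule cannot locally distinguish such a vertex from one sitting inside a hypothetical very large irreducible triangulation; ruling out large clusters of fully-chorded degree-$6$ vertices is a global, topological matter. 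Every known proof of a genus-linear bound (Nakamoto--Ota, Cheng--Dey--Poon, and Joret--Wood itself) injects exactly this topology: in an irreducible triangulation of a surface of positive Euler genus every nonfacial $3$-cycle is \emph{essential} (a null-homotopic nonfacial $3$-cycle bounds a triangulated disk with an interior vertex, which yields a contractible edge), and a surface of Euler genus $h$ carries only $O(h)$ pairwise non-homotopic essential closed curves meeting in a controlled way; the degrees are then charged against homotopy classes rather than against the Euler surplus directly. Your sketch never uses the surface beyond Euler's formula, and without that input neither the constant $13$ nor, as far as I can see, any linear bound is within reach of the argument as written.
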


The largest  known irreducible triangulation of  an orientable surface
of   genus  $g$   has  $\lfloor   \frac{17}{2}  g   \rfloor$  vertices
(see~\cite{SUL06c}).

\section{Related work}\label{sec:rwork}
An irreducible triangulation, $\T^{\prime}$,  of a surface $\s$ can be
obtained   by  applying   a  sequence   of   topology-preserving  edge
contractions to a given triangulation,  $\T$, of $\s$. Such a sequence
can  be  found  by  repeatedly  searching  for  a  contractible  edge.
Whenever a contractible edge is found, it is contracted and the search
continues.   If  no  contractible  edge  is found,  then  the  current
triangulation is already an irreducible  one, and the search ends. The
\textit{link condition test} (defined by Eq.~\ref{eq:linkcondition} in
Section~\ref{sec:back}) can be used to decide whether an examined edge
is contractible.  While this approach  is quite simple, it can be very
time-consuming in the worst-case.

Indeed, if  an algorithm to  compute $\T^{\prime}$ relies on  the link
condition test to compute  an irreducible triangulation, then its time
complexity  is basically  determined  by two  factors:  (1) the  total
number  of  times  the link  condition  test  is  carried out  by  the
algorithm, and (2) the time spent with each test.  Bounding the number
of link condition tests is challenging because \textit{the contraction
  of an edge can  make a previously non-contractible edge contractible
  and vice-versa}.  Moreover, if no special data  structure is adopted
by the  algorithm, then  the time  to test  an edge $e  = [  u ,  v ]$
against the  link condition is  in $\Theta( d_u  \cdot d_v )$,  in the
worst case, where $d_u$ and $d_v$  are the degrees of vertices $u$ and
$v$ in the current triangulation.

Consider  the triangulation of  a sphere  in Figure~\ref{fig:badcase},
which is cut open in two separate pieces.  There are exactly $n_v = 3m
+   2$   vertices   in   this   triangulation,   namely:   $x$,   $y$,
$v_0,\ldots,v_{m-1}$,  $w_0,\ldots,w_{m-1}$, and $u_0,\ldots,u_{m-1}$.
For each $i \in \{0,\ldots,m-1\}$, edges $[ v_i , v_{(i+1) \mod m} ]$,
$[  v_i,x]$,  or  $[  v_i,y]$  are  all  non-contractible,  while  the
remaining ones  are all  contractible.  If all  non-contractible edges
happen to be tested against the link condition before any contractible
edge is tested,  then the time for testing  all non-contractible edges
against  the  link  condition  is  $\Omega(  n_f^2  )$,  as  $n_f  \in
\Theta(n_v)$ by assumption, and there are as many as $2m$ edges of the
forms $[  v_i,x]$ and $[v_i ,  y ]$, where  each of them is  tested in
$\Theta( m )$ time because
\[
d_x = m = d_y \, .
\]

Schipper devised a more efficient algorithm by reducing the time spent
on each link condition test~\cite{SCH91}. For each vertex $u$ in $\T$,
his algorithm maintains a  dictionary $D_u$ containing all vertices in
$\textit{lk}(  u ,  K  )$,  where $K$  is  the current  triangulation.
Determining if an edge  $[ u , v ]$ in $K$  is contractible amounts to
verifying if  $D_v$ contains a vertex  $w$ in $\textit{lk}( u  , K )$,
with $w \neq v$ and $w \not\in \textit{lk}( [u,v] , K )$, which can be
done in $\mathcal{O}(  d_u \lg d_v )$ time, where  $d_u$ and $d_v$ are
the degrees of $u$ and $v$, respectively. He proved that if $K$ is not
irreducible then $K$ contains a contractible edge incident on a vertex
of degree  at most  $6$.  To  speed up the  search for  a contractible
edge,  the edge  chosen to  be tested  against the  link  condition is
always incident  on a  vertex of lowest  degree.  To  efficiently find
this  edge, his  algorithm  also maintains  a  global dictionary  that
stores all vertices of $K$  indexed by their current degree.  However,
this heuristic does not prevent the same (non-contractible) edge of $K$
from being  repeatedly tested against the  link condition.  Schipper's
algorithm runs in $\mathcal{O}( n_f \lg n_f  + g \lg n_f + g^4 )$ time
and requires $\mathcal{O}(n_f)$ space.

\begin{figure}[htb!]
\begin{center}
\includegraphics[height=1.8in]{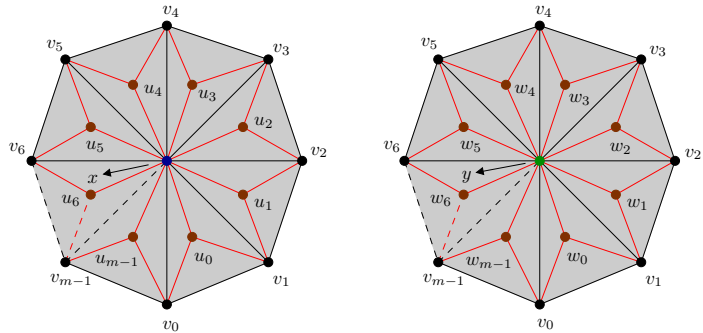}
\caption{\label{fig:badcase} A reducible  triangulation of the sphere
  cut open into two pieces.}
\end{center}
\end{figure}

Our algorithm  allows us to more efficiently  compute $\T^{\prime}$ by
testing each edge of $K$ against  the link condition at most once, and
by reducing the worst-case time complexity for the link condition test
even   further.     By   using    a   time   stamp    mechanism   (see
Section~\ref{sec:counting}  for  details), our  algorithm  is able  to
efficiently  determine if a  previously tested,  non-contractible edge
becomes   contractible   as   a   result  of   an   edge   contraction
(\textit{without  testing the edge  against the  link condition  for a
  second time}).  Our algorithm runs in $\mathcal{O}( g^2 + g \, n_f)$
time if  $g$ is  positive, and  it is linear  in $n_f$  otherwise.  In
either case, the space requirements are linear in $n_f$.

Edge contraction is a key operation for several \textit{mesh
  simplification} algorithms~\cite{LRCVWH03}.  The goal of these
algorithms is not to compute an irreducible triangulation, but
rather to decrease the level-of-detail (LOD) of a given triangulation
by reducing  its number  of vertices,  edges, and  triangles.  In
general,   contracted    edges   are   chosen    according   to   some
application-dependent   criterion,   such   as  preserving   geometric
similarity  between  the  input  and the  final  triangulation.

Garland  and Heckbert~\cite{GAR97} show  how to  efficiently combine
the edge  contraction operation with a quadric-based  error metric for
geometric   similarity.   Furthermore,   together  with   its  inverse
operation,  vertex  splitting,  the  edge contraction  operation  also
allows for  the construction of  powerful hierarchical representation
schemes  for  storing,   transmitting,  compressing,  and  selectively
refining   very  large   triangulations~\cite{HOP96,VEL01}.   However,
topology preservation is  not always desirable in the  context of mesh
simplification  applications, and to  the best  of our  knowledge, the
greedy algorithm  proposed by Cheng, Dey, and  Poon in~\cite{CDP04} is
the  only  simplification algorithm  whose  time  complexity has  been
analyzed.  

The  algorithm in~\cite{CDP04}  builds a  topology-preserving surface
triangulation  hierarchy  of  $\mathcal{O}(  n_v  + g^2  )$  size  and
$\mathcal{O}( \lg n_v + g )$  depth in $\mathcal{O}( n_v + g^2 )$ time
whenever  $n_v \ge  9182 g  - 222$  and  $g >  0$. Each  level of  the
hierarchy  is constructed  by  identifying and  contracting  a set  of
independent contractible edges in the triangulation represented by the
previous level.  A similar result for genus-$0$ surface triangulations
has been known for a long time~\cite{KIR83}, although the construction
of  the hierarchy  is  not  based on  edge  contractions. In  general,
however, we are not aware of  any attempts to bound the number of link
condition  tests  in  the  mesh  simplification  literature.   If
incorporated by simplification  algorithms, this distinguishing feature
of our algorithm, i.e., carrying out link condition tests faster, can
increase their overall simplification speed. 

\section{Algorithm}\label{sec:algo}
Our  algorithm takes  as input  a triangulation  $\T$ of  a surface 
$\s$  of  genus  $g$,   and  outputs  an  irreducible  triangulation 
$\T^{\prime}$ of the same surface.  The key idea behind our algorithm
is to iteratively  choose a vertex $u$ (rather than  an edge) from the
current triangulation, $K$, and then {\em process} $u$, 
which involves contracting (contractible) edges incident on $u$ until
no edge incident on $u$ is contractible, i.e., until $u$ becomes a
trapped vertex.  It was shown in~\cite{SCH91} that once vertex $u$
becomes trapped, it cannot become a loose vertex again as the result
of a topology-preserving edge contraction. 

\begin{lem}[\cite{SCH91}]\label{lem:trapped}
  Let $\T$ be  a surface triangulation, $v$ a  trapped vertex of $\T$,
  and $e$ a contractible edge of  $\T$.  If $e$ is contracted in $\T$,
  then $v$ remains trapped in $\T-e$.
\end{lem}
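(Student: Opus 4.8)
The plan is to use the critical-cycle characterization of contractibility (Theorem~\ref{the:linkcond}) rather than the link condition directly, and to show that every edge of $\T-e$ incident on $v$ still lies in a critical cycle. Write $e=[a,b]$, let $w$ be the vertex resulting from the contraction, and let $\{x,y\}=\textit{lk}(e,\T)$, so that $[a,b,x]$ and $[a,b,y]$ are the two faces of $e$ and, by the link condition, $\textit{lk}(a,\T)\cap\textit{lk}(b,\T)=\{x,y\}$. First I would observe that $v\notin\{a,b\}$: otherwise $e$ would be a contractible edge incident on the trapped vertex $v$, a contradiction. If $\T-e$ is isomorphic to $\T_4$ the claim is immediate, since no edge of $\T_4$ is contractible; so I may assume $\T-e\not\cong\T_4$ and, by Theorem~\ref{the:linkcond}, reduce the statement to producing, for each edge $f$ of $\T-e$ incident on $v$, a critical cycle of $\T-e$ containing $f$.

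Next I would set up the correspondence between edges at $v$ before and after the contraction. Every edge $f$ at $v$ in $\T-e$ is the image of some edge $f_0=[v,p_0]$ at $v$ in $\T$ under the renaming $a,b\mapsto w$ (if $f=[v,w]$ and $v$ is adjacent to both $a$ and $b$, either preimage will do). Since $v$ is trapped, $f_0$ is non-contractible, so by Theorem~\ref{the:linkcond} it belongs to a critical cycle $C_0=(v,p_0,z)$ of $\T$. Contracting $e$ carries $C_0$ to a closed walk $C$ through $f$. I would first check that $C$ is an honest $3$-cycle, i.e.\ its three vertices remain distinct: two of them collapse only if $\{p_0,z\}=\{a,b\}$, i.e.\ $C_0=(v,a,b)$; but then $v$ is adjacent to both $a$ and $b$, so $v\in\{x,y\}$ by the link condition and $\{a,b,v\}$ is one of the two faces of $e$, contradicting that $C_0$ is critical. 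Hence at most one vertex of $C_0$ is renamed to $w$, and $C$ is a genuine $3$-cycle of $\T-e$ containing $f$.

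The hard part is to rule out that $C$ bounds a face of $\T-e$, which would make it non-critical. Because the faces of $\T-e$ are exactly the images of the faces of $\T$ other than $[a,b,x]$ and $[a,b,y]$, and because $C_0$ is not a face of $\T$, the cycle $C$ can bound a face of $\T-e$ \emph{only} if the ``partner'' triple $\{v,\nu,\rho\}$ is a face of $\T$, where $\rho$ denotes the un-renamed neighbour in $C_0$ and $\nu$ denotes the endpoint of $e$ that does not occur in $C_0$ (with $\mu$ the other endpoint, so $\{\mu,\nu\}=\{a,b\}$). I expect this ``partner-face'' configuration to be the crux of the argument, and the plan is to show it is incompatible with $v$ being trapped. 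Assuming $\{v,\nu,\rho\}$ is a face, both $v$ and $\rho$ are then adjacent to each of $a$ and $b$, so the link condition forces $\{v,\rho\}=\{x,y\}$. The faces $[a,b,x]$ and $[a,b,y]$ of $e$ together with the face $\{x,y,\nu\}$ then close up around $\nu$, forcing $\deg\nu=3$ with $\textit{lk}(\nu,\T)=\{\mu,x,y\}$. A short computation with the link condition (Eq.~\ref{eq:linkcondition}) then shows that the edge $[\nu,v]$ is contractible, contradicting the hypothesis that $v$ is trapped. Hence $C$ does not bound a face, so $C$ is a critical cycle through $f$, and $f$ is non-contractible.

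Since $f$ was an arbitrary edge of $\T-e$ incident on $v$, every such edge is non-contractible, so $v$ remains trapped in $\T-e$, as required. I would expect the only real bookkeeping difficulty to be keeping track of which of $p_0,z$ is renamed and which endpoint of $e$ plays the role of $\nu$; phrasing the whole argument symmetrically in the pair $\{a,b\}$ should let both subcases be handled at once.
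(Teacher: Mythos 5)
The paper does not actually prove Lemma~\ref{lem:trapped}; it is imported from~\cite{SCH91} as a black box, so there is no in-paper argument to compare against. Judged on its own, your proof is correct and self-contained, and it is pleasantly consistent with the machinery the paper does develop: your reduction to critical cycles is exactly Theorem~\ref{the:linkcond}, and your ``partner-face'' analysis --- the observation that the image of a critical cycle $(v,\mu,\rho)$ can only close up into a face of $\T-e$ if $\{v,\nu,\rho\}$ is already a face of $\T$, which then forces $\deg\nu=3$ and $\{v,\rho\}=\{x,y\}$ --- is the same phenomenon the authors isolate in Proposition~\ref{prop:contractible2} (a non-contractible edge can only become contractible when the contracted edge is incident on a degree-$3$ vertex whose link is the offending critical cycle). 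Indeed, your final contradiction can be obtained in one line from Proposition~\ref{prop:contractible1}: once $\deg\nu=3$ and $\T\not\cong\T_4$, every edge at $\nu$, in particular $[\nu,v]$, is contractible, contradicting that $v$ is trapped. All the key checks are in place: $v\notin\{a,b\}$, the degenerate case $C_0=(v,a,b)$ is excluded because it would bound one of the two faces of $e$, and the identification of faces of $\T-e$ with images of faces of $\T$ other than $[a,b,x]$ and $[a,b,y]$ is exactly Definition~\ref{def:contraction}.

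Two small points to tighten in a written-up version. First, your partner-face step is phrased only for the case where exactly one endpoint of $e$ occurs in $C_0$; when $C_0$ avoids both $a$ and $b$ there is no ``$\nu$'' and the cycle survives unchanged, so it stays critical because its own vertex set is not a face of $\T$ and no other face of $\T$ can map onto it --- trivial, but it should be said. Second, when you invoke the link condition to conclude $\{v,\rho\}=\{x,y\}$ and later to show $[\nu,v]$ is contractible, recall that $\textit{lk}(a,\T)\cap\textit{lk}(b,\T)$ in Eq.~\eqref{eq:linkcondition} is an intersection of point sets that may in principle contain an edge as well as vertices; for the first use only the vertex part matters, and for the second the potential common edge $[\mu,y]$ would force $[\mu,x,y]$ to be a face and hence $\T\cong\T_4$, which you have excluded. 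Neither issue is a gap in the idea, only in the bookkeeping you already anticipated.
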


When the currently processed vertex $u$ becomes trapped (or if $u$ is
already trapped when it is chosen by the algorithm), another vertex
from the current triangulation is chosen and processed by the
algorithm until all vertices are processed, at which point the
algorithm ends.  Since all vertices in the output triangulation
$\T^{\prime}$ have been processed by the algorithm, and since all
edges contracted by our algorithm are contractible,
Lemma~\ref{lem:trapped} ensures that all vertices of $\T^{\prime}$ are
trapped.  It follows that triangulation $\T^{\prime}$ is irreducible.
It is worth noting that our algorithm requires no knowledge about the
embedding of $\T$, as all operations carried out by the algorithm are
purely topological, and hence they act on $G_{\T}$ only.

When contracting a  contractible edge $e = [ u ,  v ]$, our algorithm
does  not  merge  vertices  $u$  and  $v$  into  a  {\em  new}  vertex
$w$. Instead, either $u$ or $v$ is chosen to play the role of $w$, and
the other  vertex is merged  into the fixed  one. If $u$ is  the fixed
vertex, then  we say that $v$  \textit{is identified with}  $u$ by the
contraction  of $e$ (see  Figure~\ref{fig:contraction2}). When  $v$ is
identified with $u$  during the contraction of $e$,  every edge of the
form $[ v , z ]$ in $\T$ is replaced with an edge of the form $[ u , z
]$ in $\T - uv$, where $z  \in \textit{lk}( v, \T )$ and $z \not\in \{
u , x , y \}$, and $x$ and $y$ are the vertices in $\textit{lk}( e, \T
)$. We denote  the set $\{ u  , x , y \}$  by $\pd_{uv}$, and
the   set  $\{   z  \in   \textit{lk}(  v,   \T  )   \mid   z  \not\in
\pd_{uv} \}$ by $\po_{uv}$.

We assume  that $\T$  and all triangulations  resulting from  the edge
contractions  executed by  our algorithm  are stored  in  an augmented
doubly-connected edge list  (DCEL) data structure~\cite{BCKO08}, which
is briefly discussed in Section~\ref{sec:dcel}. A detailed description
of          the         algorithm         is          given         in
Sections~\ref{sec:procverts}-\ref{sec:procedges}. Section~\ref{sec:genus0}
discusses   the  particular  case   of  triangulations   of  genus-$0$
surfaces. Finally, Section~\ref{sec:comp}  analyzes the time and space
complexities of the algorithm.

\begin{figure}[htbp]
\begin{center}
\includegraphics[height=1.2in]{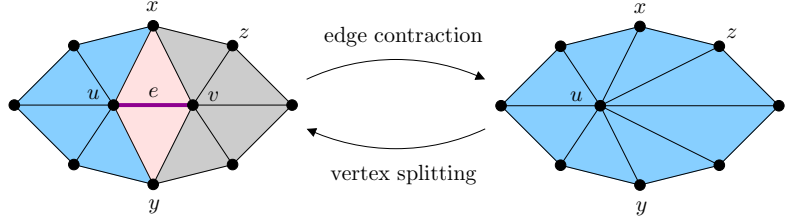}
\caption{\label{fig:contraction2} The  contraction of $e  = [ u ,  v
  ]$ in which $v$ is identified with $u$.}
\end{center}
\end{figure}

\subsection{Processing vertices}\label{sec:procverts}
To support the efficient processing  of vertices, the vertex record of
the DCEL  is augmented with six  attributes: $d$, $p$,  $n$, $c$, $o$,
and $t$,  where $d$, $c$,  $o$, and $t$  store integers, $p$  stores a
Boolean  value,  and  $n$  is  a  pointer  to  a  vertex  record  (see
Table~\ref{tab:attributes}).   We  denote each  attribute  $a$ of  a
vertex  $v$  of  the DCEL  by  $a(v)$.   The  value of  each  vertex
attribute is  defined with respect  to the vertex $u$  being currently
processed by the algorithm.  When $u$ is chosen to be processed by the
algorithm, its  attributes and all attributes of  its {\em neighbors},
i.e., the vertices in $\textit{lk}( u , K )$, where $K$ is the current
triangulation,  are  initialized  by  the  algorithm.   As  edges  are
contracted during the  processing of $u$, the attribute  values of the
neighbors of $u$ may change,  while other vertices become neighbors of
$u$ and have their attribute  values initialized.  If a vertex of $\T$
never  becomes a neighbor  of $u$  during the  processing of  $u$, its
attribute values do not change while $u$ is processed.

\begin{table}[htbp]
\begin{center}\small
\begin{tabular}{|c|l|} \hline
\textbf{Attribute} & \multicolumn{1}{c|}{\textbf{Description}} \\ \hline\hline
$d$ & the degree of $v$ \\
$p$ & indicates whether $v$ has already been processed by the algorithm \\
$n$ & indicates whether $v$ is a neighbor of $u$ \\
$c$ & number of critical cycles containing edge $[ u , v ]$ in $K$ \\
$o$ & time at which $v$ becomes a neighbor of $u$ \\
$t$  & time at which edge $[u,v]$ is removed from $\lue$ \\ \hline
\end{tabular}
\caption{\label{tab:attributes}  Attributes of  a vertex  $v$ during
  the processing of a vertex $u$.}
\end{center}
\end{table}

The  algorithm starts  by creating  a queue  $Q$ of  {\em unprocessed}
vertices, and by initializing the  attributes $d$, $p$, $n$, $c$, $o$,
and $t$ of each vertex $u$ of $\T$ (see Algorithm \ref{alg:continit}).
In  particular, for  each  vertex $u$  in  $\T$, its  degree $d_u$  is
computed and  stored in  $d(u)$, its attribute  $p(u)$ is set  to {\em
  false}, its attribute $n(u)$ is assigned the {\em null} address, and
its attributes $c(u)$, $o(u)$, and  $t(u)$ are assigned $0$, $-1$, and
$-1$, respectively.   Finally, a pointer to  the record of  $u$ in the
DCEL is inserted into $Q$.

After the initialization stage, the algorithm starts contracting edges
of $\T$ (see Algorithm \ref{alg:contractions}).  Each edge contraction
produces a new triangulation from the one to which the contraction was
applied.  The algorithm stores the currently modified triangulation in
a  variable, $K$.  Here,  we do  not distinguish  between $K$  and the
triangulation  stored in  it.   Initially,  $K$ is  set  to the  input
triangulation $\T$ and  the vertices in $Q$ are the  ones in $\T$. Let
$u$ be the vertex at the front of $Q$. The algorithm uses the value of
$p( u  )$ to decide whether  $u$ should be  processed.  In particular,
$p(u)$ is {\em  false} if and only if $u$  belongs to $K$ \textit{and}
$u$ has not been processed yet (i.e.,  $u$ is in $Q$).  If $p( u )$ is
{\em  true} when  $u$  is removed  from  $Q$, then  $u$ is  discarded.
Otherwise,   the   algorithm  processes   $u$,   i.e.,  it   contracts
(contractible) edges incident  on $u$ until $u$ is  trapped (see lines
5-36  of Algorithm  \ref{alg:contractions}). When  vertex  $u$ becomes
trapped, we say that $u$ has been {\em processed} by the algorithm.

{\setstretch{1.0}
\begin{algorithm}[ht!]\small
\caption{\textsc{Initialization}($\T$)}\label{alg:continit}
\begin{algorithmic}[1]
        \STATE $Q \leftarrow \emptyset$ \COMMENT{$Q$ is a queue of vertices}
        \FOR{each vertex $u$ in $\T$}
               \STATE $d(u) \leftarrow 0$
               \FOR{each  $v$ in $\textit{lk}( u , \T )$}
                       \STATE $d(u) \leftarrow d(u) + 1$
               \ENDFOR
               \STATE $p(u) \leftarrow \textit{false}$
               \STATE $n(u) \leftarrow \textit{nil}$
               \STATE $c(u) \leftarrow 0$
               \STATE $o(u) \leftarrow -1$
               \STATE $t(u) \leftarrow -1$
               \STATE insert a pointer to $u$ into $Q$.
        \ENDFOR
	\RETURN $Q$
\end{algorithmic}
\end{algorithm}}

Two doubly-connected linked lists, $\lue$  and $\lte$, are used by the
algorithm to  store edges incident  with $u$ during the  processing of
$u$.  The  former is  the list of  {\em unprocessed edges},  while the
latter is  the list of {\em  tested edges}. At any  given time, $\lue$
stores the edges incident on $u$ that have not been tested against the
link condition yet, while $\lte$ stores the edges incident on $u$ that
have  been  tested  against  the  link condition  before,  during  the
processing of  $u$, and  failed the test.  List $\lue$  is initialized
with all edges  $[ u , v ]$ of  $K$ such that $p( v  )$ is {\em false}
(lines  7-19  of  Algorithm \ref{alg:contractions}),  while  list
$\lte$   is    initially   empty    (see   line   20    of   Algorithm
\ref{alg:contractions}).

To process  $u$, the  algorithm removes one  edge, $[  u , v  ]$, from
$\lue$ at  a time and determines whether  $[ u , v  ]$ is contractible
(lines 23-30 of Algorithm \ref{alg:contractions}).  If so, $[ u , v ]$
is  contracted; else  it is  inserted  into $\lte$.  Once list  $\lue$
becomes  empty, the algorithm  considers list  $\lte$ (lines  31-33 of
Algorithm  \ref{alg:contractions}).  List  $\lte$  contains all  edges
incident  on $u$  that have  been  tested against  the link  condition
during the processing of edges in $\lue$ and failed the test. However,
while in  $\lte$, an  edge may have  become contractible again  as the
result of the contraction of another edge in $\lue$.  If so, Procedure
\textsc{ProcessEdgeList}$()$   in   Algorithm~\ref{alg:processlistS}
will find and contract this edge.

\begin{algorithm}[ht!]\small
\caption{\textsc{Contractions}($ \T , Q $)}\label{alg:contractions}
\begin{algorithmic}[1]
        \STATE $S \leftarrow \emptyset$ \COMMENT{$S$ is a stack for maintaining edge contraction information}
        \STATE $K \leftarrow \T$
        \STATE $\textit{ts} \leftarrow 0$ 
        \WHILE {$Q \neq \emptyset$}
               \STATE remove a vertex $u$ from $Q$ \COMMENT{vertex $u$ is chosen to be processed}
               \IF {\textit{not} $p(u)$}
                        \STATE $\lue \leftarrow \emptyset$ \COMMENT{$\lue$ is the list of unprocessed edges}
                        \FOR {each  $v$ in $\textit{lk}(u,K)$}
                                \STATE $n(v) \leftarrow u$ \COMMENT{mark $v$ as a neighbor of $u$}
                                \STATE $o(v) \leftarrow \textit{ts}$ \COMMENT{set the time at which $v$ is found to be a neighbor of $u$}
                                \STATE $t(v) \leftarrow -1$ \COMMENT{indicates that $[ u , v ]$ has not been tested yet}
                               \IF {\textit{not}  $p(v)$} 
                                      \IF {$d(v) = 3$}
                                           \STATE insert $[u,v]$ at the front of $\lue$
                                      \ELSE
                                           \STATE insert $[u,v]$ at the rear of $\lue$
                                      \ENDIF
                                \ENDIF \COMMENT{inserts $[ u , v ]$ into $\lue$ whenever $p(v)$ is {\em false}}
                        \ENDFOR \COMMENT{$\lue$ stores all vertices in $\textit{lk}(u,K)$ that have not been processed yet}
                        \STATE $\lte \leftarrow \emptyset$ \COMMENT{$\lte$ is the list of tested edges}
                        \REPEAT
                             \WHILE {$\lue \neq \emptyset$}
                                   \STATE \text{remove edge $e = [ u , v ]$ from $\lue$}
                                   \STATE $t(v) \leftarrow \textit{ts}$
                                   \IF {$d(v) = 3$}
                                        \STATE{\textsc{ProcessVertexOfDegreeEq3}($e$, $K$, $S$, $\lue$, $\lte$, \textit{ts} )}
                                   \ELSE
                                        \STATE{\textsc{ProcessVertexOfDegreeGt3}($e$, $K$, $S$, $\lue$, $\lte$, \textit{ts} )}
                                  \ENDIF
                             \ENDWHILE \COMMENT{processes all edges in $\lue$}
                             \IF {$\lte \neq \emptyset$}
                                  \STATE \textsc{ProcessEdgeList}$( K , S, \lue , \lte , \textit{ts} )$ \COMMENT{process contractible edges in $\lte$}
                             \ENDIF
                       \UNTIL{$\lue = \emptyset$}
                       \STATE{$p(u) \leftarrow \textit{true}$} 
               \ENDIF \COMMENT{vertex $u$ is now processed}
        \ENDWHILE
	\RETURN $( K , S )$
\end{algorithmic}
\end{algorithm}

Recall that  if an edge  $[ u ,  v ]$ in  $K$ is contracted,  then $u$
becomes incident on edges  of the form $[ u , z ]$  in $K - uv$, where
$z$  is a vertex  in $\po_{uv}$  (see Figure~\ref{fig:contraction2}).
These \textit{new} edges are always inserted into $\lue$, as they have
not  been  processed  yet.   Hence,  the contraction  of  an  edge  by
Algorithm~\ref{alg:processlistS}  may  cause  the insertion  of  new
edges into $\lue$.  If so,  list $\lue$ becomes nonempty and its edges
are processed  again.  Otherwise, list  $\lue$ remains empty,  and the
processing of $u$  ends with the value of $p(u)$ set  to {\em true}. A
key feature of  our algorithm is its ability  to determine which edges
from  $\lte$ become  contractible,  after the  contraction of  another
edge, without  testing those edges  against the link  condition again.
To  do so,  the algorithm  relies on  a \textit{time  stamp} mechanism
described in detail in Section~\ref{sec:counting}.

\subsection{Testing edges}\label{sec:testedges}
To decide if an edge $[ u  , v ]$ removed from $\lue$ is contractible,
the link condition test is applied to $[u,v]$, except when the degree
$d_v$     of     $v$    is     $3$     (see     lines    25-29     of
Algorithm~\ref{alg:contractions}). If $d_v  = 3$, then $[ u  , v ]$ is
always contractible,  unless the  degree $d_u$ of  $u$ is  also $3$,
which is  the case if and  only if the current  triangulation $K$ is
$\T_4$.

\begin{pro}\label{prop:contractible1}
  Let $K$  be a surface  triangulation, and let  $v$ be any  vertex of
  degree $3$ in  $K$.  If $K$ is (isomorphic to)  $\T_4$, then no edge
  of $K$ is contractible. Otherwise, every edge of $K$ incident on $v$
  is a contractible edge in $K$.
\end{pro}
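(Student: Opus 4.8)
The plan is to deduce both claims from Theorem~\ref{the:linkcond}, which characterizes contractibility purely through critical cycles together with the single exceptional triangulation $\T_4$. For the first claim this is immediate: if $K$ is isomorphic to $\T_4$, then the clause ``$\T$ is not (isomorphic to) $\T_4$'' in Theorem~\ref{the:linkcond} fails for every edge of $K$, so no edge of $K$ can be contractible. The work therefore lies entirely in the second claim.

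For the second claim I would fix a vertex $v$ of degree $3$ under the standing assumption $K \not\cong \T_4$, and first pin down the local structure around $v$. Since every vertex of a triangulation has degree at least three and $\textit{st}(v,K)$ is an open disk, the link $\textit{lk}(v,K)$ is a $3$-cycle, say $(a,b,c)$ with three distinct vertices, and the three faces incident on $v$ are exactly $[v,a,b]$, $[v,b,c]$, and $[v,c,a]$. Two consequences of degree $3$ are crucial here: the only neighbors of $v$ are $a$, $b$, and $c$, and each incident edge is shared by two of these faces; in particular $[v,a]$ lies on the faces $[v,a,b]$ and $[v,c,a]$. Given $K \not\cong \T_4$, Theorem~\ref{the:linkcond} reduces contractibility of $[v,a]$ to showing that $[v,a]$ lies on no critical cycle.

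To establish this I would enumerate all $3$-cycles through $[v,a]$. Any such cycle has the form $(v,a,z)$, where $z$ is a common neighbor of $v$ and $a$ distinct from both. Because the neighbors of $v$ are precisely $\{a,b,c\}$ and $z \neq a$, we must have $z \in \{b,c\}$; moreover both $b$ and $c$ are adjacent to $a$ via the link edges $[a,b]$ and $[c,a]$, so exactly two such cycles exist. But the edges of $(v,a,b)$ bound the face $[v,a,b]$, and the edges of $(v,a,c)$ bound the face $[v,c,a]$, so neither cycle is critical. Hence $[v,a]$ lies on no critical cycle and is therefore contractible. The identical argument applies to $[v,b]$ and $[v,c]$, so every edge incident on $v$ is contractible.

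The main obstacle is ensuring the \emph{completeness} of this cycle enumeration: the entire argument hinges on $v$ having degree exactly $3$, which forces $v$ to have only three neighbors and thereby pins down every $3$-cycle through an incident edge, each of which turns out to bound a face. One must also take care to retain the hypothesis $K \not\cong \T_4$ in the second case, for in $\T_4$ itself no $3$-cycle is critical and yet no edge is contractible; this is exactly the role played by the exceptional clause of Theorem~\ref{the:linkcond}, and it is what prevents the critical-cycle criterion alone from forcing contractibility.
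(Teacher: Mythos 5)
Your proof is correct, but it reaches the conclusion by a different route than the paper. You deduce everything from Theorem~\ref{the:linkcond} (Schipper's critical-cycle characterization): you enumerate all $3$-cycles through an edge $[v,a]$ incident on the degree-$3$ vertex $v$, observe that the third vertex must be one of the other two link vertices, note that both resulting cycles bound faces and so are non-critical, and then invoke the theorem's exceptional clause to dispose of the $\T_4$ case. The paper instead works directly with the link condition of Eq.~\eqref{eq:linkcondition}: it computes $\textit{lk}(u,K)\cap\textit{lk}(v,K)$ for a neighbor $u$ of $v$ and shows this intersection exceeds $\textit{lk}([u,v],K)=\{x,y\}$ precisely when $[u,x,y]$ is a face, i.e.\ precisely when $K\cong\T_4$. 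The difference is not cosmetic: in your argument the edge $[v,a]$ lies on \emph{no} critical cycle even when $K\cong\T_4$, so non-contractibility there is carried entirely by the exceptional clause of Theorem~\ref{the:linkcond} (as you correctly point out), whereas in the paper's argument the $\T_4$ case is detected intrinsically, by the link edge $[x,y]$ showing up in the intersection of the two vertex links. Your version is slightly cleaner in that it leans on a single black-box characterization; the paper's version is more self-contained relative to Eq.~\eqref{eq:linkcondition} and makes explicit the combinatorial reason $\T_4$ is exceptional. Your enumeration of the $3$-cycles through $[v,a]$ is complete for the reason you give (two edges of a triangulation share at most one endpoint, so the third vertex of such a cycle is a common neighbor of $v$ and $a$ distinct from both, hence lies in $\{b,c\}$), so there is no gap.
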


\begin{proof}
  Let $v$ be  any vertex of $K$ whose degree, $d_v$,  is equal to $3$.
  Then, $\textit{lk}(  v , K  )$ contains exactly three  vertices, say
  $u$,  $x$, and  $y$  (see Figure~\ref{fig:vertexofdegree3}).   Since
  there  are  exactly  two  faces  incident  on  $[  u  ,  v  ]$  (see
  Lemma~\ref{lem:tritwofaces}), the other vertices  of these two faces are
  $x$ and  $y$, else $v$ would  have degree greater than  $3$.  So, we
  get $\textit{lk}( [ u , v ] , K ) = \{ x , y \}$. We claim that $[ u
  , v  ]$ is contractible  if and only  if $K$ is not  (isomorphic to)
  $\T_4$.  Suppose that $K$ is not isomorphic to $\T_4$. Then, face $[
  u , x ,  y ]$ is not in $K$, which means that  $\textit{lk}( u , K )
  \cap \textit{lk}(  v , K )  = \{ x ,  y \}$.  Conversely,  if $K$ is
  isomorphic to  $\T_4$ then face  $[ u ,  x , y  ]$ is in  $K$, which
  implies that $\textit{lk}( u , K )  \cap \textit{lk}( v , K ) = \{ x
  ,  y ,  [ x  , y  ] \}$.   By the  link condition,  $[ u  , v  ]$ is
  contractible if and only if  $K$ is not isomorphic to $\T_4$.  Since
  every vertex of $\T_4$ has degree $3$, the claim follows.
\end{proof}

\begin{figure}[htbp]
\begin{center}
\includegraphics[height=1.5in]{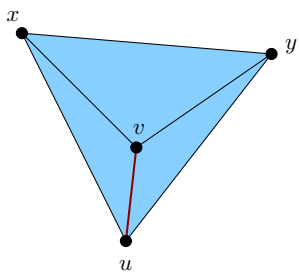}
\caption{\label{fig:vertexofdegree3} A  vertex, $v$, of  degree $3$ in
  $K$. Edge $[ u , v ]$ is  non-contractible if and only if $[ u , x ,
  y ] \in K$.}
\end{center}
\end{figure}

Proposition~\ref{prop:contractible1} implies that if $d_v = 3$, we can
decide  whether $[  u  , v]$  is  contractible by  determining if  the
current triangulation  $K$ is  isomorphic to $\T_4$.   Testing whether
$K$ is  isomorphic to $\T_4$ amounts to  checking if $d_u =  d_v = 3$,
which     can    be    done     in    constant     time.     Procedure
\textsc{ProcessVertexOfDegreeEq3}$()$           in           Algorithm
\ref{alg:processdegree3} is executed if $d_v$ is equal to $3$ (line 26
of Algorithm~\ref{alg:contractions}).  If $d_u$  is also equal to $3$,
then  $K$  is isomorphic  to  $\T_4$ and  nothing  is  done (line  2).
Otherwise,          procedure         \textsc{Contract}$()$         in
Algorithm~\ref{alg:contract}  is invoked  to  contract $[u,v]$.   This
procedure is  discussed in detail  in Section~\ref{sec:counting} along
with  lines  4-24  of  Algorithm~\ref{alg:processdegree3},  which  are
related to the time stamp mechanism for counting critical cycles.

\begin{algorithm}[htb!]\small
\caption{\textsc{ProcessVertexOfDegreeEq3}( $ e , K , S,  \lue , \lte ,  \textit{ts}$ )}\label{alg:processdegree3}
\begin{algorithmic}[1]
\STATE get the vertices $u$ and $v$ of $e$ in $K$
\IF{$d(u) \neq 3$}
    \STATE \textsc{Contract}( $e, K, S, \lue , \lte , \textit{ts}$ )
    \COMMENT{contract edge $e=[u,v]$}
    \STATE let $x$ and $y$ be the vertices in $\textit{lk}( e , K )$
    \IF{$t(x) \neq -1$ \textit{and} $t(y) \neq -1$}
        \STATE $c(x) \leftarrow c(x) - 1$ \COMMENT{edge $[u,x]$ is in $\lte$; a critical cycle containing it is gone}
        \STATE $c(y) \leftarrow c(y) - 1$ \COMMENT{edge $[u,y]$ is in $\lte$; a critical cycle containing it is gone}
        \IF{$c(x) = 0$}
             \STATE move $[u,x]$ to the front of $\lte$ \COMMENT{$[u,x]$ is now contractible}
        \ENDIF 
        \IF{$c(y) = 0$}
             \STATE move $[u,y]$ to the front of $\lte$ \COMMENT{$[u,y]$ is now contractible}
        \ENDIF
    \ELSIF{$t(x) \neq -1$ \textit{and} $t(x) \ge o(y)$}
        \STATE $c(x) \leftarrow c(x) - 1$ \COMMENT{edge $[u,x]$ is in $\lte$; a  critical cycle containing it is gone}
        \IF{$c(x) = 0$}
             \STATE move $[u,x]$ to the front of $\lte$ \COMMENT{$[u,x]$ is now contractible}
        \ENDIF 
    \ELSIF{$t(y) \neq -1$ \textit{and} $t(y) \ge o(x)$} 
        \STATE $c(y) \leftarrow c(y) - 1$  \COMMENT{edge $[u,y]$ is in $\lte$; a critical cycle containing it is gone}
        \IF{$c(y) = 0$}
             \STATE move $[u,y]$ to the front of $P$ \COMMENT{$[u,y]$ is now contractible}
        \ENDIF
    \ENDIF \COMMENT{update the value of $c(x)$ and $c(y)$ after contracting $[u,v]$}
\ENDIF \COMMENT{if $K$ is not isomorphic to $\T_4$}
\end{algorithmic}
\end{algorithm}

If  $d_v >  3$  when line  25  of Algorithm~\ref{alg:contractions}  is
reached, then $[ u , v ]$  is tested against the link condition. As we
pointed out in Section~\ref{sec:rwork}, if no special care is taken or
no special data  structure is adopted, the  test $[ u , v  ]$ can take
$\Theta(  d_u  \cdot  d_v  )$  time. To  reduce  the  worst-case  time
complexity of the link condition  test, our algorithm makes use of the
$n$ attribute.  During the processing of $u$,  we set $n( w ) = u$ for
every vertex $w$ in $K$ with $[ u , w ] \in K$.

Since $d_v > 3$, $K$ cannot be  isomorphic to $\T_4$.  So, edge $[ u ,
v ]$  is non-contractible  if and only  if $[ u  , v  ]$ is part  of a
critical cycle  in $K$ (see Figure~\ref{fig:neighbor}) ,  i.e., if and
only if  $u$ and $v$ have a  common neighbor $z$ such  that $z \not\in
\textit{lk}( [u,v]  , K)$ (i.e.,  $z \in \po_{uv}$). Conversely,  if $u$
and $v$ do  not have a common neighbor other than  the two vertices in
$\textit{lk}( [u,v] , K)$, then they  cannot be part of a $3$-cycle in
$K$. By examining  the $n$ attribute of the  vertices in $\po_{uv}$, our
algorithm can determine if $u$ and $v$ have  a common neighbor in
$\po_{uv}$, which can be done in $\mathcal{O}( d_v )$ time.

\begin{figure}[htb!]
\begin{center}
\includegraphics[height=1.2in]{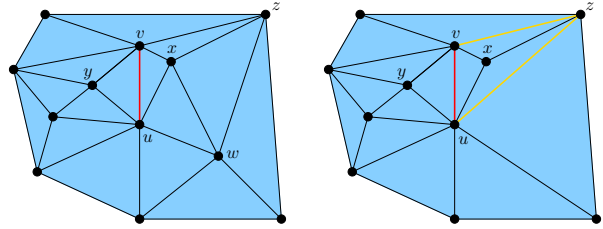}
\caption{\label{fig:neighbor} Vertex  $z$ is a neighbor  of vertex $u$
  in the right triangulation, but not in the left one.}
\end{center}
\end{figure}

Procedure           \textsc{ProcessVertexOfDegreeGt3}$()$           in
Algorithm~\ref{alg:processdegreegt3}   is  the  one   responsible  for
testing $[  u , v ]$  against the link  condition when $d_v >  3$
(line  28 of Algorithm  \ref{alg:contractions}).  This  procedure
tests edge $[ u , v ]$ against the link condition, which amounts to counting
the number  of critical cycles in $K$  containing $[ u ,  v ]$. Rather
than merely checking the value of the $n$ attribute of all vertices in
$\po_{uv}$,  Algorithm~\ref{alg:processdegreegt3}  computes  the
number $c(v)$  of critical cycles in  $K$ that contain edge $[  u , v ]$.
To that end, Algorithm~\ref{alg:processdegreegt3} (lines 2-12)
counts the number of vertices $z$ in  $\po_{uv}$ such that $n( z ) = u$,
which is precisely the number  of critical cycles in $K$ containing $[
u , v  ]$.  If $c(v)$ equals zero, then edge $[ u  , v ]$ is
contracted. Otherwise, edge $[ u ,  v]$ is inserted into $\lte$, as it
has been  tested against  the link condition  and has failed  the test
(lines 13-17 of Algorithm \ref{alg:processdegreegt3}).

\begin{algorithm}[htb!]\small
\caption{\textsc{ProcessVertexOfDegreeGt3}( $ e , K , S,  \lue , \lte ,  \textit{ts}$ )}\label{alg:processdegreegt3}
\begin{algorithmic}[1]
\STATE get the vertices $u$ and $v$ of $e$ in $K$
\FOR {each $z$ in $\textit{lk}( v , K )$}
    \IF {$z \in \po_{uv}$ \textit{and} $n( z ) = u$}
         \STATE $c( v ) \leftarrow c( v ) + 1$ \COMMENT{$(u,v,z)$ is a critical cycle in $K$; increment $c(v)$}
         \IF {$t( z ) \neq -1$ \textit{and} $t( z ) < o( v )$}
             \STATE $c( z ) \leftarrow c( z ) + 1$ \COMMENT{found a critical cycle in $K$ containing $[u,z]$}
             \IF {$c(z) = 1$}
                  \STATE move $[ u , z ]$ to the rear of $\lte$
                  \COMMENT{$c(z)$ was zero before}
             \ENDIF
        \ENDIF
    \ENDIF \COMMENT{updates the number, $c(z)$, of critical cycles in $K$ containing $[u,z]$}
\ENDFOR \COMMENT{computes the number, $c(v)$, of critical cycles in $K$ containing $[u,v]$}
\IF {$c( v ) = 0$}
    \STATE \textsc{Contract}$( e , K , S , \lue , \lte, \textit{ts} )$
    \COMMENT{$[u,v]$ in $K$ is contractible}
\ELSE
    \STATE insert $[ u , v ]$ at the rear of $\lte$ \COMMENT{edge $[u,v]$ is non-contractible in $K$, as $c(v) > 0$}
\ENDIF
\end{algorithmic}
\end{algorithm}

Lines 5-10 of Algorithm  \ref{alg:processdegreegt3} are related to the
counting of critical  cycles containing edge $[ u , z  ]$, for each $z
\in    \po_{uv}$,     in    triangulation    $K     -    uv$.     See
Section~\ref{sec:counting}  for  further  details. Furthermore,  while
edge     $[     u     ,     v     ]$    is     being     tested     by
Algorithm~\ref{alg:processdegreegt3}, the  degree $d_v$ of  $v$ in $K$
may not be  the same as the degree $d_v^{\prime}$ of  $v$ in the input
triangulation $\T$. In fact, during  the processing of any vertex $w$,
the degree  of $w$  can only  increase or remain  the same,  while the
degree  of any  other vertex  can only  decrease or  remain  the same.
Hence, we get $d_v \le d_v^{\prime}$,  and we can say that the time to
test  $[ u  , v  ]$  against the  link condition  is in  $\mathcal{O}(
d_v^{\prime}  )$. In  general, the  overall time  spent with  the link
condition test during the processing of $u$ is given by
\[
\sum_{w \in W_u} \mathcal{O}( d_w^{\prime} )
\, ,
\]
where $W_u$ is the set of all vertices $w$ of $\T$ such that $[ u , w
]$ is an edge tested  against the link condition during the processing
of $u$, and  $d_u^{\prime}$ and $d_w^{\prime}$ are the  degrees of $u$
and $w$  in $\T$, respectively. 

\subsection{Counting critical cycles}\label{sec:counting}
Let $[ u , v ]$ be a contractible edge in $K$ during the processing of
$u$,  and refer  to  Figure~\ref{fig:createcc}.  If  $[  u ,  v ]$  is
contracted, then every  $\ell$-cycle containing $[ u , v  ]$ in $K$ is
shortened  and transformed  into  a $(  \ell  - 1  )$-cycle in  $K-uv$
containing $u$.  Thus, every $4$-cycle  containing $[ u  , v ]$  in $K$
gives rise to a $3$-cycle in $K-uv$ containing vertex $u$.

\begin{figure}[htb!]
\begin{center}
\includegraphics[width=4.5in]{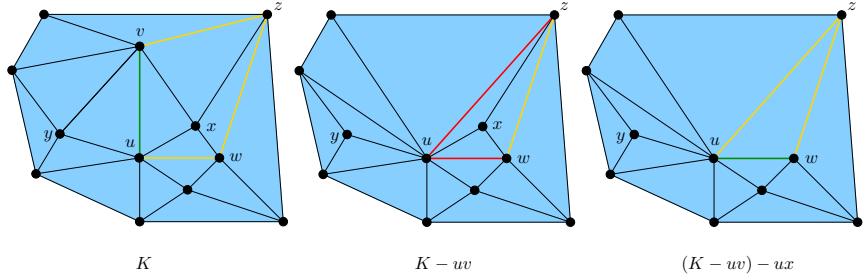}
\caption{\label{fig:createcc} Cycle  $(u,z,w)$ is critical  in $K-uv$,
  and non-critical in $(K-uv)-ux$.}
\end{center}
\end{figure}

Observe that a contractible edge in $K$ may become non-contractible in
$K-uv$.  In  particular, if a  newly created $3$-cycle,  which results
from an edge contraction, does not  bound a face in $K-uv$, then every
edge that belongs to it  is non-contractible in $K-uv$.  For instance,
if edge $[ u , v ]$  is contracted in triangulation $K$ on the left of
Figure~\ref{fig:createcc}, then $( u , v , z , w )$ gives rise to $( u
, z  , w)$  in $K-uv$, which  is critical.  Observe also that  an edge
contraction can  make a critical  cycle non-critical in  the resulting
triangulation.  For  instance, if edge  $[ u ,  x ]$ is  contracted in
triangulation  $(K- uv)$  in Figure~\ref{fig:createcc},  then critical
cycle $( u , z , w )$ in $K-uv$ becomes non-critical in $(K- uv)-ux$.

In  general,  if  the contraction  of  an  edge  $[  u  , v  ]$  in  a
triangulation  $K$  identifies  a   degree-$3$  vertex  $v$  with  the
currently processed vertex,  $u$, then the cycle defined  by the three
edges of $\textit{lk}( v  , K )$ bounds a face in  $K - uv$, and hence
it cannot  be critical in $K -  uv$. Conversely, if $C$  is a critical
cycle in $K$ but  not in $K - uv$, then $C$ must bound  a face in $K -
uv$. But, this  is only possible if a vertex $z$  of $K$ is identified
with a vertex of $C$ by the edge contraction that produced $K-uv$ from
$K$. Thus, vertex $z$ must be  $v$, vertex $u$ must belong to $C$, and
$C$ must consist of the edges in $\textit{lk}( v , K )$.  Moreover, if
a  critical cycle  $C$ in  $K$  becomes non-critical  in $K  - uv$,  a
non-contractible edge in $K$ may become contractible in $K - uv$.

\begin{pro}\label{prop:contractible2}
  Let $K$  be a surface triangulation,  and let $f$  be a contractible
  edge  of  $K$.   If  a  non-contractible edge  $e$  of  $K$  becomes
  contractible in  $K-f$, then  $f$ must be  incident on  a degree-$3$
  vertex $v$  of $K$ and  $e$ must belong  to $\textit{lk}( v ,  K )$.
  Moreover,  $e$ belongs  to a  single  critical cycle  in $K$,  which
  consists of  the edges  in $\textit{lk}(  v , K  )$, and  this cycle
  becomes non-critical in $K-f$.
\end{pro}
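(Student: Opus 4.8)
The plan is to translate (non)contractibility into the language of critical cycles via Theorem~\ref{the:linkcond}, and then track a single critical cycle through the contraction of $f$. Write $f=[a,b]$, let $\{x,y\}=\textit{lk}(f,K)$ so that the two faces incident on $f$ are $[a,b,x]$ and $[a,b,y]$ (Lemma~\ref{lem:tritwofaces}), and let $\rho$ denote the identification of $a$ and $b$ into the merged vertex $w$, fixing every other vertex and extended to edges, $3$-cycles, and faces. Recall that each face of $K-f$ is $\rho(F_0)$ for a unique face $F_0$ of $K$ distinct from $[a,b,x]$ and $[a,b,y]$. Since $f$ is contractible, the link condition gives $\textit{lk}(a,K)\cap\textit{lk}(b,K)=\{x,y\}$, and Theorem~\ref{the:linkcond} forces $K\not\cong\T_4$. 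As $e$ is non-contractible in $K$ while $K\not\cong\T_4$, that theorem puts $e$ in at least one critical cycle $C$ of $K$; as $e$ becomes contractible in $K-f$, its image $\rho(e)$ lies in no critical cycle of $K-f$.

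Next I would record two bookkeeping facts. First, a critical cycle of $K$ cannot contain $f$: any $3$-cycle through $f$ is $(a,b,r)$ with $r$ a common neighbour of $a,b$, so $r\in\{x,y\}$ by the link condition and the cycle bounds a face, hence is not critical. Consequently every critical cycle meets $\{a,b\}$ in at most one vertex, so $\rho$ carries such a cycle to a genuine (non-degenerate) $3$-cycle of $K-f$. In particular $\rho(C)$ is a $3$-cycle through $\rho(e)$; since $\rho(e)$ lies in no critical cycle of $K-f$, the cycle $\rho(C)$ must be non-critical, i.e.\ it bounds a face $F$ of $K-f$.

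The heart of the argument is to pull "$\rho(C)$ bounds a face" back to $K$. Writing $F=\rho(F_0)$ with $F_0$ a face of $K$ other than $[a,b,x],[a,b,y]$, we get $\rho(C)=\rho(F_0)$ as boundary triangles. Because $C$ is critical it is not a face, so $C\neq F_0$, yet their images under $\rho$ coincide; since $\rho$ identifies only $a$ with $b$, and each of $C,F_0$ contains at most one of $a,b$ (for $F_0$ this is exactly the exclusion of the two faces on $f$), the triangles $C$ and $F_0$ must share two vertices $p,q\notin\{a,b\}$ and differ only in the third, one being $a$ and the other $b$. Say $C=(a,p,q)$ and $F_0=[b,p,q]$ (the case with $a,b$ interchanged is symmetric). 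From the edges of $C$ we get $p,q\in\textit{lk}(a,K)$, and from $F_0$ being a face we get $p,q\in\textit{lk}(b,K)$, so $p,q\in\textit{lk}(a,K)\cap\textit{lk}(b,K)=\{x,y\}$ and hence $\{p,q\}=\{x,y\}$. Thus $F_0=[b,x,y]$ and $C=(a,x,y)$; now the three faces $[a,b,x],[a,b,y],[b,x,y]$ already account for both faces on each of the edges $[a,b],[b,x],[b,y]$ (Lemma~\ref{lem:tritwofaces}), so they are all the faces incident on $b$. Therefore $b$ has degree $3$ with $\textit{lk}(b,K)=(a,x,y)=C$; renaming $v:=b$ yields $f$ incident on the degree-$3$ vertex $v$ and $e\in C=\textit{lk}(v,K)$.

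For uniqueness I would apply the same reasoning to an arbitrary critical cycle $C'$ of $K$ through $e$: since $\rho(e)$ lies in no critical cycle of $K-f$, $\rho(C')$ bounds a face and the argument above gives $C'=\textit{lk}(v',K)$ for a degree-$3$ endpoint $v'$ of $f$. The two endpoints of $f$ cannot both have degree $3$, for then $a,b,x,y$ would carry all four faces $[a,b,x],[a,b,y],[a,x,y],[b,x,y]$ and $K$ would be $\T_4$, contradicting $K\not\cong\T_4$; hence $v'=v$ and $C'=\textit{lk}(v,K)=C$. So $e$ belongs to the single critical cycle $\textit{lk}(v,K)$, which becomes non-critical in $K-f$ because its image bounds $\rho(F_0)$. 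The step I expect to be most delicate is the pullback in the third paragraph, where one must simultaneously rule out $f\in C$, guarantee that $\rho(C)$ stays a genuine triangle, and use the link condition to force $\{p,q\}=\{x,y\}$; once that is secured, the degree-$3$ conclusion and the uniqueness follow at once.
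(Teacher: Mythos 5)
Your proof is correct and follows the same route as the paper's: translate non-contractibility of $e$ into membership in a critical cycle, observe that this cycle must become non-critical (hence bound a face) in $K-f$, and pull that face back to $K$ to force a degree-$3$ endpoint of $f$ whose link is the cycle. The paper leaves the pullback step informal (it is only sketched in the paragraph preceding the proposition), whereas you prove it carefully via the quotient map, the face bijection, and the link condition, and your uniqueness argument also explicitly rules out the case of a second degree-$3$ endpoint via the $\T_4$ exclusion, which the paper glosses over.
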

\begin{proof}
  By assumption, edge $f$ is contractible in triangulation $K$. So, we
  can conclude that $K$ cannot be (isomorphic to) $\T_4$. Thus, if $e$
  is a  non-contractible edge in $K$,  then $e$ belongs  to a critical
  cycle,  say $C$,  in $K$.   Moreover, since  $e$ is  contractible in
  $K-f$, we can also conclude  that $C$ is non-critical in $K-f$. But,
  this means that  $f$ is incident on  a vertex, $u$, in $C$  and on a
  degree-$3$ vertex, $v$,  in $K$ such that $C$  consists of the edges
  in $\textit{lk}( v , K  )$.  Also, the contraction of $f$ identifies
  $v$  with  $u$.   We claim  that  $C$  is  the only  critical  cycle
  containing $e$ in $K$.  In fact, if $e$ belonged to another critical
  cycle, say $C^{\prime}$, in $K$,  then $C^{\prime}$ would have to be
  non-critical in $K  - f$; else $e$ would  remain non-contractible in
  $K -  f$. But, if  $C^{\prime}$ were non-critical  in $K -  f$, then
  $C^{\prime}$ would have to consist of the edges of $\textit{lk}( v ,
  K )$ as well. Thus, $C^{\prime} = C$, i.e., $C$ is the only critical
  cycle containing $e$ in $K$.
\end{proof}

Proposition~\ref{prop:contractible2}  allows us  to  devise, for  each
edge $e$ that has been tested against the link condition, a time stamp
mechanism to keep track of the  number of critical cycles to which $e$
belongs.  Recall  that all  such  edges $e$  are  stored  in the  list
$\lte$. The idea is quite simple.  Whenever a contractible edge $[ u ,
v ]$, with $d_v = 3$,  is contracted, the algorithm checks whether the
critical cycle  counter of $x$  and $y$ must  be \textit{decremented},
where $x$ and $y$ are the two vertices  of $\textit{lk}( [ u , v ] , K
)$. From  Proposition~\ref{prop:contractible2}, we know that $[  u , x
]$ and  $[ u  , y ]$  are the  only edges incident  on $u$  that could
become contractible in $K - uv$ (if they are non-contractible edges in
$K$). In  turn, if  $d_v >  3$ then the  algorithm checks  whether the
critical  cycle counter  of  all vertices  involved  in newly  created
$3$-cycles  of $K-uv$  must be  \textit{incremented}. This  is because
contractible edges  in $K$ may become non-contractible  in $K-uv$, but
not       the      other       way      around       according      to
Proposition~\ref{prop:contractible2}.  Furthermore,  the newly created
critical cycles must contain a new neighbor of $u$ in $K - uv$ (i.e, a
vertex in $\po_{uv}$).

The  time stamp mechanism  relies on  a global  \textit{time counter},
\textit{ts},  and  on  the  $o$  and $t$  attributes.   The  value  of
\textit{ts} is set to zero before any vertex of $\T$ is ever processed
(line 3  of Algorithm~\ref{alg:contractions}). Moreover,  the value of
\textit{ts}  is updated if  and only  if an  edge is  contracted. More
specifically,  the  value of  \textit{ts}  is  incremented  by one  by
Algorithm~\ref{alg:contract}   immediately  before  the   actual  edge
contraction occurs.

\begin{algorithm}[htbp!]\small
\caption{\textsc{Contract}( $ e , K , S,  \lue , \lte , \textit{ts}$ )}\label{alg:contract}
\begin{algorithmic}[1]
\STATE get the vertices $u$ and $v$ of $e$
\STATE get the vertices $x$ and $y$ of $\textit{lk}( e , K )$
\STATE $\textit{ts} \leftarrow \textit{ts} + 1$
\FOR { each $z$ in $\textit{lk}( v , K )$ }
     \IF { $z \in \po_{uv}$ }
          \STATE $n( z ) \leftarrow u$
          \STATE $c( z ) \leftarrow 0$
          \STATE $o( z ) \leftarrow \textit{ts}$
          \STATE $t( z ) \leftarrow -1$
     \ENDIF \COMMENT{vertex $z$ will become a neighbor of $u$ in $K-uv$}
\ENDFOR \COMMENT{initializes the $n$, $c$, $o$, and $t$ attributes of the new neighbors of $u$}
\STATE $p( v ) \leftarrow true$ \COMMENT{prevents $v$ from being selected for processing}
\STATE push a record with $v$, $[ u , v ]$, $[ v , x]$, $[ v , y
]$, $[u,v,x]$, and $[u,v,y]$ onto $S$
\STATE $\textit{temp} \leftarrow \emptyset$ \COMMENT{\textit{temp} is a temporary list of edges $[u,z]$ such that $z \in \po_{uv}$} 
\STATE \textsc{Collapse}$( e , K , \textit{temp})$ \COMMENT{updates the DCEL}
\STATE $d( x ) \leftarrow d( x ) - 1$ \COMMENT{updates the degree of $x$}
\STATE $d( y ) \leftarrow d( y ) - 1$ \COMMENT{updates the degree of $y$}
\STATE $d( u ) \leftarrow d( u ) + d( v ) - 4$ \COMMENT{updates the degree of $u$}
\IF { $d( x ) = 3$ \textbf{and} \textbf{not} $p(x)$ \textbf{and} $t(x)=-1$}
     \STATE move $[ u , x ]$ to the front of $\lue$
\ENDIF
\IF { $d( y ) = 3$ \textbf{and} \textbf{not} $p(y)$ \textbf{and} $t(y)=-1$}
     \STATE move $[ u , y ]$ to the front of $\lue$
\ENDIF
\FOR { each $[ u , z ]$ in $\textit{temp}$ }
     \IF { \textit{not} $p( z )$ }
          \IF { $d( z ) = 3$ }
               \STATE insert $[ u , z ]$ at the front of $\lue$
          \ELSE
               \STATE insert $[ u , z ]$ at the rear of $\lue$
          \ENDIF
     \ELSE    
          \STATE get the vertices $x$ and $y$ of $\textit{lk}( [ u , z ] , K )$
          \FOR { each $w$ in $\textit{lk}( z , K )$ }
                \IF { $w \not\in \pd_{uz}$ \textit{and}  $n( w ) = u$ \textit{and} $t(w) \neq  -1$}
                     \STATE $c( w ) \leftarrow c( w ) + 1$
                     \COMMENT{increment $c(w)$ to account for $(u,w,z)$}
                     \IF { $c(w) = 1$}
                          \STATE{move $[ u , w ]$ to the rear of $\lte$}
                          \COMMENT{$[u,w]$ is now non-contractible}
                     \ENDIF
                \ENDIF
         \ENDFOR
     \ENDIF
\ENDFOR \COMMENT{updates $c(z)$ if $[u,z]\in \lte$ and inserts $[u,z]$ in $\lue$ otherwise}
\end{algorithmic}
\end{algorithm}

The $o$ and $t$ attributes of every vertex $u$ of $\T$ are each set to
$-1$ during  the initialization stage  (Algorithm \ref{alg:continit}).
During the processing of a vertex  $u$, the value of the $o$ attribute
of a  vertex $v$ is changed  to \textit{ts} if  and only if $v$  is or
becomes a neighbor of $u$, i.e., right  before $[ u , v ]$ is inserted
into list  $\lue$ because $v$  is already a  neighbor of $u$  when the
processing     of      $u$     begins     (see      line     10     of
Algorithm~\ref{alg:contractions}) or because $v$ becomes a neighbor of
$u$ as the result of an  edge contraction during the processing of $u$
(in line  8 of Algorithm~\ref{alg:contract}).  The value  of $o(v)$ is
changed only once  during the processing of $u$,  and after the change
is  made $o(v)$  can  be  viewed as  the  \textit{time} the  algorithm
discovers that  $v$ is  in $\textit{lk}(  u, K )$.   In turn,  the $t$
attribute  of a  vertex $v$  may be  changed at  most once  during the
processing  of  $u$.   The  value  of $t(v)$  is  set  to  \textit{ts}
immediately before $[  u , v ]$ is removed from  list $\lue$ (see line
24 of  Algorithm~\ref{alg:contractions}).  Hence, after  the change is
made, $t(v)$ can be viewed  as the \textit{time} the algorithm decides
whether $[ u , v ]$ is contractible.

Before we describe  the time stamp mechanism, we  state two invariants
regarding list  $\lue$ and $\lte$, which  will also help  us prove the
correctness of the algorithm:

\begin{pro}\label{prop:listLUE}
  Let $u$  be any  vertex of $\T$  processed by the  algorithm.  Then,
  during the processing of vertex $u$, the conditions regarding $\lue$
  below are (loop)  invariants of the while and  repeat-until loops in
  lines    22-30    and     21-34,    respectively,    of    Algorithm
  \ref{alg:contractions}:
\begin{itemize}
\item[(1)] every edge $[ u , w ]$ in $\lue$ is an edge of the
  current triangulation, $K$;

\item[(2)] if  $[ u ,  w ]$ is  an edge in  $\lue$ such that  $d_w$ is
  greater than $3$, then edge $[ u  , w ]$ cannot precede an edge $[ u
  , z ]$ in $\lue$ such that $d_z$ is equal to $3$;

\item[(3)] the value of $p(z)$ is \textit{false}, for every vertex $z$
  such that $[ u , z ]$ is in $\lue$;

\item[(4)] the value of $o(z)$ is no longer $-1$, for every vertex $z$ such that
  $[ u , z ]$ is in $\lue$;

\item[(5)] the value of $t(z)$ is $-1$, for every vertex $z$ such that
  $[ u , z ]$ is in $\lue$;

\item[(6)] the value of $c(z)$ is $0$, for every vertex $z$ such that
  $[ u , z ]$ is in $\lue$; and

\item[(7)]  no  edge  in  $\lue$  has been  tested  against  the  link
  condition before.
\end{itemize}
\end{pro}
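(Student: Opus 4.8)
The plan is to prove the seven conditions simultaneously by induction on the iterations of the two nested loops, taking as base case the state reached just after the initialization block (lines 7--20 of Algorithm~\ref{alg:contractions}) and as inductive step a single pass through the while-loop body together with the conditional call to \textsc{ProcessEdgeList} (lines 31--33). The observation that makes this short is that $\lue$ is altered in exactly two ways: by removal of its front edge at line 23, and by the insertions and repositionings performed inside \textsc{Contract} (Algorithm~\ref{alg:contract}). Every contraction carried out during the processing of $u$ --- including those triggered inside \textsc{ProcessEdgeList} --- routes through \textsc{Contract}, and none of \textsc{ProcessVertexOfDegreeEq3}, \textsc{ProcessVertexOfDegreeGt3}, or \textsc{ProcessEdgeList} ever moves an edge from $\lte$ back into $\lue$. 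So it suffices to establish the base case and then check that front removal and \textsc{Contract} both preserve (1)--(7).

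For the base case I would read the invariants straight off the initialization loop: each $[u,v]$ placed in $\lue$ is an edge of $K=\T$, giving (1); the front/rear rule at lines 13--17 puts the degree-$3$ edges ahead of the others, giving (2); the guard at line 12 gives (3); lines 10 and 11 set $o(v)=\textit{ts}\ge 0$ and $t(v)=-1$, giving (4) and (5); and since nothing has been removed yet, nothing has been tested, giving (7). Condition (6) is the one point not settled by this loop, and I defer it.

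For maintenance under front removal, conditions (1),(3),(4),(6),(7) are per-edge properties that survive deletion of an element, removing the front preserves the degree ordering of (2), and although line 24 sets $t(v)=\textit{ts}$, the removed edge is gone and --- since a triangulation has no two edges sharing both endpoints --- $v$ is not an endpoint of any remaining $\lue$-edge, so (5) is unharmed. For \textsc{Contract} I would argue against the new triangulation $K-uv$: the inserted edges $[u,z]$ with $z\in\po_{uv}$ and the retained $[u,x],[u,y]$ are exactly the $u$-incident edges of $K-uv$ produced by the contraction (Definition~\ref{def:contraction}), and no $\lue$-edge is destroyed because the contraction removes only $[u,v],[v,x],[v,y]$, which gives (1); the only vertices whose degree changes are $x,y$ (down by one), $u$, and the deleted $v$, so the sole $\lue$-edges that could fall out of degree order are $[u,x],[u,y]$, and lines 19--23 move them to the front precisely when their degree has become $3$ and they are still in $\lue$ (guards $t=-1$, $\neg p$), restoring (2). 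The guards $\neg p$ at lines 19, 22, 25 give (3); lines 7--9 reset $c,o,t$ of each new neighbor before it enters $\lue$, and since the $c$-increments at lines 35--41 fire only for vertices $w$ with $t(w)\neq-1$ (edges already in $\lte$), the edges $[u,x],[u,y]$ keep $c=0$ --- together these give (4),(5),(6); and everything kept or inserted has $t=-1$ and is freshly created or never removed, giving (7). The two degree-specific procedures and \textsc{ProcessEdgeList} touch $\lue$ only through \textsc{Contract} and otherwise modify $c$ and $\lte$ only at vertices with $t\neq-1$, so they are covered.

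The step I expect to be the genuine obstacle is the base case of (6): that every \emph{initial} neighbor $v$ of $u$ already satisfies $c(v)=0$ when processing of $u$ begins. Unlike $o$ and $t$, the counter $c$ is not reassigned in the initialization loop, so this has to be secured by an outer induction over the sequence of processed vertices. It is immediate for the first such vertex from Algorithm~\ref{alg:continit}. For a later vertex the trouble is that, when the processing of an earlier vertex $u''$ ends, the edges left in $\lte$ are non-contractible edges $[u'',z]$ carrying $c(z)>0$ with $p(z)=\textit{false}$, and line 20 empties $\lte$ without clearing these counters; one must show that no such $z$ can re-enter a later $\lue$ still holding a stale count --- either because line 7 of \textsc{Contract} resets it to $0$ the next time $z$ becomes a neighbor through a contraction, or because it is controlled by the companion invariant on $\lte$ and the critical-cycle counters stated next. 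Pinning down this cross-vertex bookkeeping, rather than any individual loop iteration, is where the care is needed.
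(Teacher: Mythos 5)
Your overall route is the paper's own: it factors the argument into an initialization lemma (Proposition~\ref{prop:listR}), preservation claims for \textsc{Contract}$()$ (Claims~\ref{cla:contract1} and~\ref{cla:contract2}), an invariant for the inner while loop (Proposition~\ref{prop:listRwhile}), and an outer induction over the repeat-until iterations; your maintenance arguments for (1)--(5) and (7) match the paper's in substance. The genuine gap is the one you name yourself and then leave open: the base case of condition (6). A proof that explicitly defers ``the cross-vertex bookkeeping'' has not proved the proposition, and (6) is not a cosmetic clause --- it is exactly the premise on which Proposition~\ref{prop:cvalue}, and hence the correctness of the link-condition test itself, rests. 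Neither of the two escape routes you gesture at closes it: line 7 of \textsc{Contract}$()$ resets $c(z)$ only for vertices that enter $\textit{lk}(u,K)$ \emph{through a contraction performed while $u$ is being processed}, not for the initial neighbors collected in lines 8--19 of Algorithm~\ref{alg:contractions}; and the companion invariant on $\lte$ (Proposition~\ref{prop:listLTE}) is local to the processing of a single vertex and says nothing about counters that survive after that vertex becomes trapped and its $\lte$ is abandoned.

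For comparison, the paper dispatches this point in one sentence, attributing the base case of (6) to line~9 of Algorithm~\ref{alg:continit}. That global reset covers the first processed vertex, but --- as you correctly sense --- it does not by itself cover a vertex $z$ that was left in the $\lte$ of an earlier processed vertex $u''$ with $c(z)>0$ and $p(z)=\textit{false}$, and that later reappears as an initial neighbor of $u$. The reading that makes everything consistent is that $c(v)$ is re-initialized to $0$ together with $n(v)$, $o(v)$, and $t(v)$ in lines 9--11 of Algorithm~\ref{alg:contractions}, mirroring lines 6--9 of \textsc{Contract}$()$; with that reset in place your base case for (6) reads off the initialization loop exactly as your base cases for (4) and (5) do, and the remainder of your argument goes through. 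Without it, you would have to establish an outer invariant asserting that every unprocessed vertex carries $c=0$ at the top of line~4 of Algorithm~\ref{alg:contractions}, and the scenario just described defeats that. Either way, the missing piece has to be supplied explicitly rather than postponed, so as written the proposal is incomplete precisely at the point you flag.
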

\begin{proof}
See~\ref{sec:correctness}.
\end{proof}

\begin{pro}\label{prop:listLTE}
  Let $u$  be any  vertex of $\T$  processed by the  algorithm.  Then,
  during the processing of  $u$, the conditions regarding $\lte$ below
  are (loop)  invariants of the  while and repeat-until loop  in lines
  22-30 and 21-34, respectively, of Algorithm \ref{alg:contractions}:
\begin{itemize}
\item[(1)] lists $\lue$ and $\lte$ have no edge in common;

\item[(2)] if $[ u , z ]$ is an edge in $\lte$ then $t( z ) \ge o( z 
  ) > -1$; and

\item[(3)] every edge in $\lte$  was tested against the link condition
  exactly once and failed.
\end{itemize}
\end{pro}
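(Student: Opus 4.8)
The plan is to prove the three conditions simultaneously as loop invariants, by induction on the iterations of the repeat--until loop (lines 21--34 of Algorithm~\ref{alg:contractions}) together with the nested while loop (lines 22--30) and the edge contractions performed inside \textsc{Contract}, \textsc{ProcessVertexOfDegreeEq3}, \textsc{ProcessVertexOfDegreeGt3}, and \textsc{ProcessEdgeList}. For the base case, $\lte$ is set to $\emptyset$ at line 20, immediately before the repeat--until loop is entered, so (1)--(3) hold vacuously. The crucial preliminary observation, which I would establish first, is a census of every line that alters the contents of $\lte$: the \emph{only} line that inserts an edge not already present in $\lte$ is line 17 of Algorithm~\ref{alg:processdegreegt3}, where the edge $[u,v]$ currently examined is appended. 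Every other occurrence (lines 9 and 12 of Algorithm~\ref{alg:processdegree3}, line 8 of Algorithm~\ref{alg:processdegreegt3}, and lines 35--36 of Algorithm~\ref{alg:contract}) is a \emph{move} of an edge to the front or rear of $\lte$, and in each case the guard $t(\cdot)\neq-1$ certifies that the moved edge has already been tested and hence is already a member of $\lte$. Verifying this census carefully is where most of the bookkeeping lives.

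Granting the census, the inductive step for (2) and (3) is short. For the genuine insertion at line 17 of Algorithm~\ref{alg:processdegreegt3}, the edge $[u,v]$ reaches that line precisely when the critical-cycle count returns $c(v)>0$; since that count \emph{is} the link-condition test, the edge was tested exactly once and failed, giving (3). For (2), the attribute $t(v)$ was set to the current value of \textit{ts} at line 24 of Algorithm~\ref{alg:contractions} (when $[u,v]$ was removed from $\lue$), while $o(v)$ had been set to an earlier value of \textit{ts}, either at line 10 of Algorithm~\ref{alg:contractions} or at line 8 of Algorithm~\ref{alg:contract}; because \textit{ts} is non-decreasing and is nonnegative once a vertex is being processed, we get $t(v)\ge o(v)>-1$. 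The moves preserve (2) and (3) trivially, since they change neither $t$, $o$, nor the testing history of an edge. The one point needing care is that no operation disturbs the $t$- or $o$-value of an edge \emph{already} in $\lte$: the only resets of these attributes (line 9 of Algorithm~\ref{alg:contract}, setting $t(z)\leftarrow-1$, and line 8, setting $o(z)$) act on vertices $z\in\po_{uv}$, which are \emph{new} neighbors of $u$; because $[u,v]$ is contractible, the link condition forces such $z$ to be non-adjacent to $u$ before the contraction, so $[u,z]$ is a brand-new edge and cannot already lie in $\lte$.

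For (1), the inductive step reduces to two insertions. The edge appended to $\lte$ at line 17 of Algorithm~\ref{alg:processdegreegt3} was just removed from $\lue$ at line 23, so it is absent from $\lue$ at the moment of insertion; conversely, the edges placed into $\lue$ during a contraction (lines 19--31 of Algorithm~\ref{alg:contract}) are either the fresh edges $[u,z]$ with $z\in\po_{uv}$ or the edges $[u,x],[u,y]$ guarded by $t=-1$, so in all cases $t=-1$ and hence, by (2), they cannot belong to $\lte$. As a consistency check, this matches invariant~(5) of Proposition~\ref{prop:listLUE}, which I am free to assume, stating that every edge of $\lue$ has $t=-1$, whereas (2) gives $t>-1$ for every edge of $\lte$, so the two lists cannot share an edge. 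The main obstacle, and the part I expect to be most delicate, is the census itself together with the companion claim that no edge of $\lte$ is ever re-tested: testing occurs only at lines 23--29 of Algorithm~\ref{alg:contractions} on edges drawn from $\lue$---disjoint from $\lte$ by (1)---while \textsc{ProcessEdgeList} contracts edges of $\lte$ using the counters $c(\cdot)$ rather than re-invoking the link-condition test, so the ``exactly once'' clause of (3) is never violated.
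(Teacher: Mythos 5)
Your proposal is correct and follows essentially the same route as the paper's proof: an induction over the iterations of the nested loops, the observation that the only genuine insertion into $\lte$ is at line 16 of Algorithm~\ref{alg:processdegreegt3} immediately after a failed link-condition test (the other occurrences being guarded moves), the monotonicity of \textit{ts} giving $t(z)\ge o(z)>-1$, and the freshness of edges inserted into $\lue$ by \textsc{Contract}$()$ to secure disjointness and the ``exactly once'' clause. The paper merely packages the inner while-loop case as a separate helper invariant (Proposition~\ref{prop:listSwhile}) before handling the repeat--until loop, which is a presentational rather than substantive difference.
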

\begin{proof}
See~\ref{sec:correctness}.
\end{proof}

Let $[ u ,  v ]$ be an edge removed from  $\lue$ during the processing
of vertex $u$, and let $K$  be the current triangulation at that time.
Suppose that  $[ u ,  v ]$ is  contractible. The time  stamp mechanism
distinguishes two cases: $d_v > 3$ and $d_v = 3$.

\textbf{Case $\bm{d_v  > 3}$.}  If $d_v$  is greater than  $3$ in $K$,
then Algorithm~\ref{alg:processdegreegt3} is executed  on $[ u , v ]$,
and Algorithm~\ref{alg:contract} is invoked  in line 14 to contract $[
u , v ]$ (refer to triangulation $K$ in Figure~\ref{fig:createcc}). As
we pointed out before, the contraction of $[ u , v ]$ may give rise to
one or more  critical cycles in $K - uv$.  So,  for every neighbor $z$
of $v$  in $K$ that  becomes a new  neighbor of $u$  in $K -  uv$, the
algorithm determines if  $u$ and $z$ have a  common neighbor, $w$.  If
so, then $( u ,  v , z , w )$ is a $4$-cycle  in $K$, shortened by the
contraction of $[ u, v ]$, that gave rise to critical cycle $( u , z ,
w )$ in $K-uv$.  If edge $[ u ,  w ]$ is in $\lte$, then $c( w )$ must
be incremented by $1$ to account for the newly created critical cycle,
$( u , z , w )$, in $K - uv$.  Otherwise, nothing needs to be done, as
either  $[ u  ,  w  ]$ is  still  in $\lue$  or  vertex  $w$ has  been
processed.

Lines 4-11 of \textsc{Contract}$()$ (see Algorithm~\ref{alg:contract})
visit all neighbors  $z$ of $v$ in $K$ that  become neighbors of $u$
in  $K -  uv$. Procedure  \textsc{Collapse}$()$, invoked  in  line 15,
contracts  $[  u  , v  ]$,  updates  the  DCEL,  and returns  a  list,
\textit{temp},  with the  new neighbors  $z$ of  $u$ in  $K-uv$. Lines
16-18 update the degrees of the vertices $x$, $y$, and $u$, where $x$
and $y$  are the  two vertices  in $\textit{lk}( [u,v]  , K  )$. Lines
19-24  ensure that  Proposition~\ref{prop:listLUE}(2) holds,  and lines
25-43 process the new neighbors $z$  of $u$ that were placed in list
\textit{temp}.   If   $p(z)$  is  \textit{true}   then  the  algorithm
determines whether  the contraction  of $[  u , v  ]$ in  $K$ produced
critical cycles  in $K - uv$  involving $[ u ,  z ]$.  If  this is the
case, then the  critical cycle counter of the third  vertex $w$ of the
cycle is updated accordingly.  If $p(z)$ is \textit{false} then $[ u ,
z]$ is inserted into $\lue$ in lines 27-31.

Suppose that $p(z)$  is {\em true}.  To determine  whether $u$ and $z$
share the same neighbor, $w$, in $K-uv$, or equivalently, to determine
the occurrence of a new critical cycle in $K - uv$ involving edge $[ u
, z  ]$, \textsc{Contract}$()$ compares $n(  w )$ with  $u$, for every
vertex  $w$ in  $\textit{lk}(  z ,  K -  uv  )$ such  that $w  \not\in
\pd_{uz}$ (see  lines 28-35 of Algorithm~\ref{alg:contract}).
If $n(  w ) = u$,  then $( u ,  z , w )$  is a critical cycle  in $K -
uv$. Otherwise, $( u , z , w )$ is {\em not} a cycle in $K - uv$. This
verification takes $\Theta( d_z )$-time,  where $d_z$ is the degree of
$z$ in  $K - uv$.  Since $z$  is a previously processed  vertex, it is
possible that  $d_z$ is greater  than the degree  of $z$ in  the input
triangulation, $\T$. The value of $c(w)$ must be incremented by $1$ to
account for  $( u ,  z ,  w )$ whenever  $[ u ,  w ]$ belongs  to list
$\lte$. Line  35 of Algorithm~\ref{alg:contract}  checks if $n( w  ) =
u$, $w \not\in \pd_{uz}$, and $t(  w ) \neq -1$. If the first
two conditions are true,  then $( u , z , w )$  is a critical cycle in
$K    -     uv$.    If    the    third    is     also    true,    then
Propositions~\ref{prop:listLUE} and \ref{prop:listLTE} tell us that $[ u
, w  ]$ is  in $\lte$.  Accordingly,  \textsc{Contract}$()$ increments
the critical  cycle counter, $c(w)$, of $w$  by $1$ in line  36 if and
only if the logical expression in line 35 evaluates to \textit{true}.

Suppose now  that $p(z)$ is {\em  false}.  Then, \textsc{Contract}$()$
simply  inserts  $[  u  ,  z  ]$  into  $\lue$  (see  lines  27-31  of
Algorithm~\ref{alg:contract}).  \textit{Our algorithm  need  not check
  whether $[ u , z ]$ is part  of a critical cycle in $K - uv$ at this
  point}. This verification is postponed to the moment at which $[ u ,
z    ]$    is    removed     from    $\lue$,    in    line    23    of
Algorithm~\ref{alg:contractions}, with  vertex $z$ labeled  as $v$. If
$[ u , v  ]$ is part of a critical cycle,  then $v$ cannot have degree
$3$, which means that $[ u , v ]$ is tested against the link condition
in  lines 2-12  of Algorithm~\ref{alg:processdegreegt3}.   During this
test, if $[ u , v ]$ is found to be part of a critical cycle, then the
third    vertex   involved    in   the    cycle   (labeled    $z$   in
Algorithm~\ref{alg:processdegreegt3}) may have its $c$ attribute value
incremented.   Indeed,  the value  of  $c(z)$  is  incremented by  $1$
whenever (a) $[ u , z ]$  is in $\lte$ (i.e., $t(z) \neq -1$), and (b)
$[ u , z ]$ was inserted in $\lte$ before $v$ became a neighbor of $u$
(i.e,  $t(z)   <  o(v)$).   Condition  (b)  is   necessary  to  ensure
correctness of  the counting  process.  Otherwise, $c(  z )$  could be
incremented twice for the  same cycle, $( u , v ,  z )$: one time when
edge $[  u , z  ]$ is tested  against the link condition,  and another
time when edge $[  u , v ]$ is tested against  the link condition. For
an example, let $[ u , r ]$ be an edge of $K$ such that $[ u , r ]$ is
part of a critical cycle, $( u , r , s )$, of $K$ by the time $[ u , r
]$     is     removed     from     $\lue$    in     line     23     of
Algorithm~\ref{alg:contractions}   (see   Figure~\ref{fig:createcc2}).
Suppose that $d_r > 3$ and  $p( s )=\textit{false}$.  Then, when $[ u,
r  ]$ is  given as  input to  Algorithm~\ref{alg:processdegreegt3}, we
have two possibilities:

\begin{figure}[htbp]
\begin{center}
\includegraphics[height=1.5in]{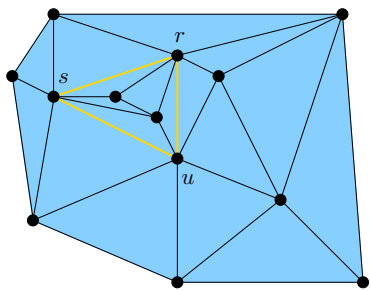}
\caption{\label{fig:createcc2} A  critical cycle  $( u ,  r , s  )$ in $K$.}
\end{center}
\end{figure}

\begin{itemize}
\item[(i)] $t( s ) < o( r )$:  edge $[ u , s ]$ was tested against the
  link condition  {\em before} $r$ becomes  a neighbor of  $u$. Thus, by
  the time $[ u , s ]$  was tested against the link condition, edge $[
  u , r]$ was not an edge of the current triangulation.  Consequently,
  line 6 of Algorithm~\ref{alg:processdegreegt3} could not be executed
  to increment $c( r )$ by $1$ (with $r$ labeled $z$) to account for a
  critical cycle that did not exist  at the time. For the same reason,
  line  4  of  Algorithm~\ref{alg:processdegreegt3}  cannot  increment
  $c(s)$ by $1$ to account for the same cycle either (with $s$ labeled
  $v$).   However,  when $[  u  ,  r ]$  is  tested  against the  link
  condition,  $c(  r  )$  is   incremented  by  $1$  in  line  $4$  of
  Algorithm~\ref{alg:processdegreegt3}  (with   $r$  labeled  $v$)  to
  account for $( u ,  r , s )$.  In addition, since $[ u  , s ]$ is in
  $\lte$, we have $t( s ) \neq  -1$. By hypothesis, we also know that $t(
  s ) <  o( r )$.  So, line  6 of Algorithm \ref{alg:processdegreegt3}
  is executed to increment  $c(s)$ by $1$ to account for $(  u , r , s
  )$ for the first time as well (with $s$ labeled $z$).

\item[(ii)] $t( s ) \ge o( r  )$: vertex $r$ was already a neighbor of
  $u$ when  $[ u , s]$ was  tested against the link  condition. So, we
  have two cases: (a) $[ u , s ]$ is tested against the link condition
  before $[ u ,  r ]$, and (b) $[ u , r ]$  is tested against the link
  condition  before $[  u ,  s ]$.   If (a)  holds, then  $c( s  )$ is
  incremented by $1$ to account  for $( u , r , s )$ when  $[ u , s ]$
  is  the  input  edge, $e$,  of  Algorithm~\ref{alg:processdegreegt3}
  (with  $s$ labeled  $v$). However,  the  value of  $c( r  )$ is  not
  incremented by $1$ to account for the same cycle, as line $6$ is not
  executed. The reason is that $[ u , r ]$ is still in $\lue$. So, $t(
  r ) = -1$, which implies that condition $t( z ) \neq -1$ fails (with
  $z =  r$) in line $5$  of Algorithm~\ref{alg:processdegreegt3}. When
  $[ u , r ]$ is tested against the link condition, the value of $c( r
  )$ is incremented by $1$ to account for $( u , r , s )$ in line 4 of
  Algorithm~\ref{alg:processdegreegt3} (with $r$ labeled $v$). At this
  point, $c(s)$ is not incremented to account  for $( u , r , s )$ for
  the {\em second} time, as condition  $t(z) < o(v)$ fails for $z = s$
  and $v = r$. If (b) holds,  then the situation is similar to (a); we
  just have to interchange the roles of $r$ and $s$. Therefore, to
  account for $(u,r,s)$, the values of   $c( r )$ and $c( s )$ are
  incremented by $1$ only once. 
\end{itemize}

\textbf{Case $\bm{d_v = 3}$.} If $v$ is a degree-$3$ vertex and $K$ is
not       (isomorphic      to)       $\T_4$,       then      procedure
\textsc{ProcessVertexOfDegreeEq3}$()$
(Algorithm~\ref{alg:processdegree3})  is  invoked   in  line  26  of
Algorithm~\ref{alg:contractions} to  contract $[ u,  v ]$. Let  $C$ be
the critical cycle of $K$ consisting of the edges in $\textit{lk}( v ,
K )$, i.e., $[ u , x ]$, $[ x , y ]$, and $[ u, y]$, where $x$ and $y$
are  the two  vertices of  $\textit{lk}( [  u  , v  ] ,  K )$.   Then,
Proposition~\ref{prop:contractible2} tells us  that the contraction of
edge $[ u , v ]$ makes $C$ non-critical in $K-uv$. If edge $[ u , x ]$
(resp.  $[  u , y ]$)  belongs to $\lte$, then  the value of  $c( x )$
(resp.  $c( y  )$) must be decremented by $1$ to  account for the fact
that one critical cycle in $K$ containing $[ u , x ]$ (resp.  $[ u , y
]$) is no longer critical in $K - uv$.

The value of $c( x )$ (resp.   $c( y )$) should only be decremented if
$c( x )$  (resp.  $c( y )$) was previously  incremented to account for
the           critical           cycle           that           became
non-critical. Algorithm~\ref{alg:processdegree3}  uses the values of
the $o$ and $t$  attributes of $x$ and $y$ to decide  whether $c( x )$
and $c( y )$ should be decremented as follows:

\begin{itemize}
\item If,  immediately after  the contraction of  $[u,v]$ in  $K$, the
  values of $t(x)$ and $t(y)$ are  both different from $-1$, then $[ u
  , x ]$ and $[  u , y ]$ are both in $\lte$, and  $c( x )$ and $c(y)$
  were incremented  by $1$  to account for  the existence of  $C$ when
  either $[  u ,  x ]$  or $[  u , y  ]$ was  tested against  the link
  condition  in line  4 of  Algorithm~\ref{alg:processdegreegt3} (with
  $x$ or $y$  labeled $v$). Both $x$ and $y$  are vertices with degree
  greater than $3$ in $K$, as it was  the case when $[ u , x ]$ and $[
  u , y  ]$ were removed from $\lue$ and then  tested against the link
  condition.   From the case  $d_v >  3$, we  know that  $c( x  )$ and
  $c(y)$ were incremented by $1$ to account for $C$ exactly once.  So,
  to account  for the fact that  $C$ is no longer  critical in $K-uv$,
  both $c(x)$ and $c(y)$ are  decremented by $1$ after the contraction
  of  $[ u  , v  ]$ in  line 3  of Algorithm~\ref{alg:processdegree3},
  which is done right after by lines 6 and 7.
 
\item If, immediately after the contraction of $[ u , v ]$, $t(x) \neq
  -1$ and $t(  y ) = -1$, then only  $[ u , x ]$  is in $\lte$. Vertex
  $y$ cannot be  trapped, as edge $[  v , y ]$ is  contractible in $K$
  (see Proposition \ref{prop:contractible1}) and no edge incident on a
  trapped  vertex can be  contractible (see  Lemma \ref{lem:trapped}).
  Thus, vertex $y$ has not been processed yet, which means that $[ u , y
  ]$ is still in $\lue$.  Moreover, the value of $c(x)$ is incremented
  to account  for $C$ if  and only if  $[ u ,  x ]$ was  inserted into
  $\lte$ after $y$ became a neighbor of  $u$ (i.e., $n( y ) = u$).  In
  fact,    if    $n(     y    )    =    u$    then     line    4    of
  Algorithm~\ref{alg:processdegreegt3} is executed for  $v = x$ and $z
  = y$, incrementing $c(x)$ by $1$ to account for $C$. Also, since $t(
  y )  = -1$, line  6 of Algorithm~\ref{alg:processdegreegt3}  is {\em
    not} executed for $z = y$,  and hence $c(y)$ is not incremented by
  $1$  to account for  $C$ while  $[u,x]$ is  tested against  the link
  condition.   Conversely, if $[  u ,  x ]$  was inserted  into $\lte$
  before $y$  became a neighbor  of $u$, then  $y$ is not a  vertex in
  $\po_{ux}$,      which       means      that      line       4      of
  Algorithm~\ref{alg:processdegreegt3} is not executed for $v = x$ and
  $z =  y$. Thus,  the value of  $c(x)$ is  not incremented by  $1$ to
  account for  $C$. This is consistent  with the fact that  $C$ is not
  even a cycle in the current triangulation by the time $[ u , x ]$ is
  tested against the link condition.

  When  $[ u  ,  x ]$  is inserted  into  $\lte$ after  $y$ becomes  a
  neighbor of $u$, we must have $o( x ) \ge o( y )$, as $[ u , y ]$ is
  still in list $\lue$ (i.e., $t(y) = -1$) and $[ u , x ]$ was removed
  from $\lue$ before $[ u , y ]$.  Since $t( w ) \ge o( w )$ for every
  vertex $w$ such that $[ u , w ]$ is in $\lte$, we must have that $t(
  x ) \ge o( y )$.  If $[  u , x ]$ is inserted into $\lte$ before $y$
  becomes a neighbor  of $u$, then $t( x )  < o( y )$, as  $t( x )$ is
  the time at  which $[ u ,  x ]$ is removed from  $\lue$ and inserted
  into  $\lte$,  while $o(y)$  is  the time  at  which  $y$ becomes  a
  neighbor of $u$. So, whenever $t(x) \neq  -1$, $t( y ) = -1$ and $t(
  x ) \ge o( y )$, the value  of $c(x)$ (but not the one of $c(y)$) is
  decremented by $1$ in line 15 of Algorithm~\ref{alg:processdegree3},
  right after the contraction of $[u,v]$ in line 3, to account for the
  fact that $C$ is no longer critical in $K-uv$.

\item  If $t(x)  = -1$  and $t(  y )  \neq -1$  immediately  after the
  contraction of $[u,v]$, then we have the same case as before, except
  that the roles of $x$ and $y$ are interchanged.

\item If both $t(x)$ and $t(y)$ are equal to $-1$, then neither $[ u ,
  x ]$ nor  $[ u , y ]$ are  in $\lte$, and thus there  is no need for
  updating $c(x)$ and $c(y)$ (as  none of them were incremented by $1$
  to account for $C$). Furthermore, since $C$ is no longer critical in
  $K-uv$, the values of $c(x)$ and $c(y)$ cannot be incremented by $1$
  to account for  $C$ when $[u,x]$ and $[u,y]$  are tested against the
  link condition, which  is also consistent with the  fact that $C$ is
  not critical in  $K-uv$. In fact, cycle $C$ may not  even be a cycle
  in $K$ when $[u,x]$ and $[u,y]$ are tested.
\end{itemize}

To  illustrate all cases  above, consider  the triangulation  $K_1$ in
Figure~\ref{fig:createcc3}.  Note that vertex $x$ is a neighbor of $u$
in $K_1$,  but vertex $y$ is  not.  Suppose that edge  $[ u ,  w ]$ is
contracted, making  $y$ a neighbor  of $u$ and  yielding triangulation
$K_2$ in Figure~\ref{fig:createcc3}.  Next, suppose that edge $[ u , z
]$     is    contracted,     yielding    triangulation     $K_3$    in
Figure~\ref{fig:createcc3}. Finally, since  $v$ is a degree-$3$ vertex
in $K_3$, edge $[ u , v ]$ is  contracted, which makes $( u , x , y )$
a non-critical cycle in  $K_3 - uv$. After $[ u ,  v ]$ is contracted,
the   values  of   $c(   x  )$   and   $c(  y   )$   are  updated   by
Algorithm~\ref{alg:processdegreegt3}.   To illustrate how  the updates
are carried  out by the  algorithm, consider the  following scenarios:
(i) $[  u , x ]$ is  removed from $\lue$ before  edge $[ u ,  w ]$ is,
(ii) $[  u , x ]$ is  removed from $\lue$ after  edge $[ u ,  w ]$ is,
(iii) $[ u ,  y ]$ is removed from list $\lue$ before $[  u , z ]$ is,
and (iv) $[ u , y ]$ is removed from list $\lue$ after $[u,z]$ is.

\begin{figure}[htb!]
\begin{center}
\includegraphics[width=4.7in]{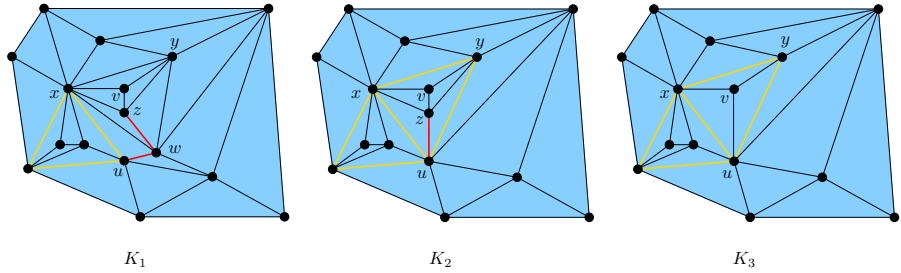}
\caption{\label{fig:createcc3} $K_2$  (resp.  $K_3$) is  obtained from
  $K_1$ (resp.  $K_2$) by contracting $[u,w]$ (resp. $[u,z]$). }
\end{center}
\end{figure}

Suppose that (i)  and (iii) hold. Then, both  $[u,x]$ and $[u,y]$ have
been  tested  against  the  link  condition by  the  time  $[u,v]$  is
considered for contraction in $K_3$. So, $t(x) \neq -1$ and $t(y) \neq
-1$ immediately after  the contraction of $[ u, v  ]$, as both $[u,x]$
and $[u,y]$ have  already been removed from list  $\lue$ (and inserted
into list $\lte$). Since $y$ is  not a neighbor of $u$ when $[u,x]$ is
tested against the link condition, the values of $c(x)$ and $c(y)$ are
not  incremented to  account for  critical cycle  $(u,x,y)$  in $K_3$.
Indeed, both $c(x)$  and $c(y)$ are incremented by  $1$ to account for
$(u,x,y)$  in $K_3$  while $[  u ,  y ]$  is tested  against  the link
condition.    This  is   done  by   lines   4  and   6  of   Algorithm
\ref{alg:processdegreegt3},  with $x$  and  $y$ labeled  $z$ and  $v$,
respectively, as  $t(x) \neq -1$  and $t(  x ) <  o( y )$.   After the
contraction  of   $[u,v]$  in  $K_3$,  both  $c(x)$   and  $c(y)$  are
decremented    by   $1$   in    lines   6    and   7    of   Algorithm
\ref{alg:processdegreegt3}, which accounts for the fact that $(u,x,y)$
is no longer critical in $K_3 - uv$.

If (iv) holds  instead, then only $[u,x]$ has  been tested against the
link condition  by the time  $[u,v]$ is considered for  contraction in
$K_3$.   So, $t(x)  \neq -1$  and $t(y)  = -1$  immediately  after the
contraction of $[  u, v ]$, as $[u,y]$ is still  in list $\lue$. Since
$y$ is not  a neighbor of $u$ when $[u,x]$ is  removed from $\lue$ and
tested against the link condition, we get $t(x) < o(y)$.  This implies
that   $c(x)$   is  not   decremented   by   $1$,   in  line   15   of
Algorithm~\ref{alg:processdegree3}), to account for the fact that $( u
, x , y )$ is not critical  in $K_3 - uv$. This is consistent with the
fact that  $c(x)$ is not incremented  by $1$ to  account for critical
cycle $( u  , x , y  )$ when $[ u ,  x ]$ was tested  against the link
condition.

Suppose that  (ii) holds.  Then, vertex  $y$ is already  a neighbor of
$u$ when $[  u , x ]$ is removed from  list $\lue$.  Furthermore, edge
$[u,x]$ is  removed from $\lue$ before  $[u,y]$ is, as  both edges are
inserted at  the rear of $\lue$  and $[u,x]$ is  inserted first. Since
$t(y) = -1$ and $t(y) < o(  x )$ by the time $[u,x]$ is tested against
the link condition,  both $c(x)$ and $c(y)$ are  incremented by $1$ in
lines 4  and 6 of  Algorithm \ref{alg:processdegreegt3}, respectively,
during the test.  If (iii) also  holds, then we get $t(x) \neq -1$ and
$t(y)  \neq  -1$  by the  time  $[u,v]$  is  tested against  the  link
condition. So, after the contraction  of $[u,v]$ in $K_3$, both $c(x)$
and   $c(y)$  are   decremented  by   $1$  in   lines  6   and   7  of
Algorithm~\ref{alg:processdegree3},  respectively, to account  for the
fact that  $(u,x,y)$ is not a critical  cycle in $K_3 -  uv$.  If (iv)
holds instead, then since $v$  is a degree-$3$ vertex, edge $[u,v]$ is
removed  from $\lue$  before $[u,y]$  is. This  means that  $[u,y]$ is
still  in $\lue$  after the  contraction  of $[u,v]$.   Thus, $t(y)  =
-1$. But,  since $y$ was  a neighbor of  $u$ when $[u,x]$  was removed
from $\lue$, we  get $t( x ) \ge  o( y )$. So, both  $c(x)$ and $c(y)$
are    decremented    by    $1$     in    lines    6    and    7    of
Algorithm~\ref{alg:processdegree3},  respectively, to account  for the
fact that $(u,x,y)$  is not critical in $K_3 -  uv$.  Thus, the values
of  $c(x)$ and  $c(y)$ are  consistently updated  by the  algorithm in
cases (ii) and (iii).

Finally,      suppose       that      triangulation      $K_2$      in
Figure~\ref{fig:createcc3}  is   the  initial  triangulation   in  the
processing  of $u$,  which means  that the  algorithm finds  that both
vertices  $x$  and  $y$  are   neighbors  of  $u$  in  lines  8-19  of
Algorithm~\ref{alg:contractions}, and  thus $o(x) = o(y)  \neq -1$. If
edge $[u,z]$ is  removed from $\lue$ before both  $[u,x]$ and $[u,y]$,
then  $[u,x]$ and  $[u,y]$ are  in list  $\lue$ immediately  after the
contraction of $[u,z]$. Since $v$ is a degree-$3$ vertex, edge $[u,v]$
is  inserted at the  front of  $\lue$, which  implies that  $[u,v]$ is
removed from $\lue$  before any of $[u,x]$ and  $[u,y]$ is.  So, after
the contraction  of $[u,v]$,  we get  $t(x) = t(y)  = -1$.   Thus, the
values  of  $c(x)$  and   $c(y)$  are  not  decremented  in  Algorithm
\ref{alg:processdegree3} to account for the fact that $(u,x,y)$ is not
a critical  cycle in  $K-uv$.  This is  consistent with the  fact that
none of  $c(x)$ and  $c(y)$ have been  incremented yet.   This example
shows that, to  consistently update the values of  the $c$ attributes,
our algorithm need not increment  counters every time a critical cycle
arises.

\subsection{Processing edges}\label{sec:procedges}
From  Proposition~\ref{prop:listLUE}, each  edge that  belongs  to list
$\lue$, during the processing of vertex $u$, is an edge of the form $[
u , z  ]$ such that $c( z ) =  0$, $o(z) \neq -1$, $t( z  ) = -1$, and
$p( z ) = \textit{false}$ (i.e, $z$ is in $Q$ and thus it has not been
processed  yet).   Furthermore, every  edge  in  $\lue$ is  eventually
removed from  $\lue$ during the execution  of the while  loop in lines
22-30 of Algorithm~\ref{alg:contractions}. Once an edge $[ u , z ]$ is
removed  from  $\lue$,  there  are  $3$ possibilities:  it  is  either
contracted, inserted into list $\lte$, or ignored.

If $[  u , z ]$ is  contracted, then it is  removed from triangulation
$K-uz$ and $p(z)$  is set to {\em true},  which prevents the algorithm
from trying to process vertex $z$ after it is removed from $Q$ in line
5 of Algorithm~\ref{alg:contractions}. If $[ u , z ]$ is inserted into
$\lte$, then  $[ u , z ]$  has been tested against  the link condition
and        found        to        be        non-contractible        by
Algorithm~\ref{alg:processdegreegt3}. Moreover,  immediately before $[
u   ,   z   ]$   is   inserted   into   $\lte$   (see   line   16   of
Algorithm~\ref{alg:processdegreegt3}), the value of  $c( z )$ is equal
to the number  of critical cycles containing  $[ u , z ]$  in $K$, and
the value  of $t( z )$ is  the time at which  $[ u , z  ]$ was removed
from $\lue$.  If $[  u , z ]$ is ignored, i.e.,  if the degree, $d_u$,
of $u$ and the  degree, $d_z$, of $z$ are both equal  to $3$ (see line
25    of    Algorithm~\ref{alg:contractions}    and    line    2    of
Algorithm~\ref{alg:processdegree3}),  then   $K$  is  (isomorphic  to)
$\T_4$, which means that $[ u , z ]$ is not contractible.

List $\lue$ will eventually be empty after finitely many iterations of
the  while loop  in lines  22-30  of Algorithm~\ref{alg:contractions}.
This  is  because   there  are  finitely  many  edges   in  the  input
triangulation $\T$, each edge  contraction yields a triangulation with
three fewer edges,  no vertex is created by the  algorithm, and no edge
removed  from  $\lue$ is  inserted  into  $\lue$  again.  So,  let  us
consider  the moment  at which  $\lue$ becomes  empty and  line  31 of
Algorithm~\ref{alg:contractions} is  reached. For every edge $[  u , z
]$ in the current triangulation, $K$, we distinguish two cases: (1) $[
u , z ] \not\in \lte$ and (2) $[ u , z ] \in \lte$.

If $[ u ,  z ] \not\in \lte$, then let us consider  the value of $p( z
)$. If  $p( z )$ is {\em  true}, then vertex $z$  was processed before
$u$ is removed from  $Q$. So, edge $[ u , z  ]$ is never inserted into
$\lue$. Since $z$  was processed before, it is  trapped, which implies
that $[ u , z ]$ is non-contractible. If $p( z )$ is {\em false}, then
$z$ is still in $Q$, and edge  $[ u , z]$ was ignored by the algorithm
after being  removed from list  $\lue$. So, triangulation $K$  must be
(isomorphic  to) $\T_4$,  which  implies  that all  edges  of $K$  are
non-contractible edges.

If $[ u , z ] \in \lte$, then  $[ u , z ]$ has been tested against the
link  condition  after being  removed  from  $\lue$  and found  to  be
non-contractible at  the time. List  $\lte$ is a temporary  holder for
this kind of edge. Every time  list $\lue$ becomes empty and the while
loop  in lines  22-30 of  Algorithm~\ref{alg:contractions}  ends, list
$\lte$  is  examined   by  procedure  \textsc{ProcessEdgeList}$()$  in
Algorithm~\ref{alg:processlistS},  which  is  invoked  by line  32  of
Algorithm~\ref{alg:contractions}  whenever $\lte$  is  nonempty.  This
procedure  checks  whether  an  edge  $[  u ,  v]$  in  $\lte$  became
contractible (after being  inserted into $\lte$). If so,  at least one
contractible edge in $\lte$ is contracted. The contraction of $[ u , v
]$ can generate  edges in $K - uv$  that are not in $K$.   This is the
case whenever $\po_{uv} \neq  \emptyset$, and the edges are precisely
the ones of the form $[u ,  w]$ in $K-uv$, with $w \in \po_{uv}$ (see
Figure~\ref{fig:contraction2}).

\begin{algorithm}[htb!]\small
\caption{\textsc{ProcessEdgeList}($K , S , \lue , \lte , \textit{ts}$)}\label{alg:processlistS}
\begin{algorithmic}[1]
     \WHILE {$\lte \neq \emptyset$}
            \STATE \text{let $e = [ u , v ]$ be the edge at the front of $\lte$} 
             \IF {$d(v) = 3$}
                   \STATE \text{remove edge $e = [ u , v ]$ from $\lte$}
                   \STATE \textsc{ProcessVertexOfDegreeEq3}$( e , K , S , \lue , \lte , \textit{ts} )$
             \ELSE
                 \STATE{\textbf{break}}\COMMENT{the edge at the front
                   of $\lte$ is not incident on a degree-$3$ vertex}
            \ENDIF
      \ENDWHILE \COMMENT{contract edges incident on degree-$3$ vertices}
      \IF {$\lte \neq \emptyset$}
           \STATE \text{let $e = [ u , v ]$ be the edge at the front of $\lte$} 
           \IF {$c(v) = 0$}
               \STATE \text{remove edge $e$ from $\lte$}\COMMENT{the degree of $v$ is greater than $3$}
               \STATE \textsc{Contract}( $e, K, S, \lue , \lte , \textit{ts}$
               )\COMMENT{since $c(v) = 0$, edge $e$ is contractible}
          \ENDIF
          \ENDIF\COMMENT{the  first edge  of $\lte$  is incident  on a
            vertex with degree greater than $3$}
\end{algorithmic}
\end{algorithm}

To efficiently find a contractible edge in $\lte$ or find out that one
does not  exist, our  algorithm always moves  every edge  $[ u ,  z ]$
whose value of  $c(z)$ is $0$ to the front  of $\lte$.  In particular,
every time that the value of $c(z)$ is decremented, for any vertex $z$
such that $[ u , z ]$ is in $\lte$, the algorithm verifies if $c( z )$
becomes $0$. If so,  edge $[ u , z ]$ is moved  to the front of $\lte$
(see       lines      8-13,       16-18,       and      21-23       of
Algorithm~\ref{alg:processdegree3}). In addition,  every time that the
value of $c(z)$ is incremented, for any  vertex $z$ such that $[ u , z
]$ is  in $\lte$, the algorithm verifies  if $c( z )$  becomes $1$. If
so, edge $[ u  , z ]$ is moved to the rear  of $\lte$ (see lines 37-39
of      Algorithm~\ref{alg:contract}     and     lines      7-9     of
Algorithm~\ref{alg:processdegreegt3}).  So,  the  following  invariant
regarding $\lte$ also holds:

\begin{pro}\label{prop:listLTE2}
  Let $u$  be the currently  processed vertex of the  algorithm. Then,
  the following  property regarding list $\lte$ is  a (loop) invariant
  of  the while and  repeat-until loops  in lines  22-30 and  21-34 of
  Algorithm \ref{alg:contractions}: no edge $[ u , z ]$ in $\lte$ such
  that $c(  z ) > 0$ can  precede an edge $[  u , w ]$  in $\lte$ such
  that $c( w )$ is equal to $0$.
\end{pro}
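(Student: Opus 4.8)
The plan is to argue by a standard loop-invariant induction, tracking every event during one pass of the loops that can change either the membership of $\lte$ or the counter $c(z)$ of an edge $[u,z]$ already in $\lte$. The entire invariant hinges on two complementary bookkeeping rules the algorithm enforces: whenever the counter of an edge in $\lte$ \emph{drops to $0$}, that edge is moved to the \emph{front} of $\lte$, and whenever the counter of an edge in $\lte$ \emph{rises from $0$ to $1$}, that edge is moved to the \emph{rear}. Granting these two rules, the $c=0$ edges are always kept as a prefix and the $c>0$ edges as a suffix, which is exactly the asserted ordering. For initialization I would note that $\lte$ is emptied in line 20 of Algorithm~\ref{alg:contractions}, immediately before both loops are entered, so the invariant holds vacuously.

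For maintenance I would enumerate the only four kinds of events affecting $\lte$ in one iteration, all occurring inside the procedures called in lines 26, 28, and 32 of Algorithm~\ref{alg:contractions}. \textbf{Insertion.} Line 16 of Algorithm~\ref{alg:processdegreegt3} inserts $[u,v]$ at the rear, and this branch is taken only when $c(v)>0$; placing a positive-counter edge last, with all $c=0$ edges already forming a prefix by hypothesis, preserves the invariant. \textbf{Decrement.} The counter of an $\lte$ edge $[u,z]$ is decremented only in lines 6, 7, 15, and 20 of Algorithm~\ref{alg:processdegree3}; each decrement is immediately guarded by a test ``$c(z)=0$?'' and, when it holds, by a move of $[u,z]$ to the front (lines 9, 12, 17, 22), which inserts the newly-zero edge among the other $c=0$ edges. \textbf{Increment.} The counter of an $\lte$ edge is incremented only in line 36 of Algorithm~\ref{alg:contract} and line 6 of Algorithm~\ref{alg:processdegreegt3}; each is immediately guarded by ``$c(z)=1$?'' and, when it holds, by a move to the rear (line 38 of Algorithm~\ref{alg:contract}, line 8 of Algorithm~\ref{alg:processdegreegt3}), sending the edge that just left $0$ behind every other edge. \textbf{Removal.} \textsc{ProcessEdgeList} (Algorithm~\ref{alg:processlistS}) deletes edges only from the \emph{front} of $\lte$, which can never make a $c>0$ edge precede a $c=0$ edge. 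Any edge whose counter does not cross the $0$/positive boundary keeps both its value-class and its position, so no new violation is introduced, and the induction closes.

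The step I expect to demand the most care is establishing the \emph{exhaustiveness} of this enumeration, that is, that no counter update on an $\lte$ edge escapes its paired move and that $\lte$ is modified nowhere else. Here I would invoke Propositions~\ref{prop:listLUE} and \ref{prop:listLTE}: Proposition~\ref{prop:listLTE}(2) (together with Proposition~\ref{prop:listLUE}(5), which gives $t(z)=-1$ for every $\lue$ edge) shows that the guard $t(\cdot)\neq-1$ used before each increment in line 5 of Algorithm~\ref{alg:processdegreegt3} and line 35 of Algorithm~\ref{alg:contract} singles out exactly the edges lying in $\lte$. This separates the genuine $\lte$ updates from the increment $c(v)\leftarrow c(v)+1$ in line 4 of Algorithm~\ref{alg:processdegreegt3}, whose edge $[u,v]$ was just removed from $\lue$ and is therefore, by Proposition~\ref{prop:listLTE}(1), not in $\lte$, and from the initialization $c(z)\leftarrow 0$ in line 7 of Algorithm~\ref{alg:contract}, whose vertex is a brand-new neighbor bound for $\lue$. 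I would finally record the elementary fact that every counter changes by exactly $\pm1$ per step, so that the guards ``$c(z)=0$'' after a decrement and ``$c(z)=1$'' after an increment detect precisely the boundary crossings, never missing nor spuriously triggering a move. With exhaustiveness in hand, the four-case maintenance argument above completes the proof.
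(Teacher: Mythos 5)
Your proposal is correct and follows essentially the same route as the paper: the paper's proof (via the auxiliary invariant for the inner while loop in Proposition~\ref{prop:listSwhile2} and then the repeat-until induction) likewise enumerates exactly the insertion at the rear with $c>0$, the guarded decrements with a move to the front upon reaching $0$, the guarded increments with a move to the rear upon reaching $1$, and the front-only removals in \textsc{ProcessEdgeList}, and closes the induction the same way. The only difference is presentational — the paper splits the argument into a separate lemma for the while loop before handling the repeat-until loop, whereas you treat all events in one unified case analysis.
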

\begin{proof}
See~\ref{sec:correctness}.
\end{proof}

From Proposition~\ref{prop:listLTE2},  it suffices to  check the value
of $c( z )$, where $[ u , z  ]$ is the edge at the front of $\lte$, to
find  out whether  $\lte$ contains  a contractible  edge,  which takes
constant time.  Of course, the  correctness of this test relies on the
premise that $c(w)$  is indeed equal to the  number of critical cycles
in $K$  containing edge $[  u , w ]$,  for every edge  $[ u , w  ]$ is
$\lte$. The following states that this premise is valid:

\begin{pro}\label{pro:ccequality}
  Let  $u$ be any  vertex of  $\T$ processed  by the  algorithm. Then,
  whenever  line 32  of  Algorithm~\ref{alg:contractions} is  reached,
  during the processing of  $u$, we have that for every edge  $[ u , w
  ]$ in list $\lte$,  the value of $c( w )$ is  the number of critical
  cycles in $K$ containing edge $[ u , w  ]$, where $K$ is the current
  triangulation at the time.
\end{pro}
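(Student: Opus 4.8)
The plan is to isolate a single loop invariant, stronger than the proposition, and show it is preserved by every elementary step taken while $u$ is processed; at line~32 it will specialize to the claim. Fix the processed vertex $u$ and, for an edge $[u,w]\in\lte$, say that a critical cycle $(u,w,z)$ of the current triangulation $K$ is \emph{pending} if $[u,z]\in\lue$ and $o(z)>t(w)$ --- that is, its third vertex $z$ became a neighbour of $u$ only after $[u,w]$ was last tested, and $[u,z]$ has not yet been examined. Writing $N(w)$ for the number of critical cycles of $K$ through $[u,w]$ and $P(w)$ for the number of pending ones, the invariant I would carry is
\[
\text{(INV)}\qquad c(w)=N(w)-P(w)\quad\text{for every }[u,w]\in\lte.
\]
Since $\lue=\emptyset$ at line~32, no cycle is pending there, so (INV) collapses to $c(w)=N(w)$, which is exactly the assertion. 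The base case is immediate, as $\lte$ is empty when the repeat--until loop is entered (line~20), so (INV) holds vacuously. By Propositions~\ref{prop:listLUE} and~\ref{prop:listLTE} I may freely read ``$[u,z]\in\lue$'' as ``$t(z)=-1$'' and ``$[u,z]\in\lte$'' as ``$t(z)\neq-1$'' for any neighbour $z$ of $u$, which is what lets the time-stamp guards in the pseudocode be matched against the definition of \emph{pending}.

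Next I would verify preservation under the event that tests an edge. Removing $[u,v]$ from $\lue$ sets $t(v)\leftarrow\textit{ts}$ and does not alter $K$, so $N$ is unchanged; the only effect on the bookkeeping is that $[u,v]$ leaves $\lue$, which un-pends every critical cycle $(u,v,z)$ with $[u,z]\in\lte$ for which $o(v)>t(z)$. This is precisely the guard $t(z)<o(v)$ in lines~5--6 of Algorithm~\ref{alg:processdegreegt3} that increments $c(z)$; when $o(v)\le t(z)$ the cycle was never pending and no update occurs. For the tested edge itself, lines~2--12 set $c(v)$ to $N(v)$, and because every current neighbour satisfies $o(\cdot)\le\textit{ts}=t(v)$ the edge has no pending cycle at the instant it enters $\lte$, so $c(v)=N(v)-P(v)$ as required.

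Then I would treat the two contraction events. When $d_v>3$ the edge is contractible, so $N$ gains only the newly shortened $4$-cycles $(u,z,w)$ with $z\in\po_{uv}$, and --- by the structural fact established just before Proposition~\ref{prop:contractible2} --- \emph{no} critical cycle is destroyed. If the new neighbour $z$ is already processed ($p(z)=\textit{true}$) then $[u,z]\notin\lue$, the cycle is not pending, and Algorithm~\ref{alg:contract} (lines~32--40) increments $c(w)$ for each $[u,w]\in\lte$; if $p(z)=\textit{false}$ then $[u,z]$ enters $\lue$ with $o(z)=\textit{ts}>t(w)$, the cycle is pending, $N$ and $P$ rise together, and (consistently) the algorithm leaves $c(w)$ alone, deferring the increment to the eventual test of $[u,z]$. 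When $d_v=3$ and $K\neq\T_4$, Proposition~\ref{prop:contractible2} shows the contraction destroys exactly the cycle $C=\textit{lk}(v,K)=(u,x,y)$ and creates none (as $\po_{uv}=\emptyset$), so (INV) requires decrementing $c(x)$ iff $[u,x]\in\lte$ and $C$ was not pending for it. Since $y$ is not trapped (Lemma~\ref{lem:trapped} with Proposition~\ref{prop:contractible1}), $[u,y]$ lies in $\lue$ or $\lte$, and ``$C$ not pending for $[u,x]$'' reads ``$t(y)\neq-1$ or $t(x)\ge o(y)$'' --- exactly the disjunction guarding the decrement of $c(x)$ in Algorithm~\ref{alg:processdegree3}, and symmetrically for $c(y)$. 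Reorderings within $\lte$ touch neither $K$ nor any $c$, so they preserve (INV) trivially.

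To close, I would note that the repeat--until loop only interleaves the while-loop (covered by the events above) with calls to \textsc{ProcessEdgeList}, whose sole actions are further contractions of the two kinds just analysed; hence (INV) holds continuously, and in particular each time line~32 is reached. The main obstacle is the interplay of the testing event with the $p(z)=\textit{false}$ branch of the $d_v>3$ contraction: one must show that the comparisons $t(z)<o(v)$ versus $t(z)\ge o(v)$ partition the critical cycles so that each is counted once, and exactly once, in every incident counter --- equivalently, that ``pending'' is precisely the condition the $o$ and $t$ stamps detect. Establishing that correspondence, leaning on Propositions~\ref{prop:listLUE} and~\ref{prop:listLTE}, is the delicate core of the proof.
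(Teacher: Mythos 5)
Your proposal is correct and, at bottom, verifies the same things the paper verifies: that each of the three kinds of events (testing an edge, contracting with $d_v>3$, contracting with $d_v=3$) changes the counters exactly when the set of critical cycles through an $\lte$-edge changes, with the $o$/$t$ stamps arbitrating the deferred updates. The organization, however, is genuinely different and in my view cleaner. The paper's proof argues a bidirectional correspondence --- ``$c(w)$ is incremented (resp.\ decremented) if and only if a critical cycle through $[u,w]$ is created (resp.\ destroyed)'' --- and then patches the fact that this is not literally true instant-by-instant by remarking that, when the third vertex $z$ of a newly created cycle is unprocessed, the increment is postponed until $[u,z]$ is later removed from $\lue$ and tested. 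Your invariant $c(w)=N(w)-P(w)$ absorbs that patch into the statement itself: the ``pending'' term $P(w)$ is precisely the set of deferred increments, it is provably detected by the guard $t(\cdot)<o(\cdot)$, and it vanishes at line~32 because $\lue=\emptyset$ there. What your formulation buys is a property that holds at \emph{every} point of the processing of $u$, which makes the preservation argument for each event local and mechanical; what the paper's formulation buys is brevity and a direct match with the prose of Section~\ref{sec:counting}. Two small points to tighten: the fact that a contraction with $d_v>3$ destroys no critical cycle \emph{through $u$} (cycles elsewhere may still die, but they are irrelevant to the counters) is established in the discussion preceding Proposition~\ref{prop:contractible2} rather than by that proposition itself, and your $d_v=3$ case should state explicitly --- as you implicitly do via Lemma~\ref{lem:trapped} and Proposition~\ref{prop:contractible1} --- that $x$ and $y$ cannot be already-processed vertices, since otherwise ``$[u,y]\in\lue$ or $[u,y]\in\lte$'' would not exhaust the possibilities and the guard $t(y)\neq -1 \lor t(x)\ge o(y)$ would not coincide with ``not pending.''
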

\begin{proof}
See~\ref{sec:correctness}.
\end{proof}

If  list  $\lue$ is  empty  after Algorithm~\ref{alg:processlistS}  is
executed, then  no edge  in $\lte$ is  contractible, which  also means
that  no edge  incident  on $u$  is  contractible. So,  vertex $u$  is
trapped and the processing of  $u$ ends.  Otherwise, the while loop in
lines 22-30  of Algorithm~\ref{alg:contractions} is  executed again to
process the edges in $\lue$.  It  is worth noting that {\em no edge is
  tested  against the  link condition  more than  once}.  Furthermore,
since $\lte$ is  a doubly-connected linked list, moving  an element of
$\lte$ from any position to the front or rear of $\lte$ can be done in
constant time if we have a pointer to the element.  With that in mind,
we included a pointer in the  edge record of our augmented DCEL to the
edge record  of $\lte$, which makes  it possible to access  an edge in
$\lte$ from the DCEL record of the edge in constant time.

\subsection{Updating the DCEL}\label{sec:dcel}
%



Our   algorithm   stores  the   input   triangulation,   $\T$,  in   a
Doubly-Connected Edge List  (DCEL) data structure~\cite{BCKO08}, which
is augmented with vertex attributes $p$, $n$, $c$, $o$, and $t$ and an
edge attribute  (i.e., a pointer to  a node in $\lte$).   Our DCEL has
four records: one for vertices,  one for edges, one for triangles, and
one  for half-edges.  For each  half-edge,  $h$, that  bounds a  given
triangle of  the triangulation represented  by the DCEL, there  is the
{\em  next} half-edge  and the  {\em previous}  half-edge on  the same
boundary. The  destination vertex of $h$  is the origin  vertex of its
next  half-edge, while  the origin  vertex of  $h$ is  the destination
vertex of its previous half-edge.

The  record of  a  vertex $v$  stores  a pointer,  \textit{he}, to  an
arbitrary  half-edge whose  origin vertex  is $v$.   It  also contains
fields corresponding  to the $p$,  $n$, $c$, $o$, and  $t$ attributes.
The edge record of an  edge $e$ contains two pointers, \textit{h1} and
\textit{h2},  one for  each  half-edge  of $e$.   It  also contains  a
pointer to a record of $\lte$. The face record of a face $\tau$ stores
a  pointer,  \textit{he},  to  one  of the  three  half-edges  of  its
boundary. The half-edge record of  a half-edge $h$ contains a pointer,
\textit{or},  to its  origin vertex;  a pointer,  \textit{pv},  to its
previous half-edge;  a pointer, \textit{nx}, to its  next half-edge; a
pointer,  \textit{eg},  to  its  corresponding edge;  and  a  pointer,
\textit{fc}, to the face it belongs to.

Our DCEL also has  a procedure, called \textsc{Mate}$()$, that returns
the mate of  a given half-edge $h$ by comparing a  pointer to $h$ with
the pointers $\textit{h1(eg(h))}$ and $\textit{h2(eg(h))}$ of the edge
$eg(h)$   to    which   $h$   belongs.     If   $h$   is    equal   to
$\textit{h1(eg(h))}$,  then  \textsc{Mate}$()$  returns a  pointer  to
$\textit{h2(eg(h))}$.  Otherwise,  \textsc{Mate}$()$ returns a pointer
to $\textit{h1(eg(h))}$.

Procedure    \textsc{Collapse}$()$   in   Algorithm~\ref{alg:collapse}
implements   the  edge  contraction   operation  in   a  triangulation
represented  by our  DCEL. If  $e =  [  u ,  v ]$  is the  edge to  be
contracted   during   the   processing   of  a   vertex,   $u$,   then
\textsc{Collapse}$()$  removes edges  $[u,v]$,  $[v,x]$, and  $[v,y]$,
along with faces $[ u , v , x ]$ and $[ u, v , y ]$, where $x$ and $y$
are the two vertices in $\textit{lk}( [ u , v ] , K )$, and $K$ is the
current triangulation. In addition, \textsc{Collapse}$()$ replaces all
edges of the form $[  v , z ]$, where $z \not\in \{ u  , x , y \}$, by
edges   of   the   form   $[   u   ,  z   ]$.    Each   operation   in
\textsc{Collapse}$()$ takes constant time,  but there are $d_v-3$ edge
replacements.  So, the time  complexity of \textsc{Collapse}$()$ is in
$\Theta( d_v )$.

\begin{algorithm}[htb!]\small
\caption{\textsc{Collapse}($e, K , \textit{temp}$)}\label{alg:collapse}
\begin{algorithmic}[1]
     \STATE get the vertices $u$ and $v$ of $e$
     \STATE $h \leftarrow \textbf{if}~\textit{or}(\textit{h1}(e)) \neq u~\textbf{then}~\textit{or}(\textit{h1}(e))~\textbf{else}~\textit{or}(\textit{h2}(e))$
     \STATE $x \leftarrow \textit{or}( \textit{pv}( h ) )$
     \STATE $y \leftarrow \textit{or}( \textit{pv}( \textsc{Mate}( h ) ) )$
     \STATE $h1 \leftarrow \textsc{Mate}(\textit{nx}( h ))$ 
     \COMMENT{$h1$ starts at $x$ and ends at $v$}
     \STATE $h2 \leftarrow \textit{pr}( \textsc{Mate}( h ))$
     \COMMENT{$h2$ starts at $v$ and ends at $y$}
     \STATE $e1 \leftarrow \textit{eg}( h1 )$ 
     \COMMENT{$e1$ points to edge $[v,x]$}
     \STATE $e2 \leftarrow \textit{eg}( h2 )$
     \COMMENT{$e2$ points to edge $[v,y]$}
     \STATE $h3 \leftarrow \textit{nx}( h1 )$
     \WHILE{$h3 \neq h2$} 
            \STATE $\textit{or}( h3 ) \leftarrow u$
            \STATE{insert $h3$ into \textit{temp}}
            \STATE $h3 \leftarrow \textit{nx}( \textsc{Mate}( h3 ))$ 
     \ENDWHILE
     \COMMENT{replace $v$ by $u$} 
     \STATE $f \leftarrow \textit{eg}( \textit{pv}( h ) )$ 
     \COMMENT{$f$ is a pointer to edge $[ u , x ]$ in $K$} 
     \STATE $g \leftarrow \textit{eg}( \textit{nx}( \textsc{Mate}( h ) ) )$ 
     \COMMENT{$g$ is a pointer to edge $[ u , y ]$ in $K$} 
     \STATE $\textit{h1}(f) \leftarrow \textsc{Mate}( \textit{pv}( h ) )$
     \STATE $\textit{h2}(f) \leftarrow h1$
     \COMMENT{the half-edge starting at  $u$ and ending at $x$ is now a mate of $h1$}
     \STATE $\textit{h1}(g) \leftarrow \textsc{Mate}( \textit{nx}( \textsc{Mate}( h ) ) )$
     \STATE $\textit{h2}(g) \leftarrow h2$
     \COMMENT{the half-edge starting at  $y$ and ending at $u$ is now a mate of $h2$}
     \STATE $\textit{he}( u ) \leftarrow h2$
     \COMMENT{makes sure $u$ points to a half-edge in the final triangulation} 
     \STATE{remove edges $e$, $e1$, and $e2$, and triangles $\textit{fc}( \textit{h1}(e))$ and $\textit{fc}(\textit{h2}(e))$}
\end{algorithmic}
\end{algorithm}

\subsection{Genus-$0$ surfaces}\label{sec:genus0}
In this  section, we make  some observations about our  algorithm with
regard to triangulations of surfaces  of genus $0$, as it takes linear
time   in  $n_f$   to  produce   an  irreducible   triangulation  from
$\T$. Recall that $n_f$ is the number of triangles in $\T$.

We start by noticing that if  list $\lte$ is nonempty when the loop in
lines 21-34  of Algorithm~\ref{alg:contractions} ends  (i.e., when the
processing of vertex $u$ ends), then every  edge $[ u , v ]$ in $\lte$
is   a   non-contractible   edge   in   the   triangulation   at   the
time. Otherwise, the $c$  attribute of $v$ would be zero and  $[ u , v
]$ would have been  contracted.  From Lemma~\ref{lem:trapped}, we know
that $[ u , v]$ can no  longer be contracted, as vertex $u$ is trapped
after  being processed.  Nevertheless,  it is  still possible  that an
edge, $[  w ,  v ]$, in  the link  of $u$ is  contracted after  $u$ is
processed,  causing $[  u ,  v  ]$ to  be removed  from the  resulting
triangulation, as  shown in Figure \ref{fig:trapped}.   Of course, the
triangulation immediately before the contraction cannot be (isomorphic
to) $\T_4$.

\begin{figure}[htb!]
\begin{center}
\includegraphics{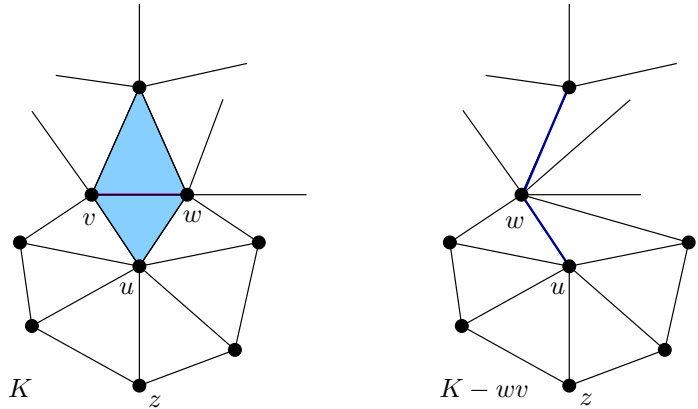}
\caption{\label{fig:trapped} Vertex $u$ is trapped, but its link may
 be modified by the contraction of a link edge.}
\end{center}
\end{figure}

Since $u$ is trapped before the contraction  of $[ v , w ]$, edge $[ u
, v]$ must  belong to a critical cycle, say  $C$, in the triangulation
$K$ to which  the contraction is applied. Similarly, edge $[  u , w ]$
must also belong  to a critical cycle, say  $C^{\prime}$, in $K$. Both
$C$ and $C^{\prime}$  can have at most one edge in  common. If they do
have an edge in common, then the contraction of $[ v , w ]$ identifies
$C$  and  $C^{\prime}$  in  the  resulting triangulation,  $K  -  wv$.
Otherwise, $C$ gives rise to  another critical cycle containing $[ u ,
w ]$ in $K-wv$.  In  either case, no critical cycle containing $[u,w]$
becomes non-critical in  $K-wv$. Thus, if $e$ is  any edge incident on
$u$  when  the  contraction  stage  ends, then  every  critical  cycle
containing $e$  immediately after $u$  is processed belongs to  or has
been merged into a critical cycle in triangulation $\T^{\prime}$.

When $\s$  is a  genus-$0$ surface, the  contraction stage  produces a
triangulation  $\T^{\prime}$   isomorphic  to  $\T_4$.    No  edge  of
$\T^{\prime}$ is contractible, of course,  but none of them can belong
to  a critical cycle  either.  So,  from our  previous remark,  we can
conclude that if $u$ is the first vertex removed from $Q$ in line 5 of
Algorithm~\ref{alg:contractions},  then list $\lte$  must be  empty by
the  time vertex  $u$ is  processed. Otherwise,  every edge  in $\lte$
would be  part of a critical  cycle in the  current triangulation, say
$L$, at the time. But, since exactly three of those edges must be part
of $\T^{\prime}$, the critical  cycles containing these three edges in
$L$ would also belong to $\T^{\prime}$.  However, this is not possible
as  $\T_4$  has  no  critical  cycles.  Hence,  after  vertex  $u$  is
processed, no edge incident on $u$ is part of a critical cycle in $L$.
Since $u$ is trapped, all  those edges must be non-contractible. Thus,
$L$ must  be isomorphic  to $\T_4$, and  hence {\em  all} contractions
occur during the processing of the first vertex $u$ removed from $Q$.

\subsection{Complexity}\label{sec:comp}
This section analyzes the time and space complexities of the algorithm
described in the previous sections. A key feature of this algorithm is
the fact  that it  tests an  edge against the  link condition  at most
once. If an  edge is ever tested against the  link condition and found
to be non-contractible, the edge is stored in an auxiliary list (i.e.,
$\lte$) and a critical cycle counter  is assigned to the edge by the
algorithm to keep  track of the number of critical cycles  containing
the edge.

It turns out that maintaining the critical cycle counters of all edges
in $\lte$  is cheaper  than repeatedly testing  them against  the link
condition. In  particular, the time to update  critical cycle counters
is constant  in lines 5-24  of Algorithm~\ref{alg:processdegree3}, can
be charged  to the time  spent with the  link condition test  in lines
2-11 of  Algorithm~\ref{alg:processdegreegt3}, and is  in $\sum_{z \in
  \mathcal{J}_u^v}   \mathcal{O}(  \rho_z   )$  in   lines   25-41  of
Algorithm~\ref{alg:contract}, where  $\mathcal{J}_u^v$ denotes the set
of all  vertices $z$ in  $\T$ such that  $z \in \T^{\prime}$,  $z$ has
been processed  before $u$,  $z$ becomes a  neighbor of $u$  after the
contraction of edge $[ u , v  ]$, and $\rho_z$ is the degree of $z$ in
the triangulation resulting from the  contraction.  As we see later in
the proof of  Theorem~\ref{thm:complexity}, if $\mathcal{C}_u$ denotes
the set  of all vertices  $v$ in $\T$  such that edge $[  u , v  ]$ is
contracted during the processing of $u$, then 
\[
\sum_{u   \in  \T^{\prime}}   \sum_{v  \in   \mathcal{C}_u}   \left  (
  \mathcal{O}(  d_v  )  +  \sum_{z \in  \mathcal{J}_u^v}  \mathcal{O}(
  \rho_z ) \right )
\]
is an  upper bound for  the total time  to test all  edges $[ u  , v]$
against  the link  condition plus  the time  to update  critical cycle
counters in lines 25-41  of Algorithm~\ref{alg:contract}, for every $u
\in \T^{\prime}$ and every  $v \in \mathcal{C}_u$, where $\T^{\prime}$
is   the  irreducible   triangulation  produced   by   the  algorithm.
Furthermore,  the above bound  can be  written as  $\mathcal{O}(n_v) +
\mathcal{O}( g^2 )$  if the genus $g$ of the surface  on which $\T$ is
defined is positive.

\begin{thm}\label{thm:complexity}
  Given a triangulation $\T$ of a surface $\s$, our algorithm computes
  an irreducible triangulation  $\T^{\prime}$ of $\s$ in $\mathcal{O}(
  g^2 +  g \cdot n_f  )$ time  if the genus  $g$ of $\s$  is positive,
  where $n_f$ is the number of  faces of $\T$.  
  If $g=0$, the time to
  compute $\T^{\prime}$ is  linear in $n_f$. In both  cases, the space
  required by the algorithm is linear in $n_f$.
\end{thm}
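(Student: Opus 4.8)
The plan is to first settle correctness and then charge the running time to five separate sources---initialization, per-vertex list setup, link-condition tests, edge contractions (calls to \textsc{Collapse}), and critical-cycle-counter maintenance---bounding each by $\mathcal{O}(g^2 + g\cdot n_f)$. Correctness is short: every contracted edge is contractible, so each intermediate $K$ stays a triangulation of $\s$ (Definition~\ref{def:contraction}); each contraction removes three edges, so the process terminates; and because every vertex of the output $\T^{\prime}$ has been \emph{processed}, it is trapped, and by Lemma~\ref{lem:trapped} it stays trapped, so $\T^{\prime}$ is irreducible (Definition~\ref{def:irreducible}). That the algorithm makes correct contractibility decisions without ever re-testing an edge is exactly the content of Propositions~\ref{prop:listLUE}--\ref{pro:ccequality}, which I would invoke verbatim. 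Two structural facts drive the timing. First, a vertex is processed at most once and, once trapped, is never eliminated (a trapped vertex has no contractible incident edge, so it is never the merged endpoint of a contraction); hence \emph{the set of processed vertices is exactly $V(\T^{\prime})$}, of size $n_v^{\prime}=\mathcal{O}(g)$ by Theorem~\ref{the:itsize} (and $n_e^{\prime}=\mathcal{O}(g)$). Second, while $v$ is not the vertex currently being processed its degree can only decrease, so at every instant $d_v\le d_v^{\prime}$, its degree in $\T$.

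The crux is the total link-condition-test time. Since each edge is tested at most once, a fixed vertex $v$ can be the unprocessed endpoint $[u,v]$ of a test at most once per processed vertex $u$; as only $n_v^{\prime}=\mathcal{O}(g)$ vertices are ever processed, $v$ is tested $\mathcal{O}(g)$ times, each test costing $\mathcal{O}(d_v)\le\mathcal{O}(d_v^{\prime})$ (the loop of Algorithm~\ref{alg:processdegreegt3} scans $\textit{lk}(v,K)$). Hence
\[
\sum_{\text{tests}}\mathcal{O}(d_v^{\prime})\ \le\ \mathcal{O}(g)\cdot\sum_{v}d_v^{\prime}\ =\ \mathcal{O}(g\cdot n_e)\ =\ \mathcal{O}(g\cdot n_f),
\]
using $\sum_v d_v^{\prime}=2n_e$ and $n_e=\Theta(n_f)$. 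The degree-$3$ shortcut (Proposition~\ref{prop:contractible1}) decides contractibility in $\mathcal{O}(1)$ and adds nothing. Each \textsc{Collapse} costs $\Theta(d_v)$ (Section~\ref{sec:dcel}); since every vertex is eliminated at most once with $d_v\le d_v^{\prime}$, their total is $\sum_v\mathcal{O}(d_v^{\prime})=\mathcal{O}(n_f)$. The counter decrements of Algorithm~\ref{alg:processdegree3} are $\mathcal{O}(1)$ per contraction, hence $\mathcal{O}(n_f)$ overall, and the increments in the test loop of Algorithm~\ref{alg:processdegreegt3} are charged to the test that contains them. Finally, the neighbor-setup loop of Algorithm~\ref{alg:contractions} scans $\textit{lk}(u,K)$ once per processed $u$, contributing $\sum_{u\in\T^{\prime}}\mathcal{O}(d_u)=\mathcal{O}(g\cdot n_f)$.

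The one genuinely delicate term is the counter maintenance inside \textsc{Contract} (lines 25--41 of Algorithm~\ref{alg:contract}) in the branch where contracting $[u,v]$ turns an \emph{already-processed} $z$ into a neighbor of $u$; there the scan of $\textit{lk}(z,K)$ costs $\mathcal{O}(\rho_z)$, producing $\sum_{u\in\T^{\prime}}\sum_{v\in\mathcal{C}_u}\sum_{z\in\mathcal{J}_u^v}\mathcal{O}(\rho_z)$. The key claim I would establish is that an edge joining two trapped vertices is never contracted (both endpoints forbid it) and never destroyed (neither endpoint is ever eliminated), so once such an edge $[u,z]$ appears during $u$'s processing it survives into $\T^{\prime}$. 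Consequently each ``new processed-neighbor'' event injects into a distinct edge of $\T^{\prime}$, bounding the number of events by $n_e^{\prime}=\mathcal{O}(g)$. Splitting $\rho_z$ into its processed neighbors (at most $n_v^{\prime}=\mathcal{O}(g)$ per event, hence $\mathcal{O}(g^2)$ in total) and its at-most-$n_v$ unprocessed neighbors (at most $\mathcal{O}(g)\cdot\mathcal{O}(n_v)=\mathcal{O}(g\cdot n_f)$ in total), this whole term is $\mathcal{O}(g^2+g\cdot n_f)$. Summing all five sources gives $\mathcal{O}(g^2+g\cdot n_f)$ for $g>0$.

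For $g=0$, Section~\ref{sec:genus0} shows $\T^{\prime}$ is isomorphic to $\T_4$ and that every contraction happens while the first vertex is processed; with $n_v^{\prime}=4=\mathcal{O}(1)$ each vertex is tested $\mathcal{O}(1)$ times, so the same accounting collapses to $\mathcal{O}(n_f)$, which is optimal since the input has size $\Theta(n_f)$. For space, the augmented DCEL is $\Theta(n_f)$, the six vertex attributes and the edge pointer add $\mathcal{O}(1)$ per record, the queue $Q$ and the lists $\lue,\lte$ together hold $\mathcal{O}(n_v)$ items, and the stack $S$ receives one record per contraction, i.e.\ $\mathcal{O}(n_v-n_v^{\prime})=\mathcal{O}(n_f)$ records; the total is $\Theta(n_f)$. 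I expect the main obstacle to be precisely the \textsc{Contract} term: proving rigorously that edges joining two trapped vertices persist---so that new processed-neighbor events inject into $E(\T^{\prime})$---and verifying that the residual unprocessed-neighbor scanning never exceeds the $\mathcal{O}(g\cdot n_f)$ budget already spent on testing.
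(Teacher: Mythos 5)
Your proposal is correct and follows the same skeleton as the paper's proof: charge everything to the $n_v^{\prime}=\mathcal{O}(g)$ processed vertices via Theorem~\ref{the:itsize}, bound the link-condition tests by $n_v^{\prime}\cdot\sum_v d_v=\mathcal{O}(g\cdot n_e)$, and control the expensive scans of already-processed neighbors by observing that each such event corresponds to a distinct edge of $\T^{\prime}$. The one place where you genuinely diverge is the disposal of the $\sum_{z\in\mathcal{J}_u^v}\mathcal{O}(\rho_z)$ term. The paper first charges the excess $\rho_z-d_z^{\prime}$ to the future contractions of link edges of $z$, reducing $\rho_z$ to the final degree $d_z^{\prime}$, and then bounds $\sum_u\sum_z d_z^{\prime}\le 2n_v^{\prime}n_e^{\prime}=\mathcal{O}(g^2)$. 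You instead bound $\rho_z$ directly by splitting $z$'s current neighbors into processed ones (at most $n_v^{\prime}$) and unprocessed ones (at most $n_v$), and multiply by the $\mathcal{O}(n_e^{\prime})=\mathcal{O}(g)$ events, getting $\mathcal{O}(g^2+g\cdot n_f)$; this is cruder (the paper isolates this term as $\mathcal{O}(g^2)$, you leak an extra $\mathcal{O}(g\cdot n_f)$ that the overall budget absorbs) but it avoids the paper's somewhat delicate forward-charging argument. Both versions rest on the same persistence fact --- that $\bigcup_{v\in\mathcal{C}_u}\mathcal{J}_u^v\subseteq\textit{lk}(u,\T^{\prime})$ because an edge joining a trapped vertex to a surviving vertex can never be contracted or collapsed away --- which the paper asserts implicitly and you correctly flag as the claim needing proof; your sketch of why such an edge persists (neither endpoint is ever the eliminated endpoint, and the only removal mechanisms require contracting an edge incident on $u$ or $z$) is sound. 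The $g=0$ case and the $\Theta(n_f)$ space accounting match the paper's.
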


\begin{proof}
  Let $n_v$ and $n_e$ be the number of vertices and edges of the input
  triangulation  $\T$.   The  initialization  of the  algorithm  (see
  Algorithm~\ref{alg:continit})      takes     $\mathcal{O}(     n_v)$
  time. Indeed,  each iteration  of the outer  for loop in  lines 2-13
  takes  $\Theta( d_u  )$ time  steps, where  $d_u$ is  the  degree of
  vertex $u$ in  $\T$, as line 3 and lines  7-12 require constant time
  each, and the inner for  loop in lines 4-6 takes $\Theta(d_u)$ time.
  Since
\[
\sum_{u \in  \T} d_u = 2 \cdot n_e \, ,
\]
the total time taken  by the outer for loop in lines  2-13 is given by
$\sum_{u \in {\T}} \Theta( d_u ) \in \Theta( n_e )$. So,
\begin{equation}\label{eq:tcomp1}
\Theta( n_e ) + \sum_{u \in \T} t_u
\end{equation}
is the total time complexity of the algorithm, where $t_u$ is the time
taken to process  vertex $u$ in the outer while loop  in lines 4-37 of
Algorithm~\ref{alg:contractions}.  If  $u$  does  not  belong  to  the
irreducible triangulation,  $\T^{\prime}$, produced by  the algorithm,
then $t_u$ is in $\Theta( 1 )$, as $p( u )$ is {\em false} immediately
after    $u$    is    removed    from    $Q$   in    line    $5$    of
Algorithm~\ref{alg:contractions}.  Consequently, we  can  re-write the
expression in Eq.~(\ref{eq:tcomp1}) as follows:
\begin{align}\label{eq:tcomp2}
  \Theta( n_e ) + \left ( \sum_{u \in \T, u \not\in \T^{\prime}} t_u
  \right ) + \left ( \sum_{u \in \T^{\prime}} t_u \right ) &= \Theta(
  n_e ) + \Theta( n_v - n_v^{\prime} ) + \left ( \sum_{u \in
      \T^{\prime}} t_u \right ) \\ \nonumber
&= \Theta( n_e ) + \mathcal{O}( n_v ) + \left ( \sum_{u \in \T^{\prime}} t_u \right ) \, ,
\end{align}
where $n_v^{\prime}$  is the number of vertices  in $\T^{\prime}$, and
the meaning of `$=$' is that  \textit{every function of the set on the
  left of `$=$' is also a function of the set on the right of `$=$'}.

We now restrict our attention to the time, $t_u$, to process vertex $u
\in  \T^{\prime}$, which  is the  time to  execute the  lines  7-35 of
Algorithm   \ref{alg:contractions}.   Let   $u$  be   any   vertex  of
$\T^{\prime}$. After $u$ is removed  from $Q$ in line $5$ of Algorithm
\ref{alg:contractions}, the for loop  in lines 8-19 is executed.  This
loop iterates exactly $\rho_u$ times,  where $\rho_u$ is the degree of
$u$ in the current triangulation,  $K$, i.e., the triangulation at the
moment that $u$ is removed from $Q$.  Since $u$ has not been processed
yet, we must have that $\rho_u  \le d_u$, where $d_u$ is the degree of
$u$ in  $\T$.  This  is because  the degree of  $u$ can  only decrease
or remain the same 
before  $u$  is  processed.   This  is  also the  case  after  $u$  is
processed. So, the total time spent  within the for loop in lines 8-19
of Algorithm~\ref{alg:contractions}  is in $\mathcal{O}(  d_u )$.  The
repeat-until loop  in lines 21-34  of Algorithm~\ref{alg:contractions}
is executed next,  and the time taken by this  loop is proportional to
the  time spent to  process all  edges removed  from lists  $\lue$ and
$\lte$. Thus, time $t_u$ can be bounded from above by
\begin{equation}\label{eq:tu}
\mathcal{O}(   d_u  ) + q_u \, ,
\end{equation}
where $q_u$  denotes the time to  process all edges  ever removed from
lists $\lue$ and $\lte$.

Let $\mathcal{C}_u$ be the subset of vertices of $\T$ such that $v \in
\mathcal{C}_u$ if  and only if edge $[  u , v ]$  is contracted during
the processing of  $u$. Let $\mathcal{N}_u$ be the  subset of vertices
of $\T$ such that  $v \in \mathcal{N}_u$ if and only if  edge $[ u , v
]$ belongs to $\T_u$, where $\T_u$ is the triangulation resulting from
the  processing of  $u$.  The  set $\mathcal{C}_u  \cup \mathcal{N}_u$
consists of all vertices of  $\T$ that are adjacent to $u$ immediately
before  $u$  is  processed  or  become  adjacent  to  $u$  during  the
processing of $u$.  Note that  if $v$ is in $\mathcal{N}_u$, then edge
$[ u , v ]$ is non-contractible,  as $u$ is trapped in $\T_u$ and thus
no  edge  incident  on  $u$  can  become  contractible  after  $u$  is
processed.  However, recall from  Section~\ref{sec:genus0} that $[ u ,
v   ]$  may   still   be  removed   from   the  final   triangulation,
$\T^{\prime}$. This  is the case whenever  an edge in the  link of $u$
and incident on $v$ is contracted, identifying $v$ with another vertex
in the  link of $u$  (see Figure~\ref{fig:trapped}).  So, a  vertex in
$\mathcal{N}_u$ is  not necessarily in $\T^{\prime}$.   Note also that
$\mathcal{C}_u  \cap \mathcal{N}_u  =  \emptyset$, as  each vertex  in
$\mathcal{C}_u$ is  eliminated during the processing of  $u$ and hence
cannot belong to $\T_u$.

To find  an upper bound  for $q_u$, we  distinguish two cases:  $v \in
\mathcal{C}_u$ and  $v \in  \mathcal{N}_u$.  If $v  \in \mathcal{C}_u$
then  edge  $[  u ,  v  ]$  is  contracted  during the  processing  of
$u$. Otherwise, we  know that $v \in \mathcal{N}_u$ and edge  $[ u , v
]$ is not contracted during the processing of $u$ (i.e., it is an edge
in $\T_u$).   Let $\mathcal{A}_u = \{  v \in \mathcal{N}_u  \mid v \in
\T^{\prime}  ~\text{and   $v$  was  processed  before   $u$}  \}$  and
$\mathcal{B}_u =  \mathcal{N}_u -  \mathcal{A}_u$. In what  follows we
show that
\begin{itemize}
\item[(a)]  For every  vertex $v  \in \mathcal{C}_u$,  the  time required  to
  process edge  $[ u , v  ]$ is in
\[
\mathcal{O}( d_v ) +  \sum_{z \in \mathcal{J}_u^v} \mathcal{O}( \rho_z
) \, ,
\]
where $\mathcal{J}_u^v$  denotes the set  of all vertices $z$  in $\T$
such that $z \in \T^{\prime}$,  $z$ has been processed before $u$, $z$
becomes a neighbor  of $u$ after the  contraction of $[ u ,  v ]$, and
$\rho_z$ is the degree of  $z$ in the triangulation resulting from the
contraction.

\item[(b)] For  every vertex $v \in \mathcal{A}_u$,  the time required
  to process edge $[ u , v ]$ is constant.

\item[(c)] For  every vertex $v \in \mathcal{B}_u$,  the time required
  to process edge $[ u , v  ]$ is in $\mathcal{O}( d_v )$.
\end{itemize}

Let  $v$ be  any vertex  in $\mathcal{C}_u$.   From the  definition of
$\mathcal{C}_u$, we  know that $[  u , v  ]$ is contracted  during the
processing of  $u$. In addition,  this contraction occurs  during the
execution of  either (i) line  26, (ii) line  28, or (iii) line  32 of
Algorithm \ref{alg:contractions}.

If (i)  holds, then $v$ is  a degree-$3$ vertex  in the triangulation,
$K$,  immediately  before  the  contraction.   Furthermore,  the  time
required to process $[  u , v ]$ is proportional to  the time spent by
the       execution      of       procedure      \textsc{Contract}$()$
(Algorithm~\ref{alg:contract})  on $[  u ,  v  ]$ and  $K$. Since  the
degree $\rho_v$ of $v$  in $K$ is $3$, the for loop  in lines 25-43 of
Algorithm~\ref{alg:contract}  is not executed,  as the  temporary list
\textit{temp}         returned         by        \textsc{Collapse}$()$
(Algorithm~\ref{alg:collapse}) in  line 15 is empty. The  for loop in
lines 4-11 takes $\Theta( \rho_v )$ time, and so does the execution of
Algorithm~\ref{alg:collapse}).      The     remaining     lines     of
Algorithm~\ref{alg:contract}  take constant time  each.  So,  the time
required to process edge $[ u , v ]$ in case (i) is in $\Theta( \rho_v
) = \Theta( 1 )$.

If (ii) holds, then the degree $\rho_v$ of $v$ in the triangulation
$K$ immediately before the contraction of $[ u , v ]$ is greater than
$3$.  Line 28 of Algorithm~\ref{alg:continit} invokes the procedure in
Algorithm~\ref{alg:processdegreegt3}.  The  for loop in  lines 2-12 of
Algorithm~\ref{alg:processdegreegt3} iterates $\Theta( \rho_v )$ times
to test  $[ u  , v  ]$ against the  link condition.   Since $v$  is in
$\lue$, $v$ is  still in $Q$.  Thus, $\rho_v \le  d_v$, where $d_v$ is
the degree of $v$ in $\T$. Thus,  the time spent by the for loop is in
$\mathcal{O}(  d_v  )$.  Since  $[   u  ,  v  ]$  was  contracted  (by
assumption),   line  14  of   Algorithm~\ref{alg:processdegreegt3}  is
executed and Algorithm~\ref{alg:contract} is invoked to contract $[ u , v ]$.

Lines 1-24 of  Algorithm~\ref{alg:contract} execute in $\Theta( \rho_v
)$  time, including  the time  for executing  \textsc{Collapse}$()$ in
line  15.   The  for  loop  in lines  25-43  of  \textsc{Contract}$()$
iterates $\rho_v - 3$ times, which is the length of list \textit{temp}
(i.e.,  the number of  neighbors of  $v$ that  become adjacent  to $u$
after  the contraction  of  $[ u  ,  v ]$).   Lines  26-32 execute  in
constant time  each, while  the total time  required to  execute lines
33-41 is in $\Theta( \rho_z )$, where $z$ is a vertex in $\textit{lk}(
v , K )$ whose degree in $K$ is $\rho_z$.  So, the total time required
by Algorithm~\ref{alg:contract}  on input  $[ u, v  ]$ and $K$  can be
bounded above by
\[
\mathcal{O}( d_v ) + \sum_{z  \in \mathcal{J}_u^v} \Theta( \rho_z ) \, .
\]

Note  that  $\rho_z \ge  d_z^{\prime}$,  where  $d_z^{\prime}$ is  the
degree of $z$  is $\T^{\prime}$, as some edges in the  link of $z$ may
still be contracted before  the final triangulation $\T^{\prime}$ is
obtained.  In any case, the total time required to process edge $[ u ,
v ]$ in case (ii) is bounded above by
\[
\mathcal{O}( d_v ) + \mathcal{O}( d_v ) + \sum_{z \in \mathcal{J}_u^v}
\Theta( \rho_z )  = \mathcal{O}( d_v ) +  \sum_{z \in \mathcal{J}_u^v}
\Theta( \rho_z ) \, .
\]

If (iii)  holds, then edge  $[ u  , v ]$  was tested against  the link
condition  before, and  then inserted  into $\lte$  after  failing the
test. Furthermore, $[  u , v]$ is contracted either in  line 5 or line
13 of Algorithm~\ref{alg:processlistS}. From our discussion about case
(ii), we know that  the cost for testing $[ u ,  v ]$ against the link
condition is in $\mathcal{O}( d_v )$, where $d_v$ is the degree of $v$
in $\T$.   In turn,  the cost  for updating the  $c$ attribute  of any
vertex $z$ such that $[u , z ]$ is an edge in $\lte$ can be charged to
the cost of the contraction or  link condition test of another edge in
$\lue$ or $\lte$, as remarked below:

\begin{rmk}\label{rmk:costcc}
  If $[u , z ]$ is in $\lte$, then the value of $c(z)$ can be updated
  by either line 6 of Algorithm~\ref{alg:processdegreegt3}, line 30 of
  Algorithm~\ref{alg:contract}, or lines 6-7, 15, and 20 of
  Algorithm~\ref{alg:processdegree3}.  However, these lines are always
  executed to contract or test an edge.
\end{rmk}

If    $[   u    ,    v   ]$    is    contracted   in    line   5    of
Algorithm~\ref{alg:processlistS}, then  our discussion about  case (i)
tells us that the cost for contracting $[ u , v ]$ is constant, as $v$
has degree $3$  at the time. Consequently, the  total time required to
process $[  u , v ]$ belongs  to $\mathcal{O}( d_v )$,  where $d_v$ is
the  degree of $v$  in $\T$,  which is  equal to  or greater  than the
degree of  $v$ at the  time $[ u  , v ]$  was tested against  the link
condition (immediately before it is  inserted into list $\lte$). If $[
u , v ]$ is contracted in line 13 of Algorithm~\ref{alg:processlistS},
then the degree of $v$ immediately  before the contraction of $[ u , v
]$ is greater  than $3$. So, from our discussion  about case (ii), the
total time required to process $[ u , v ]$ is in
\begin{equation}\label{eq:caseabound}
\mathcal{O}( d_v ) + \sum_{z  \in \mathcal{J}_u^v} \Theta( \rho_z ) \, .
\end{equation}

From   cases   (i),   (ii),   and   (iii),  we   can   conclude   that
Eq.~\ref{eq:caseabound}  is an upper  bound for  the time  required to
process every  edge $[ u ,  v ]$, with $v  \in \mathcal{C}_u$, proving
claim (a).

Now,  let  $v$ be  a  vertex  in  $\mathcal{N}_u$.  By  definition  of
$\mathcal{N}_u$, we  know that $[ u ,  v ]$ is in  $\T_u$, which means
that $[ u , v ]$ is non-contractible. If $p( v )$ is {\em true} by the
time $u$ is removed from $Q$, then $v \in \mathcal{A}_u$ and edge $[ u
,   v   ]$   is  not   inserted   into   $\lue$   (see  line   12   of
Algorithm~\ref{alg:contractions}). Thus,  the time  to process $[  u ,
v]$ is constant, which proves claim (b). If $p( v )$ is {\em false} by
the time $u$ is removed from  $Q$, then $v \in \mathcal{B}_u$ and edge
$[  u ,  v ]$  is inserted  into  $\lue$. Since  $[ u  , v  ]$ is  not
contracted during the processing of $u$,  the algorithm found $[ u , v
]$ to be non-contractible immediately after  removing $[ u , v ]$ from
$\lue$. So, either $[  u , v ]$ failed the link  condition test or the
current triangulation at the time  was (isomorphic to) $\T_4$. If $[ u
, v ]$ is tested against the link condition, then the time required to
process $[  u , v ]$  is in $\mathcal{O}(  d_v )$, where $d_v$  is the
degree of $v$  in $\T$.  While $[ u  , v]$ is in $\lte$,  the cost for
updating the $c$  attribute of $[ u ,  v ]$ is charged to  the cost of
the contraction  or link condition test  of another edge  in $\lue$ or
$\lte$ (see  Remark~\ref{rmk:costcc}). If  $[ u ,  v ]$ is  not tested
against the link condition, then the time required to process $[ u , v
]$ is constant. So, claim (c) holds.

From our discussion above, we get
\begin{align}\label{eq:tuandqu}
  t_u \in \mathcal{O}(   d_u  ) + q_u &= \mathcal{O}(   d_u  ) \\
  \nonumber &+ \sum_{v \in \mathcal{C}_u} \left ( \mathcal{O}( d_v ) +
    \sum_{z \in  \mathcal{J}_u^v} \Theta( \rho_z ) \right  ) + \sum_{v
    \in  \mathcal{A}_u}  \Theta(  1  ) +  \sum_{v  \in  \mathcal{B}_u}
  \mathcal{O}( d_v ) \, ,
\end{align}
and thus
\begin{align}\label{eq:tubig}
  \sum_{u   \in  \T^{\prime}}   t_u  \in   \sum_{u   \in  \T^{\prime}}
  \mathcal{O}(  d_u   )  &+   \sum_{u  \in  \T^{\prime}}   \sum_{v  \in
    \mathcal{C}_u}  \left   (  \mathcal{O}(   d_v  )  +   \sum_{z  \in
      \mathcal{J}_u^v}  \Theta(  \rho_z )  \right  )  \\ \nonumber  &+
  \sum_{u \in \T^{\prime}} \left  ( \sum_{v \in \mathcal{A}_u} \Theta(
    1 ) + \sum_{v \in \mathcal{B}_u} \mathcal{O}( d_v ) \right ) \, .
\end{align}

Equation~(\ref{eq:tubig})  can be  rewritten to  get rid  of $\rho_z$,
$\mathcal{A}_u$, and $\mathcal{B}_u$.  Indeed,  we know that if $z$ is
a vertex in $\mathcal{J}_u^v$, then  the degree $\rho_z$ of $z$ in the
triangulation $\T_u$ may be  greater than $d_z^{\prime}$, which is the
degree of  $z$ in $\T^{\prime}$.  However, $\rho_z  - d_z^{\prime}$ is
equal to the  number of edges of the link of  $z$ that were contracted
after $\T_u$  was obtained.  So, we  can charge the  cost of exploring
$\rho_z    -     d_z^{\prime}$    edges    in     lines    34-41    of
Algorithm~\ref{alg:contract}  to  the  contraction  of the  $\rho_z  -
d_z^{\prime}$ edges of the link of $z$. More specifically, if $[ w , y
]$  is an  edge of  the link  of $z$  that got  contracted  during the
processing of $w$, which occurs after processing $u$, then the cost of
exploring $[ z , y  ]$ in lines 34-41 of Algorithm~\ref{alg:contract},
during the processing of $z$, can be absorbed by $\mathcal{O}( d_v
)$ in the term
\[
\sum_{v \in \mathcal{C}_w} \left ( \mathcal{O}( d_v ) + \sum_{z \in  \mathcal{J}_w^v} \Theta(  \rho_z  ) \right  ) 
\]
of
\begin{equation}\label{eq:qw}
q_w \in \sum_{v \in \mathcal{C}_w}  \left ( \mathcal{O}( d_v ) + \sum_{z
    \in  \mathcal{J}_w^v} \Theta(  \rho_z  ) \right  )  +   \sum_{v \in \mathcal{A}_w} 
    \Theta( 1 ) + \sum_{v \in \mathcal{B}_w} 
    \mathcal{O}( d_v ) \, .
\end{equation}
Thus, 
\begin{align}\label{eq:tubig2}
\sum_{u \in \T^{\prime}}  t_u \in \sum_{u \in \T^{\prime}}  \mathcal{O}(   d_u  ) &+ \sum_{u \in \T^{\prime}}  \sum_{v \in \mathcal{C}_u} \left (
    \mathcal{O}( d_v ) + \sum_{z  \in \mathcal{J}_u^v} \Theta( d_z^{\prime} ) 
  \right ) \\ \nonumber &+ \sum_{u \in \T^{\prime}} \left (  \sum_{v \in \mathcal{A}_u} 
    \Theta( 1 ) + \sum_{v \in \mathcal{B}_u} 
    \mathcal{O}( d_v )  \right )  \, .
\end{align}

Note that $|\mathcal{A}_u|  \le d_u^{\prime}$, where $d_u^{\prime}$ is
the degree of vertex $u$ in $\T^{\prime}$.  Then, we get
\[
\sum_{u \in \T^{\prime}}  \sum_{v \in \mathcal{A}_u} 1 \le \sum_{u \in
  \T^{\prime}} d_u^{\prime} = 2 n_e^{\prime} \quad \text{and} \quad \sum_{u \in \T^{\prime}}  \sum_{v \in \mathcal{B}_u} d_v \le n_v^{\prime} \cdot
\left ( \sum_{v
  \in \T} d_v \right ) =  2 n_v^{\prime} n_e \, ,
\]
where  $n_e^{\prime}$ is  the number  of edges  of  $\T^{\prime}$, and
consequently we can write Eq.~\ref{eq:tubig2} as follows:
\begin{equation}\label{eq:tubig3}
\sum_{u \in \T^{\prime}}  t_u \in \sum_{u \in \T^{\prime}}  \mathcal{O}(   d_u  ) + \sum_{u \in \T^{\prime}}  \sum_{v \in \mathcal{C}_u} \left (
    \mathcal{O}( d_v ) + \sum_{z  \in \mathcal{J}_u^v} \Theta( d_z^{\prime} ) 
  \right ) + \mathcal{O}( n_e^{\prime} ) + \mathcal{O}( n_v^{\prime}
  \cdot n_e )  \, .
\end{equation}

Since
\[
\sum_{u \in  \T^{\prime}} d_u \le \sum_{u \in  \T} d_u \, ,
\]
we can conclude that $\sum_{u  \in \T^{\prime}} \mathcal{O}( d_u )$ is
in   $\mathcal{O}(  n_e   )$.   Moreover,   $\mathcal{J}_u^{v_1}  \cap
\mathcal{J}_u^{v_2} = \emptyset$,  for any two  vertices $v_1$  and $v_2$  of $\T$
such that $[ u  , v_1 ]$ and $[ u , v_2  ]$ were contracted during the
processing of  $u$.  Indeed, a vertex $z$  is in $\mathcal{J}_u^{v_1}$
if  and  only  if it  became  adjacent  to  $u$  as  a result  of  the
contraction of $[  u , v_1 ]$.  So, vertex  $z$ cannot become adjacent
to vertex $u$ as a result of the contraction of edge $[ u , v_2 ]$. As
a result, the union set 
\[
\bigcup_{v \in \mathcal{C}_u} \mathcal{J}_u^v 
\]
is a  subset of  the set $V_u$  of all  vertices in $\textit{lk}(  u ,
\T^{\prime} )$. As a result, we get
\[
\sum_{v \in \mathcal{C}_u} \sum_{z \in \mathcal{J}_u^v} d_z^{\prime}
\le \sum_{z \in V_u} d_z^{\prime} \Longrightarrow \sum_{u \in
  \T^{\prime}} \sum_{z \in V_u} d_z^{\prime} \le n_v^{\prime} \cdot
\left ( \sum_{u \in
  \T^{\prime}} d_u^{\prime} \right ) = 2 n_v^{\prime} n_e^{\prime} \, ,
\]
where $d_u^{\prime}$ is the degree of $u$ in $\T^{\prime}$.

For  any two  vertices  $x$ and  $y$  in $\T^{\prime}$,  we know  that
$\mathcal{C}_{x} \cap \mathcal{C}_{y}  = \emptyset$. Also every vertex
$v$ in  $\T$ that is not  in $\T^{\prime}$ belongs to  exactly one set
$\mathcal{C}_u$, for some vertex $u$ in $\T^{\prime}$. So,
\[
\sum_{u \in  \T^{\prime}} \sum_{v \in \mathcal{C}_u} d_v \in \sum_{v \in
  \T, v \not\in \T^{\prime}} d_v \in \mathcal{O}( n_e ) \, ,
\]
which implies that
\begin{equation}\label{eq:tubig4}
\sum_{u \in \T^{\prime}}  t_u \in \mathcal{O}( n_e ) + \mathcal{O}( n_e
) + \mathcal{O}( n_v^{\prime} \cdot n_e^{\prime} ) + \mathcal{O}(
n_e^{\prime} ) + \mathcal{O}( n_v^{\prime} \cdot n_e ) \, .
\end{equation}
So, the total time required for our algorithm to produce $\T^{\prime}$
from $\T$ can be given by
\begin{equation}\label{eq:tcomp3}
  \Theta( n_e ) + \mathcal{O}(n_v) + \sum_{u \in \T^{\prime}} t_u = 
  \Theta( n_v ) +  \mathcal{O}( (n_v^{\prime})^2 ) + \mathcal{O}( n_v^{\prime} \cdot n_v ) \, .
\end{equation}

If $\s$ is a genus-$0$ surface,  then we know that $n_v^{\prime} = 4$,
which  means that  Eq.~(\ref{eq:tcomp3}) becomes  simply $\mathcal{O}(
n_v   )$.   Otherwise,    Theorem~\ref{the:itsize}   tells   us   that
$n_v^{\prime}  \le  26   \cdot  g  -  4$,  which   then  implies  that
Eq.~(\ref{eq:tcomp3}) can be  written in terms of  $g$ and $n_v$ as
\begin{equation}\label{eq:tcomp4}
  \Theta( n_v ) +  \mathcal{O}( g^2 ) + \mathcal{O}( g \cdot n_v ) \, .
\end{equation}
From our assumption that $n_f  \in \Theta( n_v )$, the time complexity
of our algorithm  is in $\mathcal{O}( g^2 + g \cdot  n_f )$ if surface
$\s$ has a positive genus, $g$.  Otherwise, it is in $\mathcal{O}( n_f
)$. As  for the space  complexity of the  algorithm, we note  that the
space required to store the augmented  DCEL is in $\Theta( n_v + n_f +
n_e )$. In turn, lists $\lue$ and $\lte$ require $\Theta( n_e )$ space
each.  Since  $n_e, n_f \in \Theta(  n_v )$, we can  conclude that the
overall space  required by  our algorithm on  input $\T$ is  linear in
$n_f$.
\end{proof}

\section{Experimental results}\label{sec:expresults}
We implemented  the algorithm described  in Section~\ref{sec:algo}, as
well   as   Schipper's   algorithm~\cite{SCH91}  and   a   brute-force
algorithm. The brute-force algorithm  carries out two steps. First, an
array   with  all  edges   of  the   input  triangulation,   $\T$,  is
shuffled. Second, each edge in the array is visited and tested against
the  link  condition.   If  an  edge  passes  the  test,  then  it  is
contracted. Otherwise, it is inserted  in an auxiliary array. Once the
former array is  empty, the edges in the auxiliary  array are moved to
former one, and the second step  is repeated. If no edge is contracted
during an execution  of the second step, then  the algorithm stops, as
no  remaining  edge is  contractible, which implies that  the output
triangulation $\T^{\prime}$ is irreducible.

As  we   pointed  out  in   Section~\ref{sec:rwork},  the  brute-force
algorithm  may  execute  $\Omega(n_f^2)$  link  condition  tests  (see
Figure~\ref{fig:badcase}), where  $n_f$ is the number  of triangles in
$\T$. In what  follows, we describe an experiment  in which we compare
the  implementations of  the three  aforementioned  algorithms against
triangulations typically  found in  graphics applications, as  well as
triangulations devised to provide  us with some insights regarding the
behavior of our algorithm and the one by Schipper~\cite{SCH91}.

\subsection{Experimental setup}\label{sec:setup}
All  algorithms were  implemented  in \texttt{C++}  and compiled  with
\texttt{clang  503.0.40} using  the \texttt{-O3}  option.  We  ran the
experiments on  an iMac running OSX  10.9.4 at 3.2 GHz  (Intel Core i3
--- 1 processor and 2 cores),  with 256KB of level-one data cache, 4MB
of level-two cache, and 8GB  of RAM.  The implementations are based on
the  same  data  structure   for  surface  triangulations  (i.e.,  the
augmented DCEL  described in Section~\ref{sec:dcel}), and  they do not
depend on any third-party libraries\footnote{\texttt{http://www.mat.ufrn.br/$\sim$mfsiqueira/Marcelo\_Siqueiras\_Web\_Spot/Software.html}.}.

Time  measurements  refer  to  the  time to  compute  the  irreducible
triangulations only  (i.e., we did not  take into account  the time to
read  in   the  triangulations   from  a  file   and  create   a  DCEL
representation in main memory). In particular, each implementation has
a  function,  named   \texttt{run()},  that  computes  an  irreducible
triangulation from  a given pointer  to the augmented  DCEL containing
the input triangulation.  We only  measured the time spent by function
\texttt{run()}.

To time and compare the  implementations, we considered four groups of
triangulations. The first group consists of small genus triangulations
typically found in  graphics papers (see Table~\ref{tab:group1}).  The
second  group consists  of 10  triangulations of  the  same genus-$0$,
brick-shaped     surface      with     $3,\!844$     cavities     (see
Figure~\ref{fig:brick}).  Each triangulation  has a distinct number of
triangles (see Table~\ref{tab:group2}).  The third group consists of 8
triangulations  of a  brick-shaped surface  with $3,\!500$  holes (see
Table~\ref{tab:group3}). This surface was obtained from the one in the
second group by replacing $3,\!500$ cavities with holes.  Finally, the
fourth group  consists of 10 triangulations of  surfaces with varying
genus (see  Table~\ref{tab:group4}).  The triangulations  have about
the same number of triangles, and the surfaces were also obtained from
the ones in the second group by replacing a certain number of cavities
with holes.

Triangulations  in  the  first  group  were  chosen  to  evaluate  the
performance of  the three  algorithms on data  typically used  by mesh
simplification  algorithms~\cite{LRCVWH03}. 

Recall  that the time  complexity of  both our  algorithm and  the one
given  by Schipper~\cite{SCH91}  is  dictated by  two parameters:  the
number of triangles and the genus of the input triangulation. When the
genus  is zero,  the time  upper bound  we derived  for  our algorithm
depends  solely   and  linearly  on  the  number   of  triangles  (see
Section~\ref{sec:genus0}).  Triangulations in  the  second group  were
chosen  to  evaluate  the  performance  of  the  three  algorithms  on
triangulations of genus $0$ surfaces.

\begin{table}[htbp]
\begin{center}
\begin{tabular}{|c|c|c|c|c|} \hline
\textbf{Triangulation} & \textbf{\# Vertices} & \textbf{\# Edges} &  \textbf{\# Triangles} & \textbf{\# Genus}  \\ \hline\hline
Armadillo         &  $171,\!889$                      &    $\hphantom{0,}\!515,\!661$ & $\hphantom{0,\!}342,\!774$ & $\hphantom{00}0$ \\
Botijo               &  $\hphantom{0}20,\!000$   & $\hphantom{0,\!0}60,\!024$  & $\hphantom{0,\!0}40,\!016$ & $\hphantom{00}5$ \\
Casting            &  $\hphantom{00}5,\!096$  & $\hphantom{0,\!0}15,\!336$  & $\hphantom{0,\!0}10,\!224$	& $\hphantom{00}9$ \\
Eros                 &  $197,\!230$                      & $\hphantom{0,}\!591,\!684$  & $\hphantom{0,\!}394,\!456$	& $\hphantom{00}0$ \\
Fertility            &  $\hphantom{0}19,\!994$     & $\hphantom{0,\!0}60,\!000$    & $\hphantom{0,\!0}40,\!000$ & $\hphantom{00}4$ \\
Filigree             &  $\hphantom{0}29,\!129$   & $\hphantom{0,\!0}87,\!771$    & $\hphantom{0,\!0}58,\!514$ & $\hphantom{0}65$ \\
Hand               &   $195,\!557$                     &  $\hphantom{0,\!}586,\!665$    &  $\hphantom{0,\!}391,\!110$ & $\hphantom{00}0$ \\
Happy Buddha	&  $543,\!652$                      & $1,\!631,\!574$                          & $1,\!087,\!716$ & $104$ \\
Iphigenia         &  $351,\!750$	              & $1,\!055,\!268$                          & $\hphantom{0,\!}703,\!512$ & $\hphantom{00}4$ \\
Socket             &  $\hphantom{000,\!}836$   &  $\hphantom{0,\!00}2,\!544$	& $\hphantom{0,\!00}1,\!696$	&  $\hphantom{00}7$ \\ \hline
\end{tabular}
\caption{\label{tab:group1}     Euler    characteristics     of    the
  triangulations in the first group.}
\end{center}
\end{table}

\begin{figure}[htbp]
\begin{center}
\includegraphics[width=2.5in]{./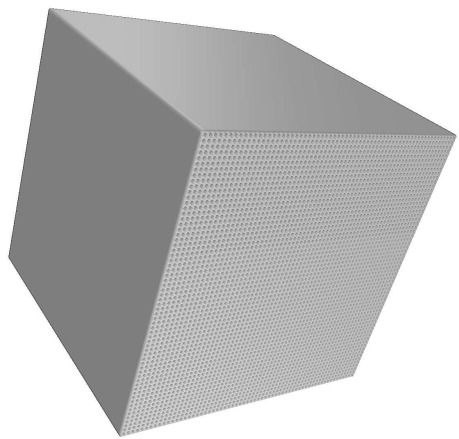}
\caption{\label{fig:brick} A brick-shaped surface with $3,\!844$  cavities.}
\end{center}
\end{figure}

Triangulations in the third and  fourth groups were chosen to evaluate
the influence of both parameters (i.e., genus and number of triangles)
separately.   Triangulations in the  third group  have the  same genus
(i.e, $3,\!500$), but their  numbers of triangles vary, which allowed
us  to evaluate  the influence  of the  number of  triangles  over the
performances  of our  algorithm  and Schipper's  algorithm.  In  turn,
triangulations  in the  fourth group  have  about the  same number  of
triangles, but  their genuses vary,  which allowed us to  evaluate the
influence  of the  genus over  the performances  of our  algorithm and
Schipper's algorithm.

\begin{table}[htb!]
\begin{center}
\begin{tabular}{|c|c|c|c|c|} \hline
\textbf{Triangulation} & \textbf{\# Vertices} & \textbf{\# Edges}  &  \textbf{\# Triangles} \\ \hline\hline
B0  &  $2,\!097,\!150$                      & $6,\!291,\!444$                     &  $4,\!194,\!296$ \\
B1  &  $1,\!097,\!150$                      & $3,\!291,\!444$                     &  $2,\!194,\!296$ \\
B2  &  $\hphantom{0,\!}597,\!150$  & $1,\!791,\!444$                     &  $1,\!194,\!296$ \\
B3  &  $\hphantom{0,\!}297,\!150$  & $\hphantom{0,\!}891,\!444$    &  $\hphantom{0,\!}594,\!296$ \\
B4  &  $\hphantom{0,\!}147,\!150$  & $\hphantom{0,\!}441,\!444$    &  $\hphantom{0,\!}294,\!296$ \\
B5  &  $\hphantom{0,\!0}72,\!150$  & $\hphantom{0,\!}216,\!444$    &  $\hphantom{0,\!}144,\!296$ \\
B6  &  $\hphantom{0,\!0}34,\!650$  & $\hphantom{0,\!}103,\!944$    &  $\hphantom{0,\!0}69,\!296$ \\
B7  &  $\hphantom{0,\!0}15,\!900$  & $\hphantom{0,\!0}47,\!694$    &  $\hphantom{0,\!0}31,\!796$ \\
B8  &  $\hphantom{0,\!00}8,\!400$  & $\hphantom{0,\!0}25,\!194$    &  $\hphantom{0,\!0}16,\!796$ \\
B9  &  $\hphantom{0,\!00}4,\!400$  & $\hphantom{0,\!0}13,\!194$    &  $\hphantom{0,\!00}8,\!796$ \\ \hline
\end{tabular}
\caption{\label{tab:group2}     Euler    characteristics     of    the
  triangulations in the second group.}
\end{center}
\end{table}

\begin{table}[htb!]
\begin{center}
\begin{tabular}{|c|c|c|c|c|} \hline
\textbf{Triangulation} & \textbf{\# Vertices} & \textbf{\# Edges}  &  \textbf{\# Triangles} \\ \hline\hline
C0  &  $2,\!104,\!150$                      & $6,\!333,\!444$                     &  $4,\!222,\!296$ \\
C1  &  $1,\!104,\!150$                      & $3,\!333,\!444$                     &  $2,\!222,\!296$ \\
C2  &  $\hphantom{0,\!}604,\!150$  & $1,\!833,\!444$                     &  $1,\!222,\!296$ \\
C3  &  $\hphantom{0,\!}354,\!150$  & $1,\!083,\!444$                     &  $\hphantom{0,\!}722,\!296$ \\
C4  &  $\hphantom{0,\!}179,\!150$  & $\hphantom{0,\!}558,\!444$ &  $\hphantom{0,\!}372,\!296$ \\
C5  &  $\hphantom{0,\!0}91,\!650$  & $\hphantom{0,\!}295,\!444$    &  $\hphantom{0,\!}197,\!296$ \\
C6  &  $\hphantom{0,\!0}41,\!650$  & $\hphantom{0,\!}145,\!944$    &  $\hphantom{0,\!0}97,\!296$ \\
C7  &  $\hphantom{0,\!0}16,\!650$  & $\hphantom{0,\!0}70,\!994$    &  $\hphantom{0,\!0}47,\!796$ \\ \hline
\end{tabular}
\caption{\label{tab:group3}     Euler    characteristics     of    the
  triangulations in the third group (their genus is $3,\!500$).}
\end{center}
\end{table}

\begin{table}[htb!]
\begin{center}
\begin{tabular}{|c|c|c|c|c|} \hline
\textbf{Triangulation} & \textbf{\# Vertices} & \textbf{\# Edges} &  \textbf{\# Triangles} & \textbf{\# Genus}  \\ \hline\hline
D0  &  $2,\!104,\!150$ & $6,\!333,\!444$ & $4,\!222,\!296$ & $3,\!500$ \\
D1  &  $2,\!103,\!150$ & $6,\!327,\!444$ & $4,\!218,\!296$ & $3,\!000$ \\
D2  &  $2,\!102,\!150$ & $6,\!321,\!444$ & $4,\!214,\!296$ & $2,\!500$ \\
D3  &  $2,\!101,\!150$ & $6,\!315,\!444$ & $4,\!210,\!296$ & $2,\!000$ \\
D4  &  $2,\!100,\!150$ & $6,\!309,\!444$ & $4,\!206,\!296$ & $1,\!500$ \\
D5  &  $2,\!099,\!150$ & $6,\!303,\!444$ & $4,\!202,\!296$ & $1,\!000$ \\
D6  &  $2,\!098,\!150$ & $6,\!297,\!444$ & $4,\!198,\!296$ & $\hphantom{0,}500$ \\
D7  &  $2,\!097,\!350$ & $6,\!292,\!644$ & $4,\!195,\!096$ & $\hphantom{0,}100$ \\
D8  &  $2,\!097,\!250$ & $6,\!292,\!044$ & $4,\!194,\!696$ & $\hphantom{0,\!0}50$ \\
D9  &  $2,\!097,\!170$ & $6,\!291,\!564$ & $4,\!194,\!376$ & $\hphantom{0,\!0}10$ \\ \hline
\end{tabular}
\caption{\label{tab:group4}     Euler    characteristics     of    the
  triangulations in the fourth group.}
\end{center}
\end{table}

\subsection{Results}\label{sec:results}
From now  on, we  denote our algorithm,  Schipper's algorithm  and the
brute-force  algorithm by  \textbf{RS},  \textbf{S}, and  \textbf{BF},
respectively. We initially ran \textbf{RS} and \textbf{S} exactly once
on each triangulation of the first group (see Table~\ref{tab:group1}),
while \textbf{BF} was executed  ten times on each triangulation (since
we randomized the input by shuffling the edges of the triangulations).
So, for  \textbf{BF}, we computed  and recorded the  average execution
times  over  the  ten runs  on  each  triangulation.   A plot  of  the
\textit{time}  (in seconds)  taken by  the three  algorithms  on every
input  triangulation versus  the \textit{number  of triangles}  of the
triangulations is shown in Figure \ref{fig:mixtimings}.

\begin{figure}[htb!]
\begin{center}
\includegraphics[width=3.2in]{./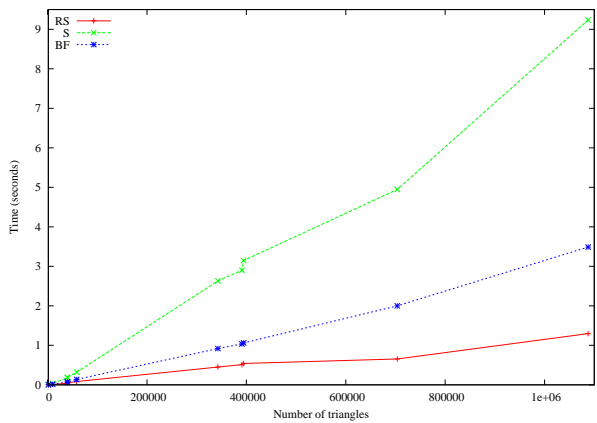}
\caption{\label{fig:mixtimings} Runtimes for the execution of \textbf{RS},
  \textbf{S}, and  \textbf{BF}  on the first group triangulations.}
\end{center}
\end{figure}

As we can see in Figure~\ref{fig:mixtimings}, the larger the number of
triangles,   the  larger   the   ratios  $t_{s}/t_{\textit{rs}}$   and
$t_{\textit{bf}}/t_{\textit{rs}}$,  where  $t_{\textit{rs}}$, $t_{s}$,
and $t_{\textit{bf}}$ are the  times taken by \textbf{RS}, \textbf{S},
and \textbf{BF}, respectively.  In particular, $t_{s}/t_{\textit{rs}}$
and $t_{\textit{bf}}/t_{\textit{rs}}$  are equal to  $2.13$ and $1.14$
for  the triangulation with  the smallest  number of  triangles (i.e.,
\texttt{Socket}), and equal to $7.55$ and $3.05$ for the triangulation
with      the     largest      number     of      triangles     (i.e.,
\texttt{Iphigenia}). Observe  that \textbf{BF} outperforms \textbf{S}.
In particular, $t_{\textit{s}}/t_{\textit{bf}}$ is always greater than
$1.9$  and  its  largest  value   is  $2.97$,  which  is  attained  on
triangulation \texttt{Eros}.

We repeated the experiment for  the triangulations in the second group
(see  Table~\ref{tab:group2}). All triangulations  in this  group have
genus-$0$. In particular, for every $i=1,\ldots,9$, triangulation B$i$
was  obtained from  triangulation B$i-1$  by a  simplification process
that approximately halved  the number of triangles of  B$i-1$.  A plot
of the  \textit{time} (in  seconds) taken by  \textbf{RS}, \textbf{S},
and  \textbf{BF}  on  triangulations   B0-B9  as  a  function  of  the
\textit{number of triangles} of  the triangulations is shown in Figure
\ref{fig:bricktimings}.

Note  that  \textbf{RS} outperforms  \textbf{S}  and \textbf{BF},  and
\textbf{BF}  outperforms   \textbf{S}.   However,  this   time,  ratio
$t_{s}/t_{\textit{rs}}$  gets  smaller  as  the  number  of  triangles
grows. In particular, $t_{s}/t_{\textit{rs}}$  is equal to $12.28$ for
triangulation  B9 and  equal to  $3.56$ for  triangulation  B0.  Ratio
$t_{\textit{bf}}/t_{\textit{rs}}$ presents  the same behavior,  but it
becomes noticeable  only for triangulations B0-B3, for which  
the numbers of triangles exceed $500,\!000$.

Triangulations  C0-C7 in  Table~\ref{tab:group3} have  a  fixed, large
genus (i.e, $3,\!500$). In addition,  C1-C7 were built as follows: for
every   $i=1,\ldots,7$,   triangulation   C$i$   was   obtained   from
triangulation  C$i-1$ by a  simplification process  that approximately
halved the number  of triangles of C$i-1$.  The  triangulations in the
third group were designed  to compare the performances of \textbf{RS},
\textbf{S},  and \textbf{BF}  on variable-size  triangulations  of the
same  fixed, large  genus surface  (as opposed  to the  same genus-$0$
surface  like we  did  before  for the  triangulations  in the  second
group). A plot of the \textit{time} (in seconds) taken by \textbf{RS},
\textbf{S}, and  \textbf{BF} on triangulations C0-C7 as  a function of
the  \textit{number of triangles}  of the  triangulations is  shown in
Figure \ref{fig:fixedgenustimings}.

\begin{figure}[htb!]
\begin{center}
\includegraphics[width=3.2in]{./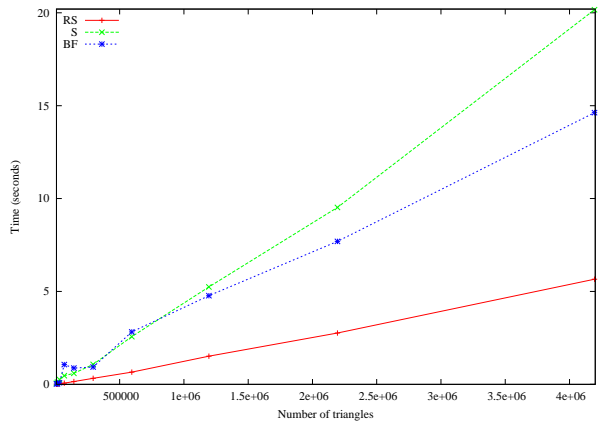}
\caption{\label{fig:bricktimings}  Runtimes for the execution of  \textbf{RS},
  \textbf{S}, and \textbf{BF} on the second group triangulations.}
\end{center}
\end{figure}

\begin{figure}[htb!]
\begin{center}
\includegraphics[width=3.5in]{./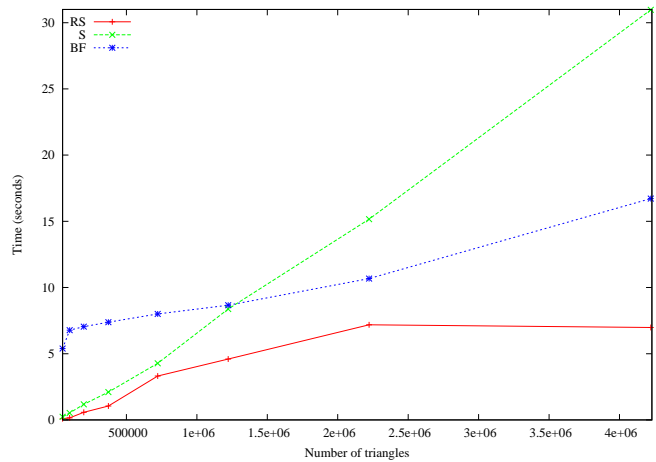}
\caption{\label{fig:fixedgenustimings}  Runtimes  for  the  execution  of
  \textbf{RS},  \textbf{S},   and  \textbf{BF}  on   the  third  group
  triangulations.}
\end{center}
\end{figure}

Once  again \textbf{RS} outperformed  \textbf{S} and  \textbf{BF}, but
\textbf{BF} outperforms \textbf{S}  only for C0 and C1,  which are the
triangulations  with the  largest number  of  triangles.  Furthermore,
contrary to the results obtained from the triangulations in the second
group,  ratio $t_{s}/t_{\textit{rs}}$  gets  larger as  the number  of
triangles    grows.    This    is    also   the    case   for    ratio
$t_{\textit{bf}}/t_{\textit{rs}}$,  but  the   behavior  can  only  be
noticed for  triangulations C0, C1, and  C2, which are  the ones whose
number of triangles is greater than $500,\!000$.

Finally,   we  ran   \textbf{RS},  \textbf{S},   and   \textbf{BF}  on
triangulations D0-D9  of the  fourth group. These  triangulations have
nearly the same number of  triangles, but their genus varies from $10$
to $3,\!500$ (see Table~\ref{tab:group4}). A plot of the \textit{time}
(in  seconds) taken  by  \textbf{RS}, \textbf{S},  and \textbf{BF}  on
triangulations  D0-D9  as a  function  of  the  \textit{genus} of  the
triangulations  is   shown  in  Figure  \ref{fig:varyinggenustimings}.
Observe that  \textbf{RS} outperforms \textbf{S}  and \textbf{BF}, and
\textbf{BF}  outperforms  \textbf{S}  for all  triangulations.   Ratio
$t_{s}/t_{\textit{rs}}$ gets  larger as the genus  grows.  Its maximum
value is $4.7$,  and is attained for triangulation  D0.  Unlike, ratio
$t_{\textit{bf}}/t_{\textit{rs}}$ gets  slightly smaller as  the genus
grows, and it is basically constant and about $2.4$ for triangulations
D0-D3 whose genuses are greater than $1,\!999$.
 
\begin{figure}[htbp]
\begin{center}
\includegraphics[width=3.5in]{./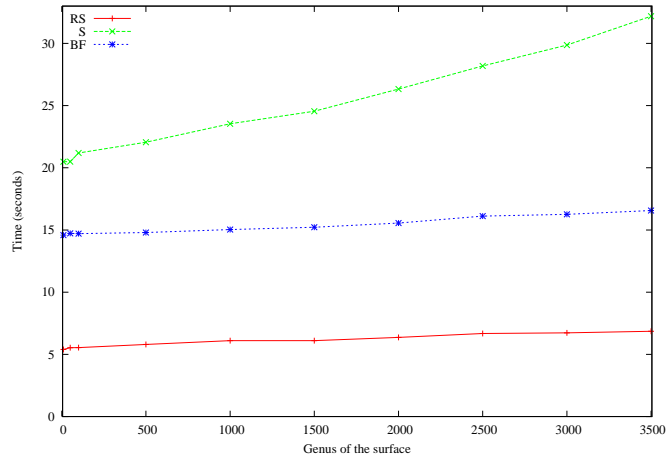}
\caption{\label{fig:varyinggenustimings} Runtimes for the execution of
  \textbf{RS},  \textbf{S},  and   \textbf{BF}  on  the  fourth  group
  triangulations.}
\end{center}
\end{figure}

\subsection{Discussion}\label{sec:discussion}
To properly analyze the  results in Section~\ref{sec:results}, we take
into account  the \textit{number of link condition  tests} carried out
by each algorithm, as well  as the \textit{number of edges tested more
  than once} by  \textbf{S} and \textbf{BF}.  We denote  the number of
link condition tests carried out by \textbf{RS} (resp.  \textbf{S} and
\textbf{BF})  by $\ell_{\textit{rs}}$ (resp.   $\ell_{\textit{s}}$ and
$\ell_{\textit{bf}}$), the  number of edges  tested more than  once by
\textbf{S}   (resp.   \textbf{BF})   and   by  $\epsilon_{s}$   (resp.
$\epsilon_{\textit{bf}}$). In particular,    $\ell_{\textit{bf}}$   and
$\epsilon_{\textit{bf}}$ are  the average values over the  ten runs of
\textbf{BF}.

Although \textbf{RS}  outperforms both \textbf{S}  and \textbf{BF} for
the triangulations in the second group, ratios $t_{s}/t_{\textit{rs}}$
and $t_{\textit{bf}}/t_{\textit{rs}}$ get  smaller as the number $n_f$
of triangles  of the input  triangulations grows.  The main  reason is
that  the probability  that a  randomly chosen  edge from  a genus-$0$
surface triangulation  is contractible increases as  $n_f$ gets larger
(and  the surface  is kept  fixed).  So,  ratios  $\ell_{\textit{s}} /
\ell_{\textit{rs}}$  and   $\ell_{\textit{bf}}  /  \ell_{\textit{rs}}$
decrease  as $n_f$  gets larger,  causing  $t_{s}/t_{\textit{rs}}$ and
$t_{\textit{bf}}/t_{\textit{rs}}$  to decay,  as we  can see  in Tables
\ref{tab:lctgenus0}-\ref{tab:lrgenus0}.     Note   also   that    $\ell_{\textit{bf}}   >
\ell_{\textit{s}}$         and        $\epsilon_{\textit{bf}}        >
\epsilon_{\textit{s}}$,   for  all  triangulations   B0-B9.  Moreover,
$\epsilon_{\textit{s}}$  is very  small  and does  not  scale up  with
$n_f$. Even so, we get  $t_{s} > t_{\textit{bf}}$.

\begin{table}[htbp]
\begin{center}
\begin{tabular}{|c|c|c|c|c|c|} \hline
\textbf{Triangulation} & $\bm{\ell_{\textit{rs}}}$ & $\bm{\ell_{\textit{s}}}$ & $\bm{\ell_{\textit{bf}}}$ & $\bm{\epsilon_{\textit{s}}}$ & $\bm{\epsilon_{\textit{bf}}}$ \\ \hline\hline
B0  &  $2,\!079,\!539$                       & $2,\!097,\!158$                      & $3,\!406,\!091.5$                     & 6  & $19,\!011.7$ \\
B1  &  $1,\!085,\!325$                       & $1,\!097,\!194$                      & $1,\!783,\!418.4$                     & 9  & $10,\!034.9$ \\
B2  &  $\hphantom{0,}\!588,\!335$   & $\hphantom{0,}\!597,\!196$  & $\hphantom{0,}\!968,\!511.9$  & 7  & $\hphantom{0}\!5,\!495.3$ \\
B3  &  $\hphantom{0,}\!289,\!520$   & $\hphantom{0,}\!297,\!174$  & $\hphantom{0,}\!479,\!536.2$  & 8  & $\hphantom{0}\!2,\!863.7$ \\
B4  &  $\hphantom{0,}\!145,\!338$   & $\hphantom{0,}\!147,\!193$  & $\hphantom{0,}\!232,\!984.4$  & 7  & $\hphantom{0}\!1,\!482.1$ \\
B5  &  $\hphantom{0,\!0}68,\!454$   & $\hphantom{0,\!0}72,\!190$  & $\hphantom{0,}\!111,\!673.8$  & 7  & $\hphantom{0,\!0}\!816.4$ \\
B6  &  $\hphantom{0,\!0}29,\!827$   & $\hphantom{0,\!0}34,\!675$  & $\hphantom{0,\!0}50,\!936.6$  & 6  & $\hphantom{0,\!0}\!458.9$ \\
B7  &  $\hphantom{0,\!0}14,\!085$   & $\hphantom{0,\!0}15,\!908$  & $\hphantom{0,\!0}18,\!838.2$  & 6  & $\hphantom{0,\!0}\!292.5$ \\
B8  &  $\hphantom{0,\!00}7,\!979$   & $\hphantom{0,\!00}8,\!408$  & $\hphantom{0,\!00}9,\!941.4$  & 6  & $\hphantom{0,\!0}\!159.4$ \\
B9  &  $\hphantom{0,\!00}4,\!063$   & $\hphantom{0,\!00}4,\!408$  & $\hphantom{0,\!00}5,\!246.4$  & 6  & $\hphantom{0,\!00}\!92.1$ \\\hline
\end{tabular}
\caption{\label{tab:lctgenus0} The number  of link condition tests and
  the  number  of  edges  tested  more than  once  obtained  from  the
  executions  of  \textbf{RS},  \textbf{S},  and  \textbf{BF}  on  the
  triangulations B0-B9 in the second group.}
\end{center}
\end{table}

\begin{table}[htbp]
\begin{center}
\begin{tabular}{|c|c|c|c|c|c|c|} \hline
\textbf{Triangulation}  & $\bm{\ell_{\textit{s}}} /
\bm{\ell_{\textit{rs}}}$ & $\bm{\ell_{\textit{bf}}} /
\bm{\ell_{\textit{rs}}}$ & $\bm{\ell_{\textit{s}}} /
\bm{\ell_{\textit{bf}}}$  & $\bm{t_{\textit{s}}} / \bm{t_{\textit{rs}}}$ & $\bm{t_{\textit{bf}}} / \bm{t_{\textit{rs}}}$ & $\bm{t_{\textit{s}}} / \bm{t_{\textit{bf}}}$ \\ \hline\hline
B0     & $1.01$    & $1.64$   & $0.62$ & $\hphantom{0}3.56$ & $\hphantom{0}2.58$   & $1.38$ \\
B1     & $1.01$    & $1.65$   & $0.62$ & $\hphantom{0}3.45$ & $\hphantom{0}2.79$   & $1.24$  \\
B2     & $1.02$    & $1.65$   & $0.62$ & $\hphantom{0}3.46$ & $\hphantom{0}3.15$   & $1.10$  \\
B3     & $1.03$    & $1.66$   & $0.62$ & $\hphantom{0}3.90$ & $\hphantom{0}4.30$   & $0.91$  \\
B4     & $1.01$    & $1.60$   & $0.63$ & $\hphantom{0}3.32$ & $\hphantom{0}2.88$   & $1.15$  \\
B5     & $1.05$    & $1.63$   & $0.65$ & $\hphantom{0}3.96$ & $\hphantom{0}5.86$   & $0.67$  \\
B6     & $1.16$    & $1.71$   & $0.68$ & $\hphantom{0}7.37$ &                     $16.90$   & $0.44$  \\
B7     & $1.13$    & $1.34$   & $0.84$ &                     $11.51$ & $\hphantom{0}3.76$   & $3.06$  \\
B8     & $1.05$    & $1.25$   & $0.85$ &                     $10.81$ & $\hphantom{0}3.35$   & $3.23$  \\
B9     & $1.08$    & $1.29$   & $0.84$ &                     $12.28$ & $\hphantom{0}4.15$   & $2.96$  \\\hline
\end{tabular}
\caption{\label{tab:lrgenus0}  Ratios  between   the  number  of  link
  condition   tests   performed   by  \textbf{RS},   \textbf{S},   and
  \textbf{BF} on the triangulations B0-B9 in the second group, and their
  corresponding execution time ratios.}
\end{center}
\end{table}

The fact that $\epsilon_{\textit{s}}$ is very small and does not scale
up with  $n_f$ is  due to the  strategy used  by \textbf{S} to  find a
contractible edge:  first, a  vertex of lowest  degree in  the current
triangulation $K$ is chosen, and  then a contractible edge incident on
this vertex  is found.   Since the  genus of the  surface is  $0$, the
lowest degree vertex of $K$ is  very likely to have degree $3$ or $4$.
If $K$ is not (isomorphic to) $\T_4$ already, then every edge incident
on    a   degree-$3$    vertex   is    contractible   in    $K$   (see
Proposition~\ref{prop:contractible1}), and hence most edges are tested
against the  link condition by  \textbf{S} only once.   In particular,
for the cases in which $\epsilon_{\textit{s}} = 6$, only the six edges
of the final irreducible triangulation, which is isomorphic to $\T_4$,
were tested more than once.  \textbf{S} chooses a lowest degree vertex
from $K$  in $\mathcal{O}( \lg m )$  time, where $m$ is  the number of
(loose)  vertices   in  $K$.    Our  experiments  indicate   that  the
$\mathcal{O}( \lg m )$ cost  cancels out the gain obtained by reducing
the values of $\ell_{\textit{s}}$ and $\epsilon_{\textit{s}}$.

For triangulations  C0-C7, which have  a fixed genus of  $3,\!500$ and
variable-size,    the    scenario    regarding    $\ell_{\textit{s}}$,
$\ell_{\textit{bf}}$, $\epsilon_{s}$,  and $\epsilon_{\textit{bf}}$ is
quite the opposite to the  one for the genus-$0$ triangulations, B0-B9
(see Table~  \ref{tab:lctgenus3500}).  The  reason is that  the larger
the genus is  the smaller the probability that  a randomly chosen edge
from  any  of C0-C7  is  contractible.  Furthermore, this  probability
decreases even further as $n_f$ gets smaller.

\begin{table}[htbp]
\begin{center}
\begin{tabular}{|c|c|c|c|c|c|} \hline
\textbf{Triangulation} & $\bm{\ell_{\textit{rs}}}$                 & $\bm{\ell_{\textit{s}}}$               & $\bm{\ell_{\textit{bf}}}$                & $\bm{\epsilon_{\textit{s}}}$   & $\bm{\epsilon_{\textit{bf}}}$ \\ \hline\hline
C0                                 &  $3,\!499,\!517$                       & $3,\!336,\!889$                      & $3,\!595,\!222.5$                     & $346,\!735$                       & $52,\!815.0$ \\
C1                                 &  $1,\!846,\!458$                       & $1,\!780,\!868$                      & $1,\!967,\!691.3$                     & $200,\!636$                       & $49,\!757.9$ \\
C2                                 &  $1,\!018,\!801$                       & $1,\!006,\!051$                      & $1,\!131,\!182.2$                     & $128,\!339$                       & $48,\!063.1$ \\
C3                                 &  $\hphantom{0,}\!605,\!195$   & $\hphantom{0,}\!615,\!112$  & $\hphantom{0,}\!742,\!667.4$  & $\hphantom{0}91,\!493$   & $47,\!180.6$ \\
C4                                 &  $\hphantom{0,}\!318,\!114$   & $\hphantom{0,}\!347,\!941$  & $\hphantom{0,}\!459,\!221.8$  & $\hphantom{0}66,\!975$   & $46,\!630.6$ \\
C5                                 &  $\hphantom{0,}\!171,\!599$   & $\hphantom{0,}\!209,\!767$  & $\hphantom{0,}\!316,\!699.6$  & $\hphantom{0}54,\!353$   & $46,\!300.9$ \\
C6                                 &  $\hphantom{0,\!0}90,\!867$   & $\hphantom{0,}\!135,\!245$  & $\hphantom{0,}\!232,\!041.0$  & $\hphantom{0}47,\!094$   & $46,\!105.7$ \\
C7                                 &  $\hphantom{0,\!0}54,\!594$   & $\hphantom{0,}\!100,\!181$  & $\hphantom{0,}\!188,\!700.6$  & $\hphantom{0}45,\!856$   & $46,\!000.5$ \\  \hline
\end{tabular}
\caption{\label{tab:lctgenus3500}  The number  of link condition tests and
  the  number  of  edges  tested  more than  once  obtained  from  the
  executions  of  \textbf{RS},  \textbf{S},  and  \textbf{BF}  on  the
  triangulations C0-C7 in the third group.}
\end{center}
\end{table}

The  fact  that   $\epsilon_{s}  >  \epsilon_{\textit{bf}}$,  for  all
triangulations in  the third group,  but C7, tells  us that a  few low
degree vertices  are chosen over  and over again by  \textbf{S} before
they become trapped, and several  edges incident on these vertices are
tested  more than  once  against  the link  condition  and failed  the
test. So, the strategy adopted  by \textbf{S} is not so effective when
the  genus  of the  surface  is  large. Moreover,  as  we  can see  in
Tables~\ref{tab:lctgenus3500}   and~\ref{tab:lrgenus3500},   we   have
$\ell_{\textit{bf}}  >   \ell_{\textit{s}}$,  for  all  triangulations
C0-C7,  and $\ell_{\textit{bf}} /  \ell_{\textit{s}}$ gets  smaller as
$n_f$  gets  larger, varying  from  $1.88$  (for  C7) to  $1.08$  (for
C0). The larger values of $\ell_{\textit{bf}} / \ell_{\textit{s}}$ for
C2-C7  explain why  \textbf{S} outperforms  \textbf{BF} on  C2-C7 (see
Figure~\ref{fig:fixedgenustimings}).
 
It is worth mentioning that the number of link condition tests carried
out  by \textbf{RS}  (i.e., $\ell_{\textit{rs}}$)  is larger  than the
number  of  link condition  tests  carried  out  by \textbf{S}  (i.e.,
$\ell_{\textit{s}}$) for triangulations C0,  C1, and C2, which are the
ones  with a  larger  number,  $n_f$, of  triangles.   Even so,  ratio
$t_{\textit{s}}/t_{\textit{rs}}$ gets larger  as $n_f$ gets larger for
C0, C1,  and C2. This fact  indicates that the $\mathcal{O}(  \lg m )$
cost for  choosing a lowest  degree vertex from  the set of  $m$ loose
vertices of the current triangulation cancels out the gain obtained by
\textbf{S}  by executing  a smaller  number of  link  condition tests.
Since  $\epsilon_{\textit{s}}$  is large  (compared  to the  genus-$0$
scenario), the value of $m$ decreases more slowly, which increases the
overall cost of picking lowest degree vertices. Also, the overall cost
of testing a  set of edges in \textbf{RS} is  smaller than the overall
cost of testing  the same set of edges  in \textbf{S} (see discussions
in Section~\ref{sec:testedges} and Section~\ref{sec:rwork}).

\begin{table}[htbp]
\begin{center}
\begin{tabular}{|c|c|c|c|c|c|c|} \hline
\textbf{Triangulation}  & $\bm{\ell_{\textit{s}}} /
\bm{\ell_{\textit{rs}}}$ & $\bm{\ell_{\textit{bf}}} /
\bm{\ell_{\textit{rs}}}$ & $\bm{\ell_{\textit{s}}} /
\bm{\ell_{\textit{bf}}}$  & $\bm{t_{\textit{s}}} / \bm{t_{\textit{rs}}}$ & $\bm{t_{\textit{bf}}} / \bm{t_{\textit{rs}}}$ & $\bm{t_{\textit{s}}} / \bm{t_{\textit{bf}}}$ \\ \hline\hline
C0     & $0.95$    & $1.03$   & $0.93$ & $4.44$ & $\hphantom{0}2.39$   & $1.85$ \\
C1     & $0.96$    & $1.07$   & $0.91$ & $2.11$ & $\hphantom{0}1.49$   & $1.42$  \\
C2     & $0.99$    & $1.11$   & $0.89$ & $1.82$ & $\hphantom{0}1.89$   & $0.97$  \\
C3     & $1.02$    & $1.23$   & $0.83$ & $1.29$ & $\hphantom{0}2.41$   & $0.53$  \\
C4     & $1.09$    & $1.44$   & $0.76$ & $2.00$ & $\hphantom{0}7.02$   & $0.29$  \\
C5     & $1.22$    & $1.85$   & $0.66$ & $2.05$ &                     $12.26$   & $0.17$  \\
C6     & $1.49$    & $2.55$   & $0.58$ & $3.41$ &                     $43.97$   & $0.08$  \\
C7     & $1.84$    & $3.46$   & $0.53$ & $2.61$ &                     $58.88$   & $0.04$  \\\hline
\end{tabular}
\caption{\label{tab:lrgenus3500}  Ratios  between   the  number  of  link
  condition   tests   performed   by  \textbf{RS},   \textbf{S},   and
  \textbf{BF} on the triangulations C0-C7 in the third group, and their
  corresponding execution time ratios.}
\end{center}
\end{table}

The   experiment  with   triangulations  D0-D9   indicates   that  the
performance of \textbf{S} decreases  as the genus of the triangulation
gets larger and the number of  triangles is kept about the same. As we
can    see   in    Table~\ref{tab:lctgenusvary},    the   values    of
$\epsilon_{\textit{s}}$ scales  up very quickly with  the genus, which
is  not the  case for  the values  of  $\epsilon_{\textit{bf}}$.  This
observation tells  us that an edge  chosen by \textbf{S}  to be tested
against the link condition is  much more likely to be non-contractible
than an edge  chosen at random (as it is the  case in \textbf{BF}). As
we  pointed  out  before,  those  ``bad'' choices  increase  the  time
\textbf{S} takes to  choose a lowest degree vertex  and a contractible
edge incident on it.  This is why $t_{\textit{s}}/t_{\textit{rs}}$ and
$t_{\textit{s}}/t_{\textit{bf}}$ get larger as the triangulation genus
grows,        despite        the        fact        that        ratios
$\ell_{\textit{s}}/\ell_{\textit{rs}}$                              and
$\ell_{\textit{s}}/\ell_{\textit{bf}}$      get      smaller      (see
Table~\ref{tab:lrgenusvary}). Moreover, since $n_t$ is large and about
the    same    for     triangulations    D0-D9,    the    values    of
$\epsilon_{\textit{bf}} / n_e$ for D4-D9, where $n_e$ is the number of
edges  of  the triangulation,  are  much  smaller  than the  ones  for
triangulations C2-C7.   Thus, contrary to  what we observe  for C2-C7,
\textbf{BF}   outperforms    \textbf{S}   on   the    smallest   genus
triangulations,          D4-D9,          as         well          (see
Figure~\ref{fig:varyinggenustimings}).

Finally,  observe  that  the  experiments  with  triangulations  B0-B9
corroborates the  fact that \textbf{RS}  runs in linear time  in $n_f$
for      triangulations      of      genus-$0$      surfaces      (see
Figure~\ref{fig:bricktimings}).    Likewise,   the  experiments   with
triangulations  D0-D9 indicates  that  the runtime  of \textbf{RS}  is
proportional  to the  term $g  \cdot n_f$.  To see  that,  we computed
$\delta    f_i   =    (n_{f_i}-n_{f_9})/n_{f_9}$,   $\delta    g_i   =
(g_{i}-g_9)/g_9$, and $\delta t_i = (  t_i - t_9)/t_9$, for every $i =
8,\ldots,0$, where $n_{f_j}$ and $g_j$ are the number of triangles and
the genus of  triangulation D$j$, respectively, and $t_j$  is the time
taken by \textbf{RS} on  triangulation D$j$, for every $j=0,\ldots,9$.
Then, we verified that $\delta t_i  / ( \delta f_i \cdot \delta g_i )$
is approximately  constant for triangulations D0-D4,  which have genus
greater than or equal to $1,\!500$.

\begin{table}[ht!]
\begin{center}
\begin{tabular}{|c|c|c|c|c|c|} \hline
\textbf{Triangulation} & $\bm{\ell_{\textit{rs}}}$ & $\bm{\ell_{\textit{s}}}$ & $\bm{\ell_{\textit{bf}}}$ & $\bm{\epsilon_{\textit{s}}}$  & $\bm{\epsilon_{\textit{bf}}}$ \\ \hline\hline
D0                                 &  $3,\!499,\!517$       & $3,\!336,\!889$        & $3,\!585,\!933.5$     & $346,\!735$                       & $52,\!866.6$ \\
D1                                 &  $3,\!298,\!209$       & $3,\!161,\!516$        & $3,\!576,\!091.5$     & $297,\!628$                       & $47,\!936.2$ \\
D2                                 &  $3,\!095,\!251$       & $2,\!981,\!813$        & $3,\!554,\!938.0$     & $247,\!822$                       & $43,\!124.9$ \\
D3                                 &  $2,\!895,\!440$       & $2,\!804,\!562$        & $3,\!522,\!016.4$     & $197,\!884$                       & $38,\!400.2$ \\
D4                                 &  $2,\!693,\!425$       & $2,\!628,\!433$        & $3,\!495,\!457.5$     & $148,\!587$                       & $33,\!570.1$ \\
D5                                 &  $2,\!493,\!458$       & $2,\!451,\!622$        & $3,\!466,\!912.2$     & $\hphantom{0}99,\!075$   & $28,\!706.5$ \\
D6                                 &  $2,\!293,\!492$       & $2,\!274,\!066$        & $3,\!436,\!328.4$     & $\hphantom{0}49,\!415$   & $23,\!811.3$ \\
D7                                 &  $2,\!129,\!696$       & $2,\!132,\!148$        & $3,\!412,\!062.2$     & $\hphantom{00}9,\!718$   & $19,\!991.1$ \\ 
D8                                 &  $2,\!110,\!987$       & $2,\!115,\!194$        & $3,\!409,\!289.6$     & $\hphantom{00}4,\!948$   & $19,\!357.0$ \\
D9                                 &  $2,\!087,\!627$       & $2,\!100,\!756$        & $3,\!406,\!560.1$     & $\hphantom{00}1,\!019$   & $19,\!072.6$ \\  \hline
\end{tabular}
\caption{\label{tab:lctgenusvary} The  number of link  condition tests
  and  the number of  edges tested  more than  once obtained  from the
  executions  of  \textbf{RS},  \textbf{S},  and  \textbf{BF}  on  the
  triangulations D0-D9 in the fourth group.}
\end{center}
\end{table}

\begin{table}[ht!]
\begin{center}
\begin{tabular}{|c|c|c|c|c|c|c|} \hline
\textbf{Triangulation}  & $\bm{\ell_{\textit{s}}} /
\bm{\ell_{\textit{rs}}}$ & $\bm{\ell_{\textit{bf}}} /
\bm{\ell_{\textit{rs}}}$ & $\bm{\ell_{\textit{s}}} /
\bm{\ell_{\textit{bf}}}$  & $\bm{t_{\textit{s}}} / \bm{t_{\textit{rs}}}$ & $\bm{t_{\textit{bf}}} / \bm{t_{\textit{rs}}}$ & $\bm{t_{\textit{s}}} / \bm{t_{\textit{bf}}}$ \\ \hline\hline
D0     & $0.95$    & $1.02$   & $0.93$ & $4.69$ & $2.42$   & $1.94$ \\
D1     & $0.96$    & $1.08$   & $0.88$ & $4.44$ & $2.42$   & $1.84$  \\
D2     & $0.96$    & $1.15$   & $0.84$ & $4.22$ & $2.41$   & $1.75$  \\
D3     & $0.97$    & $1.22$   & $0.80$ & $4.14$ & $2.45$   & $1.69$  \\
D4     & $0.98$    & $1.30$   & $0.75$ & $4.02$ & $2.49$   & $1.61$  \\
D5     & $0.98$    & $1.39$   & $0.71$ & $3.86$ & $2.46$   & $1.57$  \\
D6     & $0.99$    & $1.50$   & $0.66$ & $3.80$ & $2.55$   & $1.49$  \\
D7     & $1.00$    & $1.60$   & $0.62$ & $3.82$ & $2.65$   & $1.44$  \\
D8     & $1.00$    & $1.62$   & $0.62$ & $3.70$ & $2.66$   & $1.39$  \\
D9     & $1.01$    & $1.63$   & $0.62$ & $3.80$ & $2.70$   & $1.40$  \\\hline
\end{tabular}
\caption{\label{tab:lrgenusvary}  Ratios between  the  number of  link
  condition   tests   performed   by  \textbf{RS},   \textbf{S},   and
  \textbf{BF}  on the triangulations  D0-D9 in  the fourth  group, and
  their corresponding execution time ratios.}
\end{center}
\end{table}

\section{Conclusions}\label{sec:conc}
We   presented  a   new   algorithm  for   computing  an   irreducible
triangulation  $\T^{\prime}$  from a  given  triangulation  $\T$ of  a
connected,  oriented, and compact  surface $\s$  in $\E^d$  with empty
boundary. If the genus $g$ of $\s$ is positive, then $\T^{\prime}$ can
be computed  in $\mathcal{O}( g^2 + g  \, n_f )$ time,  where $n_f$ is
the number of triangles in $\T$.  Otherwise, $\T^{\prime}$ is computed
in linear  time in  $n_f$. In  both cases, the  space required  by the
algorithm is in $\Theta( n_f )$.  The time upper bound derived in this
paper   improves  upon   the   previously  best   known  upper   bound
in~\cite{SCH91} by a $\lg n_f / g$ factor.

We  also implemented our  algorithm, the  algorithm given  by Schipper
in~\cite{SCH91},  and a  randomized, brute-force  algorithm,  and then
experimentally   compared  these  implementations   on  triangulations
typically found  in graphics  applications, as well  as triangulations
specially devised to  study the runtime of the  algorithms in extreme
scenarios.   Our algorithm  outperformed  the other  two  in all  case
studies, indicating that the key ideas we use to reduce the worst-case
time complexity  of our  algorithm are also  effective in  the average
case  and for  triangulations typically  encountered in  practice. Our
experiments  also  indicated  that  the key  ideas  behind  Schipper's
algorithm are  not very effective  for the same  type of data,  as his
algorithm was outperformed by the brute-force one.

In the description  of our algorithm, we required  $\s$ be orientable,
as the augmented  DCEL we used in the  implementation of the algorithm
does not support nonorientable  surfaces.  However, our algorithm also
works  for nonorientable  surfaces  within the  same  time bounds,  as
Theorem~\ref{the:itsize}  is stated  in terms  of the  Euler  genus of
$\s$, which is half the value  of the (usual) genus $g$ for orientable
surfaces. We have not yet  extended our algorithm to deal with compact
surfaces with  a nonempty boundary  either.  A starting  point towards
this extension is the very  recent work by Boulch, de Verdi\`{e}re and
Nakamoto~\cite{BVN13},  which gives  an  analogous result  to that  of
Theorem~\ref{the:itsize} for (non-orientable) surfaces with a nonempty
boundary.

We  are interested in  investigating the  possibility of  lowering the
$\mathcal{O}( g^2  + g  \, n_f )$  upper bound,  so that the  $g^2$ is
replaced with $g$ and the bound becomes $\mathcal{O}( g \, n_f )$.  If
this  is possible, is  the resulting  bound tight?   Another important
research venue is  the development of a fast  algorithm for generating
the  complete  set of  \textit{all}  irreducible  triangulations of  a
surface from a  given triangulation of the surface.  We are interested
in developing  such an  algorithm by using  ours as a  building block,
providing an alternative method to that of Sulanke \cite{SUL06a}.

It  would be interesting  to find  out whether  some ideas  behind our
algorithm could speed up some topology-preserving, mesh simplification
algorithms~\cite{LRCVWH03}.  In particular,  one can devise a parallel
version of our algorithm to take advantage of the increasingly popular
and powerful graphics processing units (GPU). The idea is to process a
few vertices  of the input triangulation  at a time,  rather than only
one vertex. Theorem~\ref{the:itsize} can be used to give us an idea of
the size of the initial set of vertices. The algorithm must handle the
case  in which the  same vertex  becomes a  neighbor of  two currently
processed vertices. At this point,  an edge contraction could make two
currently processed vertices neighbors  of each other. If their common
edge is contractible, then one of two vertices can be removed from the
current  triangulation   by  contracting  the   edge.   This  parallel
algorithm can efficiently build  a hierarchy of triangulations such as
the one in~\cite{CDP04}.

\section{Acknowledgments}\label{sec:ackn}
All  triangulations  in   Table~\ref{tab:group1}  were  obtained  from
publicly available triangle  mesh repositories. Namely, triangulations
\texttt{Armadillo},  \texttt{Botijo}, \texttt{Casting}, \texttt{Eros},
\texttt{Fertility},      \texttt{Filigree},     \texttt{Hand},     and
\texttt{Socket}          were          taken         from          the
\href{http://shapes.aim-at-shape.net/}{Aim@Shape           Repository},
triangulation    \texttt{Happy   Buddha}    was    taken   from    the
\href{http://www.cc.gatech.edu/projects/large\_models/}{Large
  Geometric Models Archive},  and triangulation \texttt{Iphigenia} was
taken  from the  \href{http://www.pmp-book.org/}{website} of  the book
in~\cite{BKPAL10}.

\section*{References}

\bibliography{irt}

\begin{appendix}
\section{Proof of Lemma~\ref{lem:tritwofaces}}\label{sec:proofs}
A proof for Lemma~\ref{lem:tritwofaces} is given below:
\begin{proof}
  Let $e$  be any edge  of $(  G , i  )$.  Aiming at  a contradiction,
  assume that there are at  least three faces, $\tau_1$, $\tau_2$, and
  $\tau_3$, incident on $e$. Let $p$  be any point of $e$. Since every
  face is  an open  disk, and since  each edge  incident on a  face is
  entirely  contained in  the face  boundary, there  exists  a positive
  number $r_j$, for each  $j=1,2,3$, such that $\overline{\tau}_j \cap
  B(  p  ,  r_j  )$  is  homeomorphic to  the  half-disk,  $D$,  where
  $\overline{\tau}_j$ is the closure of $\tau_j$, $B( p, r_j )$ is the
  open ball of radius $r_j$ centered at $p$, and $D = \{ ( x , y ) \in
  \E^2 \mid x \ge 0 , x^2 + y^2 < 1 \}$. Furthermore, since $e$ cannot
  contain a vertex, if each $r_j$ is chosen small enough, then we also
  have that every point in $B( p , r_j )$ which is also a point on the
  boundary of $\tau$ belongs to $e$, i.e., $\partial( \tau_j ) \cap B(
  p , r_j ) = e \cap B(  p , r_j )$, where $\partial( \tau_j )$ is the
  boundary  of $\tau_j$. By  definition of  subdivision, we  know that
  $\tau_j \cap \tau_k = \emptyset$,  for any two $j,k \in \{ 1,2,3\}$,
  with $j \neq k$. So, if we take $r = \min\{ r_1,r_2,r_3\}$, then the
  intersection of the three half-disks, $\overline{\tau_1} \cap B( p ,
  r )$,  $\overline{\tau_2} \cap B(  p , r )$,  and $\overline{\tau_3}
  \cap B(  p , r )$, is  equal to $e \cap  B( p , r_j  )$, while their
  union is not  homeomorphic to an open disk (no  matter how small $r$
  is). So, there cannot be any neighborhood of $\s$ around $p$ that is
  homeomorphic to  a disk, which contradicts  the fact that  $\s$ is a
  surface. Thus, edge $e$ must be incident on either one or two faces.
  But, from Definition~\ref{def:triangulation}, the vertices and edges
  in  the boundary  of  each  face of  a  triangulation are  distinct.
  Moreover, since  the closure $\bar{\tau}$  of a single  face $\tau$,
  which  is  bounded by  three  distinct  vertices  and edges,  cannot
  entirely  cover  a  boundaryless,  compact surface  in  $\E^3$,  the
  complement  of $\bar{\tau}$  with respect  to $\s$  must  contain at
  least one more face. Consequently,  every edge of $\tau$ is incident
  on two faces, and so must be $e$.
\end{proof}

\section{Correctness}\label{sec:correctness}
This  appendix presents  a correctness  proof for  our  algorithm (see
Section~\ref{sec:algo}). 

We  start  by   proving  that  the  set  of   vertices  of  the  final
triangulation, $\T^{\prime}$, is a subset of the processed vertices of
the input triangulation,  $\T$. Later, we show that  such vertices are
all  trapped,  establishing   that  $\T^{\prime}$  is  an  irreducible
triangulation.

\begin{pro}\label{prop:processed}
  Whenever  condition $Q  \neq  \emptyset$  is reached  in  line 4  of
  Algorithm~\ref{alg:contractions}, we have  $p( z ) = \textit{true}$,
  where $z$  is a vertex in  $\T$, if and  only if either (1)  $z$ has
  been processed by the algorithm or  (2) $z$ was removed from $\T$ by
  an edge contraction (or from a triangulation obtained from $\T$ by a
  sequence of edge contractions).
\end{pro}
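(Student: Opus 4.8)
The plan is to prove the statement as a loop invariant of the outer while loop (lines 4--37) of Algorithm~\ref{alg:contractions}, by induction on the number of times its guard is evaluated in line 4. The crucial preliminary observation is that, after the initialization stage (Algorithm~\ref{alg:continit}), the attribute $p$ of any vertex is written in exactly two places: line 35 of Algorithm~\ref{alg:contractions}, which sets $p(u) \leftarrow \textit{true}$ right after $u$ has been processed, and line 12 of Algorithm~\ref{alg:contract}, which sets $p(v) \leftarrow \textit{true}$ at the moment $v$ is identified with $u$ and hence removed from the current triangulation by \textsc{Collapse} (see Definition~\ref{def:contraction} and Algorithm~\ref{alg:collapse}). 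In particular, $p$ is never reset to \textit{false} after initialization, so once $p(z)$ becomes \textit{true} it stays \textit{true}; this is consistent with the fact that the two states described in (1) and (2) are permanent.

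For the base case I would note that the first time line 4 is reached is immediately after Algorithm~\ref{alg:continit} has set $p(z) \leftarrow \textit{false}$ for every vertex $z$ of $\T$, at which point no vertex has been processed and no edge has been contracted. Hence both sides of the equivalence are \textit{false} for every $z$, and the invariant holds vacuously. For the inductive step, assume the equivalence holds the $k$-th time line 4 is reached, and consider the iteration that follows. Line 5 removes some vertex $u$ from $Q$. If $p(u)$ is \textit{true}, then by the induction hypothesis $u$ is already processed or removed; the body in lines 7--35 is skipped, no $p$ value changes, and the invariant is preserved when line 4 is reached again. If $p(u)$ is \textit{false}, the algorithm processes $u$: every edge $[u,v]$ contracted during this processing invokes \textsc{Contract}, which removes $v$ (establishing (2) for $v$) and simultaneously sets $p(v) \leftarrow \textit{true}$ in line 12; and when the processing ends, line 35 sets $p(u) \leftarrow \textit{true}$, at which point $u$ has just been processed (establishing (1) for $u$). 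Since these are the only assignments to $p$, the forward direction is maintained for the newly affected vertices, while all other vertices retain both their $p$ value and their processed/removed status. The converse direction follows from the same case analysis, because being processed (respectively removed) is precisely the event that triggers line 35 (respectively line 12).

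The main obstacle is not any single calculation but the bookkeeping needed to argue that the processing of $u$ touches $p$ only in the intended ways and that each contraction inside \textsc{Contract} genuinely deletes the identified vertex from the triangulation. A secondary point requiring care is the handling of stale entries of $Q$: a vertex removed by an earlier contraction still has a pointer in $Q$, and when that pointer is dequeued we must confirm that $p(u)$ is \textit{true} (so the body is correctly skipped); this follows directly from the induction hypothesis together with the fact that contraction and processing set $p$ to \textit{true} and never clear it. Finally, I would remark in passing---using Lemma~\ref{lem:trapped}---that a processed, hence trapped, vertex can never subsequently be removed, since the edge effecting its removal would have to be contractible while being incident on a trapped vertex; this is not needed for the equivalence itself, but it clarifies that cases (1) and (2) are in fact mutually exclusive.
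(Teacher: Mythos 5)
Your proposal is correct and follows essentially the same route as the paper's own proof: induction on the number of evaluations of the guard in line 4, with the key observation that after initialization the attribute $p$ is written only in line 12 of Algorithm~\ref{alg:contract} (when a vertex is removed by a contraction) and line 35 of Algorithm~\ref{alg:contractions} (when a vertex finishes being processed), each corresponding exactly to one of the two cases in the equivalence. The extra remarks on stale queue entries and on the mutual exclusivity of the two cases are sound but not needed for the statement.
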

\begin{proof}
  Our proof  is by  induction on  the number, $i$,  of times  that the
  condition  $Q \neq \emptyset$  is reached  and tested  in line  4 of
  Algorithm~\ref{alg:contractions}. We  note that $1  \le i \le  n_v +
  1$,  where   $n_v$  is   the  number  of   vertices  in   the  input
  triangulation, $\T$.   This is because  $Q$ is initialized  with all
  vertices  of $\T$  in  Algorithm~\ref{alg:continit} and  $Q$ is  not
  modified    until    the   while    loop    of    lines   4-37    of
  Algorithm~\ref{alg:contractions}  is  executed.   Furthermore,  each
  loop iteration removes exactly one  vertex from $Q$ and no vertex is
  ever inserted into $Q$ during the execution of the loop.

  \textbf{Base case ($\bm{i  = 1}$)}. If $i =  1$, then the hypothesis
  of our  claim is  \textit{if $Q \neq  \emptyset$ is reached  for the
    first time}. It  turns out that $Q \neq  \emptyset$ will always be
  reached and tested  at least once. In the first time,  we have $p (z
  )$ equals to  \textit{false}, for every vertex $z$ in  $\lte$, as $p( z
  )$   is   set   to    \textit{false}   during   the   execution   of
  Algorithm~\ref{alg:continit}.   Furthermore,   no  vertex  has  been
  processed  and no  edge  contraction  has been  carried  out by  the
  algorithm yet. So, our claim holds for $i = 1$.

  \textbf{Hypothesis ($\bm{i = k}$)}.  Assume that our claim is true for $i
  = k$, where  $k$ is an arbitrary (but fixed) integer,  with $1 \le k
  \le n_v$.  This is  equivalent to saying  that if condition  $Q \neq
  \emptyset$   is  tested   for  the   $k$-th  time   in  line   4  of
  Algorithm~\ref{alg:contractions}, then (at that moment) the value of
  $p( z )$ is \textit{true}, for  every vertex $z$ in $\T$, iff either
  (1)  $z$  has  been  processed  by  the algorithm  or  (2)  an  edge
  contraction removed  $z$ from $\T$ or from  a triangulation obtained
  from $\T$ by the edge contractions.

  \textbf{Inductive Step  ($\bm{i = k+1}$)}. Assume that  condition $Q \neq
  \emptyset$  is  tested  for  the   $(k+1)$-th  time  in  line  4  of
  Algorithm~\ref{alg:contractions}.   So,  let   us  unroll  the  last
  iteration of the loop and  consider the moment in which condition $Q
  \neq \emptyset$  was tested for  the $k$-th time.  By  the induction
  hypothesis, the value of $p( z )$ is \textit{true}, for every vertex
  $z$ in $\T$, iff either (1)  $z$ has been processed by the algorithm
  or  (2)  an  edge  contraction  removed  $z$ from  $\T$  or  from  a
  triangulation    previously    obtained    from   $\T$    by    edge
  contractions. During  the $k$-th iteration of the  outer while loop,
  there are  only two places  in which the  current value of $p(  z )$
  could be  changed: line 12 of  Algorithm~\ref{alg:contract} and line
  35  of Algorithm~\ref{alg:contractions}.  The  former line  sets the
  value of  $p( z  )$ to \textit{true}  iff $[  u , z  ]$ is  the edge
  chosen  to be  contracted by  the  algorithm, $u$  is the  currently
  processed vertex,  and $z$  is the vertex  removed from  the current
  triangulation by  contraction of $[ u  , z ]$.  In  turn, the latter
  line sets  the value  of $p( z  )$ to  \textit{true} iff $z$  is the
  currently  processed  vertex,  $u$.   So,  when  condition  $Q  \neq
  \emptyset$ is tested for  the $(k+1)$-th time, immediately after the
  $k$-th  iteration  of the  loop  ends,  the value  of  $p(  z )$  is
  \textit{true}, for every vertex $z$  in $\T$, iff either (1) $z$ has
  been processed by  the algorithm or (2) an  edge contraction removed
  $z$ from $\T$ or from  a triangulation previously obtained from $\T$
  by  edge contractions.  Thus, our  claim  is true  for all  $i =  1,
  \ldots, n_v+1$.
\end{proof}

The      next     five     results      allow     us      to     prove
Proposition~\ref{prop:listLUE}:

\begin{pro}\label{prop:listR}
  Let  $u$ be  any vertex  of  $\T$ selected  to be  processed by  the
  algorithm.  Then,  before the processing of $u$  starts, list $\lue$
  is  initialized  with  every  edge  $[  u ,  v  ]$  in  the  current
  triangulation, $K$, where $v$ is a vertex of the link, $\textit{lk}(
  u  ,   K  )$,   of  $u$   in  $K$  that   has  not   been  processed
  yet. Furthermore,  if $[ u , v  ]$ is an edge  inserted into $\lue$,
  where $v$  is a vertex  in $K$ of  degree, $d_v$, greater  than $3$,
  then edge $[ u , v ]$ cannot  precede an edge $[ u , w ]$ already in
  $\lue$ such that the degree, $d_w$, of $w$ is $3$.
\end{pro}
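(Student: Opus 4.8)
The plan is to establish both parts of the proposition by directly examining the initialization loop in lines 8--19 of Algorithm~\ref{alg:contractions}, which is the only place where $\lue$ is populated before the processing of $u$ begins.

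For the first part, I would observe that the for loop in lines 8--19 iterates exactly once over each vertex $v \in \textit{lk}(u,K)$, and that the edge $[u,v]$ is inserted into $\lue$ (at the front or the rear) if and only if the guard \textit{not}\ $p(v)$ in line 12 evaluates to true. To convert the condition $p(v) = \textit{false}$ into the desired statement ``$v$ has not been processed yet,'' I would invoke Proposition~\ref{prop:processed}: it tells us that $p(v) = \textit{true}$ exactly when $v$ has already been processed or $v$ has been removed from the triangulation by an edge contraction. Since $v$ belongs to $\textit{lk}(u,K)$, it is a vertex of the current triangulation $K$ and hence has not been removed; therefore $p(v) = \textit{false}$ is equivalent to $v$ being unprocessed. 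Because the loop visits each neighbor of $u$ in $K$ exactly once, every edge $[u,v]$ with $v$ unprocessed is inserted into $\lue$ exactly once, and no other edge is inserted, which gives the first claim.

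For the second part, the key observation is the insertion discipline dictated by lines 13--17: an edge $[u,w]$ with $d_w = 3$ is always inserted at the front of $\lue$ (line 14), whereas an edge $[u,v]$ with $d_v > 3$ is always inserted at the rear (line 16). Since a rear insertion places the new element after every element currently in the list, at the moment $[u,v]$ with $d_v > 3$ is inserted it follows every edge already present, and in particular every degree-$3$ edge already in $\lue$; hence $[u,v]$ cannot precede any such $[u,w]$. To make the statement robust across the whole initialization, I would phrase it as a loop invariant: \emph{every edge $[u,w]$ in $\lue$ with $d_w = 3$ precedes every edge $[u,v]$ in $\lue$ with $d_v > 3$.} This holds vacuously before the loop, since $\lue$ is empty, and is preserved by each iteration, because a front insertion of a degree-$3$ edge keeps all degree-$3$ edges ahead of the degree-$>3$ ones, while a rear insertion of a degree-$>3$ edge keeps all degree-$>3$ edges behind. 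An induction on the number of loop iterations then completes the argument.

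I do not anticipate a substantial obstacle here, as both parts reduce to a careful reading of the insertion code together with the characterization of $p(\cdot)$ supplied by Proposition~\ref{prop:processed}. The only point requiring mild care is the linking step in the first part: one must note that membership of $v$ in $\textit{lk}(u,K)$ rules out the ``removed by contraction'' branch of Proposition~\ref{prop:processed}, so that the Boolean $p(v)$ faithfully records whether $v$ has been processed.
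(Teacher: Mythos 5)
Your proposal is correct and takes essentially the same approach as the paper: the paper also reduces both claims to an induction on the iterations of the initialization loop in lines 8--19 of Algorithm~\ref{alg:contractions}, invoking Proposition~\ref{prop:processed} to translate $p(v)=\textit{false}$ into ``$v$ is unprocessed and still in $K$,'' and using the front/rear insertion discipline to preserve the ordering invariant. The only cosmetic difference is that the paper folds both parts into a single induction on the loop counter, whereas you separate the membership argument from the ordering invariant.
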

\begin{proof}
  By assumption, the  value of $p( u )$  is \textit{false} when vertex
  $u$     is     removed     from     $Q$     in     line     5     of
  Algorithm~\ref{alg:contractions}.                                From
  Proposition~\ref{prop:processed},   we  know  that   $p(  u   )$  is
  \textit{false} iff $u$  has not been processed by  the algorithm yet
  nor has it been removed from the current triangulation, $K$, or from
  a previous  triangulation, by an  edge contraction. This  means that
  $u$ was not processed before and  that $u$ belongs to $K$. Since $p(
  u   )$  is   \textit{false},  the   for  loop   in  lines   8-19  of
  Algorithm~\ref{alg:contractions}   is    executed.   We   can   view
  $\textit{lk}( u , K )$ as  a list of vertices, which means that line
  8 is executed $h + 1$ times and the loop body is executed $h$ times,
  where $h$ is  the number of vertices in $\textit{lk}( u  , K )$. For
  every  $i \in  \{  1, \ldots,  h \}$,  where  $h$ is  the number  of
  vertices in $\textit{lk}(  u , K )$, let $v_i$  be the $i$-th vertex
  of $\textit{lk}(  u , K  )$ considered by  the loop.  We  claim that
  {\em by the time line 8 is  executed for the $j$-th time, for $j \in
    \{ 1, \ldots, h + 1 \}$, list $\lue$ consists of every edge $[ u ,
    v_k ]$  in $K$, with $k  \in \{ 1, \ldots  , j - 1  \}$, such that
    $v_k$  has not  been processed  by  the algorithm  yet}. If  every
  vertex  $v_k$, with  $k \in  \{ 1,  \ldots  , j  - 1  \}$, has  been
  processed   by  the   algorithm  before,   then  $\lue$   is  empty.
  Furthermore, if  $[ u  , v  ]$ is an  edge in  $\lue$ such  that the
  degree, $d_v$,  of $v$ is greater  than $3$, then  edge $[ u ,  v ]$
  cannot precede an edge  $[ u , w ]$ in $\lue$  such that the degree,
  $d_w$, of  $w$ is $3$.   If this claim  is true, then it  also holds
  immediately after  the loop ends. Consequently, the  veracity of the
  claim implies the veracity of  our proposition.  In what follows, we
  prove the claim by induction on $j$.

  \textbf{Base  case ($\bm{j  = 1}$)}.  If  $j =  1$, then  line 8  of
  Algorithm~\ref{alg:contractions}    selects   vertex    $v_1$   from
  $\textit{lk}(  u ,  K  )$. At  this  moment, no  previous vertex  of
  $\textit{lk}( u  , K )$ has  been selected before and  the loop body
  has not been executed yet. Since  list $\lue$ was made empty in line
  7, it is empty when line 8  is reached. So, our claim holds for $j =
  1$.

  \textbf{Hypothesis ($\bm{j  = k}$)}.  Assume that our  claim is true
  for $j = k$, where $k$  is an arbitrary (but fixed) integer, with $1
  \le k  \le h$.   This is equivalent  to saying  that when line  8 of
  Algorithm~\ref{alg:contractions}  is executed  for the  $k$-th time,
  list $\lue$ consists of  each edge $[ u , v_l ]$,  with $l \in \{ 1,
  \ldots, k  - 1 \}$,  such that $v_l$  has not been processed  by the
  algorithm yet. If every vertex $v_l$, with $l \in \{ 1, \ldots , l -
  1 \}$,  has been processed by  the algorithm before,  then $\lue$ is
  empty. Furthermore, if  $[ u , v  ]$ is an edge in  $\lue$ such that
  $d_v > 3$,  then $[ u , v ]$  cannot precede an edge $[ u  , w ]$ in
  $\lue$ such that $d_w = 3$.

  \textbf{Inductive Step ($\bm{j  = k+1}$)}. When $j = k  + 1$, line 8
  of  Algorithm~\ref{alg:contractions} is executed  for the  $( k  + 1
  )$-th time.   Since $k \ge  1$, the loop  body has been  executed at
  least once. So, consider the moment in which line 8 was executed for
  the $k$-th  time.  At  this moment, vertex  $v_k$ was  selected from
  $\textit{lk}(   u    ,   K   )$,   where   $K$    is   the   current
  triangulation. Next, edge $[ u ,  v_k ]$ is inserted into $\lue$ iff
  $p   (   v_k   )$    is   \textit{false}   (see   lines   12-18   of
  Algorithm~\ref{alg:contractions}).                               From
  Proposition~\ref{prop:processed}, we can conclude that $[ u , v_k ]$
  is inserted into  $\lue$ iff $v_k$ belongs to $K$  and $v_k$ has not
  been      processed     yet.       From      lines     12-18      of
  Algorithm~\ref{alg:contractions},  we have  that $[  u ,  v_k  ]$ is
  inserted at  the front of  $\lue$ if the  degree of $v_k$ in  $K$ is
  equal to $3$.  Otherwise, it is  inserted at the back of $\lue$.  By
  the induction hypothesis,  list $\lue$ consists of all  edges $[ u ,
  v_l ]$ in $K$, with $l \in \{  1, \ldots, k - 1 \}$, such that $v_l$
  has not been  processed by the algorithm yet  (if any). Furthermore,
  if $[ u , v ]$ is an edge in $\lue$ such that $d_v > 3$, then $[ u ,
  v ]$ cannot precede  an edge $[ u , w ]$ in  $\lue$ such that $d_w =
  3$.  So, the property ``\textit{if $[ u  , v ]$ is an edge in $\lue$
    such that $d_v > 3$, then $[ u  , v ]$ cannot precede an edge $[ u
    , w  ]$ in $\lue$ such that  $d_w = 3$}'' still  holds.  Thus, our
  claim  holds  when  line  8 of  Algorithm~\ref{alg:contractions}  is
  executed for the $j$-th time,  with $j=k+1$. Thus, our claim is true
  for every  $j \in \{ 1,  \ldots, h+1 \}$.  Since  this claim implies
  that the  statement we want to  prove is true  immediately after the
  for loop of lines 8-19 is finished, we are done.
\end{proof}

The invariant stated  by Proposition~\ref{prop:listR} can be augmented
to include the facts that $n( v )  = u$, $o( v ) = \textit{ts}$, $t( v
) = -1$, and $c( v ) = 0$,  for every vertex $v$ such that $[ u , v ]$
belongs to $\lue$,  where $u$ is the vertex  selected to be processed,
and \textit{ts} is the current  ``time'' (which does not change during
the  entire execution  of  the for  loop  in lines  8-19 of  Algorithm
\ref{alg:contractions}).  We  can also prove that  a slightly modified
invariant about $\lue$ holds for  the repeat-until loop of lines 21-34
of  Algorithm~\ref{alg:contractions}.  To  do so,  we note  that every
edge  inserted into $\lue$  during the  loop execution  is done  so by
\textsc{Contract}$()$. So, we have the following:

\begin{cla}\label{cla:contract1}
  Every edge $[  u , z ]$ inserted  by \textsc{Contract}$()$ into $\lue$,
  during the processing of a vertex $u$  of $\T$, is such that $n( z )
  = u$, $o( z ) = \textit{ts}$, $t( z ) = -1$, and $c( z ) = 0$, where
  \textit{ts} is the insertion time.
\end{cla}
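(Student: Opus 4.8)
The plan is to prove the claim by directly inspecting procedure \textsc{Contract}$()$ (Algorithm~\ref{alg:contract}) and tracking the attributes $n$, $o$, $t$, and $c$ of the relevant vertices from the moment they are assigned until the moment the corresponding edge is placed into $\lue$. The first step is to pin down exactly which edges \textsc{Contract}$()$ genuinely inserts into $\lue$. The only statements that insert a new edge are lines 28 and 30, which act on the edges $[u,z]$ enumerated by the loop over the list \textit{temp}. By contrast, lines 20 and 23 merely \emph{move} the edges $[u,x]$ and $[u,y]$, which already belong to $K$ before the contraction, to the front of $\lue$; these are reprioritizations of edges already present rather than insertions, and indeed the equality $o(z)=\textit{ts}$ asserted by the claim could not hold for them, since $x$ and $y$ became neighbours of $u$ earlier and generally have $o(x),o(y)<\textit{ts}$. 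Hence the only edges to which the claim applies are the $[u,z]$ inserted in lines 28 and 30. I would then recall, from the description of \textsc{Collapse}$()$ (Algorithm~\ref{alg:collapse}), that the list \textit{temp} returned in line 15 consists precisely of the edges $[u,z]$ with $z\in\po_{uv}$, i.e.\ the neighbours of $v$ that become new neighbours of $u$ in $K-uv$.

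Having isolated the insertions, the core of the argument is to show that $n(z)=u$, $o(z)=\textit{ts}$, $t(z)=-1$, and $c(z)=0$ hold, for every $z\in\po_{uv}$, at the instant $[u,z]$ is inserted. First I would note that the for loop of lines 4--11 runs \emph{before} the contraction and assigns to every $z\in\po_{uv}$ exactly these four values (lines 6--9), where $\textit{ts}$ is the value fixed by the single increment in line 3 and left unchanged throughout the remainder of the procedure; so the equalities hold immediately after line 11. What remains is a non-interference argument: nothing executed between line 11 and the insertion in lines 25--31 may disturb any of these four attributes for a vertex $z\in\po_{uv}$.

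The non-interference argument is the main obstacle, and I would treat it attribute by attribute. The attributes $n$, $o$, and $t$ are written only in lines 6, 8, and 9 and are never touched again, so they keep the values set by the first loop; in particular, the operations in lines 12--24 --- the assignment to $p(v)$, the push onto $S$, \textsc{Collapse}$()$, the degree updates, and the moves of $[u,x]$ and $[u,y]$ --- alter only DCEL pointers, the $d$ attribute, $p(v)$, the stack, and the ordering of $\lue$, leaving $n$, $o$, $t$, and $c$ untouched. Apart from its initialization to $0$ in line 7, the sole statement that writes a counter $c(\cdot)$ before the insertion is line 36, which lies in the branch taken when a vertex $z'$ drawn from \textit{temp} has already been processed; that increment is guarded in line 35 by the condition $t(w)\neq-1$. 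Since every $z\in\po_{uv}$ has $t(z)=-1$, the guard fails for $w=z$, so $c(z)$ is never incremented there and remains $0$. Combining these facts, at the moment line 28 or line 30 inserts $[u,z]$ we still have $n(z)=u$, $o(z)=\textit{ts}$ (the insertion time), $t(z)=-1$, and $c(z)=0$, as claimed. The one point I would be careful to state in the right order is that the argument uses $t(z)=-1$ --- a value never overwritten after line 9 --- to conclude that $c(z)$ is left untouched, and not conversely, so that no circularity arises.
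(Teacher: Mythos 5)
Your proposal is correct and follows essentially the same route as the paper's own proof: identify the insertions as exactly the edges $[u,z]$ with $z\in\Pi_{uv}$ coming from \textit{temp}, observe that the for loop in lines 4--11 of \textsc{Contract}$()$ sets $n(z)=u$, $c(z)=0$, $o(z)=\textit{ts}$, $t(z)=-1$ with \textit{ts} fixed by line 3, and conclude these values persist until the insertion in lines 25--31. Your explicit non-interference check (in particular that the guard $t(w)\neq -1$ in line 35 shields $c(z)$ from the increment in line 36) is slightly more detailed than what the paper writes, but it is the same argument.
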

\begin{proof}
  Every edge inserted into  $\lue$ by \textsc{Contract}$()$ comes from
  the  temporary  list,  \textit{temp}.    This  list  is  made  empty
  immediately before  \textsc{Collapse}$()$ is called (see  line 14 of
  Algorithm   \ref{alg:contract}).    When  \textsc{Collapse}$()$   is
  executed, it updates the DCEL to account for the contraction of edge
  $[ u  , v ]$.  Let  $L$ be the triangulation  immediately before the
  contraction of $[ u , v ]$, i.e., $K = L - uv$, where $K$ is current
  triangulation.  Then,  updating the  DCEL amounts to  replacing each
  edge of the  form $[ v , z ]$ in  $L$ with an edge $[ u  , z ]$, for
  every $z \in \po_{uv}$.  As a result,  vertex $v$, edge $[ u , v ]$
  and the $2$-faces $[ u  , v , x]$ and $[ u , v  , y ]$ do not belong
  to $K$, where $x$ and $y$ are  the vertices in $\textit{lk}( [ u , v
  ] , L )$.   \textsc{Collapse}$()$ also inserts each replacement edge
  $[ u ,  z ]$ into \textit{temp}. So,  list \textit{temp} consists of
  every  edge $[ u  , z  ]$ such  that $z  \in \po_{uv}$.   But, each
  vertex $z$  in $\textit{lk}( v ,  L )$ had its  attributes $n$, $c$,
  $o$, and $t$  set to $u$, $0$, \textit{ts},  and $-1$, respectively,
  by the  for loop in lines 4-11  of Algorithm~\ref{alg:contract}. The
  value of $t( z )$ is  the same for the vertices $z$ in $\textit{lk}(
  v , L )$, and it is equal to the current value of \textit{ts}, which
  in turn was last  updated in line 3 of Algorithm~\ref{alg:contract}.
  Finally, every edge in \textit{temp}  is inserted into $\lue$ by the
  for  loop  in lines  25-43  of Algorithm~\ref{alg:contract}.   Since
  \textit{ts} is  only updated  in line $3$  of \textsc{Contract}$()$,
  the value of $t(z)$  can be viewed as the time in which  $[ u , z ]$
  is  inserted in  $\lue$, as  well as  the time  in which  vertex $z$
  became a vertex of $\textit{lk}( u , K )$.
\end{proof}

\begin{cla}\label{cla:contract2}
  Let  $u$ be any  vertex of  $\T$ processed  by the  algorithm. Then,
  during  the processing  of $u$,  if the  following  three conditions
  regarding     $\lue$    hold     immediately     before    executing
  \textsc{Contract}$()$,  then they also  hold immediately  after: (1)
  every edge  $[ u ,  w ]$ in  $\lue$ is also  an edge of  the current
  triangulation, $K$, (2)  if $[ u ,  w ]$ is any edge  in $\lue$ such
  that the degree, $d_w$, of $w$ is greater than $3$, then edge $[ u ,
  w ]$  cannot precede an  edge $[ u  , z ]$  in $\lue$ such  that the
  degree, $d_z$, of $z$ is equal to $3$, and (3) if $[ u , w ]$ is any
  edge in  $\lue$, then  $w$ has not  been processed by  the algorithm
  yet.
 \end{cla}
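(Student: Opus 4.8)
The plan is to verify that each of the three conditions is preserved by tracing the (few) lines of \textsc{Contract}$()$ that actually modify $\lue$. Writing $L$ for the triangulation to which the contraction is applied and $K = L - uv$ for the triangulation it produces, I would first record the structural facts on which the whole argument rests. First, the only edges of $L$ deleted in passing to $K$ are $[u,v]$, $[v,x]$, and $[v,y]$, and the only edges of $K$ not already in $L$ are the edges $[u,z]$ with $z \in \po_{uv}$ (exactly the replacements produced by \textsc{Collapse}$()$ in line 15 and returned in \textit{temp}). Second, the contraction changes the degree of exactly three vertices: $d(x)$ and $d(y)$ each drop by one (lines 16--17) and $d(u)$ is reset (line 18), while every other vertex keeps its degree. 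Third, the only $p$-attribute altered is $p(v)$, set to \textit{true} in line 12. Finally, the only lines that touch $\lue$ are the relocations of $[u,x]$ and $[u,y]$ to the front (lines 20 and 23) and the insertions of the new edges $[u,z]$ (lines 28 and 30); the else-branch in lines 32--42 modifies only $\lte$.

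For condition (1) I would argue that every edge already in $\lue$ survives in $K$. Such an edge has the form $[u,w]$ with $w \neq u$, and among the three deleted edges only $[u,v]$ has this form; but $[u,v]$ was removed from $\lue$ in line 23 of Algorithm~\ref{alg:contractions} \emph{before} \textsc{Contract}$()$ is entered, so no edge currently in $\lue$ is deleted. The newly inserted edges $[u,z]$, $z \in \po_{uv}$, are edges of $K$ by construction. Moreover, since \textsc{Contract}$()$ is invoked only on a contractible edge, the link condition (Eq.~\ref{eq:linkcondition}) gives $\textit{lk}(u,L) \cap \textit{lk}(v,L) = \{x,y\}$, so no $z \in \po_{uv}$ is a common neighbour of $u$ and $v$ in $L$; hence $[u,z]$ is not an edge of $L$, and the insertions add genuinely new, distinct edges of $K$. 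Thus condition (1) is restored.

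For condition (2) the structural fact that only $x$, $y$, $u$ change degree is decisive: the only edges of $\lue$ whose non-$u$ endpoint changes degree class are $[u,x]$ and $[u,y]$, so every other edge keeps both its degree class and its relative position and cannot create a new violation. Invariant (2) says that all degree-$3$ edges precede all higher-degree ones. Lines 19--24 relocate $[u,x]$ (resp.\ $[u,y]$) to the front precisely when it is still in $\lue$ (guards $t(x)=-1$ and \textbf{not}~$p(x)$) and has just become degree-$3$; moving a newly degree-$3$ edge from the high-degree suffix to the front keeps the two classes correctly separated. The new edges obey the same discipline --- degree-$3$ at the front (line 28, using the unchanged $d(z)$), higher degree at the rear (line 30) --- so they too respect the ordering. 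For condition (3) I would observe that the only vertex made processed is $v$, and $[u,v] \notin \lue$; the relocations in lines 20 and 23 are guarded by \textbf{not}~$p(x)$ and \textbf{not}~$p(y)$, and each insertion in line 26 is guarded by \textbf{not}~$p(z)$, so every edge added to or repositioned in $\lue$ has an unprocessed endpoint.

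The main obstacle is condition (2): one must check that moving $[u,x]$ and $[u,y]$ to the front does not disturb the degree ordering of the edges they pass over. This is exactly where the fact that \emph{only} these two edges change degree class --- together with the front/rear insertion discipline and with Claim~\ref{cla:contract1}, which guarantees the attribute values ($t(z)=-1$, etc.) that let lines 19--24 distinguish edges still residing in $\lue$ --- is needed. Combined with Proposition~\ref{prop:listR} (the initial state of $\lue$) and Claim~\ref{cla:contract1}, this claim supplies the inductive step establishing the $\lue$-invariants of Proposition~\ref{prop:listLUE}.
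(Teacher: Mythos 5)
Your proof is correct and follows essentially the same route as the paper's: trace the few lines of \textsc{Contract}$()$ that alter $\lue$, observe that only $u$, $x$, $y$ change degree and only $p(v)$ changes, and check that the front/rear relocation and insertion guards preserve each condition. Your treatment is in fact somewhat more explicit than the paper's (notably on condition (1), where you verify via the link condition that the replacement edges $[u,z]$ are genuinely new edges of $K-uv$ and that no edge already in $\lue$ is deleted), but the underlying argument is the same.
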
 
\begin{proof}
  Let  $[  u  ,  v  ]$  be  the  edge  of  $K$  to  be  contracted  by
  \textsc{Contract}$()$.  As  we already pointed  out in the  proof of
  Claim~\ref{cla:contract1}, the  contraction of $[ u ,  v ]$ replaces
  each edge of the form  $[ v , z ]$ in $K$ with an  edge $[ u , z ]$,
  for every $z \in \po_{uv}$. Moreover,  vertex $v$, edge $[ u , v ]$
  and $2$-faces $[ u , v , x]$ and  $[ u , v , y ]$, where $x$ and $y$
  are the  two vertices  in $\textit{lk}( [  u ,  v ] ,  K )$,  do not
  belong  to   the  resulting  triangulation,  $K  -   uv$,  and  each
  replacement edge $[ u , z ]$ is inserted into \textit{temp}. Observe
  that every replacement edge is an edge of $K - uv$.  It is possible,
  though, that $p( z ) =  \textit{true}$ for a vertex $z$ such that $[
  u , z ]$ is $\lue$. However, edge  $[ u , z ]$ will be inserted into
  $\lue$   iff  $p(   z  )   =   \textit{false}$  (see   line  26   of
  Algorithm~\ref{alg:contract}).   So, conditions  (2)  and (3)  hold.
  Regarding condition  (2), note that  the only vertices  whose degree
  are modified by \textsc{Contract}$()$ are $u$, $x$, and $y$. So, the
  only edges that could violate condition (2) are $[ u , x ]$ and $[ u
  ,  y ]$.   But, lines  19-24 of  Algorithm~\ref{alg:contract} checks
  whether each of $[ u , x ]$ and $[  u , y ]$ is in $\lue$. If $[ u ,
  x ]$ (resp.  $[  u , y ]$) is in $\lue$  and the degree $d_x$ (resp.
  $d_y$) of $x$ (resp.  $y$) is  equal to $3$ after the contraction of
  $[ u , v ]$  (i.e., in $K - uv$), then $[ u , x  ]$ (resp.  $[ u , y
  ]$) is moved to the front of $\lue$.  So, condition (2) also holds.
\end{proof}

\begin{pro}\label{prop:listRwhile}
  Let $u$  be any  vertex of $\T$  processed by the  algorithm.  Then,
  during  the  processing of  $u$,  if  the  following hold  when  the
  condition    of    the    while    loop   in    lines    22-30    of
  Algorithm~\ref{alg:contractions} is tested  for the first time, they
  hold every time the condition is tested again:
\begin{itemize}
\item[(1)] every edge $[ u , w ]$ in $\lue$ is also an edge of the
  current triangulation, $K$;

\item[(2)] if $[ u ,  w ]$ is an edge  in $\lue$ such  that the degree,
  $d_w$, of $w$ is greater than $3$, then  edge $[ u , w ]$ cannot precede an
  edge $[ u , z ]$ in $\lue$ such that the degree, $d_z$, of $z$ is equal
  to $3$;

\item[(3)] the value of $p(z)$ is \textit{false}, for every vertex $z$
  such that $[ u , z ]$ is in $\lue$; 

\item[(4)] the value of $o(z)$ is no longer $-1$, for every vertex $z$ such that
  $[ u , z ]$ is in $\lue$;

\item[(5)] the value of $t(z)$ is $-1$, for every vertex $z$ such that
  $[ u , z ]$ is in $\lue$;

\item[(6)] the value of $c(z)$ is $0$, for every vertex $z$ such that
  $[ u , z ]$ is in $\lue$; and

\item[(7)] no edge in $\lue$ has been tested against the link condition
  yet.
\end{itemize}
\end{pro}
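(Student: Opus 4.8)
The plan is to prove the seven conditions as loop invariants of the while loop in lines 22--30 by the standard initialization/maintenance scheme. Since the statement already hands us the invariants at the first evaluation of the loop guard, the work lies entirely in the maintenance step: I would show that if all seven conditions hold when the guard is tested, they still hold the next time it is tested. The only subtlety in the control flow is that the while loop sits inside the repeat-until loop of lines 21--34, so between two executions of the while loop the procedure \textsc{ProcessEdgeList}$()$ may run and alter $\lue$. I would therefore verify maintenance uniformly for every place where $\lue$ is modified, not merely for the while body.

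The key observation I would exploit is that $\lue$ is changed in exactly two ways: an edge is removed in line 23, and edges are inserted (or reordered) inside \textsc{Contract}$()$, whether that call originates in \textsc{ProcessVertexOfDegreeEq3}$()$, \textsc{ProcessVertexOfDegreeGt3}$()$, or \textsc{ProcessEdgeList}$()$. Removal is harmless: conditions (1)--(7) are stated pointwise over the edges of $\lue$, so deleting an edge cannot falsify any of them, and the assignment $t(v)\leftarrow\textit{ts}$ in line 24 touches only the vertex $v$ just removed. For the insertions I would invoke the two claims already established: Claim~\ref{cla:contract1} shows that every edge $[u,z]$ that \textsc{Contract}$()$ places into $\lue$ satisfies $n(z)=u$, $o(z)=\textit{ts}$, $t(z)=-1$, and $c(z)=0$, which is exactly conditions (4), (5), and (6); and Claim~\ref{cla:contract2} shows that conditions (1), (2), and (3) continue to hold for all of $\lue$ across a call to \textsc{Contract}$()$.

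The remaining piece is condition (7), which the two claims do not directly supply. Here I would argue that every edge present in $\lue$ after an iteration is either a survivor or a freshly inserted edge. A survivor was never removed from $\lue$, and since an edge is tested against the link condition only when it is removed (and, upon failing, is sent to $\lte$ rather than back to $\lue$), a survivor cannot have been tested. A freshly inserted edge has the form $[u,z]$ with $z\in\po_{uv}$, i.e.\ $z\in\textit{lk}(v,K)\setminus\{u,x,y\}$; because $[u,v]$ was contractible at the moment of its contraction, $u$ and $v$ share no common neighbor outside $\textit{lk}([u,v],K)$, so $z$ was not adjacent to $u$ beforehand and the edge $[u,z]$ simply did not exist in the previous triangulation. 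An edge that did not exist cannot have been tested, so condition (7) persists.

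The step I expect to be the main obstacle is condition (2), the ordering requirement that no edge to a degree-${>}3$ vertex precede an edge to a degree-$3$ vertex, because \textsc{Contract}$()$ simultaneously inserts new edges and \emph{lowers} the degrees of $x$ and $y$, possibly to $3$, which could in principle invert the required order. This is precisely what Claim~\ref{cla:contract2} is built to handle, through lines 19--24 of \textsc{Contract}$()$ that detect when $d(x)$ or $d(y)$ has dropped to $3$ and move $[u,x]$ or $[u,y]$ to the front of $\lue$. Thus the obstacle is dissolved by appealing to that claim rather than re-deriving the ordering from scratch, and the four ingredients together close the maintenance step for all seven conditions.
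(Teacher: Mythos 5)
Your overall strategy --- induction on the number of times the guard in line 22 is tested, with removals handled pointwise and insertions handled by Claims~\ref{cla:contract1} and~\ref{cla:contract2} --- is the same as the paper's, and your treatment of conditions (1), (2), (3) is sound. For (7) your argument is slightly short: you show the fresh edge $[u,z]$ did not exist in the triangulation immediately before the contraction, but to conclude it was never tested you also need that it never existed in \emph{any} earlier triangulation; the paper supplies this by noting that an edge can only disappear when one of its endpoints is eliminated, and neither $u$ nor $z$ ever was, so no ``resurrection'' is possible.

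The genuine gap is in the maintenance of conditions (4), (5) and especially (6) for the edges that \emph{survive} in $\lue$ across an iteration. You partition the analysis into ``removal'' and ``insertion'' and assert that $\lue$ is modified in exactly these two ways, but (4)--(6) are statements about the vertex attributes $o$, $t$, $c$, which are written in many places that change neither endpoint of a $\lue$ operation: line 6 of Algorithm~\ref{alg:processdegreegt3} and line 36 of Algorithm~\ref{alg:contract} increment $c(\cdot)$, and lines 6, 7, 15 and 20 of Algorithm~\ref{alg:processdegree3} decrement it, all for vertices that are neither the just-removed $v$ nor new neighbors. Claim~\ref{cla:contract1} certifies the attributes only of the \emph{freshly inserted} edges, so nothing in your argument prevents, say, $c(z)$ from being incremented for some $z$ with $[u,z]$ still sitting in $\lue$, which would break invariant (6) and, downstream, the correctness of the contractibility decision when $[u,z]$ is eventually removed. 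The paper closes this by enumerating every line that writes to $c$, $o$ or $t$ and observing that each such write is applied either to the vertex just removed from $\lue$, or to a brand-new neighbor (where Claim~\ref{cla:contract1} applies), or is guarded by $t(\cdot)\neq -1$, which together with invariant (5) --- itself part of the induction hypothesis --- excludes every vertex whose edge remains in $\lue$. This simultaneous induction on (5) and (6), driven by the $t(\cdot)\neq -1$ guards, is the missing ingredient in your maintenance step.
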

\begin{proof}
  Our proof is by induction on  the number $i$ of times that the while
  loop   condition,  $\lue   \neq   \emptyset$,  in   line  22-30   of
  Algorithm~\ref{alg:contractions}  is reached  and tested.   Since we
  have not proved termination of the loop yet, we will assume that the
  loop  ends after $m$  iterations, for  a non-negative  integer, $m$.
  So, we can conclude that $1 \le  i \le m + 1$. Termination is proved
  later.

  \textbf{Base case  ($\bm{i = 1}$)}.  When the while  loop condition,
  $\lue \neq  \emptyset$, is  reached and tested  for the  first time,
  conditions (1) to (7) are all true \textit{by the hypothesis} of our
  claim.  When  we   show  that  (1)-(7)  also  hold   for  the  outer
  repeat-until loop, this hypothesis will be proved to be true a valid
  one.

  \textbf{Hypothesis ($\bm{i =  k}$)}.  Assume that conditions (1)-(7)
  hold for  $i = k$,  where $k$ is  an arbitrary (but  fixed) integer,
  with $1 \le k \le m$;  that is, conditions (1)-(7) hold if condition
  $\lue \neq \emptyset$ is tested for the $k$-th time.

  \textbf{Inductive Step  ($i = k+1$)}.  When  $i = k +  1$, the while
  loop condition, $\lue \neq \emptyset$, is reached and tested for the
  $(k+1)$-th time.  Since  $k \ge 1$, the loop  body has been executed
  at least once.   So, consider the moment in  which the condition was
  reached  and tested for  the $k$-th  time.  Since  the loop  body is
  executed,  the while  loop condition  holds and  thus $\lue$  is not
  empty.  At this point, the  induction hypothesis implies that if any
  of conditions (1)-(7) is  violated, then the violation occurs during
  the $k$-th iteration of the loop.   We now show that this is not the
  case.

  The $k$-th iteration of the loop  starts with the removal of edge $[
  u , v ]$ from the front of  list $\lue$. Next, the value of $t( v )$
  is set  to \textit{ts}. Since $[  u , v  ]$ is no longer  in $\lue$,
  this assignment does  not violate condition (4). Next,  if the degree
  of    $v$   is    equal   to    $3$,   then    the    procedure   in
  Algorithm~\ref{alg:processdegree3}   is  invoked.    Otherwise,  the
  procedure  in   Algorithm~\ref{alg:processdegreegt3}  is.   In  both
  cases, no edge  incident with $u$ other  than $[ u , v  ]$ is tested
  against the link  condition or further removed from  $\lue$ and from
  the  current triangulation.  Furthermore,  every edge  inserted into
  $\lue$   is   done   so   by   \textsc{Contract}$()$.    But,   from
  Claim~\ref{cla:contract2}, we can conclude that conditions (1), (2),
  and  (3) cannot  be  violated.   Conditions (4)  and  (5) cannot  be
  violated either, as the only  places in which attributes $o$ and $t$
  are   modified  (during  the   loop  execution)   are  line   24  of
  Algorithm~\ref{alg:contractions}    and   lines    8   and    9   of
  Algorithm~\ref{alg:contract}. Line 8 of Algorithm~\ref{alg:contract}
  sets  the  value  of the  $o$  attribute  to  the current  value  of
  \textit{ts},   which   is   always   non-negative.    Line   24   of
  Algorithm~\ref{alg:contractions} modifies the value of $t( v )$, but
  $[ u , v ]$ is no longer in $\lue$ at this point. In turn, line 9 of
  Algorithm~\ref{alg:contract} set  the value of the  $t$ attribute to
  $-1$.  Finally, for  every  vertex $z$  such  that $[  u  , z]$  was
  inserted  into $\lue$ by  Algorithm~\ref{alg:contract}, the  $o$ and
  $t$ attributes of $z$ were modified  by lines 8 and 9. So,
  conditions (4) and  (5) cannot be violated.

  During any while  loop iteration, the $c$ attribute  of a vertex can
  only      be      modified     by      Algorithm~\ref{alg:contract},
  Algorithm~\ref{alg:processdegree3}                                and
  Algorithm~\ref{alg:processdegreegt3}.    Algorithm~\ref{alg:contract}
  can only  modify the value of the  $c$ attribute of a  vertex in two
  lines, namely: line 7 and line 36. The former line sets the value of
  the $c$ attribute to $0$, while the latter line increments the value
  of the  $c$ attribute of a  vertex $w$ by  $1$.  But, if line  36 is
  executed then $t( w ) \neq -1$, which  means that $[ u , w ]$ is not
  in   $\lue$.    Thus,   condition   (6)  cannot   be   violated   by
  Algorithm~\ref{alg:contract}.                 In               turn,
  Algorithm~\ref{alg:processdegree3} can only modify the $c$ attribute
  of vertices $x$ and $y$ in $\textit{lk}( [ u , v ] , K )$, where $K$
  is the  triangulation before  the contraction  of $[ u  , v  ]$ (see
  lines 5-24 of  Algorithm~\ref{alg:processdegree3}).  However, if $c(
  x )$ (resp.  $c( y )$) is  changed, then $t( x ) \neq -1$ (resp. $t(
  y ) \neq -1$). But, we just showed that $[ u , x ]$ (resp.  $[ u , y
  ]$) cannot  be in $\lue$ if  $t( x ) \neq  -1$ (resp.  $t(  y ) \neq
  -1$); otherwise, condition (5)  would be violated. So, condition (6)
  cannot be violated  by Algorithm \ref{alg:processdegree3}.  Finally,
  Algorithm~\ref{alg:processdegreegt3}   can  only   modify   the  $c$
  attribute of a  vertex $v$ and of  a vertex $z$ such that  $z$ is in
  $\textit{lk}( v ,  K )$, where $K$ is  the current triangulation and
  $[ u , v ]$ is the edge given as input to the procedure (see lines 4
  and 6 of Algorithm~\ref{alg:processdegreegt3}).  But, since $[ u , v
  ]$ is no longer in $\lue$, the  possible change of $c( v )$ does not
  violate condition (6).  Moreover, if  $c(z)$ is changed in line 6 of
  Algorithm~\ref{alg:processdegreegt3},  then $(  u ,  v ,  z )$  is a
  critical cycle  of $K$ and  $t( z )  \neq -1$.  But, we  just showed
  that if  $t( z )  \neq -1$, then  $[ u , z  ]$ cannot be  in $\lue$;
  otherwise, condition  (5) would be  violated.  So, condition  (6) is
  not violated either.  

  Regarding condition (7), recall  that all edges inserted into $\lue$
  during     every     loop      iteration     are     inserted     by
  \textsc{Contract}$()$. But, these edges have not been tested against
  the  link condition before,  as they  do not  belong to  the current
  triangulation, $K$,  to the initial triangulation, $\T$,  nor to any
  triangulation obtained from  $\T$ before $K$.  The reason  is that a
  previously  contracted   edge  can  never  be   resurrected  by  the
  algorithm, as this would require  the resurrection of a vertex. But,
  nowhere  in the  algorithm a  vertex  is introduced  in the  current
  triangulation.  So, condition (7)  cannot be violated either.  Thus,
  our claim holds for $i = 1, \ldots, m+1$.
\end{proof}

\begin{pro}\label{prop:whiletermination}  The  while  loop  in  lines
  22-30 of Algorithm~\ref{alg:contractions} terminates.
\end{pro}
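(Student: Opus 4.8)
The plan is to prove termination by exhibiting a finite bound on the total number of iterations of the while loop. Since each iteration removes exactly one edge from $\lue$ (line 23 of Algorithm~\ref{alg:contractions}) and the loop halts precisely when $\lue$ becomes empty, it suffices to show that, during the entire processing of $u$, only finitely many edges are ever inserted into $\lue$; in fact I would bound the number of insertions by $n_v-1$, where $n_v$ is the number of vertices of $\T$. To this end, I would first locate every place where an edge can enter $\lue$: the initialization in lines 12-18 of Algorithm~\ref{alg:contractions}, and lines 27-31 of \textsc{Contract} (Algorithm~\ref{alg:contract}). In the former, by Proposition~\ref{prop:processed} and the standing hypothesis that $p(u)$ is \textit{false}, the edges inserted are exactly the $[u,v]$ with $v\in\textit{lk}(u,K)$ and $p(v)=\textit{false}$, and each neighbor $v$ is examined once. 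In the latter, every inserted edge has the form $[u,z]$ with $z\in\po_{uv}$, i.e.\ $z$ is a \emph{new} neighbor of $u$ produced by contracting $[u,v]$.

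The heart of the argument is the claim that, for each vertex $w$, the edge $[u,w]$ is inserted into $\lue$ at most once during the processing of $u$. The key step here is a link-condition lemma: if $[u,v]$ is contracted (hence is contractible) and $w$ is a vertex that is already a neighbor of $u$ immediately before the contraction, then $w\notin\po_{uv}$. Indeed, contractibility together with Eq.~\ref{eq:linkcondition} gives $\textit{lk}(u,K)\cap\textit{lk}(v,K)=\{x,y\}\subseteq\pd_{uv}$, where $x,y$ are the vertices of $\textit{lk}([u,v],K)$; so if $w$ were both a neighbor of $u$ and a member of $\po_{uv}\subseteq\textit{lk}(v,K)$, then $w\in\{x,y\}\subseteq\pd_{uv}$, contradicting $w\in\po_{uv}$. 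The same computation shows, symmetrically, that any $z\in\po_{uv}$ was \emph{not} a neighbor of $u$ before the contraction, so every edge inserted by \textsc{Contract} corresponds to a genuinely fresh neighbor.

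Using this lemma I would argue that each vertex becomes a fresh neighbor of $u$ (either initially, or as a member of some $\po_{uv}$) at most once: an initial neighbor can never later lie in any $\po_{uv}$ by the lemma, and a vertex already made a neighbor by one contraction cannot reappear in a subsequent $\po_{uv'}$ for the same reason. I would also invoke the fact, stressed in Section~\ref{sec:algo}, that no vertex is ever created and that each contraction removes only the identified endpoint $v$, so a vertex deleted from the current triangulation never returns. Consequently $[u,w]$ is inserted at most once, the total number of insertions into $\lue$ over the whole processing of $u$ is at most $n_v-1$, and hence so is the total number of removals. The while loop therefore executes at most $n_v-1$ times and terminates.

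I expect the link-condition exclusion lemma to be the main obstacle, since its correctness is exactly what guarantees that a tested edge never re-enters $\lue$; concretely, it is what ensures that the time stamp $t(w)$ of an established neighbor $w$, once set to a nonnegative value in line 24 of Algorithm~\ref{alg:contractions}, is never reset to $-1$ (the only reset being line 9 of Algorithm~\ref{alg:contract}, which fires only for members of some $\po_{uv}$). This ties the termination argument back to invariants (4) and (5) of Proposition~\ref{prop:listRwhile}, which I would cite to make the bookkeeping precise.
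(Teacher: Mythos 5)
Your proposal is correct and follows essentially the same route as the paper's proof: both bound the total number of insertions into $\lue$ by $n_v-1$ by arguing that every edge $[u,z]$ inserted by \textsc{Contract}$()$ corresponds to a genuinely new neighbor of $u$ (the paper phrases this as ``$[u,z]$ cannot already belong to $K$, else $(u,v,z)$ would be a critical cycle and $[u,v]$ would not be contractible,'' which is exactly your link-condition exclusion lemma), and that no vertex is ever created or resurrected. The extra bookkeeping you attach via invariants (4) and (5) of Proposition~\ref{prop:listRwhile} is not needed for termination itself, but it does not harm the argument.
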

\begin{proof}
  We  must  show that  list  $\lue$  will  be eventually  empty  after
  finitely  many  iterations of  the  while  loop  in lines  22-30  of
  Algorithm~\ref{alg:contractions}.   Indeed, every  iteration removes
  exactly one edge, $[ u , v ]$, from $\lue$. If $[ u , v ]$ fails the
  link condition, then  no edge is inserted into  $\lue$ until the end
  of the current  loop iteration. Otherwise, one or  more edges of the
  form $[ u ,  z ]$ are inserted into $\lue$, where  $u$ is the vertex
  being processed by the algorithm, $z$ is a vertex in $\textit{lk}( v
  ,  K  )$,  with $z  \not  \in  \pd_{uv}$,  and $K$  is  the
  triangulation immediately before the contraction of $[ u , v ]$.  We
  claim  that only  finitely  many  edges can  ever  be inserted  into
  $\lue$.  Indeed, during the execution of  the loop, an edge $[ u , z
  ]$ is inserted into $\lue$ if and only if $[ u , z ]$ belongs to the
  triangulation, $K - uv$, resulting from  the contraction of $[ u , v
  ]$ in $K$. But, edge $[ u , z ]$ cannot belong to $K$, else $( u , v
  , z )$ would be a critical cycle  in $K$ and hence $[ u , v ]$ would
  not be  contractible in $K$.   In general, edge  $[ u , z  ]$ cannot
  belong to  $\T$ nor to  any triangulation obtained from  $\T$ before
  $K$ is.  Otherwise, either $[ u , z ]$ would belong to $K$ or $[ u ,
  z ]$ would have been  removed from some triangulation preceding $K$.
  Since $[  u , z ]$  cannot belong to  $K$ (for the reason  we stated
  before), $[  u , z ]$  would have been removed  from a triangulation
  obtained  from $\T$  before $K$  is  obtained.  But,  in this  case,
  vertex $z$ would not be a vertex of $K$, as the algorithm never adds
  vertices to a triangulation.  Since $\T$ has a finite number, $n_v$,
  of vertices, there can be  at most $n_v-1$ edge insertions in $\lue$
  during the  entire execution  of the while  loop.  But,  since every
  iteration of the  loop removes an edge from  $\lue$, we can conclude
  that $\lue$ will be  eventually empty after finitely many iterations
  of the while loop.
 \end{proof}

 We   now  give  a   proof  for   Proposition~\ref{prop:listLUE}  from
 Section~\ref{sec:counting}:

\begin{proof}
  Our  proof is  by induction  on  the number  $i$ of  times that  the
  repeat-until  loop condition,  $\lue  = \emptyset$,  in  line 34  of
  Algorithm~\ref{alg:contractions}  is reached.   We  have not  proved
  termination of the  loop yet.  So, let us assume  that the loop ends
  after $m$  iterations, for some  non-negative integer, $m$.   So, we
  can conclude that $1 \le i \le m + 1$.

  \textbf{Base  case  ($\bm{i  =  1}$)}. When  the  repeat-until  loop
  condition, $\lue  = \emptyset$, is  reached for the first  time, the
  loop body has  been executed exactly once.  So,  let us consider the
  moment  in  which line  21  of Algorithm~\ref{alg:contractions}  was
  executed    for     the    first    time.      At    this    moment,
  Proposition~\ref{prop:listR}  ensures that conditions  (1)-(3) hold.
  Conditions (4) and (5) also hold,  as the values of $o(z)$ and $t( z
  )$, for every vertex $z$ such that  $[ u , z ]$ is $\textit{lk}( u ,
  K )$,  were set  to \textit{ts} and  $-1$, respectively, by  the for
  loop in lines 8-19 of Algorithm~\ref{alg:contractions}, where $K$ is
  the current  triangulation at the time  line 21 of  was executed for
  the first  time.  Furthermore,  every edge in  $\lue$ is an  edge in
  $\textit{lk}(    u    ,    K    )$.     Due    to    line    9    of
  Algorithm~\ref{alg:continit},  condition (6)  also  holds.  This  is
  also the case  of condition (7), as no edge  has been tested against
  the  link  condition  yet  \textit{during the  processing  of  $u$}.
  Proposition~\ref{prop:listRwhile}  ensures  that conditions  (1)-(7)
  are  not violated  by the  while loop  in lines  22-30 if  they hold
  immediately before the loop takes  place (which we just argued to be
  the    case).     The    while    loop    in    lines    22-30    of
  Algorithm~\ref{alg:contractions}    must   end.     Otherwise,   our
  hypothesis would not be true, i.e., the repeat-until loop condition,
  $\lue = \emptyset$,  in line 34 would never  have been reached.  So,
  line 32 of Algorithm~\ref{alg:contractions} may be executed.  If so,
  list $\lue$ is only modified by possible edge insertions carried out
  by   \textsc{Contract}$()$.    But,  Claim~\ref{cla:contract1}   and
  Claim~\ref{cla:contract2}  ensure that  conditions  (1)-(5) are  not
  violated.   In  turn, Claim~\ref{cla:contract1}  and  the fact  that
  condition  (5) is  not  violated  imply that  condition  (6) is  not
  violated  either (see  the arguments  in the  inductive step  of the
  proof  of  Proposition~\ref{prop:listRwhile}).   Finally, since  the
  only  edge tested  against the  link condition  by the  procedure in
  Algorithm~\ref{alg:processlistS}  is an  edge  removed from  $\lte$,
  which is not in $\lue$, we can conclude that condition (7) cannot be
  violated either. So, our claim is true for $i = 1$.

  \textbf{Hypothesis ($\bm{i =  k}$)}. Assume that conditions (1)-(7)
  hold for  $i = k$,  where $k$ is  an arbitrary (but  fixed) integer,
  with $1 \le k \le m$.  That is, conditions (1)-(7) hold if condition
  $\lue = \emptyset$ is tested for the $k$-th time.

  \textbf{Inductive  Step ($\bm{i =  k+1}$)}. When  $i =  k +  1$, the
  repeat-until loop condition, $\lue  = \emptyset$, is reached for the
  $(k+1)$-th time. Since $k \ge 1$, the loop body has been executed at
  least twice.   So, consider  the moment in  which the  condition was
  reached  for   the  $k$-th  time.   By   the  induction  hypothesis,
  conditions  (1)-(7) will  hold immediately  after the  loop  test is
  executed, i.e., they will be true in the beginning of the $(k+1)$-th
  iteration. So, using  the same arguments from the  base case, we can
  show that  conditions (1)-(7) will  hold immediately after  the loop
  body  is  executed.   Consequently,  conditions  (1)-(7)  hold  when
  condition   $\lue  =   \emptyset$   is  reached   in   line  34   of
  Algorithm~\ref{alg:contractions}   for   the   $i$-th   time,   with
  $i=k+1$. Thus, our claim is true for $i = 1, \ldots, m+1$.
\end{proof}

The previous proof  assumed that the repeat-until loop  in lines 21-34
of             Algorithm~\ref{alg:contractions}            terminates.
Proposition~\ref{prop:repeatuntiltermination}  below  states that  our
assumption is valid.

\begin{pro}\label{prop:repeatuntiltermination}  The repeat-until loop
  in lines 21-34 of Algorithm~\ref{alg:contractions} terminates.
\end{pro}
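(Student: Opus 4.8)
The plan is to follow the same two-part template used in the proof of Proposition~\ref{prop:whiletermination}: first argue that a single pass through the body of the repeat-until loop in lines 21--34 of Algorithm~\ref{alg:contractions} always halts, and then bound the number of passes by a quantity that cannot grow. A single pass executes the inner while loop of lines 22--30, which terminates by Proposition~\ref{prop:whiletermination}, and then, conditionally, invokes \textsc{ProcessEdgeList}$()$ in lines 31--33. I would first record that \textsc{ProcessEdgeList}$()$ itself halts: its while loop (Algorithm~\ref{alg:processlistS}) removes exactly one edge from the front of $\lte$ per iteration and never \emph{adds} a new edge to $\lte$, since the only effect a contraction has on $\lte$ is to \emph{move} an edge that is already present (the genuinely new edges $[u,z]$, with $z\in\po_{uv}$, created by \textsc{Contract}$()$ are routed into $\lue$, not $\lte$). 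Thus $|\lte|$ strictly decreases along that loop, and the trailing conditional block performs at most one further contraction. Hence every pass of the repeat-until body finishes.

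The crux is bounding the number of passes, and here I would exploit the exit test \texttt{until }$\lue=\emptyset$ together with the fact that the inner while loop runs precisely until $\lue$ is empty. Consequently, when line 34 is reached, $\lue$ can be nonempty only because the call to \textsc{ProcessEdgeList}$()$ in that same pass inserted at least one edge into $\lue$. But \textsc{ProcessEdgeList}$()$ inserts edges into $\lue$ only through \textsc{Contract}$()$, and \textsc{Contract}$()$ is invoked solely to carry out an actual edge contraction. Therefore every pass of the repeat-until loop that is \emph{not} the last is preceded, within its own \textsc{ProcessEdgeList}$()$ call, by at least one edge contraction.

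To finish, I would invoke the monotonicity argument already used for $\lue$: an edge contraction deletes exactly one vertex from the current triangulation, and the algorithm never introduces a new vertex, so the total number of edge contractions performed while processing $u$ is at most $n_v-4$ (a surface triangulation has at least four vertices). Since each non-final pass consumes at least one such contraction, the repeat-until loop can make at most $n_v-3$ passes; as each pass terminates, the loop terminates.

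The step I expect to be the main obstacle is the middle claim: certifying that $\lue$ can be repopulated after the while loop drains it \emph{only} through contractions executed by \textsc{ProcessEdgeList}$()$, thereby tying each additional pass to a globally bounded resource. Establishing this requires verifying that, once the while loop exits, no edge enters $\lue$ except via \textsc{Contract}$()$ called from Algorithm~\ref{alg:processlistS} — a fact made transparent by the insertion behavior of \textsc{Contract}$()$ described in Claim~\ref{cla:contract1} and by the code of Algorithm~\ref{alg:processlistS}.
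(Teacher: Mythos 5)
Your proof is correct and follows essentially the same route as the paper's: both reduce to Proposition~\ref{prop:whiletermination} for the inner while loop and then bound the number of repeat-until passes by a finite resource whose existence rests on the fact that the algorithm never creates a vertex. The only differences are in the bookkeeping: you charge each non-final pass to an edge contraction (of which there are at most $n_v-4$) where the paper charges it to the supply of edges that could ever be inserted into $\lue$ (at most $n_v-1$, reusing the argument from Proposition~\ref{prop:whiletermination}), and you explicitly verify that \textsc{ProcessEdgeList}$()$ halts, a step the paper's proof leaves implicit.
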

\begin{proof}
  We  must  show that  list  $\lue$  will  be eventually  empty  after
  finitely many iterations of the  repeat-until loop in lines 21-34 of
  Algorithm~\ref{alg:contractions}.
  Proposition~\ref{prop:whiletermination} ensures  that the while loop
  in   lines    22-30   of   Algorithm~\ref{alg:contractions}   always
  terminates.  So, in each  iteration of  the repeat-until  loop, list
  $\lue$ gets empty after the while loop is executed. Thus, line 30 is
  reached.  From this  point on, list $\lue$ can  still be modified by
  edge insertions carried out by \textsc{Contract}$()$. However, using
  the      same       arguments      from      the       proof      of
  Proposition~\ref{prop:whiletermination}, we  can conclude that there
  are  only  finitely  many  edges  that  \textit{all}  executions  of
  \textsc{Contract}$()$ could  possibly insert into  $\lue$. So, after
  some  finitely many  iterations of  the repeat-until  loop,  no more
  edges   are    inserted   into   $\lue$   by    the   procedure   in
  Algorithm~\ref{alg:processlistS}.  Thus, list $\lue$ will eventually
  remain  empty after  this procedure  is executed.  Consequently, the
  loop ends.
 \end{proof}

 Edges  are  inserted  into list  $\lte$  at  only  one place  in  our
 algorithm:    line   16    of   Algorithm~\ref{alg:processdegreegt3}.
 Furthermore, an edge inserted into  $\lte$ has just been removed from
 $\lue$ and  tested against the  link condition.  Since the  same edge
 removed from  $\lue$ never  gets back to  $\lue$ again (see  proof of
 Proposition~\ref{prop:whiletermination}),  lists  $\lue$  and  $\lte$
 have no edge in common  during the entire execution of the algorithm.
 Moreover, every edge inserted into $\lte$ has been tested against the
 link  condition before  and  has failed  the  test. So,  we have  the
 following:

\begin{pro}\label{prop:listSwhile}
  Let $u$  be any  vertex of $\T$  processed by the  algorithm.  Then,
  during  the  processing of  $u$,  if  the  following hold  when  the
  condition    of    the    while    loop   in    lines    22-30    of
  Algorithm~\ref{alg:contractions} is tested  for the first time, they
  hold every time it is tested:
\begin{itemize}
\item[(1)] $\lte$ and $\lue$ have no edge in common;

\item[(2)] if $[ u , z ]$ is an edge in $\lte$ then $t( z ) \ge o(z) >
  -1$; and

\item[(3)]  every edge  in $\lte$  has  been tested  against the  link
  condition exactly once and failed the test.
\end{itemize}
\end{pro}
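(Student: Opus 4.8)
The plan is to mirror the inductive argument already used for Proposition~\ref{prop:listRwhile}, proceeding by induction on the number $i$ of times the while-loop condition $\lue \neq \emptyset$ in lines 22--30 of Algorithm~\ref{alg:contractions} is reached and tested during the processing of $u$. Assuming the loop ends after $m$ iterations (guaranteed by Proposition~\ref{prop:whiletermination}), I have $1 \le i \le m+1$. The base case $i=1$ holds by the hypothesis of the claim, and for the inductive step I would unroll the $k$-th iteration of the loop body and verify that none of conditions (1)--(3) is violated. The central observation, stated just before the proposition, is that the \emph{only} line inserting a genuinely new edge into $\lte$ is line 16 of Algorithm~\ref{alg:processdegreegt3}, and that this insertion is always preceded by the removal of that same edge $[u,v]$ from $\lue$ in line 23 together with the assignment $t(v)\leftarrow\textit{ts}$ in line 24 of Algorithm~\ref{alg:contractions}.

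For condition (1), I would note that each pass through the loop body either leaves $\lte$ untouched, relocates an edge already in $\lte$ to its front or rear (the move operations in lines 8--13, 16--18, and 21--23 of Algorithm~\ref{alg:processdegree3}, in lines 7--9 of Algorithm~\ref{alg:processdegreegt3}, and in lines 37--39 of Algorithm~\ref{alg:contract}), or inserts into $\lte$ the edge $[u,v]$ just removed from $\lue$ at line 16 of Algorithm~\ref{alg:processdegreegt3}. A relocation inside $\lte$ cannot create a shared edge, and the inserted $[u,v]$ is no longer in $\lue$ at that moment. The only edges added to $\lue$ during the iteration are inserted by \textsc{Contract}$()$; by the argument in the proof of Proposition~\ref{prop:whiletermination}, each such $[u,z]$ with $z\in\po_{uv}$ is brand new (it belongs to $K-uv$ but not to $K$ nor to any earlier triangulation), so it cannot already lie in $\lte$. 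Hence (1) is preserved.

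For condition (2), the key point is that when $[u,v]$ is placed in $\lte$ at line 16 of Algorithm~\ref{alg:processdegreegt3}, the value $t(v)=\textit{ts}$ has just been set in line 24 of Algorithm~\ref{alg:contractions}, whereas $o(v)$ was set to an earlier (or equal) non-negative value of \textit{ts} when $v$ first became a neighbor of $u$ (line 10 of Algorithm~\ref{alg:contractions} or line 8 of Algorithm~\ref{alg:contract}); since \textit{ts} is non-decreasing, this gives $t(v)\ge o(v)>-1$. I would then check that the loop body never resets $o$ or $t$ for a vertex $z$ with $[u,z]\in\lte$: the only resets $o(z)\leftarrow\textit{ts}$, $t(z)\leftarrow-1$ occur in lines 8--9 of Algorithm~\ref{alg:contract} for the \emph{new} neighbors $z\in\po_{uv}$, and such a $z$ cannot already satisfy $[u,z]\in\lte$, because by the link condition a contractible edge $[u,v]$ admits no common neighbor of $u$ and $v$ other than the two vertices of $\textit{lk}([u,v],K)$, so no $z\in\po_{uv}$ was a neighbor of $u$ before the contraction. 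For condition (3), I would observe that $[u,v]$ enters $\lte$ exactly because $c(v)>0$ in line 16 of Algorithm~\ref{alg:processdegreegt3}, i.e.\ it failed the single link-condition test it had just undergone; and since an edge removed from $\lue$ never re-enters $\lue$ (proof of Proposition~\ref{prop:whiletermination}), it can never be tested again, so ``exactly once and failed'' is maintained, while relocations inside $\lte$ preserve the test count by the induction hypothesis.

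The main obstacle is condition (2), specifically ruling out any stale reset of the $o$ or $t$ attribute of a vertex whose edge already sits in $\lte$. This rests on the contractibility-based disjointness argument above, which shows that the new neighbors $z\in\po_{uv}$ created by contracting $[u,v]$ are disjoint from the existing neighbors of $u$ recorded in $\lte$, so that lines 8--9 of Algorithm~\ref{alg:contract} touch only vertices outside $\lte$; without this disjointness the inequality $t(z)\ge o(z)$ could be falsified. Once this is secured, the remaining bookkeeping is routine and parallels the proof of Proposition~\ref{prop:listRwhile}, so our claim holds for $i=1,\ldots,m+1$.
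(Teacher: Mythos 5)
Your proof is correct and follows essentially the same route as the paper's: induction on the number of times the while-loop condition is tested, with a case analysis of every line that can insert into $\lte$ or modify the $o$ and $t$ attributes. You are in fact slightly more careful than the paper on one point---the paper's proof asserts that line 24 of Algorithm~\ref{alg:contractions} is the only place the $t$ attribute can change during a loop iteration, glossing over the reset in line 9 of Algorithm~\ref{alg:contract}, whereas you explicitly rule that reset out by showing via the link condition that the new neighbors in $\po_{uv}$ are disjoint from the vertices whose edges already sit in $\lte$.
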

\begin{proof}
  Our proof is by induction on  the number $i$ of times that the while
  loop   condition,  $\lue   \neq   \emptyset$,  in   line  22-30   of
  Algorithm~\ref{alg:contractions}             is             reached.
  Proposition~\ref{prop:whiletermination}  ensures that the  loop ends
  after $m$  iterations, for some  non-negative integer, $m$.   So, we
  have $1 \le i \le m + 1$.

  \textbf{Base case  ($\bm{i = 1}$)}.  When the  while loop condition,
  $\lue \neq \emptyset$, is reached for the first time, conditions (1)
  to (3) are  all true \textit{by the hypothesis}  of our claim.  When
  we show that (1)-(3) also hold for the outer repeat-until loop, then
  this hypothesis will be proved to be true as well.

  \textbf{Hypothesis ($\bm{i =  k}$)}.  Assume that conditions (1)-(3)
  hold for  $i = k$,  where $k$ is  an arbitrary (but  fixed) integer,
  with $1 \le k \le m$;  that is, conditions (1)-(3) hold if condition
  $\lue \neq \emptyset$ is tested for the $k$-th time.

  \textbf{Inductive Step  ($\bm{i =  k+1}$)}.  When $i  = k +  1$, the
  while  loop condition,  $\lue \neq  \emptyset$, is  reached  for the
  $(k+1)$-th time.  Since  $k \ge 1$, the loop  body has been executed
  at least once.   So, consider the moment in  which the condition was
  reached for the  $k$-th time.  Since the loop  body is executed, the
  while loop  condition holds and thus  $\lue$ is not  empty.  At this
  point, the  induction hypothesis implies  that if any  of conditions
  (1)-(3)  is violated, then  the violation  occurs during  the $k$-th
  iteration of the loop.  We now show that this is not the case.

  The $k$-th iteration of the loop  starts with the removal of edge $[
  u , v ]$ from the front of  list $\lue$. Next, the value of $t( v )$
  is      set     to     \textit{ts}      (see     line      24     of
  Algorithm~\ref{alg:contractions}). Note that \textit{ts} is equal to
  or  greater than $0$,  as \textit{ts}  is set  to $0$  in line  3 of
  Algorithm~\ref{alg:contractions}  and is  never  decremented by  the
  algorithm. Furthermore,  we get $t(z) \ge  o(z) > -1$,  as $o(z)$ is
  equal to  the value held by  \textit{ts} when the for  loop in lines
  8-19 of Algorithm~\ref{alg:contractions} was executed.  Next, if the
  degree   of  $v$   is  equal   to   $3$,  then   the  procedure   in
  Algorithm~\ref{alg:processdegree3}   is  executed;   otherwise,  the
  procedure in Algorithm~\ref{alg:processdegreegt3}  is. In the former
  case, list $\lte$ is not  modified.  In the latter case, list $\lte$
  may be modified by the insertion of edge $[ u , v ]$, which has just
  been     removed     from     $\lue$     (see     line     16     of
  Algorithm~\ref{alg:processdegreegt3}). However,  in both cases, list
  $\lue$ is modified  by the insertion of  edges of the form $[  u , z
  ]$, where $z$  is a vertex in  $\textit{lk}( v , K )$,  with $z \not
  \in  \pd_{uv}$, and  $K$ is  the  triangulation immediately
  before the  contraction of  $[ u ,  v ]$. Since  $[ u  , v ]$  is no
  longer  in $\lue$, and  since the  edges inserted  in $\lue$  do not
  belong to $K$,  to $\T$ nor to any  triangulation obtained from $\T$
  before $K$, condition (1) is not violated. We also know that $t( v )
  =  \textit{ts}$   and  $\textit{ts}  \ge   0$  before  line   16  of
  Algorithm~\ref{alg:processdegreegt3} is executed.  Moreover, line 24
  of Algorithm~\ref{alg:contractions} is the  only one that can modify
  the $t$  attribute of  a vertex during  the entire iteration  of the
  while  loop in  lines  22-30.   So, condition  (2)  is not  violated
  either.         Finally,       before        line        16       of
  Algorithm~\ref{alg:processdegreegt3}  is  executed,  lines 2-12  are
  executed to test  edge $[ u , v ]$ against  the link condition.  So,
  condition (3)  cannot be violated.  As a  result, conditions (1)-(3)
  remain true during the $k$-th loop iteration, and thus they are true
  when the  loop condition is reached  for the $i$-th time,  with $i =
  k+1$. Thus, our claim holds for $i = 1, \ldots, m+1$.
\end{proof}

We   now  give   a  proof   for   Proposition~\ref{prop:listLTE}  from
Section~\ref{sec:counting}:

\begin{proof}
  Our  proof is  by induction  on  the number  $i$ of  times that  the
  repeat-until  loop condition,  $\lue  = \emptyset$,  in  line 34  of
  Algorithm~\ref{alg:contractions}             is             reached.
  Proposition~\ref{prop:repeatuntiltermination}   ensures   that   the
  repeat-until loop terminates.  So, we  can assume that the loop ends
  after $m$  iterations, for some non-negative integer,  $m$. Thus, we
  have that $1 \le i \le m + 1$.

  \textbf{Base  case  ($\bm{i  =  1}$)}. When  the  repeat-until  loop
  condition, $\lue  = \emptyset$, is  reached for the first  time, the
  loop body  has been executed exactly  once. So, let  us consider the
  moment  at  which line  21  of Algorithm~\ref{alg:contractions}  was
  executed for the first time.  At this moment, we know that $\lte$ is
  empty  (see  line   20  of  Algorithm~\ref{alg:contractions}).   So,
  conditions (1)-(3) holds immediately  before the while loop of lines
  22-30     is    executed     for    the     first     time.     From
  Proposition~\ref{prop:listSwhile},  they also  hold during  the loop
  execution  and  immediately after  it  ends,  which will  eventually
  happen  (see  Proposition~\ref{prop:whiletermination}).   Next,  the
  procedure in Algorithm~\ref{alg:processlistS} may be invoked in line
  32. During the execution of this procedure, one or more edges may be
  removed    from     $\lte$    (see    lines    4     and    13    of
  Algorithm~\ref{alg:processlistS}),  but  no   edge  is  inserted  in
  $\lte$.   In  addition, the  $t$  attribute  of  a vertex  is  never
  modified  by \textsc{ProcessEdgeList}$()$.   So, conditions  (2) and
  (3) must remain true. In turn,  no edge is removed from list $\lue$,
  but some edges  may be inserted into $\lue$. If  an edge is inserted
  into $\lue$, then edge $[ u , v ]$ was removed from $\lte$ before in
  line 13 of Algorithm~\ref{alg:processlistS}.   Moreover, edge $[ u ,
  v ]$  is contracted.  The  contraction of $[  u , v ]$  triggers the
  insertion into $\lue$  of all edges of  the form $[ u ,  z ]$, where
  $z$  is a  vertex  in  $\textit{lk}( v  ,  K )$,  with  $z \not  \in
  \pd_{uv}$.  But,  these edges do not belong  to the current
  triangulation, $K$,  to $\T$ nor to any  triangulation obtained from
  $\T$  before $K$.  So, none  of  them can  be in  $\lte$, and  hence
  condition (1)  remains true.   Thus, our claim  holds when  the loop
  condition is reached for the first time.

  \textbf{Hypothesis ($\bm{i =  k}$)}.  Assume that conditions (1)-(3)
  hold for  $i = k$,  where $k$ is  an arbitrary (but  fixed) integer,
  with $1 \le k \le m$.  That is, conditions (1)-(3) hold if condition
  $\lue = \emptyset$ is tested for the $k$-th time.

  \textbf{Inductive Step  ($\bm{i =  k+1}$)}.  When $i  = k +  1$, the
  repeat-until loop condition, $\lue  = \emptyset$, is reached for the
  $(k+1)$-th time. Since $k \ge 1$, the loop body has been executed at
  least twice.   So, consider  the moment in  which the  condition was
  reached  for   the  $k$-th  time.   By   the  induction  hypothesis,
  conditions  (1)-(3) are  true  immediately after  the  loop test  is
  executed, i.e.,  they are  true in the  beginning of  the $(k+1)$-th
  iteration.   So, from  Proposition~\ref{prop:listSwhile}, conditions
  (1)-(3) remain true immediately after  the while loop in lines 22-30
  ends,       which       will       eventually       occur       (see
  Proposition~\ref{prop:whiletermination}).  Using  the same arguments
  from the  base case, we can  show that conditions  (1)-(3) also hold
  immediately   after  lines   31-33   are  executed.    Consequently,
  conditions (1)-(3) hold when condition $\lue = \emptyset$ is reached
  in  line  34  of Algorithm~\ref{alg:contractions}  for  the
  $i$-th  time, with  $i=k+1$. Thus,  our claim  is true  for $i  = 1,
  \ldots, m+1$.
\end{proof}

The    following     three    results    are     needed    to    prove
Proposition~\ref{prop:listLTE2} from Section~\ref{sec:procedges}:

\begin{pro}\label{prop:cvalue}
  Let $u$ be the currently  processed vertex of the algorithm, and let
  $[ u  , v ]$ be any  edge of the current  triangulation, $K$, tested
  against    the   link    condition    by   the    lines   2-12    of
  Algorithm~\ref{alg:processdegreegt3}.  Then, the  value  of the  $c$
  attribute  of $v$ is  zero immediately  before the  test, and  it is
  equal to the number of critical cycles  that contains $[ u , v ]$ in
  $K$ immediately after the test.
\end{pro}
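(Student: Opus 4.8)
The plan is to treat the two assertions separately: first that $c(v)=0$ when Algorithm~\ref{alg:processdegreegt3} is entered, and then that the for loop in its lines 2--12 leaves $c(v)$ equal to the number of critical cycles of $K$ through $[u,v]$. For the first, I would note that $e=[u,v]$ was removed from $\lue$ in line 23 of Algorithm~\ref{alg:contractions}, inside the body of the while loop of lines 22--30 and immediately after its guard $\lue\neq\emptyset$ evaluated to true. At that instant $[u,v]$ still lay in $\lue$, so Proposition~\ref{prop:listLUE}(6) gives $c(v)=0$; the only statement run before the for loop is line 24, which sets $t(v)$ and does not touch $c(v)$. Hence $c(v)=0$ at the start of the test.

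For the second assertion, observe that lines 2--12 increment $c(v)$ (in line 4) exactly once for each vertex $z\in\textit{lk}(v,K)$ with $z\in\po_{uv}$ and $n(z)=u$, and change $c(v)$ nowhere else. Since $c(v)$ begins at $0$, it suffices to put these $z$ into bijection with the critical cycles through $[u,v]$. Using the discussion in Section~\ref{sec:testedges}: as $d_v>3$ we have $K\not\cong\T_4$, every critical cycle is a $3$-cycle, and the two faces on $[u,v]$ are $[u,v,x]$ and $[u,v,y]$ with $\{x,y\}=\textit{lk}([u,v],K)$. Thus $(u,v,z)$ is critical in $K$ iff $z$ is a common neighbor of $u$ and $v$ with $z\notin\{x,y\}$, i.e. iff $z\in\textit{lk}(v,K)$, $z\in\po_{uv}$, and $z\in\textit{lk}(u,K)$. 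The loop tests this last membership $z\in\textit{lk}(u,K)$ through the condition $n(z)=u$.

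The crux, which I expect to be the main obstacle, is therefore the invariant that throughout the processing of $u$ a vertex $z$ of the current triangulation satisfies $n(z)=u$ if and only if $z$ is a neighbor of $u$ in $K$. I would prove it by induction on the contractions performed while $u$ is processed, in the style of the loop-invariant arguments for Propositions~\ref{prop:listLUE} and~\ref{prop:listLTE}. The base case is line 9 of Algorithm~\ref{alg:contractions}, setting $n(z)=u$ for every $z\in\textit{lk}(u,K)$ at selection time; since Algorithm~\ref{alg:continit} and the processing of earlier vertices write only values different from $u$ into the $n$ field, no vertex carries $n(z)=u$ spuriously. For the inductive step, every contraction during the processing of $u$ is of an edge $[u,w]$, which removes only $w$ from $K$ and makes new neighbors of $u$ exactly out of the vertices of $\po_{uw}$---each assigned $n(\cdot)=u$ in lines 4--11 of Algorithm~\ref{alg:contract}---while no vertex other than $w$ ceases to be a neighbor of $u$. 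With the invariant in hand, the for loop counts precisely the critical cycles through $[u,v]$, and the proposition follows.
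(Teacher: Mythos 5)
Your proposal is correct and follows essentially the same route as the paper's own proof: it establishes $c(v)=0$ on entry via the $\lue$ invariant (Proposition~\ref{prop:listLUE}(6)), identifies the increments of $c(v)$ in lines 2--12 with the third vertices $z\in\po_{uv}\cap\textit{lk}(u,K)$ of critical $3$-cycles through $[u,v]$, and reduces everything to the key premise that $n(z)=u$ exactly for the current neighbors of $u$, argued from line 9 of Algorithm~\ref{alg:contractions} and lines 4--11 of \textsc{Contract}$()$. Your explicit induction for the $n$-attribute invariant (including the observation that earlier processing never writes the value $u$) is only a slightly more formal packaging of the paper's argument.
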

\begin{proof}
  Every edge,  $[ u , v ]$,  tested against the link  condition by the
  lines 2-12  of Algorithm~\ref{alg:processdegreegt3} is  an edge just
  removed from $\lue$  in line 23 of Algorithm~\ref{alg:contractions}.
  This means that  $[ u , v  ]$ belonged to $\lue$, and  hence the $c$
  value of  vertex $v$  is zero immediately  before the lines  2-12 of
  Algorithm~\ref{alg:processdegreegt3}  are executed. This  is because
  Algorithm~\ref{alg:contractions}  does not modify  the $c$  value of
  $v$ before  Algorithm~\ref{alg:processdegreegt3} is invoked  in line
  26.    During   the  link   condition   test   in   lines  2-12   of
  Algorithm~\ref{alg:processdegreegt3},    the    value   $c(v)$    is
  incremented by  one $m$ times, where  $m$ is the  number of vertices
  $z$ in  $\po_{uv}$ such  that $n(  z ) =  u$. Note  that if  $z \in
  \po_{uv}$ and $n( z  ) = u$ then $z$ is a  neighbor of both $u$ and
  $v$, which means  that $( u , v ,  z )$ is a cycle  of length $3$ in
  $K$. Such a  cycle, if any, cannot bound a $2$-face  in $K$, as edge
  $[ u ,  v ]$ is already incident on  two $2$-faces, $[ u ,  v , x ]$
  and $[  u ,  v , y  ]$, where $x$  and $y$  are the two  vertices in
  $\textit{lk}([ u ,  v ], K )$.  Thus,  cycle $( u , v, z  )$ must be
  critical in $K$. Conversely, if $z \in \pd_{uv}$ or $n( z )
  \neq  u$, then either  $z \in  \{ u  , x  , y  \}$ or  $z$ is  not a
  neighbor of $u$. If $z = u$ or $n( z  ) \neq u$ then $( u , v , z )$
  is not a cycle of length $3$. Otherwise,  if $z = x$ or $z = y$ then
  $( u , v , z )$ is a  $2$-face of $K$. In either case, triple $( u ,
  v , z  )$ does not define a  critical cycle.  So, since $c(  v )$ is
  equal   to   zero   before   the   execution  of   lines   2-12   of
  Algorithm~\ref{alg:processdegreegt3}, and  since $c(v)$ is  equal to
  $c(   v   )  +   m$   after  the   execution   of   lines  2-12   of
  Algorithm~\ref{alg:processdegreegt3},  we have  that  $c(v)$ is  the
  number of critical cycles that contains $[ u , v ]$ in $K$ after the
  link condition test ends. Note that  we rely on the premise that $n(
  z )$ is equal to $u$ if and only if $z$ is a neighbor of $u$ in $K$,
  which is in fact  true.  To see why, recall that $n(  v )$ is set to
  $u$, for  every vertex  $v$ in  $\textit{lk}( u ,  K )$,  during the
  initialization      of      $\lue$       in      line      9      of
  Algorithm~\ref{alg:contractions}.  In addition, during the execution
  of     the     repeat-until    loop     in     lines    21-34     of
  Algorithm~\ref{alg:contractions}), the $n$  attribute of a vertex is
  only   modified   in    line   6   of   \textsc{Contract}$()$   (see
  Algorithm~\ref{alg:contract}). However,  this is done  precisely for
  the vertices $z$ of $\textit{lk}( v  , K )$ that become neighbors of
  $u$ after the contraction of $[  u , v ]$.  Conversely, a vertex $z$
  in the current triangulation, $K$, can only become a neighbor of $u$
  if the contraction of  an edge, $[ u , v ]$,  takes place and $z$ is
  in $\po_{uv}$. So, whenever $[ u  , v ]$ is tested against the link
  condition in Algorithm~\ref{alg:processdegreegt3},  we have that $n(
  z )$ is equal to $u$ if and only if $z$ is a neighbor of $u$ in $K$.
\end{proof}

An immediate  consequence of Proposition~\ref{prop:cvalue}  is that an
edge $[ u , v ]$ is inserted  into $\lte$ if and only if it is part of
a     critical     cycle     in     $K$,     as     line     16     of
Algorithm~\ref{alg:processdegreegt3} is executed if  and only if $c( v
) $  is not  equal to zero  immediately after  $[ u ,  v ]$  is tested
against     the     link     condition     in    lines     2-12     of
Algorithm~\ref{alg:processdegreegt3}. Furthermore,  at this point, the
value of $c(v)$ is equal to  the number of critical cycles in $K$ edge
$[ u , v ]$ belongs to.

\begin{cor}\label{cor:cvalue}
  Let $u$ be the vertex  currently processed by the algorithm, and let
  $[  u , v  ]$ be  an edge  removed from  list $\lue$  by line  23 of
  Algorithm~\ref{alg:contractions}. Then, edge $[ u , v ]$ is inserted
  into list $\lte$  in line 16 of Algorithm~\ref{alg:processdegreegt3}
  if and  only if  $[ u  , v  ]$ is part  of a  critical cycle  in the
  current triangulation,  $K$. Furthermore, the  value of $c( v  )$ is
  equal to the number of critical cycles in $K$ to which edge $[ u , v
  ]$ belongs.
\end{cor}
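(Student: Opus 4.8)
The plan is to obtain Corollary~\ref{cor:cvalue} directly from Proposition~\ref{prop:cvalue}, after pinning down the control flow of Algorithm~\ref{alg:contractions} and a structural fact about the degree of $v$. First I would record the structural observation: \emph{if an edge $[u,v]$ of $K$ belongs to a critical cycle, then $d_v > 3$}. Indeed, any critical cycle through $[u,v]$ is a $3$-cycle $(u,v,z)$ whose three edges do not all bound the same face; this forces the third vertex $z$ to be a common neighbor of $u$ and $v$ with $z \not\in \textit{lk}( [u,v] , K ) = \{ x , y \}$, since otherwise $(u,v,z)$ would be the face $[u,v,x]$ or $[u,v,y]$ and the cycle would bound a face. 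Hence $\textit{lk}( v , K )$ contains the four distinct vertices $u$, $x$, $y$, and $z$, so $d_v \ge 4$.

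Next I would split the edge $[u,v]$ removed in line 23 of Algorithm~\ref{alg:contractions} into the two cases handled by lines 25--28. If $d_v = 3$, then by the structural observation $[u,v]$ lies on no critical cycle, and line 26 routes control to Algorithm~\ref{alg:processdegree3}, which never inserts an edge into $\lte$; thus $[u,v]$ is not inserted into $\lte$ and both sides of the asserted equivalence are false. For the ``furthermore'' claim in this case, Proposition~\ref{prop:listLUE}(6) gives $c(v) = 0$ at the moment of removal, and Algorithm~\ref{alg:processdegree3} does not modify $c(v)$, so $c(v) = 0$ agrees with the (empty) count of critical cycles through $[u,v]$. If $d_v > 3$, then line 28 routes control to Algorithm~\ref{alg:processdegreegt3}, which tests $[u,v]$ against the link condition in lines 2--12.

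In the main case $d_v > 3$ I would simply invoke Proposition~\ref{prop:cvalue}: immediately after lines 2--12 of Algorithm~\ref{alg:processdegreegt3}, the value $c(v)$ equals the number of critical cycles of $K$ containing $[u,v]$, which is exactly the ``furthermore'' assertion. The equivalence then follows from the branch in lines 13--16: line 16, the only place where $[u,v]$ is inserted into $\lte$, is reached precisely when the test $c(v) = 0$ in line 13 fails, i.e.\ when $c(v) \neq 0$, i.e.\ (by Proposition~\ref{prop:cvalue}) when $[u,v]$ belongs to at least one critical cycle of $K$. The main obstacle is the structural observation that a critical cycle through $[u,v]$ forces $d_v > 3$: it is what rules out the degree-$3$ branch and what makes the count consistent in the degree-$3$ case, where Proposition~\ref{prop:cvalue} does not apply directly. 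With that in hand, everything else is a routine reading of the control flow, since the substantive work---that the test correctly tallies critical cycles---is already carried out in Proposition~\ref{prop:cvalue}.
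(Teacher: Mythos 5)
Your proof is correct and follows essentially the same route as the paper, which treats the corollary as an immediate consequence of Proposition~\ref{prop:cvalue} together with the branch at lines 13--16 of Algorithm~\ref{alg:processdegreegt3}. Your explicit treatment of the $d_v = 3$ branch, via the observation that an edge on a critical cycle must have $d_v \ge 4$, fills in a case the paper leaves implicit, but it does not change the substance of the argument.
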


The result  stated by Corollary~\ref{cor:cvalue} is only  valid at the
moment that edge  $[ u , v ]$ is inserted  into $\lte$.  Indeed, while
$[ u , v ]$ is in $\lte$,  the contraction of an edge may give rise to
a critical  cycle containing $[  u ,  v ]$ or  it can make  a critical
cycle containing $[ u , v ]$ non-critical.

\begin{pro}\label{prop:listSwhile2}
  Let $u$ be the vertex currently processed by the algorithm. Then, if
  the following  condition holds immediately before the  while loop in
  lines 22-30 of Algorithm~\ref{alg:contractions} is executed, it will
  remain true every time the loop condition, $\lue \neq \emptyset$, is
  reached in line  22: {\em no edge $[  u , v ]$ in  $\lte$, such that
    $c( v ) > 0$, can precede an  edge $[ u , w ]$ in $\lte$ such that
    $c( w ) = 0$}.
\end{pro}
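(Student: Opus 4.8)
The plan is to prove this as a loop invariant by induction on the number $i$ of times the while-loop condition $\lue \neq \emptyset$ in lines 22-30 of Algorithm~\ref{alg:contractions} is reached, exactly mirroring the structure of the proofs of Propositions~\ref{prop:listRwhile} and~\ref{prop:listSwhile}. Proposition~\ref{prop:whiletermination} guarantees the loop halts after some finite number $m$ of iterations, so I may assume $1 \le i \le m+1$. The guiding structural observation is that the asserted condition is equivalent to saying that, within $\lte$, all edges $[u,w]$ with $c(w)=0$ occur before all edges $[u,v]$ with $c(v)>0$; so it suffices to show that the algorithm repositions an edge of $\lte$ precisely when its counter crosses the boundary between $0$ and a positive value, and always to the correct side.

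For the base case $i=1$, the condition holds by the hypothesis of the claim (just as in Proposition~\ref{prop:listSwhile}); in the outer-loop proof this hypothesis is discharged because $\lte$ is empty immediately after line 20, so the condition is vacuously true. For the inductive step, I would unroll the $k$-th iteration, which begins by removing an edge $e=[u,v]$ from the front of $\lue$ and then invokes exactly one of \textsc{ProcessVertexOfDegreeEq3}$()$ or \textsc{ProcessVertexOfDegreeGt3}$()$ on $e$ (with \textsc{Contract}$()$ possibly called inside). The only places where the \emph{contents} or \emph{order} of $\lte$, or the value of $c(z)$ for an edge $[u,z]\in\lte$, can change are therefore confined to these three procedures, and I would enumerate them one at a time.

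Concretely, each modification is paired with a repositioning that preserves the invariant. In \textsc{ProcessVertexOfDegreeGt3}$()$, whenever $c(z)$ is incremented from $0$ to $1$ (lines 5-9) the edge $[u,z]$ is moved to the \emph{rear} of $\lte$, and the newly non-contractible edge $[u,v]$ (with $c(v)>0$) is inserted at the rear in line 16; inside \textsc{Contract}$()$ the increment in line 36 is likewise accompanied by a rear-move in lines 37-39 exactly when $c(w)$ reaches $1$. Dually, in \textsc{ProcessVertexOfDegreeEq3}$()$ the decrements of $c(x)$ and $c(y)$ that bring a counter down to $0$ (lines 6-7 and the \textbf{elsif} branches) are matched by front-moves in lines 8-13, 16-18, and 21-23. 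Any change that leaves a counter strictly positive, or strictly at $0$, triggers no move and leaves the edge in its correct region. Hence no boundary is crossed without a matching reinsertion, and the invariant survives the $k$-th iteration, giving the claim for $i=k+1$.

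The main obstacle I expect is not the bookkeeping of the moves themselves but justifying that, at each such move, $c(z)$ truly equals the number of critical cycles of the current triangulation containing $[u,z]$, so that ``$c(z)=0$'' genuinely corresponds to contractibility and the front/rear placement is semantically correct. This requires leaning on Proposition~\ref{prop:cvalue} and Corollary~\ref{cor:cvalue}, together with the $o$/$t$ timestamp guards (e.g. the tests $t(z)\neq -1$ in line 35 of Algorithm~\ref{alg:contract} and line 5 of Algorithm~\ref{alg:processdegreegt3}) that ensure an edge is repositioned only when it actually lies in $\lte$. Verifying that these guards fire exactly on the edges of $\lte$ whose counters cross the $0$-boundary, and never spuriously, is the delicate part of the argument.
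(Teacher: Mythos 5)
Your proposal is correct and follows essentially the same route as the paper's own proof: induction on the number of times the while-loop condition is reached, with the inductive step handled by enumerating every site where $\lte$ is restructured or a $c$-counter of an $\lte$-edge changes, and checking that each increment to $1$ is paired with a rear-move, each decrement to $0$ with a front-move, and each insertion (with $c>0$) occurs at the rear. The only minor difference is one of emphasis: the semantic correctness of the counters (that $c(z)$ equals the true number of critical cycles) is mostly deferred to Proposition~\ref{pro:ccequality} rather than being needed here, the paper invoking Corollary~\ref{cor:cvalue} only to rule out negative counter values at insertion time.
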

\begin{proof}
  Our proof is by induction on  the number $i$ of times that the while
  loop   condition,   $\lue   \neq   \emptyset$,   in   line   22   of
  Algorithm~\ref{alg:contractions}       is       reached.        From
  Proposition~\ref{prop:whiletermination},  we   know  that  the  loop
  eventually ends after finitely many iterations. Let $m$ be the total
  number   of   loop  iterations,   where   $m$   is  a   non-negative
  integer. Then, we have that $1 \le i \le m + 1$.

  \textbf{Base case  ($\bm{i = 1}$)}.  When the  while loop condition,
  $\lue  \neq \emptyset$,  is reached  for the  first time,  our claim
  holds     \textit{by    hypothesis}.      In     the    proof     of
  Proposition~\ref{prop:listLTE2}, we  show that the  claim also holds
  for     the    repeat-until     loop    of     lines     21-34    of
  Algorithm~\ref{alg:contractions}.  So, our hypothesis is valid.

  \textbf{Hypothesis ($\bm{i = k}$)}.  Assume that our claim holds for
  $i = k$, where $k$ is  an arbitrary (but fixed) integer, with $1 \le
  k \le m$; that  is, assume that {\em no edge $[ u  , v ]$ in $\lte$,
    such that $c( v ) > 0$, can  precede an edge $[ u , w ]$ in $\lte$
    such that $c( w ) = 0$}, when the while loop condition, $\lue \neq
  \emptyset$, is tested for the $k$-th time.

  \textbf{Inductive  Step ($\bm{i =  k+1}$)}. When  $i =  k +  1$, the
  while  loop condition,  $\lue \neq  \emptyset$, is  reached  for the
  $(k+1)$-th time.  Since  $k \ge 1$, the loop  body has been executed
  at least once.   So, consider the moment in  which the condition was
  reached for the  $k$-th time.  Since the loop  body is executed, the
  while loop  condition holds and thus  $\lue$ is not  empty.  At this
  point, the induction  hypothesis implies that {\em no edge  $[ u , v
    ]$ in $\lte$, such that $c( v ) > 0$, can precede an edge $[ u , w
    ]$ in  $\lte$ such  that $c(  w ) =  0$}. We  must show  that this
  property  is  not  violated  during  the  execution  of  the  $k$-th
  iteration of the while loop.   To that end, we consider all possible
  situations  in which  the structure  of $\lte$  is modified,  or the
  value of the $c$ attribute of a vertex $w$, such that $[ u , w ]$ is
  an edge in $\lte$, is modified.

  If an  edge is inserted into  $\lte$ during the  $k$-th iteration of
  the while loop, then the insertion  must have occurred in line 16 of
  Algorithm~\ref{alg:processdegreegt3}. At  this moment, we  know that
  $c( w )  > 0$, as the condition of the  if-then construction in line
  13 is  {\em false}. Moreover,  Corollary~\ref{cor:cvalue} ensures us
  that $c( w )$ is equal to  the number of critical cycles edge $[ u ,
  w ]$ is  part of in the current triangulation. So,  the value of $c(
  w)$ cannot be negative. Since $[ u , w ]$ is inserted at the rear of
  $\lte$, the property  in our claim cannot be violated  by line 16 of
  Algorithm~\ref{alg:processdegreegt3}.  If an  edge $[  u ,  w  ]$ in
  $\lte$ is moved to the front  of $\lte$, then such a change can only
  occur     in      lines     9,     12,     17,      or     22     of
  Algorithm~\ref{alg:processdegree3}.  However, in  each case, edge $[
  u  , w ]$  is moved  to the  front of  $\lte$ because  $c( w  )$ was
  decremented before  and became $0$.   So, the property in  our claim
  cannot  be  violated  by  lines  9,  12, 17,  and  22  of  Algorithm
  \ref{alg:processdegree3}.  If an edge $[ u , w ]$ in $\lte$ is moved
  to the rear of  $\lte$, then such a change can only  occur in line 8
  of    Algorithm~\ref{alg:processdegreegt3}    or    line    38    of
  Algorithm~\ref{alg:contract}. But, in either case,  edge $[ u , w ]$
  is moved  to the  rear of  $\lte$ because $c(  w )$  was incremented
  before  and became  $1$. So,  the property  in our  claim  cannot be
  violated by  line 8 of  Algorithm~\ref{alg:processdegreegt3} or line
  38 of Algorithm~\ref{alg:contract} either.

  If the  value of $c(  w )$ is  incremented while $[ u  , w ]$  is in
  $\lte$,  then  this  must  occur  either  in  line  6  of  Algorithm
  \ref{alg:processdegreegt3}      or      in      line      36      of
  Algorithm~\ref{alg:contract}. But, in either  case, the value of $c(
  w )$ is  compared to $1$ in the  following line, and $[ u ,  w ]$ is
  moved to the rear  of $\lte$ whenever $c( w )$ is  equal to $1$. So,
  the property in our claim cannot  be violated after the value of $c(
  w  )$  is  incremented. Likewise,  if  the  value  of  $c( w  )$  is
  decremented while $[ u , w ]$  is in $\lte$, then this must occur in
  line 6, 7, 15,  or 20 of Algorithm~\ref{alg:processdegree3}. But, in
  each case,  the value  of $c(  w )$ is  compared to  $0$ immediately
  after $c( w )$ is decremented, and $[ u , w ]$ is moved to the front
  of $\lte$ whenever $c( w )$ is equal to $0$. So, the property in our
  claim  cannot  be   violated  after  the  value  of   $c(  w  )$  is
  decremented. Thus,  we can conclude  that the property in  our claim
  cannot be violated  during the execution of the  $k$-th iteration of
  the while loop. As a result,  the property holds when the while loop
  condition is  reached for the $i$-th time,  with $i = k  + 1$. Thus,
  our claim holds for $i = 1, \ldots, m+1$.
\end{proof}

We      now       prove      Proposition~\ref{prop:listLTE2}      from
Section~\ref{sec:procedges}:

\begin{proof}
  Our  proof is  by induction  on  the number  $i$ of  times that  the
  repeat-until  loop condition,  $\lue  = \emptyset$,  in  line 34  of
  Algorithm~\ref{alg:contractions}   is  reached  and   tested.   From
  Proposition~\ref{prop:repeatuntiltermination}, we know that the loop
  eventually ends after finitely many iterations. Let $m$ be the total
  number   of   loop  iterations,   where   $m$   is  a   non-negative
  integer. Then, we get $1 \le i \le m + 1$.

  \textbf{Base  case  ($\bm{i  =  1}$)}. When  the  repeat-until  loop
  condition, $\lue  = \emptyset$, is  reached for the first  time, the
  loop body has  been executed exactly once.  So,  let us consider the
  moment  in  which line  21  of Algorithm~\ref{alg:contractions}  was
  executed for the first time.   At this moment, list $\lte$ is empty.
  So, the property in our claim holds when the while loop condition in
  line 22 of Algorithm~\ref{alg:contractions} is reached for the first
  time.   From Proposition~\ref{prop:listSwhile2},  we  know that  the
  property remains  true immediately after the execution  of the while
  loop. So, if the property is violated, then it must be so during the
  execution         of         \textsc{ProcessEdgeList}$()$         in
  Algorithm~\ref{alg:processlistS}.   But,  this  procedure  can  only
  cause a change of structure in $\lte$ or in the value of $c( w )$ of
  a   vertex   $w$,    with   $[   u   ,   w    ]$   in   $\lte$,   if
  Algorithm~\ref{alg:processdegree3}  or  Algorithm~\ref{alg:contract}
  are       executed       in        lines       5       and       14,
  respectively. Algorithm~\ref{alg:processdegree3} can move an edge $[
  u , w ]$ in $\lte$ to  the front of $\lte$ or decrement the value of
  $c( w )$. But,  edge $[ u , w ]$ is moved to  the front of $\lte$ if
  and only  if $c(  w )$ is  equal to  $0$ after being  decremented by
  $1$. In turn, Algorithm~\ref{alg:contract} can move an edge $[ u , w
  ]$ in $\lte$ to  the rear of $\lte$ or increment the  value of $c( w
  )$. But, edge $[ u , w ]$ is moved to the rear of $\lte$ if and only
  if $c( w )$ is equal to  $1$ after being incremented by $1$. So, the
  property in our claim cannot  be violated during the first iteration
  of  the  repeat-until loop,  and  hence  our  claim holds  when  the
  condition of the loop is reached for the first time.

  \textbf{Hypothesis ($\bm{i = k}$)}.  Assume that our claim holds for
  $i = k$, where $k$ is  an arbitrary (but fixed) integer, with $1 \le
  k \le m$; that  is, assume that {\em no edge $[ u  , v ]$ in $\lte$,
    such that $c( v ) > 0$, can  precede an edge $[ u , w ]$ in $\lte$
    such that  $c( w  ) = 0$},  when the repeat-until  loop condition,
  $\lue = \emptyset$, is tested for the $k$-th time.

  \textbf{Inductive  Step ($\bm{i =  k+1}$)}. When  $i =  k +  1$, the
  repeat-until loop condition, $\lue  = \emptyset$, is reached for the
  $(k+1)$-th time. Since $k \ge 1$, the loop body has been executed at
  least twice.   So, consider  the moment in  which the  condition was
  reached  for the  $k$-th  time.  By  the  induction hypothesis,  the
  property of our claim holds  immediately after the loop condition is
  tested, which means that the property also holds in the beginning of
  the   $(k+1)$-th   iteration    of   the   loop.    In   particular,
  Proposition~\ref{prop:listSwhile2}  holds.   So,  using exactly  the
  same arguments from the base case,  we can show that the property of
  our claim remains true immediately  after the loop body is executed.
  Consequently, the  property holds when condition  $\lue = \emptyset$
  is reached  in line 34  of Algorithm \ref{alg:contractions}  for the
  $i$-th time, with $i  = k+1$. Thus, our claim is true  for each $i =
  1, \ldots, m+1$.
\end{proof}

Finally,   we    can   prove   Proposition~\ref{pro:ccequality}   from
Section~\ref{sec:procedges}:

\begin{proof}
  Assume that $u$ is the  vertex currently processed by the algorithm,
  and let $[  u , w]$ be  any edge in $\lte$ during  the processing of
  $u$.  From  Proposition~\ref{prop:listLTE2}, we  know that $c(  w )$
  was greater  than $0$ at the  moment that $[  u , w ]$  was inserted
  into $\lte$  by line 16  of Algorithm~\ref{alg:processdegreegt3}. We
  must  show  that  $c(w)$   equals  the  number  of  critical  cycles
  containing   $[    u   ,    w   ]$   every    time   line    32   of
  Algorithm~\ref{alg:contractions} is reached and  $[ u , w]$ is still
  in $\lte$.  To  that end, we must prove  that $c(w)$ is consistently
  modified  by our algorithm;  that is,  we must  show that  $c(w)$ is
  decremented by $1$  if only if a critical cycle containing  $[ u , w
  ]$ is eliminated by the algorithm, and that $c(w)$ is incremented by
  $1$  if and  only if  a critical  cycle containing  $[ u  , w  ]$ is
  created by the  algorithm.  

  Assume that a critical cycle containing $[ u , w ]$ is eliminated by
  the algorithm.   This is only  possible if  an edge $[  u , v  ]$ is
  contracted, $v$  is a degree-$3$ vertex,  and $w$ is one  of the two
  vertices of $\textit{lk}( [ u , v  ] , K )$.  Furthermore, edge $[ u
  , v]$ is  contracted by invoking \textsc{Contract}$()$ in  line 3 of
  Algorithm~\ref{alg:processdegree3}.   However,  after  line  $3$  is
  executed, the value  of $c(w)$ is decremented by $1$  in lines 6, 7,
  15, or 20 of Algorithm~\ref{alg:processdegree3} (with $w$ labeled by
  either $x$ or $y$). Now,  assume that a critical cycle containing $[
  u , w  ]$ is created by  the algorithm.  This is only  possible if a
  vertex $z$, which is a neighbor of $w$, became a neighbor of $u$ due
  to the contraction of an edge $[ u , v ]$. Furthermore, $( u , z , w
  )$ is a  critical cycle in the resulting  triangulation.  If $z$ has
  already  been   processed  by  the   algorithm,  then  line   36  of
  \textsc{Contract}$()$ is executed to  increment $c(w)$ by $1$, which
  accounts for $(  u , z , w  )$.  If $z$ has not  been processed yet,
  then edge $[ u  , z ]$ is inserted into $\lue$  by either line 28 or
  line  30 of  \textsc{Contract}$()$.  At this  moment,  the value  of
  $c(w)$  is not  incremented by  $1$ to  account  for $(  u ,  z ,  w
  )$. However,  as soon as $[  u , z ]$  is removed from  $\lue$ to be
  tested   against    the   link    condition   (see   line    23   of
  Algorithm~\ref{alg:contractions}),
  Algorithm~\ref{alg:processdegreegt3} is  executed (as the  degree of
  $z$ must be  greater than $3$; else $( u  , z , w )$  would not be a
  critical cycle) and $c(w)$ is incremented by $1$ to account for $( u
  , z , w )$.

  Conversely, assume  that $c(w)$ has  been modified by  the algorithm
  while $[ u , w ]$ is  in $\lte$.  if so, then then either (1) $c(w)$
  was   decremented  by   $1$   in  lines   6,   7,  15,   or  20   of
  Algorithm~\ref{alg:processdegree3} (with  $w$ labeled by  either $x$
  or  $y$),  or  (2) $c(w)$  was  incremented  by  $1$  in line  6  of
  Algorithm~\ref{alg:processdegreegt3} (with  $w$ labeled as  $z$), or
  (3)   $c(w)$    was   incremented   by    $1$   in   line    36   of
  Algorithm~\ref{alg:contract}. If  (1) holds, then vertex  $w$ is one
  of the two vertices in $\textit{lk}( [ u  , v ] , K )$, where $[ u ,
  v ]$  is an edge contracted  by the algorithm  in triangulation $K$.
  Since $v$ is a  degree-$3$ vertex, cycle $( u , x  , y )$, which was
  critical in $K$ before the contraction, becomes non-critical in $K -
  uv$. Since $w = x$ or $w =  y$, the number of critical cycles $[ u ,
  w ]$ belongs  to is decremented by $1$.  If (2)  holds, then an edge
  $[ u , v ]$ just removed from $\lue$ is found to be non-contractible
  by Algorithm~\ref{alg:processdegreegt3}  and $(  u , v  , w )$  is a
  critical cycle  in the current  triangulation, $K$. Since line  6 of
  Algorithm~\ref{alg:processdegreegt3}  is executed  for $z  =  w$, we
  have that $t( w  ) < o( v )$. This means  that $v$ became a neighbor
  of  $u$ \textit{after}  $[ u  ,  w ]$  was tested  against the  link
  condition  and  inserted  into  $\lte$.   So, $c(w)$  has  not  been
  incremented before to take into account  critical cycle $( u , v , w
  )$, and line 6 modifies the value  of $c(w)$ to account for $( u , v
  , w )$.   If (3) holds, then the  contraction of an edge $[  u , v]$
  caused a previously  processed vertex, $z$, to become  a neighbor of
  $u$. Furthermore, vertex $w$ is a  neighbor of both $u$ and $z$. So,
  cycle $(  u , z  , w )$  is critical in the  triangulation resulting
  from the  contraction.  Since  this cycle did  not exist  before the
  contraction,  $c(w)$ has  not been  incremented before  to  take the
  cycle into account.  So, line 32 increments $c(w)$ by $1$ to account
  for $( u , z , w )$.
\end{proof}

\begin{cor}\label{cor:equality2}
  Let $u$  be a vertex of  $\T$ processed by the  algorithm.  Then, by
  the time $u$ is processed,  every edge in $\lte$ is non-contractible
  in the current triangulation.
\end{cor}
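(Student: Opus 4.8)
The plan is to show that, at the instant vertex $u$ is declared processed, no edge of $\lte$ is contractible in the current triangulation $K$. Recall that the processing of $u$ terminates exactly when the repeat-until loop of lines 21--34 of Algorithm~\ref{alg:contractions} exits, i.e., when $\lue = \emptyset$ after the final call to \textsc{ProcessEdgeList}$()$. First I would dispose of the degenerate case $K \cong \T_4$: by Proposition~\ref{prop:contractible1} no edge of $\T_4$ is contractible, so in particular every edge of $\lte$ is non-contractible and there is nothing to prove. Hence I may assume $K \not\cong \T_4$, in which case Theorem~\ref{the:linkcond} reduces contractibility of an edge $[u,w]$ to the purely combinatorial condition that $[u,w]$ lie on no critical cycle of $K$.

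Next I would translate this condition into the language of the critical-cycle counters. By Proposition~\ref{pro:ccequality}, whenever the algorithm is about to enter \textsc{ProcessEdgeList}$()$ the value $c(w)$ equals the number of critical cycles of $K$ through $[u,w]$, for every $[u,w]$ in $\lte$; so, under $K \not\cong \T_4$, an edge $[u,w]$ of $\lte$ is contractible if and only if $c(w) = 0$. By Proposition~\ref{prop:listLTE2} the edges of $\lte$ with vanishing counter form a prefix of the list, so it suffices to prove that the edge at the \emph{front} of $\lte$ has a positive counter once the loop has exited. I would argue this by contradiction: if the front edge $[u,v]$ had $c(v) = 0$, then in the terminating iteration \textsc{ProcessEdgeList}$()$ would have acted on it — contracting it either through the \textsc{while}-loop that handles front edges incident on degree-$3$ vertices, or through the subsequent single contraction of a front edge incident on a vertex of degree greater than $3$ whose counter is zero — and this action keeps the repeat-until loop alive, contradicting the hypothesis that the loop exited with $\lue$ empty.

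The main obstacle is precisely this last link between the \emph{syntactic} termination test $\lue = \emptyset$ and the \emph{semantic} claim that $\lte$ holds no contractible edge. Making it rigorous requires careful bookkeeping of what a contraction performed inside \textsc{ProcessEdgeList}$()$ does: contracting a contractible edge $[u,v]$ either exposes a still-unprocessed neighbour $z \in \po_{uv}$, whose edge $[u,z]$ is then inserted into $\lue$ and forces another pass of the outer loop, or else every newly exposed neighbour is already trapped, in which case each new edge $[u,z]$ is incident on a trapped vertex and is therefore non-contractible by definition (cf. Lemma~\ref{lem:trapped}). Combined with Proposition~\ref{prop:contractible2} — which guarantees that such a contraction can only \emph{destroy} contractibility of the remaining edges of $\lte$, never create it — and with the front-justified, consistently maintained counters of Propositions~\ref{prop:listLTE2} and~\ref{pro:ccequality}, one concludes that every contractible edge of $\lte$ is consumed before the loop can terminate. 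Hence, when $u$ is processed, the front of $\lte$, and therefore all of $\lte$ by Proposition~\ref{prop:listLTE2}, carries a strictly positive counter; by Proposition~\ref{pro:ccequality} each such edge lies on a critical cycle of $K$, and by Theorem~\ref{the:linkcond} it is non-contractible, which is the assertion of the corollary.
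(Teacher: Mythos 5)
Your overall route is the one the paper intends: the paper states this corollary with no proof at all, as an immediate consequence of Proposition~\ref{pro:ccequality}, Proposition~\ref{prop:listLTE2}, Proposition~\ref{prop:contractible1}, and Theorem~\ref{the:linkcond}, and your first two paragraphs reconstruct exactly that chain (disposal of the $\T_4$ case, reduction of contractibility to critical cycles, reduction to ``the front edge of $\lte$ has a positive counter''). The problem is the final step, where the real content lies. The contradiction you set up in the second paragraph --- that if the front edge had $c(v)=0$ then \textsc{ProcessEdgeList}$()$ would have contracted it ``and this action keeps the repeat-until loop alive'' --- is not valid: the loop's guard is $\lue = \emptyset$, and a contraction performed inside \textsc{ProcessEdgeList}$()$ repopulates $\lue$ only when it exposes a neighbour $z \in \po_{uv}$ with $p(z) = \textit{false}$. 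A contraction of an edge incident on a degree-$3$ vertex exposes no new neighbours at all, and a degree-$>3$ contraction may expose only already-processed ones, so the outer loop can exit in the very iteration in which \textsc{ProcessEdgeList}$()$ contracts something.

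Your third paragraph patches this only for the \emph{newly created} edges $[u,z]$ (correctly: if every such $z$ is trapped, those edges are non-contractible by Lemma~\ref{lem:trapped}), but not for edges that were \emph{already} sitting in $\lte$. Note that \textsc{ProcessEdgeList}$()$ contracts at most one front edge incident on a vertex of degree greater than $3$ per invocation and then returns. If two edges $[u,x]$ and $[u,y]$ of $\lte$ reach counter $0$ simultaneously --- which the counting machinery explicitly permits, e.g.\ when a degree-$3$ contraction destroys the cycle $\textit{lk}(v,K)$ that was the unique critical cycle through both of them --- then after the one permitted contraction the other edge may still lie in $\lte$ with counter $0$, and the loop may still exit because nothing entered $\lue$. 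Proposition~\ref{prop:contractible2}, which you invoke here, points the wrong way: it guarantees that a degree-$>3$ contraction cannot \emph{make} a non-contractible edge of $\lte$ contractible, but gives no reason why a surviving zero-counter edge should have \emph{become} non-contractible. So the sentence ``every contractible edge of $\lte$ is consumed before the loop can terminate'' is precisely the assertion that still needs proof; to close the gap you would need either an invariant guaranteeing that at most one zero-counter edge can be present when the loop is about to exit, or an argument that a surviving zero-counter edge necessarily forces another pass of the outer loop.
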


\begin{lem}\label{lem:correct1}
  The algorithm described  in Section~\ref{sec:algo} always terminates
  and produces  a triangulation, $\T^{\prime}$, whose  set of vertices
  is the set of processed vertices of $\T$.
\end{lem}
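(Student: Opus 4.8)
The plan is to prove the three assertions of the lemma—termination, that $\T^{\prime}$ is a triangulation, and that its vertices are exactly the processed vertices of $\T$—largely separately, leaning on the termination and invariant results already established. Termination is the quickest: the outer while loop in lines 4--37 of Algorithm~\ref{alg:contractions} runs while $Q \neq \emptyset$, and $Q$ is initialized in Algorithm~\ref{alg:continit} with the finitely many vertices of $\T$; since every iteration removes one vertex from $Q$ and no vertex is ever reinserted, the loop executes at most $n_v$ times. Each iteration either does nothing (when $p(u)$ is \textit{true}) or processes $u$ through the repeat-until loop in lines 21--34, which terminates by Proposition~\ref{prop:repeatuntiltermination}. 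Hence the whole algorithm halts.

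Because $\T$ is a triangulation and, by Definition~\ref{def:contraction}, contracting a \emph{contractible} edge of a triangulation yields a triangulation, to see that $\T^{\prime}$ is a triangulation it suffices to check that every edge the algorithm contracts is contractible and then induct on the number of contractions; note also that contractions create no new vertices, so the vertices of $\T^{\prime}$ are a subset of those of $\T$. There are only three contraction sites: line 3 of Algorithm~\ref{alg:processdegree3}, line 14 of Algorithm~\ref{alg:processdegreegt3}, and line 14 of Algorithm~\ref{alg:processlistS} (the last also reaching Algorithm~\ref{alg:processdegree3} through line 5). At the degree-$3$ sites the guard $d(u) \neq 3$ forces $K$ to differ from $\T_4$, so Proposition~\ref{prop:contractible1} gives contractibility; at the other sites $d_v > 3$ already forces $K \not\cong \T_4$, and the contraction fires only when $c(v)=0$, which by Proposition~\ref{prop:cvalue} (for an edge just tested) and Proposition~\ref{pro:ccequality} (for an edge taken from $\lte$) means $[u,v]$ lies on no critical cycle, whence Theorem~\ref{the:linkcond} gives contractibility.

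For the vertex set, I would use that every vertex of $\T$ is removed from $Q$ exactly once. At the moment $z$ is dequeued, Proposition~\ref{prop:processed} says $p(z)=\textit{true}$ iff $z$ has already been processed or has been destroyed by a contraction; but $z$ cannot have been processed earlier, since processing occurs only at a dequeue and $z$ is dequeued only once. Thus $p(z)=\textit{true}$ at dequeue time means precisely that $z$ was eliminated by some contraction, while $p(z)=\textit{false}$ causes $z$ to be processed now. The crucial companion fact is that a processed vertex is never afterwards removed by a contraction: an edge $[w,z]$ is contracted only after entering $\lue$, and both lines 12--18 of Algorithm~\ref{alg:contractions} and lines 25--31 of Algorithm~\ref{alg:contract} admit $[w,z]$ into $\lue$ only when $p(z)=\textit{false}$. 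Consequently processed vertices survive every contraction and belong to $\T^{\prime}$, whereas the vertices absent from $\T^{\prime}$ are exactly those destroyed by contractions and hence never processed, giving the claimed equality.

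I expect the delicate part to be this last step—ensuring that ``processed'' and ``present in $\T^{\prime}$'' are genuinely equivalent. The work lies in ruling out that a vertex is processed and only later absorbed into another vertex by a contraction, which is exactly where the $p(z)=\textit{false}$ guard on insertion into $\lue$, reinforced by Lemma~\ref{lem:trapped}, is essential. By contrast, the termination and triangulation arguments are routine once Proposition~\ref{prop:repeatuntiltermination}, Proposition~\ref{prop:cvalue}, Proposition~\ref{pro:ccequality}, and Theorem~\ref{the:linkcond} are in hand.
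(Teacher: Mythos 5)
Your proof is correct and follows essentially the same route as the paper's: termination via the finiteness of $Q$ together with Proposition~\ref{prop:repeatuntiltermination}, and the identification of the vertices of $\T^{\prime}$ with the processed vertices via Proposition~\ref{prop:processed} and the $p(z)=\textit{false}$ guard on insertion into \lue\ (the paper invokes Proposition~\ref{prop:listLUE} for the latter). The only difference is that you also verify here that every contracted edge is contractible, which the paper defers to Lemma~\ref{lem:correct2}; that is a harmless (indeed slightly more self-contained) reorganization, not a different argument.
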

\begin{proof}
  Recall that every vertex, $u$, of $\T$ is placed in a queue, $Q$, by
  Algorithm      \ref{alg:continit}.      Later,      in     Algorithm
  \ref{alg:contractions}, every vertex $u$ in $Q$ is removed from $Q$,
  one at a time, and no vertex is further inserted in $Q$. After being
  removed from  $Q$, a vertex $u$  is selected to be  processed by the
  algorithm if and  only if $p( u )$  is {\em false}.  If $p(  u )$ is
  {\em true}, then vertex $u$ is discarded and a new vertex is removed
  from $Q$,  if any. If $p(  u )$ is  {\em false}, then vertex  $u$ is
  processed          by          the         algorithm.           From
  Proposition~\ref{prop:repeatuntiltermination}  ,  we  know that  the
  processing  of  $u$  ends  after  finitely many  iterations  of  the
  repeat-until        loop       in        lines        20-34       of
  Algorithm~\ref{alg:contractions}.  Since  there are a  finite number
  of vertices in $\T$, and since no vertex is inserted into $Q$ during
  the execution of Algorithm~\ref{alg:contractions}, $Q$ must be empty
  after every vertex of $\T$ is  removed from it.  At this moment, the
  algorithm ends.  We now show  that a vertex belongs to $\T^{\prime}$
  if  and only  if it  was processed  by the  algorithm. Indeed,  if a
  vertex $u$  was processed by the  algorithm, then $p( u  )$ was {\em
    false}     when    $u$    was     removed    from     $Q$.     So,
  Proposition~\ref{prop:processed} asserts that either (1) $u$ has not
  been processed by  the algorithm before, and $u$  belongs to $K$, or
  (2) $u$ has been processed by the algorithm before, and $u$ does not
  belong to $K$.   We claim that assertion (2) is  false.  In fact, if
  $u$ had been processed before, then  $p( u )$ would have been set to
  {\em true} immediately after the repeat-until loop in lines 20-34 of
  Algorithm~\ref{alg:contractions}  is executed  to  process $u$  (see
  line 35  of Algorithm  \ref{alg:contractions}).  But, if  $p (  u )$
  becomes {\em true}, then Proposition~\ref{prop:listLUE} ensures that
  an edge of  the form $[ z ,  u ]$ can never be  inserted into $\lue$
  during  the processing of  a vertex  $z$, which  was selected  to be
  processed after $u$.  Thus, edge $[  z , u ]$ cannot be removed from
  the   current   triangulation.   So,   assertion   (1)  holds,   and
  consequently  vertex $u$  is  processed by  the  algorithm, and  $u$
  belongs  to  $K$.   But, since  $p(  u  )$  becomes true  after  the
  processing  of $u$, vertex  $u$ can  no longer  be removed  from the
  current triangulation nor from  any triangulation derived from it by
  further   edge   contractions.     So,   vertex   $u$   belongs   to
  $\T^{\prime}$. Conversely, if a vertex $u$ belongs to $\T^{\prime}$,
  then this  vertex was never removed from  any triangulation obtained
  from  $\T$ by  edge contractions.   So, vertex  $u$ belonged  to the
  current triangulation when  it was removed from $Q$.   Thus, at that
  point, $p( u )$ was {\em false} and hence $u$ was processed.
\end{proof}

\begin{lem}\label{lem:correct2}
  Triangulation $\T^{\prime}$ is of the same topological type of $\T$.
\end{lem}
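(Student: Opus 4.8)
The plan is to argue that $\T^{\prime}$ is produced from $\T$ by a finite sequence of \emph{topology-preserving} edge contractions, and that each such contraction, by Definition~\ref{def:contraction}, yields a triangulation of the \emph{same} surface $\s$. Since ``triangulating $\s$'' already means having underlying surface homeomorphic to $\s$, preserving this property step by step will give the claim by induction on the number of contractions. First I would invoke Lemma~\ref{lem:correct1} to conclude that the algorithm terminates and that the only operation ever altering the current triangulation $K$ is the edge contraction carried out by \textsc{Collapse}$()$ (Algorithm~\ref{alg:collapse}) through \textsc{Contract}$()$ (Algorithm~\ref{alg:contract}); no vertex or edge is introduced except as the image of a contraction. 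Hence $\T^{\prime}$ is the last term of a chain $\T = K_0, K_1, \ldots, K_N = \T^{\prime}$ in which each $K_{i+1}$ arises from $K_i$ by contracting a single edge $e_i$.

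The crux — and the step I expect to be the main obstacle — is verifying that \emph{every} edge $e_i$ the algorithm actually contracts is contractible in $K_i$, so that the contraction is topology-preserving and $K_{i+1}$ is again a triangulation of $\s$. I would settle this by inspecting the three sites at which \textsc{Contract}$()$ is reached. In Algorithm~\ref{alg:processdegree3} the contracted edge $[u,v]$ has $d_v = 3$ and is contracted only when $d_u \neq 3$, i.e.\ when $K_i$ is not isomorphic to $\T_4$, so Proposition~\ref{prop:contractible1} guarantees $[u,v]$ is contractible. In Algorithm~\ref{alg:processdegreegt3} the edge $[u,v]$ is contracted only when $c(v) = 0$; since $d_v > 3$ forces $K_i \not\cong \T_4$, and since Proposition~\ref{prop:cvalue} (together with Corollary~\ref{cor:cvalue}) shows $c(v)$ equals the true number of critical cycles through $[u,v]$ at that instant, Theorem~\ref{the:linkcond} yields contractibility. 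In Algorithm~\ref{alg:processlistS} an edge is contracted either through the degree-$3$ case above or, in line 14, when $c(v) = 0$ with $d_v > 3$; here Proposition~\ref{pro:ccequality} certifies that $c(v)$ is the correct count of critical cycles whenever line 32 is reached, and Corollary~\ref{cor:equality2} confirms the accompanying bookkeeping, so Theorem~\ref{the:linkcond} again applies.

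Finally I would close the induction. The base case $K_0 = \T$ triangulates $\s$ by hypothesis. For the inductive step, if $K_i$ triangulates $\s$, then, $e_i$ being contractible, Definition~\ref{def:contraction} makes $K_{i+1} = K_i - e_i$ a triangulation of the \emph{same} $\s$. Therefore $\T^{\prime} = K_N$ triangulates $\s$, so its underlying surface is homeomorphic to that of $\T$; that is, $\T^{\prime}$ is of the same topological type as $\T$. The only non-routine ingredient is the correctness of the critical-cycle counters used to decide contractibility, but this is precisely what Propositions~\ref{prop:cvalue} and~\ref{pro:ccequality} already supply, so what remains is a direct induction on the length of the contraction chain.
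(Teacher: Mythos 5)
Your proposal is correct and takes essentially the same approach as the paper: the paper's own proof is a two-sentence observation that the algorithm only modifies $\T$ by contracting edges that have been verified to be contractible, so each step is topology-preserving and the topological type is unchanged. Your version merely unpacks the implicit justification (inspecting the three call sites of \textsc{Contract}$()$ and citing Propositions~\ref{prop:contractible1}, \ref{prop:cvalue}, and \ref{pro:ccequality}) and makes the induction on the contraction chain explicit.
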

\begin{proof}
  Since  our algorithm modifies  $\T$ by  contracting edges  only, and
  since  every contraction is  topology-preserving ---  as an  edge is
  contracted  only if  it  is a  contractible  edge ---  triangulation
  $\T^{\prime}$ must be of the same topological type of $\T$.
\end{proof}

\begin{thm}\label{thm:correct}
  The algorithm described in Section~\ref{sec:algo} is correct.
\end{thm}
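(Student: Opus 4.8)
The plan is to unpack ``correct'' into three obligations---that the algorithm terminates, that the output $\T^{\prime}$ is a triangulation of the \emph{same} surface $\s$, and that $\T^{\prime}$ is irreducible---and to discharge the first two almost immediately by appealing to the lemmas just established. Termination, together with the fact that $\T^{\prime}$ is a genuine triangulation whose vertex set is exactly the set of processed vertices of $\T$, is precisely Lemma~\ref{lem:correct1}. That $\T^{\prime}$ has the same topological type as $\T$---and hence is a triangulation of $\s$---is Lemma~\ref{lem:correct2}. So the entire substance of the theorem reduces to proving irreducibility.

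For irreducibility I would invoke the equivalence, noted after Definition~\ref{def:irreducible}, that a triangulation is irreducible if and only if every one of its vertices is trapped. By Lemma~\ref{lem:correct1} every vertex $u$ of $\T^{\prime}$ was processed by the algorithm, so it suffices to show two things: (i) when the processing of $u$ ends, $u$ is trapped in the current triangulation, and (ii) $u$ remains trapped thereafter, i.e.\ in $\T^{\prime}$. Part (ii) is immediate from Lemma~\ref{lem:trapped}: once $u$ is trapped, every subsequent edge contraction performed by the algorithm---each of which is of a \emph{contractible} edge---leaves $u$ trapped.

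The core step is therefore (i). Here I would argue from the loop invariants and the counting guarantees already in hand. When the processing of $u$ ends, the repeat-until loop exits with $\lue = \emptyset$; I would then case-analyze every edge $[u,z]$ of the current triangulation $K$. If $[u,z] \in \lte$, then Corollary~\ref{cor:equality2} says it is non-contractible. If $[u,z] \notin \lte$, I would consider the attribute $p(z)$: when $p(z)$ is \emph{true} the vertex $z$ was processed before $u$ and is thus trapped, so $[u,z]$, being incident on a trapped vertex, is non-contractible by Definition~\ref{def:trapped}; when $p(z)$ is \emph{false}, the edge was ignored after leaving $\lue$, which (by the degree-$3$ test) can happen only when $K$ is isomorphic to $\T_4$, in which case Proposition~\ref{prop:contractible1} makes every edge non-contractible. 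As these cases are exhaustive, no edge incident on $u$ is contractible, so $u$ is trapped.

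The main obstacle is not the assembly above but the fidelity of the critical-cycle bookkeeping that underlies Corollary~\ref{cor:equality2}: the entire case analysis hinges on $c(w)$ being, at the moment line~32 of Algorithm~\ref{alg:contractions} is reached, the \emph{exact} number of critical cycles of $K$ through $[u,w]$ for every $[u,w] \in \lte$. This is exactly Proposition~\ref{pro:ccequality}, which in turn rests on the list invariants (Propositions~\ref{prop:listLUE}, \ref{prop:listLTE}, and \ref{prop:listLTE2}) and on Propositions~\ref{prop:contractible1} and~\ref{prop:contractible2} that characterize how contractibility is created and destroyed by a contraction. Since all of these are proved earlier in the excerpt, the remaining care in writing Theorem~\ref{thm:correct} is merely to confirm that every contraction the algorithm actually performs is invoked only after contractibility has been certified---via the link-condition test for vertices of degree greater than $3$, via Proposition~\ref{prop:contractible1} for degree-$3$ vertices, or via a zero $c$-value in $\lte$---so that Lemma~\ref{lem:trapped} may legitimately be applied in step (ii).
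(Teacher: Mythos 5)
Your proposal is correct and follows essentially the same route as the paper: reduce to Lemmas~\ref{lem:correct1} and~\ref{lem:correct2}, then show every processed vertex is trapped at the end of its processing via Corollary~\ref{cor:equality2} and the case analysis on $\lte$-membership and $p(z)$, and propagate trappedness forward with Lemma~\ref{lem:trapped}. The only presentational difference is that the paper organizes the argument as an explicit induction on the order in which vertices are processed, which is what legitimizes your step ``$p(z)$ is \emph{true}, so $z$ was processed before $u$ and is thus trapped'' (otherwise that step quietly assumes the very claim being proved, for an earlier vertex), and the paper also argues by contradiction that every edge $[u,z]$ with $p(z)$ \emph{false} really did pass through $\lue$ before being ignored.
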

\begin{proof}
  Since the  algorithm terminates (see  Lemma~\ref{lem:correct1}), and
  since $\T$ and $\T^{\prime}$  are triangulations of the same surface
  (see Lemma~\ref{lem:correct2}), it suffices  to show that every edge
  of  $\T^{\prime}$ is  non-contractible, i.e.,  that every  vertex of
  $\T^{\prime}$ is a  trapped one. To prove this  assertion, let $u_1,
  u_2, \ldots, u_n$, with $n \in \Z$ and $n \ge 1$, be the sequence of
  vertices selected to be processed  by the algorithm (in this order).
  We will  use induction on $k$ to  show that $u_k$ is  trapped by the
  time    it     is    completely    processed.      If    so,    then
  Lemma~\ref{lem:trapped} ensures that  $u_k$ will remain trapped when
  the algorithm ends.

  \textbf{Base  case ($\bm{k =  1}$)}.  Consider  the moment  in which
  $u_1$ is  completely processed.  If the  current triangulation, $K$,
  is  (isomorphic to)  $\T_4$, then  we are  done. So,  let  us assume
  otherwise.   Since  $u_1$  is  the  only  vertex  processed  by  the
  algorithm so  far, every edge incident  with $u_1$ in $K$  is of the
  form $[ u_1 , z ]$,  where $z$ has not been processed before.  Since
  $z$  belongs to  $K$, Proposition~\ref{prop:processed}  implies that
  $p( z )$ is {\em false}. We claim that $[ u_1 , z ]$ belongs to list
  $\lte$. To  prove our claim,  we must  show that $[  u_1 , z  ]$ was
  inserted  into list $\lue$  during the  processing of  $u_1$, tested
  against  the link  condition, and  then inserted  into  $\lte$ after
  failing the test. Aiming at a  contradiction, assume that $[ u_1 , z
  ]$ was  never inserted into  $\lue$. So, edge  $[ u_1 , z  ]$ cannot
  belong      to     $\T$.       Otherwise,     lines      8-19     of
  Algorithm~\ref{alg:contractions} would  have inserted $[ u_1  , z ]$
  into $\lue$ immediately before vertex $u_1$ is processed. This means
  that $z$ became adjacent to $u_1$ due to the contraction of an edge,
  $[ u_1 , z_1 ]$, in  $\T$ or in some triangulation derived from $\T$
  by previous edge contractions.  Let $L$ be this triangulation. Since
  $[ u_1 ,  z_1 ]$ was contracted,  edge $[ u_1 , z_1  ]$ was inserted
  into  $\lue$  before  the   contraction;  else  it  would  never  be
  contracted by the algorithm. Now, since $z$ belongs to $\textit{lk}(
  z_1  , L  )$, \textsc{Collapse}$()$  inserts $[  u_1 ,  z ]$  in the
  temporary list, \textit{temp}, of procedure \textsc{Contract}$()$ in
  Algorithm~\ref{alg:contract}. Since $p(  z ) = \textit{false}$, edge
  $[  u_1  ,  z  ]$  is   inserted  into  $\lue$  in  lines  27-31  of
  Algorithm~\ref{alg:contract}.  So,  edge $[ u_1  , z ]$  is inserted
  into $\lue$ during the processing  of $u_1$.  As a result, this edge
  is eventually removed  from $\lue$.  Since $[ u_1 ,  z ]$ belongs to
  $\T^{\prime}$,  it was  not  contracted by  the  algorithm.  So,  we
  distinguish two situations: (1) the degree, $d_z$, of $z$ was $3$ or
  (2) the degree, $d_z$, of $z$ was  greater than $3$ when $[ u , z ]$
  was removed from $\lue$. If (1) holds then the current triangulation
  is (isomorphic to) $\T_4$, as $[ u_1 , z ]$ was not contracted. But,
  if this is the case, then  no more edge contractions take place, the
  algorithm  ends, and every  vertex of  the current  triangulation is
  trapped. If (2) holds then edge $[ u_1 , z ]$ was tested against the
  link         condition        in        lines         2-12        of
  Algorithm~\ref{alg:processdegreegt3},  failed   the  test,  and  was
  inserted     into     list      $\lte$     in     line     16     of
  Algorithm~\ref{alg:processdegreegt3}.   Since $[  u_1 ,  z ]$  is in
  $\T^{\prime}$,  edge $[ u_1 , z ]$  was not  contracted  after  being inserted  into
  $\lte$.  Finally, from Corollary~\ref{cor:equality2}, edge $[ u ,
  z ]$ cannot  be contractible in $K$. So, we  can conclude that $u_1$
  is trapped in $K$ after being processed.

  \textbf{Hypothesis   ($\bm{k  \le   h}$,   with  $\bm{1   \le  h   <
      n}$)}. Assume  that our claim  is true for  every $k \in \{  1 ,
  \ldots , h \}$, i.e., $u_k$  is trapped by the time it is processed,
  for every $k  \in \{ 1 , \ldots  , h \}$, where $h$  is an arbitrary
  (but, fixed) integer, with $1 \le h < n$.

  \textbf{Inductive Step  ($\bm{k = h+1}$)}.   Let $K$ be  the current
  triangulation by  the time  that $u_{h+1}$ is  processed. If  $K$ is
  (isomorphic to) $\T_4$, then we are  done, as every vertex of $K$ is
  trapped and $u_{h+1}$  is in $K$.  So, let  us assume otherwise.  In
  this case, we can partition the set of edges incident with $u_{h+1}$
  in $K$ in  two sets: (1) the set,  $A$, of all edges of  the form $[
  u_{h+1} , q  ]$ in $K$ such  that $q$ has not been  processed by the
  algorithm before,  and the  set, $B$,  of all edges  of the  form $[
  u_{h+1}  , z  ]$ in  $K$ such  that $z$  has been  processed  by the
  algorithm before. Using the same argument from the base case, we can
  prove that  every edge $[ u ,  q ]$ in $A$  is non-contractible.  By
  the induction hypothesis, for every vertex $z$ in $K$ such that $[ u
  , z ]$ is  in $B$, vertex $z$ was trapped by  the time the algorithm
  finished processing  $z$.  From Lemma~\ref{lem:trapped},  vertex $z$
  remains trapped until  the end of the first stage, and  hence $e = [
  u_{h+1}  , z  ]$ is  also  non-contractible.  So,  since every  edge
  incident with $u_{h+1}$ in $K$ is non-contractible, vertex $u_{h+1}$
  is trapped by the time  it is completely processed by the algorithm.
  Thus, our claim is true for $k = 1, \ldots, n$.
\end{proof}

\end{appendix}

\end{document}